\theoremstyle{theorem}
\newtheorem{thm}{Theorem}[section]
\newtheorem{prop}{Proposition}[section]
\newtheorem{cor}{Corollary}[section]
\newtheorem{lem}{Lemma}[section]
\newtheorem*{thm_A}{Theorem A}
\newtheorem*{thm_B}{Theorem B}
\newtheorem*{thm_1}{Theorem I}
\newtheorem*{thm_2}{Theorem II}
\newtheorem*{thm_3}{Theorem III}
\newtheorem*{thm_4}{Theorem IV}
\newtheorem*{thm_5}{Theorem V}
\newtheorem*{thm_6}{Theorem VI}
\theoremstyle{definition}
\newtheorem{defn}{Definition}[section]
\newtheorem{example}{Example}[section]
\newtheorem{remark}{Remark}[section]
\newtheorem{setting}{Setting}[section]
\newtheorem{compl}{Compliment}[section] 
\newtheorem{convention}{Convention}[section] 
\newtheorem{fact}{Fact}[section]
\newtheorem*{notation}{Notation}
\newtheorem*{setting_3.2+}{Setting~\ref{setting_handle_decomp_dual-skeleton}$^+$}
\newtheorem*{setting_3.2++}{Setting~\ref{setting_handle_decomp_dual-skeleton}$^{++}$}
\newtheorem*{setting_3.3+}{Setting~\ref{setting_handle_even_cpt}$^+$}
\newtheorem*{setting_3.6+}{Setting~\ref{setting_triang_even_cpt}$^+$}
\newtheorem*{setting_5.2+}{Setting~\ref{setting_cpt_even}$^+$}
\newtheorem*{setting_5.3+}{Setting~\ref{setting_even_cpt_triang}$^+$}
\newtheorem*{setting_5.4+}{Setting~\ref{setting_even_cpt}$^+$}
\newtheorem*{setting_5.6+}{Setting~\ref{setting_even_open_handle}$^+$}
\newtheorem*{setting_5.7+}{Setting~\ref{setting_even_covering}$^+$}
\newtheorem*{setting_5.8+}{Setting~\ref{setting_inf_sum}$^+$}
\newtheorem*{setting_5.8++}{Setting~\ref{setting_inf_sum}$^{++}$}
\def \btab {\begin{tabular}}
\def \etab {\end{tabular}} 
\def \bary {\begin{array}}
\def \eary {\end{array}} 
\def \benum {\begin{enumerate}}
\def \eenum {\end{enumerate}} 
\def \bit {\begin{itemize}}
\def \eit {\end{itemize}} 
\def \itemi {\item[(i)\,]}
\def \itemii {\item[(ii)]}
\def \itemiii {\item[(iii)]}
\def \itemiv {\item[(iv)]}
\def \itemv {\item[(v)\,]}
\def \itemI {\item[(i)\ ]}
\def \itemII {\item[(ii)\,]}
\def \itema {\item[(a)]} 
\def \itemb {\item[(b)]} 
\def \itemc {\item[(c)]} 
\def \itemd {\item[(d)]}
\def \hsf {\hspace*{2mm}}
\def \hsh {\hspace*{5mm}}
\def \hsp {\hspace*{10mm}}
\def \hspp {\hspace*{20mm}}
\def \hspppp {\hspace*{40mm}}
\def \e {\varepsilon} 
\def \cal {\mathcal}
\def \phi {\varphi}
\def \ds {\displaystyle}
\newcommand{\id}{\mathrm{id}}
\def \Llra {\Longleftrightarrow}
\def \Lra {\Longrightarrow}
\def \llra {\longleftrightarrow}
\def \lra {\longrightarrow}
\def \LLRA {\ $\Llra$ \ } 
\def \LRA {\ $\Lra$ \ }
\renewcommand{\Phi}{\varPhi}
\renewcommand{\Psi}{\varPsi}
\renewcommand{\rho}{\varrho}
\newcommand{\IR}{\mathbb R}
\newcommand{\IZ}{\mathbb Z}
\newcommand{\cG}{\mathcal G}
\def \FSubset
\begin{document}
\baselineskip 6mm 

\title[]{The uniform perfectness of diffeomorphism groups \\ of open manifolds}

\author[Kazuhiko Fukui]{Kazuhiko Fukui} 
\address{228-168  Nakamachi, Iwakura, Sakyo-ku, Kyoto 606-0025, Japan}
\email{fukui@cc.kyoto-su.ac.jp}

\author[Tomasz Rybicki]{Tomasz Rybicki} 
\address{Faculty of Applied Mathematics, AGH University of Science and Technology, al. A. Mickiewicza 30, 30-059 Krak\'ow, Poland}
\email{tomasz.rybicki@agh.edu.pl}

\author[Tatsuhiko Yagasaki]{Tatsuhiko Yagasaki}
\address{Graduate School of Science and Technology, Kyoto Institute of Technology, Kyoto, 606-8585, Japan}
\email{yagasaki@kit.ac.jp}

\subjclass[2010]{Primary 57R50, 57R52;  Secondary 37C05.}  
\keywords{diffeomorphism group, open manifold, uniformly perfect, bounded, uniformly simple, commutator length, conjugation-invariant norm}

\maketitle

\begin{abstract}{}
In this paper we study the uniform perfectness, boundedness and uniform simplicity of diffeomorphism groups of 
compact manifolds with boundary and open manifolds and 
obtain some upper bounds of their diameters with respect to commutator length, those with support in balls and conjugation-generated norm. 
\end{abstract}

\thispagestyle{empty}

\tableofcontents

\section{Introduction and statement of results}

The algebraic properties of diffeomorphism groups have been studied by Herman \cite{H1}, Thurston \cite{Th}, Mather \cite{Ma}, Epstein \cite{Ep} and others. These fundamental results are about the perfectness and simplicity of groups of diffeomorphisms on closed manifolds (compact, without boundary) or groups of diffeomorphisms with compact support on open manifolds (non-compact, without boundary). 
For a $\sigma$-compact (separable metrizable) $C^\infty$ $n$-manifold $M$ without boundary, let ${\rm Diff}^r(M)_0$ denote the group of all $C^r$ diffeomorphisms of $M$ which are isotopic to the identity and 
${\rm Diff}^r_c(M)_0$ denote the subgroup of all elements of ${\rm Diff}^r(M)$ which are isotopic to the identity by compactly supported isotopies.  
It is well-known  that for $1 \le r \le \infty$,\, $r \neq n +1$ 
the group ${\rm Diff}^r_c(M)_0$ is perfect, i.e., it coincides with its commutator subgroup, and is simple provided $M$ is connected.

McDuff \cite{Mc} studied the lattice of normal subgroups of the diffeomorphism group of an open manifold $M$ which is  the interior of a compact manifold with nonempty boundary and showed that ${\rm Diff}^r(M)_0$ 
is a perfect group,  
using the fact due to Ling \cite{L} and Schweitzer, cf.\,\cite{S}, that certain quotient groups of the group by normal subgroups are simple groups. 

For an element $g$ of a group $G$ its commutator length $cl_G (g)$  
is defined as the minimum number of commutators whose product is equal to $g$ for $g \in [G,G]$ 
and as $cl_G (g) = \infty$ for $g  \in G - [G,G]$. 
The commutator length diameter $cld\,G$ of $G$ is defined by $cld\,G = \sup_{g \in G} cl_G(g)$. 
A group $G$ is said to be {\it uniformly perfect} if $cld\,G < \infty$, that is, 
any element of $G$ can be written as a product of a bounded number of commutators. 

Recall that a group $G$ is (i) bounded if any conjugation-invariant norm on $G$ is bounded and 
(ii) uniformly simple (\cite{Tsu09}) if  there is a positive integer $k$ such that 
for all $f,g\in G$ with $g\neq e$, $f$ can be written as a product of at most $k$ conjugates of $g$ or $g^{-1}$. 
Note that every uniformly simple group is simple and bounded and 
that any bounded perfect group is uniformly perfect. 

The problem of the uniform perfectness of diffeomorphism groups has been studied by Burago, Ivanov and Polterovich \cite{BIP}, Tsuboi \cite{Tsuboi2}, \cite{Tsuboi3} and others. They showed 
that the diffeomorphism groups are uniformly perfect for all closed manifolds except for two and four dimensional cases. 

\begin{thm_A}{\rm (\cite{BIP}, \cite{Tsuboi2, Tsuboi3})} Suppose $M$ is a closed $n$-manifold and $1 \le r \le \infty$,\, $r \neq n +1$. 
\benum 
\item[$(1)$] In the case $n =2m+1$ $(m \geq 0)$ : \hsh $cld\,{\rm Diff}^r(M)_0 \leq 4$. 
\item[$(2)$] In the case $n=2m$ $(m \geq 1)$ : 
\bit 
\itemI $cld\,{\rm Diff}^r(M)_0 \leq 3$ if $M$ admits a handle decomposition without $m$-handles.  
\itemII $cld\,{\rm Diff}^r(M)_0 \leq 4k +11$ if $m \geq 3$ and 
$M$ has a $C^\infty$ triangulation with at most $k$ $m$-simplices. 
\eit 
\eenum 
\end{thm_A}

In \cite{Tsuboi2, Tsuboi3} the estimates of the commutator length diameters in the above theorem were given as 
$cld \, {\rm Diff}^r(M) \leq 5$ in (1) and $cld \, {\rm Diff}^r(M) \leq 4$ in (2)(i). 
However, in his lecture around 2011, Tsuboi announced that 
these estimates are improved to those in the above theorem. 

The uniform simplicity of diffeomorphism groups has been studied by Tsuboi \cite{Tsu09, Tsuboi3}, 
based on the estimates on the commutator length supported in finite disjoint union of balls $(clb^f)$ and the conjugation-generated norm $\nu$ (cf.~Sections 2.3, 6.1). 

\begin{thm_B}{\rm (\cite{Tsu09, Tsuboi3})} 
Suppose $M$ is a closed connected $n$-manifold $(n \geq 1)$ and $1 \le r \le \infty$,\, $r \neq n +1$. \\ 
${\rm Diff}^r(M)_0$ is uniformly simple in the following cases : 
\benum 
\item[$(1)$] $n \neq 2,4$. 
\item[$(2)$] $n = 2m$ and $M$ admits a handle decomposition without $m$-handles.  
\eenum 
\end{thm_B}

The problem of the uniform perfectness and boundedness of groups of equivariant diffeomorphisms, 
leaf-preserving diffeomorphisms or diffeomorphisms which preserve some submanifolds, etc, 
has been also studied in \cite{A-F1},  \cite{A-F2},  \cite{F2},  \cite{R2}, \cite{LMR}, etc.

In this paper we extend the strategy in \cite{BIP}, \cite{Tsuboi2, Tsu09, Tsuboi3} and 
study the uniform perfectness, boundedness and uniform simplicity of the diffeomorphism groups of compact manifolds with boundary and open manifolds. 
Our main results are listed below. 

\begin{thm_1}[Theorems~\ref{thm_cpt_bdry_odd}, ~\ref{thm_open_odd}]
Suppose $n = 2m+1$ $(m \geq 0)$ and $1 \leq r \leq \infty$, $r \neq n+1$. 
\benum 
\item[$(1)$] If $M$ is a compact $n$-manifold possibly with boundary, then \ \ $cld\,{\rm Diff}^r(M, \partial)_0 \leq 4$.  

\item[$(2)$] If $M$ is an open $n$-manifold, then \ \ $cld\,{\rm Diff}^r(M)_0 \leq 8$ \ and \ $cld\,{\rm Diff}_c^r(M)_0 \leq 4$. 
\eenum 
\end{thm_1}

Here, ${\rm Diff}^r(M, \partial)_0$ is the group of $C^r$ diffeomorphisms which are isotopic to $\id_M$ rel a neighborhood of $\partial M$. 
See Section 2 for the precise definition. 

\begin{thm_2}[Corollaries~\ref{cor_cpt_bdry_even}, ~\ref{cor_closed}, Propositions~\ref{prop_2m_open_no-m-h}, ~\ref{prop_2m_open_finite-m-h}] \mbox{} \\ 
Suppose $n = 2m$ $(m \geq 1)$ and $1 \leq r \leq \infty$, $r \neq n+1$. 
\begin{enumerate}
\item[$(1)$] 
Suppose $M$ is a compact $n$-manifold possibly with boundary and $m \geq 3$. \\ 
\hsp $cld\,{\rm Diff}^r(M, \partial)_0 \leq 2k+7$ \\
\hsp \hsh if $M$ has a $C^\infty$ triangulation ${\mathcal T}$ whcih includes at most $k$ $m$-simplices not in $\partial M$. 

\item[$(2)$] Suppose $M$ is an $n$-manifold without boundary. 
\benum 
\item[{\rm (i)}\ ] $cld\,{\rm Diff}^r(M)_0 \leq 6$ \ and \ $cld\,{\rm Diff}_c^r(M)_0 \leq 3$ \\
\hsh if $M$ admits a handle decomposition without $m$-handles. 

\item[{\rm (ii)}\,] $cld\,{\rm Diff}^r(M)_0 \leq 2k +10$ \ and \ $cld\,{\rm Diff}_c^r(M)_0 \leq 2k + 7$ \\
\hsh if \btab[t]{@{}l}
$m \geq 3$ and 
$M$ admits a handle decomposition ${\cal H}$ such that 
${\cal H}$ includes 
at most \\[1.5mm] 
$k$ handles of index $m$ and 
each closed $m$-cell of the core complex $P_{\cal H}$ for ${\cal H}$ has \\[1.5mm]
the strong displacement property for the $m$-skeleton of $P_{\cal H}$. 
\etab 
\eenum 
\end{enumerate}
\end{thm_2} 

See Section 2.2 for definitions of the displacement property.  
The estimates in Theorem II\,(2)(ii) reduce to rough estimates 
$3k+8$ and $3k+5$ respectively, 
when each closed $m$-cell of $P_{\cal H}$ has 
the strong displacement property for only itself in $M$ (Proposition~\ref{prop_2m_open_finite-m-h}). 
These results induce fine estimates even in the case of closed manifolds (cf.~Theorem A\,(2)(ii)). 
For example, if $M$ is the product of two $m$-spheres $(m \geq 3)$, then it has the natural product handle decomposition ${\cal H}$ and
both of two closed $m$-cells of $P_{\cal H}$ have the strong displacement property for itself in $M$. 
This implies $cld\,{\rm Diff}^r(M)_0 \leq 11$ (Example~\ref{exp_even_covering}).  

When an open $2m$-manifold $M$ has infinitely many $m$-handles, 
if these $m$-handles admit a grouping into finitely many classes with appropriate displacement property, 
then we can induce some estimate on $cld\,{\rm Diff}^r(M)_0$ 
(Proposition~\ref{prop_exhausting seq}, Theorem~\ref{thm_even_open_handle}). 
In particular, we obtain some estimates on the commutator length diameter 
for covering spaces of closed $2m$-manifolds and infinite sums of finitely many compact $2m$-manifolds. 

\begin{thm_3}[Corollary~\ref{cor_even_covering}] 
Suppose $\pi : \widetilde{M} \to M$ is a $C^\infty$ covering space over 
a closed $2m$-manifold $M$ $(m \geq 3)$ and $1 \leq r \leq \infty$, $r \neq 2m+1$. 
If $M$ has a $C^\infty$ triangulation with at most $k$ $m$-simplices, then \\
\hspp \hsf $cld\,{\rm Diff}^r(\widetilde{M})_0 \leq 4k+14$ \ \ and \ \ $cld\,{\rm Diff}_c^r(\widetilde{M})_0 \leq 2k+7$. 
\end{thm_3} 

We also obtain a finer estimate of $cld\,{\rm Diff}^r(\widetilde{M})_0$ based on a handle decomposition of $M$ with 
some displacement property (Theorem~\ref{thm_even_covering}).  

\begin{thm_4}[Corollary~\ref{cor_inf_sum}, Example~\ref{exp_inf_conn_sum}] 
Suppose $M$ is any infinite connected sum 
$\bary[c]{@{}c@{\,}}
\mbox{\scriptsize $\infty$} \\[-1mm] 
\# \\[-1.5mm] 
\mbox{\scriptsize $i\!=\!1$}
\eary
 N$ of a closed $2m$-manifold $N$ $(m \geq 3)$ and 
$1 \leq r \leq \infty$, $r \neq 2m+1$. 
Then, $cld\,{\rm Diff}^r(M)_0 < \infty$ \ and \ $cld\,{\rm Diff}_c^r(M)_0 < \infty$. 
\end{thm_4}

In Proposition~\ref{prop_inf_sum} we treat a more general class of infinite sums of finitely many compact $2m$-manifolds
and provide with some explicit estimates on $cld\,{\rm Diff}^r(M)_0 $ and $cld\,{\rm Diff}_c^r(M)_0$. 

The following is an extension of the results of McDuff \cite{Mc} and the second author \cite{R2}.

\begin{thm_5}[Proposition~\ref{prop_product_end}] 
Suppose $M$ is the interior of a compact $n$-manifold $W$ with boundary and $1 \le r \le \infty$, $r \neq n +1$. 
Then $cld\, {\rm Diff}^r(M)_0 \leq \max \{cld\,{\rm Diff}^r(W, \partial)_0, 2\} + 2$. 
In particular, ${\rm Diff}^r(M)_0$ is uniformly perfect for $n \neq 2,4$. 
\end{thm_5}

For the boundedness and uniform simplicity of diffeomorphism groups we obtain the following results. 

\begin{thm_6}[Corollary~\ref{cor_bounded}] 
\mbox{} Suppose $1 \leq r \leq \infty$, $r \neq n+1$.
\benum  
\item[{\rm [1]}]  Suppose $M$ is a compact connected $n$-manifold possibly with boundary and $n \neq 2, 4$. 
Then, \hsh ${\rm Diff}^r(M, \partial)_0$ is uniformly simple. 

\item[{\rm [2]}] Suppose $M$ is a connected open $n$-manifold. Then, \\ 
\hsh ${\rm Diff}^r(M)_0$ is bounded \ and \ ${\rm Diff}^r_c(M)_0$ is uniformly simple \ \ 
in the following cases : 
\bit 
\item[{\rm (1)}] $n = 2m+1$ $(m \geq 0)$
\item[{\rm (2)}] $n = 2m$ $(m \geq 1)$ and $M$ satisfies one of the following conditions: 
\bit 
\itemI $M$ has a handle decomposition without $m$-handles. 
\eit 
\item[]  for $m \geq 3$

\bit 
\itemII $M$ has a handle decomposition ${\cal H}$ with only finitely many $m$-handles  
and for which the closure of each open $m$-cell of $P_{\cal H}$ is strongly displaceable from itself in $M$. 
\itemiii $M$ is a covering space over a closed $2m$-manifold. 
\itemiv $M$ is an infinite sum of finitely many compact $2m$-manifolds $($as in Setting~{\rm \ref{setting_inf_sum_nu}}$)$.
\eit 
\eit 

\vskip 2mm 

\item[{\rm [3]}] Suppose $M$ is the interior of a compact $n$-manifold $W$ with boundary. Then, \\ 
\hsh ${\rm Diff}^r(M)_0$ is bounded, if ${\rm Diff}^r(W, \partial)_0$ is bounded.  
\eenum 
\end{thm_6}

Extending the arguments in \cite{Tsu09}, 
we introduce the commutator length supported in discrete union of balls ($clb^d$) 
as a non-compact version of $clb^f$, and discuss the relation between the conjugation-generated norm $\nu$ and $clb^f$, $clb^d$ (Lemma~\ref{lem_nu_leq_4clb}).  
Since the norms $cl$, $clb^f$ and $clb^d$ are closely related, we provide with the estimates on these three commutator lengthes
simultaneously in all steps of Sections 2\,$\sim$\,5. The estimates on $\nu$ are induced from those on $clb^f$ and $clb^d$ (Section 6.2).
Then, Theorem VI follows from the boundedness of $\nu$. 

In this paper we have tried to find finer estimates on the commutator lengthes $cl$, $clb^f$ and $clb^d$, 
though this causes complicated expressions in some statements. 
This is  because we are interested in the lower bounds of these norms and also in 
the relations between the precise values of these norms and the topology of the manifolds. 

This paper is organized as follows. 
In Sections 2.1, 2.2 we fix basic notations used in this paper and recall generalities on extended conjugation-invariant norms on groups. 
In Section 2.3 we introduce the commutator length supported in discrete union of balls ($clb^d$) as a noncompact version of $clb^f$. 
In Section 2.4 we separate the notions of absorption and displacement properties for compact subsets in a manifold from the related arguments. 
We clarify their basic behavior and roles in the basic factorization lemma on diffeomorphisms in \cite{BIP, Tsuboi2, Tsu09} and 
extend the factorization lemma to the case of manifolds with boundary.
Section 2.5 includes some observation on factorization of isotopies on noncompact manifolds

Section 3 is devoted to some basic arguments on handle decompositions and triangulations of manifolds, 
which are necessary to extend the strategy in \cite{Tsuboi2, Tsu09, Tsuboi3} to the cases of compact manifolds with boundary and open manifolds. 
See an introduction in Section 3 for an explanation on this issue. 
In Section 3.1 we treat $C^\infty$ handle decompositions. 
Our device here is that, given a handle decomposition ${\cal H}$ and its dual ${\cal H}^\ast$ of an open $n$-manifold $M$, 
instead of a single compact $n$-submanifold $N$ of $M$ 
we use a pair $N_1 \subset N_2$ of compact $n$-submanifolds of $M$ such that $N_1$ is ${\cal H}$-saturated and 
$N_2$ is ${\cal H}^\ast$-saturated. 
Section 3.2 includes some arguments on $C^\infty$ triangulations of compact manifolds with boundary. 
In the bounded case $C^\infty$ triangulations are important since they naturally induce flows between some subcomplexes and their duals which preserve the boundary of manifolds.
Here we use the cylinder structures between complimentary full subcomplexes rather than the induced handle decompositions, 
since the description of the former is more direct and simpler. 

Section 4 includes main results in the odd-dimensional case. 
In Section 4.1 we recall the basic procedure in \cite{BIP, Tsuboi2}
for removing crossing points on the track of an isotopy and a factorization of an isotopy.  
We treat the compact case in Section 4.2 and the open case in Section 4.3,  
based on the basic procedure and the preliminary lemmas for the bounded/open cases in Section 3. 

Section 5 includes main results in the even-dimensional case. 
In Section 5.1 we recall the basic procedure in \cite{Tsuboi3}
for the Whitney trick on the track of an isotopy and a factorization of an isotopy, 
together with some improvements for finer estimates of $cl$ and $clb^f$ in our setting. 
In Section 5.2 we discuss the compact case based on the basic procedure  
and the preliminary lemmas for the bounded/open cases in Section 3. 
Section 5.3 is devoted to the open case. 
Section 5.3.1 includes some general results on grouping of $m$-cells in $2m$-manifolds. 
In Section 5.3.2 we treat three classes: (i) open $2m$-manifolds with only finitely many $m$-handles, 
(ii) covering spaces of closed $2m$-manifolds and
(iii) infinite sum of finitely many compact $2m$-manifolds. 
The results on grouping and the case (i) follow from the compact results in Section 5.2 and some basic lemmas in Section 3, 
while the cases (ii), (iii) follow from the results on grouping directly. 

In the final section 6 we obtain the estimates on the conjugation-generated norm $\nu$ in the bounded/open cases.   
First we discuss the relation between $\nu$ and $clb^f$, $clb^d$ as an extension of the arguments in \cite{Tsu09}. 
Then, the estimates on $\nu$ follow from those on $clb^f$ and $clb^d$ in Sections 2 $\sim$ 5. 
The conclusions on boundedness and uniform simplicity of diffeomorphism groups in the bounded/open cases 
follow immediately from the boundedness of $\nu$ according to the basic facts on conjugation-generated norms. 

\section{Preliminaries}

\subsection{Notations} \mbox{} 

For a subset $A$ of a topological space $X$ the symbols ${\rm Int}_XA$ and $Cl_XA$ denote the topological interior and closure of $A$ in $X$. 
For subsets $A$, $B$ of $X$ 
the symbol $A \Subset B$ means $A \subset {\rm Int}_XB$ and the symbol $A_B$ denotes the subset $A - {\rm Int}_X B$. 
The symbols ${\cal O}(X)$, ${\cal F}(X)$ and ${\cal K}(X)$ denote 
the collections of open subsets, closed subsets and compact subsets of $X$ respectively. 
The symbol ${\cal C}(X)$ denotes the collection of connected components of $X$. 

For a family ${\cal F} = \{ F_\lambda \}_{\lambda \in \Lambda}$ of subsets of a topological space $X$, let $|{\cal F}| := \bigcup_{\lambda \in \Lambda} F_\lambda$ 
and $St(A, {\cal F}) := A \cup \big(\bigcup \{ F_\lambda \mid \lambda \in \Lambda, A \cap F_\lambda \neq \emptyset \}\big)$ for $A \subset X$. 
We say that ${\cal F}$ is discrete in $X$ if any point of $X$ has a neighborhood $U$ in $X$ which intersects at most one $F_\lambda$ (i.e., 
$\#\{ \lambda \in \Lambda \mid U \cap F_\lambda \neq \emptyset \}\leq 1$).
When ${\cal F}$ is discrete in $X$, the following holds : 
(i) The family $\{ Cl_X F_\lambda \}_{\lambda \in \Lambda}$ is also discrete in $X$.  
(ii) $|{\cal F}| \in {\cal F}(X)$ if $F_\lambda \in {\cal F}(X)$ $(\lambda \in \Lambda)$. 
(iii) If $X$ is a paracompact Hausdorff space, then there exists a discrete family ${\cal G} = \{ G_\lambda \}_{\lambda \in \Lambda}$ in $X$ 
such that $F_\lambda \Subset G_\lambda$ $(\lambda \in \Lambda)$. 

In this paper an $n$-manifold means  
a $\sigma$-compact (separable metrizable) $C^\infty$ manifold of dimension $n$. 
A closed/open manifold means a compact/non-compact manifold without boundary.  

Suppose $M$ is a $C^\infty$ $n$-manifold possibly with boundary. 
By ${\mathcal S}{\mathcal M}(M)$ we denote the collection of $C^\infty$ $n$-submanifolds of ${\rm Int}\,M$ which are closed in $M$. 
Let ${\mathcal S}{\mathcal M}_c(M) := {\mathcal S}{\mathcal M}(M) \cap {\cal K}(M)$. 
We regard $\emptyset$ as an element of ${\mathcal S}{\mathcal M}_c(M)$.
If $L, N \in {\mathcal S}{\mathcal M}(M)$ and $L \Subset N$, then 
$N_L = N -  {\rm Int}_M L \in {\mathcal S}{\mathcal M}(M)$.
A subcollection ${\cal R}$ of ${\mathcal S}{\mathcal M}_c(M)$ is said to be cofinal if for any $L \in {\mathcal S}{\mathcal M}_c(M)$ there exists $N \in{\cal R}$ with $L \subset N$. 

An exhausting sequence in $M$ is a sequence $M_i \in {\mathcal S}{\mathcal M}_c(M)$ $(i \geq 1)$ such that 
$M_i \Subset M_{i+1}$ $(i \geq 1)$ and $M = \bigcup_{i \geq 1} M_i$. 
Such a sequence always exists, since $M$ is $\sigma$-compact. 
For an exhausting sequence $\{ M_i \}_{i\geq 1}$ 
we use the following notations : $M_{i} := \emptyset$ for $i \leq 0$ (otherwise specified) and  
$M_{i,j} := (M_j)_{M_i} \in {\mathcal S \mathcal M}_c(M)$ $(-\infty < i < j < \infty)$. 

For the manifold $M$, the symbol $\widetilde{M}$ denotes the $C^\infty$ $n$-manifold $\widetilde{M} := M \cup (\partial M \times [0,1))$ and 
for any $O \in {\cal O}(M)$ the symbol $\widetilde{O} \equiv O^\sim$ denotes 
$\widetilde{O} := O \cup (\partial O \times [0,1)) \in {\cal O}(\widetilde{M})$. 

For the simplicity of notation we always use the symbol $I$ to denote the interval $[0,1]$. 
A $C^r$ isotopy on $M$ is a $C^r$ diffeomorphism $F : M \times I \to M \times I$ which preserves the $I$-factor, that is, 
it takes the form $F(x,t) = (F_t(x), t)$ $((x,t) \in M \times I)$. 
For a subset $A$ of $M$ the track of $A$ under $F$ is the subset $\cup_{t \in I} F_t(A) = \pi_M F(A \times I)$, where $\pi_M : M \times I \to M$ is the projection onto $M$. 
The support of an isotopy $F$ is defined by ${\rm supp}\,F := Cl_M \big(\bigcup_{t \in I} {\rm supp}\,F_t\big)$. 
Note that an isotopy $F$ on $M$ has compact support if and only if there exists $K \in {\cal K}(M)$ 
such that $F = \id$ on $(M-K) \times I$. 

The symbols ${\rm Diff}^r(M)$ and ${\rm Diff}^r_c(M)$ $(1 \leq r \leq \infty)$ denote the group of all $C^r$ diffeomorphisms of $M$ and 
its subgroup consisting of those with compact support. Similarly, 
the symbols ${\rm Isot}^r(M)$ and ${\rm Isot}^r_c(M)$ denote the group of all $C^r$ isotopies of $M$ and 
its subgroup consisting of those with compact support. 
There exists a natural group homomorphism $R : {\rm Isot}^r(M) \to {\rm Diff}^r(M)$ : $R(F) = F_1$. 
Any subgroup $\cG$ of ${\rm Isot}^r(M)$ induces the subgroup $R(\cG)$ of ${\rm Diff}^r(M)$. 
For a subset $C$ of $M$ we have the following subgroups of the groups ${\rm Isot}^r(M)$ and ${\rm Diff}^r(M)$:  \\[1mm] 
\hspace*{3mm} ${\rm Isot}^r(M; C)_0 = \{ F \in {\rm Isot}^r(M) \mid F_0 = \id_M, \mbox{$F = \id$ on $U \times I$ for some neighborhood $U$ of $C$ in $M$} \}$, \\[0.5mm] 
\hspace*{3mm} ${\rm Isot}^r_c(M; C)_0 = {\rm Isot}^r(M; C)_0 \cap {\rm Isot}^r_c(M)$, \\[0.5mm]
\hspace*{3mm} ${\rm Diff}^r(M; C)_0 = R({\rm Isot}^r(M; C)_0\big)$, 
\hspace*{5mm} ${\rm Diff}^r_c(M; C)_0 = R({\rm Isot}^r_c(M; C)_0\big)$. \\ 
Note that ${\rm Diff}^r_c(M; C)_0 \subsetneqq {\rm Diff}^r(M; C)_0 \cap {\rm Diff}^r_c(M; C)$ in general. 
When $C = \partial M$, we simply write as ${\rm Diff}^r(M; \partial)_0$, etc. Under this notation, for a compact manifold $W$ the restriction map 
${\rm Diff}^r(W; \partial)_0 \to {\rm Diff}^r_c({\rm Int}\,W)_0$ is a group isomorphism.
For notational simplicity, when $r = \infty$, we usually omit the superscript $\infty$ from the above notations. 

\subsection{Extended conjugation-invariant norms} \mbox{} 

Suppose $G$ is a group. 
For simplicity we use the notation $b^a := aba^{-1}$ for $a,b \in G$. 
The next formula is useful in the change of order of factors in multiplications. 

\begin{fact}\label{fact_conjugation}  
$ab = b^aa$ \ \ and more generally \ \  
$a_1b_1 \cdots a_s b_s
= b_1^{a_1}b_2^{a_1a_2} b_3^{a_1a_2a_3} \cdots b_s^{a_1 \cdots a_s} (a_1 \cdots a_s)$ \ in $G$. 
\end{fact}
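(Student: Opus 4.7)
The plan is to prove Fact~\ref{fact_conjugation} by first verifying the single-factor identity $ab = b^a a$ and then bootstrapping to the general case by an iterated application (equivalently, induction on $s$).

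For the base identity, I would just unfold the definition: since $b^a = aba^{-1}$, we have $b^a a = aba^{-1}a = ab$. That is a one-line computation with no real content.

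For the general identity, the key observation is that each step of the iteration lets one migrate an $a_i$ past the $b_i$ that sits immediately to its right, replacing $b_i$ by its conjugate under the product of all $a_j$'s that have already been collected on the left. Concretely, starting from $a_1 b_1 a_2 b_2 \cdots a_s b_s$, apply $a_1 b_1 = b_1^{a_1} a_1$ to move $b_1$ past $a_1$; this yields $b_1^{a_1}(a_1 a_2) b_2 a_3 b_3 \cdots a_s b_s$. Now treat $a_1 a_2$ as a single element $x$ and apply the base identity $x b_2 = b_2^{x} x$, producing $b_1^{a_1} b_2^{a_1 a_2} (a_1 a_2 a_3) b_3 \cdots a_s b_s$. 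Iterating, after $s$ steps we arrive at $b_1^{a_1} b_2^{a_1 a_2} \cdots b_s^{a_1 \cdots a_s}(a_1 \cdots a_s)$, which is the asserted formula. This can be formalized cleanly as induction on $s$: assuming the identity for $s-1$ and writing $a_1 b_1 \cdots a_s b_s = \bigl(a_1 b_1 \cdots a_{s-1} b_{s-1}\bigr) a_s b_s$, rewrite the tail $a_s b_s = b_s^{a_1 \cdots a_s}(a_1 \cdots a_s)\cdot (a_1 \cdots a_{s-1})^{-1}$ after conjugating, or, more directly, apply the base identity with $a = a_1 \cdots a_s$ to the suffix $(a_1 \cdots a_s) b_s$ obtained after the inductive rearrangement of the first $s-1$ pairs.

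There is no serious obstacle here; the only thing to keep straight is the bookkeeping of which cumulative product $a_1 \cdots a_k$ appears in the exponent at each stage. Since the statement will be used later only as a mechanical rewriting rule for reordering factors in a product of group elements (to be applied, for instance, in manipulations of commutators inside $\mathrm{Diff}^r(M)_0$), this minimal proof is all that is needed.
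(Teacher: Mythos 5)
Your proof is correct, and the iterative/inductive bookkeeping is the natural argument. The paper states this as an unproved \emph{Fact} (it is a one-line computation plus iteration, used only as a mechanical rewriting rule), so there is no proof in the paper to compare against; your write-up is a perfectly adequate justification.
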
 

We recall basic facts on conjugation-invariant norms \cite{BIP, Tsu09}. 
An extended conjugation-invariant norm on $G$ is a function $q : G\to [0,\infty]$ which satisfies the following conditions~: \ \ 
for any $g, h\in G$
\bit 
\itemi $q(g)=0$ iff $g=e$ \hsh 
(ii) $q(g^{-1})=q(g)$ \hsh 
(iii) $q(gh)\leq q(g)+ q(h)$ \hsh 
(iv) $q(hgh^{-1})= q(g)$. 
\eit 
A conjugation-invariant norm on $G$ is an extended conjugation-invariant norm on $G$ with values in $[0, \infty)$. 
(Below we abbreviate ``an (extended) conjugation-invariant norm'' to an (ext.) conj.-invariant norm.) 

Note that for any ext.~conj.-invariant norm $q$ on $G$ the inverse image  
$N := q^{-1}([0, \infty))$ is a normal subgroup of $G$ and $q|_N$ is a conj.-invariant norm on $N$. Conversely, 
if $N$ is a normal subgroup of $G$ and $q : N \to [0, \infty)$ is conj.-invariant norm on $N$, then 
its trivial extension $q : G \to [0, \infty]$ by $q = \infty$ on $G - N$ is an ext.~conj.-invariant norm on $G$. 
For any ext.~conj.-invariant norm $q$ on $G$, 
the $q$-diameter of a subset $A$ of $G$ is defined by \ $q\hspace{0.2mm}d\,A := \sup\,\{ q(g) \mid g \in A \}$.

\begin{example}\label{exp_norm} (Basic construction)

Suppose $S$ is a subset of $G$. Its normalizer is denoted by $N(S)$.  
If $S$ is symmetric ($S = S^{-1}$) and conjugation-invariant ($gSg^{-1} = S$ for any $g \in G$), then 
$N(S) = S^\infty := \bigcup_{k=0}^\infty S^k$ and 
the ext.~conj.-invariant norm \  
$q_{(G,S)} : G \lra {\Bbb Z}_{\geq 0} \cup \{ \infty \}$ \ is defined by \\[2mm] 
\hspace*{20mm} $q_{(G,S)}(g) := 
\left\{ \hspace{-1mm}
\bary[c]{l}
\min \{ k \in {\Bbb Z}_{\geq 0} \mid \text{$g = g_1 \cdots g_k$ for some $g_1, \cdots, g_k \in S$}\}
\hsh (g \in N(S)), \\[2mm] 
\infty \hsh (g \in G - N(S)).
\eary \right.$ \\[2mm] 
Here, the empty product $(k=0)$ denotes the unit element $e$ in $G$ and $S^0 = \{ e \}$. 
\end{example}

\begin{example}\label{exp_norm} (Commutator length)  

The symbol $G^c$ denotes the set of commutators in $G$. 
Since $G^c$ is symmetric and conjugation-invariant in $G$, 
we have $[G, G] = N(G^c) = (G^c)^\infty$ and obtain the ext.~conj.-invariant norm \ 
$q_{(G,G^c)} : G \lra {\Bbb Z}_{\geq 0} \cup \{ \infty \}$. 
We denote $q_{(G,G^c)}$ by $cl_G$ and call it the commutator length of $G$. 
The diameters $q_{(G,G^c)}d\,G$ and $q_{(G,G^c)}d\,A$ $(A \subset G)$ are denoted by 
$cld\,G$ and $cld_G\,A \equiv cld\,(A, G)$ and called the commutator length diameter of $G$ and $A$ respectively. 
Sometimes we write $cl(g) \leq k$ in $G$ instead of $cl_G(g) \leq k$. 

A group $G$ is perfect if $G = [G, G]$, that is, any element of $G$ is written as a product of commutators in $G$.
We say that a group $G$ is uniformly perfect if $cld\,G < \infty$, that is, 
any element of $G$ is written as a product of a bounded number of commutators in $G$.

More generally, suppose $S$ is a subset of $G_c$ and it is symmetric and conjugation-invariant in $G$.  
Then, $N(S) \subset [G,G]$ and we obtain $q_{(G,S)}$, which is denoted by $cl_{(G, S)}$ 
and is called the commutator length of $G$ with respect to $S$. 
The diameters $q_{(G,S)}d\,G$ and $q_{(G,S)}d\,A$ $(A \subset G)$ are denoted by 
$cld_S\,G$ and $cld_{(G,S)}\,A \equiv cld_S\,(A, G)$ respectively. 
\end{example} 

\subsection{Commutator length of diffeomorphism groups} \mbox{} 

In this paper we are concerned with the commutator length on diffeomorphism groups. 
In Section 6 we also study the boundedness and uniform simplicity of diffeomorphism groups, 
where ``the commutator length supported in balls'' plays an important role. 
Since these commutator lengthes are closely related, in Sections 2 $\sim$ 5 we treat them simultaneously. 

We start with the definition of the commutator length supported in balls. 
Suppose $M$ is an $n$-manifold possibly with boundary, $1\leq r \leq \infty$ and $C \in {\cal F}(M)$. 

\begin{notation}\label{notation_B}
Let ${\cal B}^r(M, C)$, ${\cal B}^r_f(M, C)$ and ${\cal B}^r_d(M, C)$ denote the collections of 
(i) all $C^r$ $n$-balls in ${\rm Int}\,M - C$,  
(ii) all finite disjoint unions of $C^r$ $n$-balls in ${\rm Int}\,M - C$ and 
(iii) all discrete unions of $C^r$ $n$-balls in ${\rm Int}\,M - C$ which are closed in $M$, respectively. 
The condition (iii) is restated as (iii)$'$ all discrete unions of $C^r$ $n$-balls in $M$ which are included in ${\rm Int}\,M - C$. 
(cf. the 2nd paragraph in Section 2.1)
\end{notation}

Consider the group $G \equiv {\rm Diff}^r(M, \partial M \cup C)_0$. 
In each case where ${\cal B} = {\cal B}^r(M, C)$, ${\cal B}^r_f(M, C)$ or ${\cal B}^r_d(M, C)$, 
consider the subset $S$ of $G^c$ defined by \\
\hspace*{50mm} $S := \bigcup \{ {\rm Diff}^r(M, M_D)_0^c \mid D \in {\cal B} \}$. \\[1mm] 
Then, $S$ is symmetric and conjugation-invariant in $G$. 
Therefore, we obtain the commutator length $cl_{(G,S)}$ of $G$ with respect to $S$. 
Depending on the selection of ${\cal B}$, we denote this commutator length by $clb$, $clb^f$ or $clb^d$ and 
the commutator length diameter by $clbd$, $clb^fd$ or $clb^dd$, respectively. 

\begin{remark}\label{rmk_clb=clb^f} \mbox{}
\benum
\item Since $clb \geq clb^f \geq clb^d$ in $G$ since ${\cal B}^r(M, C) \subset {\cal B}^r_f(M, C) \subset {\cal B}^r_d(M, C)$. 
\item \bit 
\itemI If $M - C$ is connected, then $clb = clb^f$ in $G$, since   
${\rm Int}\,M - C$ is also connected and any $D \in {\cal B}^r_f(M, C)$ is included in some $E \in {\cal B}^r(M, C)$. 
\itemII If $M - C$ is relatively compact in $M$, then ${\cal B}^r_f(M, C) = {\cal B}^r_d(M, C)$ and 
$clb^f = clb^d$ in $G$. 
\eit 
\item For $S_f^r := \bigcup \{ {\rm Diff}^r(M, M_D)_0^c \mid D \in {\cal B}^r_f(M, C) \}$, the following holds. \\
(i) $N(S_f^r)\subset {\rm Diff}^r_c(M, \partial M \cup C)_0$ \ 
and \ (ii) $N(S_f^r)= {\rm Diff}^r_c(M, \partial M \cup C)_0$ when $r \neq n+1$. 

\eenum
\end{remark}

In the above notations the symbol $C$ is omitted when $C = \emptyset$ and the script $r$ is omitted when $r = \infty$. 

\begin{remark}\label{rmk_clb^f} \mbox{} Suppose $O \in {\cal O}(M)$ and 
consider the groups ${\cal G} := {\rm Diff}^r_c(M, M_O)_0$ and ${\cal H}:= {\rm Diff}^r_c(O)_0$. 
\benum[(1)]  
\item 
The restriction induces the canonical isomorphism ${\cal G} \cong {\cal H}$ and 
the definition of $clb^f_{\cal G}$ directly implies that $clb^f_{\cal G}(g) = clb^f_{\cal H}(g|_O)$ for any $g \in {\cal G}$. 

\item 
Suppose $M'$ is another $n$-manifold possibly with boundary and $O' \in {\cal O}(M')$. 
Let ${\cal G}' := {\rm Diff}^r_c(M', M'_{O'})_0$ and ${\cal H}':= {\rm Diff}^r_c(O')_0$. 
If there exists a $C^r$ diffeomorphism $\phi : O \cong O'$, then it induces the isomorphism by conjugation,  
$\phi^\ast : {\cal H} \cong {\cal H}'$, $\phi^\ast(h) = \phi h \phi^{-1}$ $(h \in {\cal H})$ and we have  
$clb^f_{\cal H}(h) = clb^f_{\cal H'}(\phi h \phi^{-1})$ $(h \in {\cal H})$. 
It follows that $clb^f_{\cal G}(g) = clb^f_{\cal G'}(\chi(g))$ $(g \in {\cal G})$ under 
the isomorphism $\chi : {\cal G} \cong {\cal H} \stackrel{\phi^\ast} {\cong}  {\cal H}' \cong {\cal G}'$. 
\eenum 
\end{remark}

\subsection{Basic estimates on $cl$ and $clb^f$ and the absorption\,/\,displacement property} \mbox{} 

The estimates of $cl$ and $clb^f$ is based on the next lemma 
(\cite[Theorem 2.1]{Tsuboi2}, \cite[Proof of Theorem 1.1\,(1)]{Tsuboi3}, \cite{BIP}, \cite{Tsu09}). 
We include its proof to confirm the supports of related diffeomorphisms for later use. 

\begin{lem}\label{basic_lemma_cl} $($\cite{BIP, Tsuboi2, Tsu09, Tsuboi3}$)$ 
Suppose $M$ is an $n$-manifold possibly with boundary, $1 \le r \le \infty, \, r \neq n+1$, 
$F \in {\rm Isot}^r_c(M, \partial)_0$ and $f := F_1 \in {\rm Diff}^r_c(M, \partial)_0$. 
Assume that there exist $U \in {\cal K}(M)$, $V, W \in {\cal K}({\rm Int}\,M)$ and $\phi \in {\rm Diff}^r(M, M_U \cup \partial M)_0$ such that 
$U \supset V \supset W$, $\phi(V) \subset W$ and ${\rm supp}\,F \subset {\rm Int}_M V - W$. Then, the following holds. 
\benum 
\item[$(1)$] 
$cl\,f \leq 2$ in ${\rm Diff}^r(M, M_{U} \cup \partial M)_0$. 
Moreover, there exists a factorization \ $f = gh = (ghg^{-1})g$ \\
\hspace*{20mm} for some $g \in {\rm Diff}^r(M, M_{U} \cup \partial M)_0^c \cap {\rm Diff}^r(M, M_{V})_0$ and $h \in {\rm Diff}^r(M, M_W)_0^c$. 

\item[$(2)$]  
In addition, if 
$clb^f \phi \leq k$ in ${\rm Diff}^r(M, M_U \cup \partial M)_0$, then \\   
$clb^f(f) \leq 2k+1$ in ${\rm Diff}^r(M, M_U \cup \partial M)_0$ and 
we can take $g$ and $h$ in $(1)$ so that \\ 
\hspace*{20mm} $clb^f(g) \leq 2k$ \ in ${\rm Diff}^r(M, M_U \cup \partial M)_0$ \ \ and \ \ $clb^f(h) \leq 1$ \ in ${\rm Diff}^r(M, M_W)_0$. 
\eenum 
\end{lem}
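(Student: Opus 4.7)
The plan is to apply the Burago--Ivanov--Polterovich/Tsuboi infinite-conjugation displacement trick. The geometric content of the hypothesis is that $\phi(V)\subset W\subset V$ forces inductively $\phi^i(V)\subset W$ for every $i\ge 1$, and the nested iterates $\phi^i(V)$ shrink toward the attractor of $\phi|_V$. Consequently the conjugate isotopies $\phi^iF_t\phi^{-i}$ have pairwise disjoint supports $\phi^i(V-W)$, with the $i=0$ piece sitting in $V-W$ and every $i\ge 1$ piece sitting in $W$.

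First I would define the infinite composition
\[
G_t \,:=\, F_t\cdot(\phi F_t\phi^{-1})\cdot(\phi^2 F_t\phi^{-2})\cdots.
\]
Pointwise this is well defined (only finitely many factors move any given point), and because the supports shrink into the $\phi$-attractor -- together with the hypothesis $r\neq n+1$, which rules out the Mather--Thurston borderline at which such an infinite composition can fail to be smooth -- $G_t$ is a $C^r$ isotopy with support in $V$. Writing $g_1:=G_1\in\mathrm{Diff}^r(M,M_V)_0$, the index shift $\phi G_t\phi^{-1}=F_t^{-1}G_t$ gives the recursion $g_1=f\cdot\phi g_1\phi^{-1}$, hence
\[
f \,=\, g_1\,\phi g_1^{-1}\phi^{-1} \,=\, [g_1,\phi].
\]
So $f$ is a single commutator in $\mathrm{Diff}^r(M,M_U\cup\partial M)_0$.

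For (1), I take $g:=[f,\phi]$ and $h:=\phi f\phi^{-1}=[\phi g_1\phi^{-1},\phi]$. Both are single commutators in their respective groups, with $\mathrm{supp}\,g\subset V$ and $\mathrm{supp}\,h\subset\phi(V)\subset W$. A direct calculation gives $gh = f\phi f^{-1}\phi^{-1}\cdot\phi f\phi^{-1} = f$, yielding the factorization $f = gh = (ghg^{-1})g$ with $cl\,f\le 2$. For (2), write $\phi = c_1\cdots c_k$ with each $c_i$ a commutator supported in a disjoint union of balls. Fact~\ref{fact_conjugation} then rearranges
\[
[f,\phi] \,=\, (fc_1f^{-1})\cdots(fc_kf^{-1})\,c_k^{-1}\cdots c_1^{-1},
\]
a product of $2k$ commutators, each supported in a disjoint union of balls (the $f$-image of balls is again a disjoint union of balls), so $clb^f(g)\le 2k$. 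The factor $h$, a single commutator supported in a disjoint union of balls inside $W$, contributes $clb^f(h)\le 1$, whence $clb^f(f)\le 2k+1$. The main obstacle is the $C^r$-regularity of the infinite composition defining $g_1$ -- this is precisely the role of the hypothesis $r\neq n+1$. The secondary bookkeeping task is verifying that $f$-translates and $\phi$-conjugates of a disjoint union of balls remain such a disjoint union, so that the $clb^f$-bound is not inflated.
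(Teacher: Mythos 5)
Your approach is genuinely different from the paper's: instead of applying the Thurston--Mather factorization of $f$ into finitely many commutators supported in balls and then bundling these with \emph{finitely} many conjugates $\phi,\phi^2,\dots,\phi^s$ (the Tsuboi device used in the actual proof), you attempt an \emph{infinite} conjugation trick $G_t = F_t\cdot(\phi F_t\phi^{-1})\cdot(\phi^2 F_t\phi^{-2})\cdots$, obtaining $f=[g_1,\phi]$ directly. This is elegant when it works, but here it has two gaps that I do not see how to close.

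First, the $C^r$-regularity of the infinite composition is not established, and the hypothesis $r\neq n+1$ does \emph{not} supply it. That hypothesis is there so that ${\rm Diff}^r_c(M,M_O)_0$ is perfect (Thurston--Mather); it is a statement about the group, not about convergence of infinite products. Even though the supports $\phi^i(V-W)$ are pairwise disjoint, they accumulate on the attractor $\bigcap_i\phi^i(V)$, and the derivatives of $\phi^i F_t\phi^{-i}$ at points near this accumulation set involve the derivatives of $\phi^{\pm i}$, which can blow up for arbitrary $\phi$ with $\phi(V)\subset W$. Without a quantitative contraction hypothesis on $\phi$ (which is not part of the lemma), the infinite product need not even be $C^1$. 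This is precisely why the standard argument avoids the infinite product and instead uses the finite Thurston--Mather decomposition $f=[a_1,b_1]\cdots[a_s,b_s]$ --- which is where $r\neq n+1$ actually enters --- and only conjugates by the finite collection $\phi,\dots,\phi^s$.

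Second, even granting $f=[g_1,\phi]$, your decomposition does not satisfy the stated conclusion for $h$. The lemma requires $h\in{\rm Diff}^r(M,M_W)_0^c$, i.e.\ $h$ must be a commutator $[A,B]$ with \emph{both} $A$ and $B$ supported in $W$. You exhibit $h=\phi f\phi^{-1}=[\phi g_1\phi^{-1},\phi]$, which has support in $W$, but $\phi$ itself is only supported in $U$; so you have only shown $h$ is a commutator in the larger group ${\rm Diff}^r(M,M_U\cup\partial M)_0$, not in ${\rm Diff}^r(M,M_W)_0$. The same issue undermines your $clb^f(h)\leq 1$ estimate, which requires $h$ to be a commutator of diffeomorphisms supported in a common disjoint union of balls. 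The paper's construction of $h=[A,B]$ (with $A,B$ being products of $\phi^j a_{\cdot}\phi^{-j}$, $\phi^j b_{\cdot}\phi^{-j}$ for $j\ge 1$, hence all supported in $\phi(V)\subset W$, indeed in $\bigcup_j\phi^j(D_\cdot)\in{\cal B}_f({\rm Int}_M W)$) is exactly what produces a genuine commutator in the smaller group with the single-ball-union bound --- and this is tied to using the Thurston--Mather factorization rather than $g_1$.
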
  

\begin{proof} Let $O:= {\rm Int}_M V - W \in {\cal O}({\rm Int}\,M)$. 
Since $f\in{\rm Diff}^r(M, M_O)_0$, we have a factorization \\[1mm] 
\hspace*{10mm} $f=[a_1,b_1]\cdots[a_s,b_s]$ \ \ for some $D_i \in {\cal B}(O)$ and 
$a_i, b_i \in {\rm Diff}^r(M, M_{D_i})_0$ $(i=1, \cdots, s)$ (\cite{Th, Ma}). \\[1mm] 
The conjugation by $\phi$  induces the factorization $f=hg_0 = gh$, where 
\vskip 1mm 
\bit 
\item[] $h := [A,B]$, $g_0 := [\phi,H^{-1}]$, $g := g_0^{h}$ and 
\vskip 1mm 
\item[] $\ds H :=\prod_{i=1}^s \big( \phi^{s-i}[a_1,b_1]\cdots[a_i,b_i]\phi^{i-s}\big)$, \hsf 
$\ds A :=\prod_{i=0}^{s-1}\big(\phi^{s-i}a_{i+1}\phi^{i-s}\big)$, \hsf $\ds B :=\prod_{i=0}^{s-1}\big(\phi^{s-i}b_{i+1}\phi^{i-s}\big)$.
\eit 
\vskip 1mm 
\benum 
\item Note that $H \in {\rm Diff}^r(M, M_{V})_0$ and $A, B \in {\rm Diff}^r(M, M_{W})_0$. 
Hence, (i) $h \in {\rm Diff}^r(M, M_{W})_0^c$, (ii) $g_0, g \in {\rm Diff}^r(M, M_{U} \cup \partial M)_0^c$ since $\phi, H, h \in {\rm Diff}^r(M, M_U \cup \partial M)_0$ 
and (iii) $g = fh^{-1} \in {\rm Diff}^r(M, M_{V})_0$. 
\vskip 2mm 

\item Let $\ds D := \bigcup_{i=1}^s \phi^{s-i+1}(D_i) \in {\cal B}_f({\rm Int}_MW)$. \vspace{1mm} 
Then, $A, B \in {\rm Diff}^r(M, M_D)_0$ 
and $h \in {\rm Diff}^r(M, M_D)_0^c$, \break so that \vspace{1mm} 
$clb^f(h) \leq 1$ in ${\rm Diff}^r(M, M_W)_0$.  
On the other hand, in ${\rm Diff}^r(M, M_U \cup \partial M)_0$ we have \\ 
\hsp \hsh 
$\bary[t]{@{}l@{ \ }l}
clb^f(g) & = \, clb^f(g_0) \, 
= \, clb^f(\phi H^{-1}\phi^{-1}H) \, \leq \, clb^f(\phi) + clb^f(H^{-1}\phi^{-1}H) \\[2.5mm]
& = \, clb^f(\phi) + clb^f(\phi^{-1}) \, = \, 2clb^f(\phi) \, \leq \, 2k, \\[2mm] 
clb^f(f) & \leq clb^f(g) + clb^f(h) \leq 2k+1.  
\eary$  
\eenum 
\vskip -5mm 
\end{proof}

\begin{example}\label{example_displaceable} Suppose $M$ is an $n$-manifold without boundary and $1 \le r \le \infty, \, r \neq n+2$. 
\benum
\item $cld\,{\rm Diff}^r_c(M \times I, \partial)_0 \leq 2$. 
\item $clb^fd\,{\rm Diff}^r(M \times I, \partial)_0 \leq 2\, clb^f(\phi_\xi)+1$ \ in ${\rm Diff}^r(M \times I, \partial)_0$, \\
\hsh if $M$ is a closed $n$-manifold, $\xi \in {\rm Diff}^r(I, \partial)_0 - \{ \id_I \}$ and 
$\phi_\xi := \id_M \times \xi \in {\rm Diff}^r(M \times I, \partial)_0$. 
\eenum 
\end{example} 

\begin{proof} (1) The assertion easily follows  from Lemma~\ref{basic_lemma_cl}\,(1). 
\benum[(1)] 
\item[(2)] Given any $f \in {\rm Diff}^r(M \times I, \partial)_0$. 
Take $F \in {\rm Isot}^r(M \times I, \partial)_0$ with $F_1 = f$. 
Then, ${\rm supp}\,F \subset M \times (\alpha, \beta)$ for some closed interval $J \equiv [\alpha, \beta] \subset (0,1)$. 

Since $\xi \neq \id_I$, there exists a closed interval $K \subset (0,1)$ such that 
$\xi(K) \cap K = \emptyset$. 
Take $\eta \in {\rm Diff}(I, \partial)_0$ with $\eta(K) = J$ and 
let $\lambda := \eta \xi \eta^{-1} \in {\rm Diff}^r(I, \partial)_0$. 
Then, $\lambda(J) \cap J = \emptyset$ and so $[\gamma, \delta] := \lambda(J) \subset (0, \alpha) \cup (\beta, 1)$. 
Take $0 < \e_1 < \e_2 < 1$ such that 
$\lambda = \id$ on $[0,\e_1] \cup [\e_2, 1]$ and 
$J \cup \lambda(J) \subset (\e_1, \e_2)$. 
\vskip 2mm 
\noindent Define closed intervals $I_2 \subset I_1 \subset [\e_1, \e_2]$ by \ \ 
$(I_1, I_2) := 
\left\{ \bary[c]{@{\,}ll}
([\e_1, \beta], [\e_1, \delta]) & \text{if $[\gamma, \delta] \subset (0, \alpha)$,} \\[2mm]  
([\alpha, \e_2], [\gamma, \e_2]) & \text{if $[\gamma, \delta] \subset (\beta, 1)$.}
\eary \right.$ \\[2mm] 
Then, $J \subset I_1 - I_2$ and $\lambda(I_1) = I_2$.

For any $\zeta \in {\rm Diff}^r(I, \partial)_0$ we set $\phi_\zeta:= \id_M \times \zeta \in {\rm Diff}^r(M \times I, \partial)_0$. 
It follows that \\ 
\hspp $\phi_\lambda = \phi_\eta \phi_\xi \phi_\eta{}^{-1} \in {\rm Diff}^r(M \times I, \partial)_0$ \ \ and \ \ 
$clb^f \phi_\lambda = clb^f \phi_\xi$ in ${\rm Diff}^r(M \times I, \partial)_0$. \\
Let $(U, V, W) := M \times (I, I_1, I_2)$. Then, it follows that $V, W \in {\cal K}({\rm Int}\,(M \times I) )$, \ $U \supset V \supset W$, \ $\phi_\lambda(V) = W$ \ and \ ${\rm supp}\,F \subset {\rm Int}_{M\times I} V - W$. 
Therefore, Lemma~\ref{basic_lemma_cl}\,(2) implies that \\
\hspp $clb^f f \leq 2\,clb^f \phi_\lambda + 1 = 2\, clb^f \phi_\xi +1$ \ \ in ${\rm Diff}^r(M \times I, \partial)_0$. 
\eenum 
\vskip -7mm 
\end{proof}

Lemma~\ref{basic_lemma_cl} is applied in various situations. 
The existence of $V$, $W$ and $\phi$  
is concerned with absorption and displacement of compact subsets  
by isotopies (cf. \cite{BIP, Tsuboi2, Tsuboi3}). 
We inspect this point more closely.  

\begin{setting}\label{setting_absorption_displacement}
Suppose $M$ is an $n$-manifold possibly with boundary, 
$O \in {\cal O}(M)$, 
$K \in {\cal K}(M)$ and 
$L \in {\cal F}(M)$. 
Recall that $\widetilde{M} := M \cup_{\partial M} (\partial M \times [0,1))$ and 
$\widetilde{O} := O \cup (\partial O \times [0,1)) \in {\cal O}(\widetilde{M})$. 

\end{setting}

\begin{defn}\label{def_absorption_displacement} 
In Setting~\ref{setting_absorption_displacement}:  
We use the following terminologies : 
\benum
\item[{[I]}] Consider any condition ${\cal P}$ on $\phi \in {\rm Diff}_c(M, M_O)_0$. \\
(1) For $C \in {\cal K}(O)$ : 
\bit 
\itemI 
$C$ is absorbed to $K$ in $O$ with ${\cal P}$ \\
\LLRA There exists $\phi \in {\rm Diff}_c(M, M_O)_0$ such that 
$\phi(C) \subset K$ and $\phi$ satisfies the condition ${\cal P}$. \\
The diffeomorphism $\phi$ is called an absorbing diffeomorphism of $C$ to $K$ in $O$ with ${\cal P}$. 

\itemII $C$ is weakly absorbed to $K$ in $O$ with ${\cal P}$ \\ 
\hsf \LLRA $C$ is absorbed to ``any neighborhood of $K$ in $M$'' in $O$ with ${\cal P}$. 
\eit 
(2) $K$ has the (weak) absorption property in $O$ with ${\cal P}$ \\ 
\hsh \hsf \LLRA 
Any $C \in {\cal K}(O)$ is (weakly) absorbed to $K$ in $O$ with ${\cal P}$. \\
(3) When $K \in {\cal K}(O)$ : \\
\hsh \hsf $K$ has the (weak) neighborhood absorption property in $O$ with ${\cal P}$ \\
\hsh \hsf \hsf \LLRA Some compact neighborhood of $K$ in $O$ is (weakly) absorbed to $K$ in $O$ with ${\cal P}$. 

\item[{[II]}] 
(1) $K$ is displaceable from $L$ in $O$ (or $K$ has the displacement property for $L$ in $O$) \\
\hsh \hsf \ \LLRA 
There exists $\psi \in {\rm Diff}_c(M, M_O)_0$ such that $\psi(K) \cap L = \emptyset$. \\ 
\hspace*{6mm} The diffeomorphism $\psi$ is called a displacing diffeomorphism of $K$ from $L$ in $O$. \\
(2) $K$ is strongly displaceable from $L$ (or $K$ has the strong displacement property for $L$) \\
\hsh \hsf \ \LLRA $K$ is displaceable from $L$ in any open neighborhood of $K$ in $M$
\eenum
\end{defn} 

\begin{compl}\label{compl_abs_dis} \mbox{}
\benum 
\item In this paper we are concerned with the following conditions ${\cal P}$ on $\phi$ : 
\bit  
\itemI (a) \,rel $L$ \LLRA $\phi(L) \subset L$ 
\hsp (b) \,keeping $L$ invariant \LLRA $\phi(L) = L$ 
\itemII $clb^f \leq k$ \LLRA $clb^f \phi \leq k$ \ in ${\rm Diff}_c(M, M_O)_0$ \hsp ($k \in \IZ_{\geq 0} \cup \{ \infty \}$)
\eit 
These conditions will play important roles in the estimates of $cl$ and $clb^f$ in Sections 3, 4.

\item When $K$ has the weak neighborhood absorption property in $O$, we set \\
\hsp $clb^f (K; O) := \min \{ k \in \IZ_{\geq 0} \cup \{ \infty \} \mid (\ast)_k \}$ : \\
\hsp \hsh \text{$(\ast)_k$\ $K$ has the weak neighborhood absorption property in $O$ with $clb^f \leq k$.}

\item In [I](1) consider the case where the condition ${\cal P}$ is ``rel $L$'' or ``keeping $L$ invariant''. 
If $C \in {\cal K}(O \cap {\rm Int}\,M)$ and the absorbing diffeomorphism $\phi$ is constructed as a truncation of a flow which keeps $L$ invariant, then 
we can cut off the vector field associated to the flow around the boundary and modify $\phi$ so that 
$\phi \in {\rm Diff}_c(M, \partial M \cup M_O)_0$ with still keeping the condition ${\cal P}$. 

\item In [II](2), if $K \in {\cal K}(O \cap {\rm Int}\,M)$, then the isotopy extension theorem is used to modify $\psi$ 
so that $\psi \in {\rm Diff}_c(M, \partial M \cup M_O)_0$ (cf. Lemma~\ref{Isotopy_ext}). 
\item The properties defined in Definition~\ref{def_absorption_displacement} are preserved by appropriate diffeomorphisms. 
For example, consider the following condition on $(M, O, K, C)$ : 
\bit 
\item[] $(\ast)$ \ $C \in {\cal K}(O)$ is weakly absorbed to $K$ in $O \subset M$ with $clb^f \leq k$. 
\eit 
Suppose $M'$ is another $n$-manifold, $O' \in {\cal O}(M')$ and $h : O \cong O'$ is a $C^\infty$ diffeomorphism. 
When $K \in {\cal K}(O)$, from Remark~\ref{rmk_clb^f} it follows that $(M, O, K, C)$ satisfies $(\ast)$ iff $(M', O', h(K), h(C))$ satisfies $(\ast)$
\eenum 
\end{compl} 

\begin{example}\label{exp_displacement} \mbox{} 
Suppose $M$ is an $n$-manifold possibly with boundary. 
We list some situations in which $K$ is strongly displaceable from $L$ in $M$. 

\benum 
\item $($\cite[Lemma 2.1]{Tsuboi3}$)$ 
Suppose $K$ is a compact $k$-dimensional stratified subset of ${\rm Int}\,M$ and $L$ is an $\ell$-dimensional stratified subset of $M$. 

\bit 
\itemI If $k + \ell < n$, then $K$ is strongly displaceable from $L$. 
\itemII In the case $k +\ell = n$ : 
\bit 
\itema If $K \subset O \in {\cal O}(M)$ and $Cl_M(K - K^{(k-1)})$ is displaceable from $Cl_M(L - L^{(\ell-1)})$ in $O$, then 
$K$ is displaceable from $L$ in $O$. 

\itemb If $Cl_M(K - K^{(k-1)})$ is strongly displaceable from $Cl_M(L - L^{(\ell-1)})$ in $M$, then $K$ is strongly displaceable from $L$ in $M$. 
\eit 
\eit  

\item Suppose $K \in {\cal K}({\rm Int}\,M)$ and $L \in {\cal F}(M)$. 
\bit 
\itemI If $U$ is an open $n$-disk in $M$ with $K \subset U \nsubseteq L$, then $K$ is displaceable from $L$ in $U$. 
\itemII If $K$ has arbitrarily small open $n$-disk neighborhoods in $M$ which is not included in $L$, then $K$ is strongly displaceable from $L$ in $M$. 
\eit 

\item If $L$ is a submanifold of $M$, $K$ is a compact subset of $L$ and 
the normal bundle of $L$ in $M$ admits a non-vanishing section over $K$, then $K$ is strongly displaceable from $L$. 
\eenum 
\end{example}

We list some basic facts on the absorption/displacement property. 
The next lemma follows directly from the definitions (and a simple argument using the collar for (1)(ii)) . 

\begin{lem}\label{lem_absorption_displacement} \mbox{} 
In Setting{\rm ~\ref{setting_absorption_displacement}} : 
\benum 
\item[{\rm (1)}]  
\bit 
\itemI Suppose $K_1, K_2, K_3 \in {\cal K}(O)$. 
If $K_1$ is $($weakly$)$ absorbed to $K_2$ in $O$ rel $L$ and $K_2$ is $($weakly$)$ absorbed to $K_3$ in $O$ rel $L$, then 
$K_1$ is $($weakly$)$ absorbed to $K_3$ in $O$ rel $L$. 

\itemII 
If $K$ has the weak absorption property in $O \subset M$ $($keeping $K$ invariant$)$, then 
$K$ has the weak absorption property in $\widetilde{O} \subset \widetilde{M}$ $($keeping $K$ invariant$)$.  
\eit 

\item[{\rm (2)}]  
\bit 
\itemI If $K$ is displaceable from $K \cup L$ in $O$,  
then a sufficiently small compact neighborhood $K_0$ of $K$ in $O$ is displaceable from $K_0 \cup L$. 

\itemII 
{\rm (a)} If $K$ is weakly absorbed to $K' \in {\cal K}(O)$ in $O$ and $K'$ is displaceable from $L$ in $O$, 
then $K$ is displaceable from $L$ in $O$. 

{\rm (b)} If $K$ is weakly absorbed to $K' \in {\cal K}(O)$ in $O$ rel $L$ and $K'$ is displaceable from $K' \cup L$ in $O$, 
then $K$ is displaceable from $K \cup L$ in $O$. 

\itemiii If $K$ is displaceable from $L$ in $O$ and $K$ has the weak absorption property in $O$ keeping $K$ and $L$ invariant, then 
$K$ is strongly displaceable from $L$ in $O$. 
\eit 
\eenum 
\end{lem}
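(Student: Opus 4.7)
The plan is to verify each part directly from the definitions, using composition for (1)(i), a collar extension for (1)(ii), and conjugation by absorbing diffeomorphisms in (2)(ii)(b) and (2)(iii).

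For (1)(i), the ``absorbed'' case is immediate: if $\phi_i \in {\rm Diff}_c(M, M_O)_0$ absorbs $K_i$ into $K_{i+1}$ rel $L$ for $i=1,2$, then $\phi_2\phi_1$ absorbs $K_1$ into $K_3$ rel $L$. For the ``weakly absorbed'' case, given any neighborhood $V$ of $K_3$ in $M$, first apply weak absorption of $K_2$ to $K_3$ to obtain $\phi_2$ with $\phi_2(K_2) \subset V$, so that $\phi_2^{-1}(V)$ is an open neighborhood of $K_2$; then weak absorb $K_1$ into $\phi_2^{-1}(V)$ by some $\phi_1$, giving $(\phi_2\phi_1)(K_1) \subset V$. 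For (1)(ii), given $\widetilde{C} \in {\cal K}(\widetilde{O})$ and a neighborhood $\widetilde{U}$ of $K$ in $\widetilde{M}$, use a bicollar of $\partial M$ in $\widetilde{M}$ to construct $g \in {\rm Diff}_c(\widetilde{M}, \widetilde{M}_{\widetilde{O}})_0$ that pushes $\widetilde{C}$ into $O$ while keeping $K$ invariant (straightforward when $K \cap \partial M = \emptyset$, and in general via an equivariant cut-off of the collar push near $K \cap \partial M$). Then apply the given weak absorption in $O$ to $g(\widetilde{C}) \in {\cal K}(O)$ with target neighborhood $\widetilde{U} \cap M$, extend the resulting $\phi \in {\rm Diff}_c(M, M_O)_0$ to $\widetilde{\phi} \in {\rm Diff}_c(\widetilde{M}, \widetilde{M}_{\widetilde{O}})_0$ by suspending the generating vector field across the collar, and form the composition $\widetilde{\phi}\,g$.

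For (2)(i), uniform continuity suffices: since $\psi(K)$ is compact and disjoint from $K \cup L$, pick $\delta > 0$ with $d(\psi(K), K \cup L) > 3\delta$, and a compact $\delta$-neighborhood $K_0$ of $K$ in $O$ on which $\psi$ moves points by less than $\delta$; then $\psi(K_0) \cap (K_0 \cup L) = \emptyset$. Part (2)(ii)(a) follows by a continuity step: the condition $\psi'(K') \cap L = \emptyset$ extends to $\psi'(U) \cap L = \emptyset$ for some open $U \supset K'$, and then $\psi'\phi$ displaces $K$ from $L$ whenever $\phi$ absorbs $K$ into $U$. For (2)(ii)(b), apply (2)(i) to obtain a compact neighborhood $U$ of $K'$ in $O$ with $\psi'(U) \cap (U \cup L) = \emptyset$; weak absorb $K$ into ${\rm Int}\,U$ rel $L$ via some $\phi$, so $\phi(K) \subset U$ and $\phi(L) \subset L$. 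The key observation is that $\sigma := \phi^{-1}\psi'\phi$ satisfies $\sigma(K) \subset \phi^{-1}(\psi'(U))$, and the inclusions $K \subset \phi^{-1}(U)$ and $L \subset \phi^{-1}(L)$ combine with $\psi'(U) \cap (U \cup L) = \emptyset$ to give $\sigma(K) \cap (K \cup L) = \emptyset$.

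For (2)(iii), pick $F \in {\rm Isot}_c(M, M_O)_0$ with $F_1 = \psi$ and let $Z := {\rm supp}\,F \in {\cal K}(O)$. Given any open neighborhood $W$ of $K$ in $M$, the weak absorption of $Z$ in $O$ keeping both $K$ and $L$ invariant supplies $\phi \in {\rm Diff}_c(M, M_O)_0$ with $\phi(Z) \subset W$, $\phi(K) = K$, and $\phi(L) = L$. The conjugated isotopy $\phi F_t \phi^{-1}$ then has support in $\phi(Z) \subset W$, so its time-$1$ map $\phi\psi\phi^{-1}$ lies in ${\rm Diff}_c(M, M_W)_0$; moreover $\phi\psi\phi^{-1}(K) = \phi(\psi(K))$ is disjoint from $\phi(L) = L$, so $K$ is displaceable from $L$ in $W$, which gives strong displaceability. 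The only mildly delicate step in the whole proof is the equivariant collar cut-off in (1)(ii); everything else is direct bookkeeping with the definitions.
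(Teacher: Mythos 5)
Your approach matches the paper's, which only says the lemma ``follows directly from the definitions (and a simple argument using the collar for (1)(ii))," and your treatments of (1)(i), (2)(ii)(a), (2)(ii)(b) and (2)(iii) are correct. A minor wording issue in (2)(i): ``$\psi$ moves points of $K_0$ by less than $\delta$'' cannot be literally right, since every point of $K$ is already moved by more than $3\delta$; what you want is a compact neighborhood $K_0$ of $K$ contained in the $\delta$-neighborhood of $K$ with $\psi(K_0)$ contained in the $\delta$-neighborhood of $\psi(K)$ (available by continuity of $\psi$ and compactness of $K$), after which your distance estimate goes through.

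The genuine gap is in (1)(ii). You propose to first push $\widetilde{C}$ into $O$ by a collar push $g$ that also keeps $K$ invariant, handling $K \cap \partial M \neq \emptyset$ by an ``equivariant cut-off near $K \cap \partial M$.'' This cannot be made to work. If $\widetilde{C}$ occupies the external collar $\partial O \times (0,1)$ near $K \cap \partial M$, then any $g$ with $g(\widetilde{C}) \subset O$ must carry a neighborhood of $K \cap \partial M$ in $\widetilde{M}$ --- in particular $K \cap \partial M$ itself --- off $\partial M$ into ${\rm Int}\,M$, so in general $g(K) \neq K$; and cutting the push off near $K \cap \partial M$ so as to fix $K$ leaves that portion of $\widetilde{C}$ outside $O$, which breaks your subsequent absorption step. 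The remedy is to reverse the order: absorb first, compress the collar afterwards. Set $C := \pi'(\widetilde{C}) \in {\cal K}(O)$, where $\pi' : \widetilde{M} \to M$ collapses the external collar onto $\partial M$; apply the hypothesis to obtain $\phi \in {\rm Diff}_c(M, M_O)_0$ with $\phi(C)$ in a prescribed small neighborhood of $K$ in $M$ and $\phi(K) = K$; extend $\phi$ to $\widetilde{\phi} \in {\rm Diff}_c(\widetilde{M}, \widetilde{M}_{\widetilde{O}})_0$ by suspending its generating vector field along the external collar with a cut-off chosen so that $\widetilde{\phi}|_M = \phi$ and $\widetilde{\phi} = (\phi|_{\partial O}) \times \mathrm{id}$ on $\partial O \times [0,1/2]$; finally post-compose with a compression $g_1 \in {\rm Diff}_c(\widetilde{M}, \widetilde{M}_{\widetilde{O}})_0$ that is the identity on $M$ and squeezes $\partial O \times [0,1/2]$ toward $\partial O \times \{0\}$. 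All three maps keep $K$ invariant because $K \subset M$ and each is the identity there or restricts to $\phi$ there, and the composition sends $\widetilde{C}$ into any prescribed neighborhood of $K$ in $\widetilde{M}$. (If $\widetilde{C}$ extends past level $1/2$ in the collar, a preliminary $M$-fixing compression of the $s$-coordinate handles that.)
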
 

\begin{lem}\label{lem_clb^f_basic} In Setting{\rm ~\ref{setting_absorption_displacement}}; \ 
Suppose $K$ has the weak neighborhood absorption property in $O$. 
\benum 
\item[$(1)$] Suppose $K' \in {\cal K}(O)$ is weakly absorbed to $K$ in $O$ keeping $K$ invariant.
\bit 
\itemI If $K$ has the weak neighborhood absorption property in $O$ with $clb^f \leq k$, 
then some compact neighborhood $V$ of $K'$ in $O$ is weakly absorbed to $K$ in $O$ with $clb^f \leq k$. 

\itemII If $K \subset K'$, 
then $clb^f(K'; O) \leq clb^f(K;O)$. 
\eit 
\item[$(2)$] If $K$ has  
the weak neighborhood absorption property in $O$ with $clb^f \leq k$
and the weak absorption property in $O$ keeping $K$ invariant, then 
$K$ has the weak absorption property in $O$ with $clb^f \leq k$. 

\eenum 
\end{lem}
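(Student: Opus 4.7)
The plan is to first prove (1)(i), since (1)(ii) and (2) will follow quickly from it, and the essential content of the lemma is the fact that a conjugation trick allows us to absorb a neighborhood of $K'$ with the same $clb^f$-bound that we have for $K$.

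For (1)(i), fix a compact neighborhood $V_0$ of $K$ in $O$ witnessing the bound $clb^f(K;O) \le k$, that is, for every open $U \supset K$ there exists $\psi_0 \in {\rm Diff}_c(M, M_O)_0$ with $\psi_0(V_0) \subset U$ and $clb^f \psi_0 \le k$. Using the hypothesis that $K'$ is weakly absorbed to $K$ in $O$ keeping $K$ invariant, choose $\phi \in {\rm Diff}_c(M, M_O)_0$ with $\phi(K') \subset {\rm Int}_M V_0$ and $\phi(K)=K$. I then define the candidate compact neighborhood of $K'$ to be $V := \phi^{-1}(V_0)$; it lies in $O$ (since $\phi$ restricts to a diffeomorphism of $O$) and contains $K'$ in its interior (since $\phi(K') \subset {\rm Int}_M V_0$).

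The key step is the conjugation trick. Given any open neighborhood $U$ of $K$ in $M$, set $U_0 := \phi(U)$, which is itself an open neighborhood of $K$ since $\phi(K)=K$. Pick $\psi_0$ with $\psi_0(V_0) \subset U_0$ and $clb^f \psi_0 \le k$, and then set $\psi := \phi^{-1}\psi_0\phi$. A direct computation shows $\psi(V) = \phi^{-1}\psi_0(V_0) \subset \phi^{-1}(U_0) = U$, while by the conjugation-invariance of $clb^f$ we get $clb^f \psi = clb^f \psi_0 \le k$. This is exactly the trick that lets us avoid the naive composition $\psi_0 \phi$, whose $clb^f$-bound would be $k + clb^f \phi$ and hence useless.

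Part (1)(ii) then follows almost formally: when $K \subset K'$, any open neighborhood of $K'$ is in particular a neighborhood of $K$, so the $V$ produced above is weakly absorbed to $K'$ with the same bound $clb^f \le k$, giving $clb^f(K';O) \le clb^f(K;O)$. For (2), apply (1)(i) with the role of $K'$ played by an arbitrary $C \in {\cal K}(O)$, which by the weak absorption property of $K$ keeping $K$ invariant is weakly absorbed to $K$ keeping $K$ invariant; this produces a compact neighborhood $V$ of $C$ weakly absorbed to $K$ with $clb^f \le k$, and since $C \subset V$, absorbing $V$ automatically absorbs $C$ with the same bound. The only real obstacle in the whole argument is the conjugation trick in (1)(i); once one sees that $\phi(K)=K$ allows the target neighborhood $U_0=\phi(U)$ to be arranged so that $\phi^{-1}$ pulls everything back into $U$, the rest is bookkeeping.
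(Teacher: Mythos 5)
Your proof is correct and follows essentially the same route as the paper: the key conjugation trick $\psi := \phi^{-1}\psi_0\phi$ (the paper's $\chi^{-1}\phi\chi$), where the neighborhood $U$ of $K$ is first transported to $\phi(U)$ using $\phi(K)=K$, is exactly the paper's argument, and your derivations of (1)(ii) and (2) from (1)(i) match what the paper asserts "readily follow."
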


\begin{proof} 
(1)\,(i) By the assumption $K$ admits a compact neighborhood $U$ in $O$ which is weakly absorbed to $K$ in $O$ with $clb^f \leq k$. 
By the assumption on $K'$, there exists $\chi \in {\rm Diff}_c(M, M_O)_0$ such that $\chi(K') \subset {\rm Int}_MU$ and $\chi(K) = K$.  
We can find  a compact neighborhood $V$ of $K'$ in $O$ such that $\chi(V) \subset U$. 
Then, $V$ is weakly absorbed to $K$ in $O$ with $clb^f \leq k$.
In fact, given any neighborhood $W$ of $K$ in $O$. 
Since $\chi(W)$ is a neighborhood of $K$ in $O$, the choice of $U$ implies the existence of $\phi \in {\rm Diff}_c(M, M_O)_0$ such that $\phi(U) \subset \phi(W)$ and $clb^f \phi \leq k$ in ${\rm Diff}_c(M, M_O)_0$. 
Then, $\psi := \chi^{-1}\phi \chi \in {\rm Diff}_c(M, M_O)_0$ satisfies the required conditions 
$\psi(K') \subset W$ and 
$clb^f \psi = clb^f (\chi^{-1}\phi \chi) = clb^f \phi \leq k$ \ in ${\rm Diff}_c(M, M_O)_0$.  

The remaining statements readily follow from (1)(i). 
\end{proof} 

\begin{setting}\label{setting_basic_cl_clb^f}
Suppose $M$ is an $n$-manifold possibly with boundary, $1 \le r \le \infty, \, r \neq n+1$, $O \in {\cal O}({\rm Int}\,M)$, $L \in {\cal F}(M)$ and 
$k \in \IZ_{\geq 0}$. 
\end{setting}

Consider the following statements : 
\bit 
\item[{\small $(\#)$}] $cld\,{\rm Diff}^r_c(M, M_O)_0 \leq 2$ \ and \ 
any $f \in {\rm Diff}^r_c(M, M_O)_0$ has a factorization $f = gh$ \\
\hspp for some \ $g \in {\rm Diff}^r_c(M, M_O)_0^c$ \ and \ $h \in {\rm Diff}^r_c(M, M_O \cup L)_0^c$. 
\vskip 1mm 

\item[$(\flat)$]   
\benum 
\item[$(1)$]\hspace{0.2mm}{\rm (i)}\, $cld\,{\rm Diff}^r_c(M, M_{O})_0 \leq 2$, \hsh 
{\rm (ii)} $clb^f\!d\,{\rm Diff}_c^r(M, M_O)_0 \leq 2k+1$.  
\item[$(2)$] any $f \in {\rm Diff}^r_c(M, M_O)_0$ has a factorization $f = gh$ such that \\ 
\hsp \btab[t]{lll}
$g \in {\rm Diff}^r_c(M, M_O)_0^c$ & and & $clb^f(g) \leq 2k$ in ${\rm Diff}^r_c(M, M_O)_0$, \\[2mm]
$h \in {\rm Diff}^r_c(M, M_O \cup L)_0^c$ & and & $clb^f(h) \leq 1$ in ${\rm Diff}^r_c(M, M_O \cup L)_0$. 
\etab 
\eenum  
\eit

\begin{lem}\label{basic_lemma_cl-2} In Setting~{\rm \ref{setting_basic_cl_clb^f}};  
Suppose $f \in {\rm Diff}^r_c(M, M_O)_0$.
\benum[(1)] 
\item[{\rm [I]}] There exists a factorization \ $f = gh$ \ \ for some \ $g \in {\rm Diff}^r_c(M, M_O)_0^c$ \ and \ $h \in {\rm Diff}^r_c(M, M_O \cup L)_0^c$ \\
if there exists $F \in {\rm Isot}^r_c(M, M_O)_0$ with $F_1 = f$, $K \in {\cal K}(O)$ and a compact neighborhood $U$ of $K$ in $O$ 
which satisfy one of the following conditions $(1)$ and $(2)$:

\bit 
\item[$(1)$] 
\mbox{}\,{\rm (i)}\,
{\rm (a)} ${\rm supp}\,F$ is absorbed to ${\rm Int}_M U - K$ in $O$ rel $L$, \hsf 
{\rm (b)} $U$ is weakly absorbed to $K$ in $O$. \\
{\rm (ii)} $K$ is strongly displaceable from $L$ in $M$. 

\item[$(2)$] 
\mbox{}\,{\rm (i)}\, {\rm (a)} ${\rm supp}\,F$ is weakly absorbed to $K$ in $O$ rel $L$, \hsf 
{\rm (b)} $U$ is weakly absorbed to $K$ in $O$. \\
{\rm (ii)} $K$ is displaceable from $K \cup L$ in ${\rm Int}_MU$. 
\eit 
\item[{\rm [II]}] In {\rm [I]\,(1)}, suppose $K$ and $U$ satisfy the following stronger condition : 
\bit 
\item[]
\bit 
\itemI {\rm (b)}$'$ $U$ is weakly absorbed to $K$ in $O$ with $clb^f \leq k$. 
\eit 
\eit 
Then, $clb^f(f) \leq 2k+1$ in ${\rm Diff}^r_c(M, M_O)_0$ and in the factorization $f = gh$ we can take 
$g$ and $h$ with 
\hspace*{25mm} $clb^f(g) \leq 2k$ in ${\rm Diff}^r_c(M, M_O)_0$ \ \ and \ \  
$clb^f(h) \leq1$ in ${\rm Diff}^r_c(M, M_O \cup L)_0$.  
\eenum 
\end{lem}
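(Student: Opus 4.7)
The plan is to reduce each sufficient condition in [I] to a direct application of Lemma~\ref{basic_lemma_cl}. In both cases I first conjugate $F$ by an absorbing diffeomorphism $\alpha$ from (i)(a) so that $F^\alpha := \alpha F \alpha^{-1}$ has controlled support, and then build a compressing diffeomorphism $\phi$ whose target $W$ is a neighborhood of a displaced copy $K' := \psi(K)$ of $K$; the displacement $\psi$ from (ii) is precisely what makes $W \cap L = \emptyset$ achievable. For Part [II] I will replace the composition $\psi\phi_1$ by the conjugate $\psi\phi_1\psi^{-1}$ so that $\psi$ contributes nothing to $clb^f(\phi)$.

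In Case [I](1), by (i)(a) choose $\alpha \in {\rm Diff}_c(M, M_O)_0$ with $\alpha({\rm supp}\,F) \subset {\rm Int}_M U - K$ and $\alpha(L) \subset L$; then ${\rm supp}\,F^\alpha$ is compact in ${\rm Int}_M U$ and disjoint from $K$, so pick an open neighborhood $\Omega$ of $K$ in ${\rm Int}_M U \setminus {\rm supp}\,F^\alpha$. By strong displaceability (ii) take $\psi \in {\rm Diff}_c(M, M_\Omega)_0$ with $\psi(K) \cap L = \emptyset$, set $K' := \psi(K)$, and choose a compact neighborhood $W$ of $K'$ inside $\Omega \setminus L$. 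Using (i)(b) take $\phi_1 \in {\rm Diff}_c(M, M_O)_0$ with $\phi_1(U) \subset \psi^{-1}({\rm Int}_M W)$ and set $\phi := \psi\phi_1$; since $\psi$ is supported in $\Omega \subset {\rm Int}_M U$ it fixes $U$ setwise, so $\phi(U) \subset {\rm Int}_M W$. Case [I](2) is parallel: (ii) gives $\psi \in {\rm Diff}_c(M, M_{{\rm Int}_M U})_0$ with $\psi(K) \cap (K \cup L) = \emptyset$, and disjoint compact neighborhoods $N_K$ of $K$ and $W$ of $\psi(K)$ in ${\rm Int}_M U$ with $W \cap L = \emptyset$ are chosen; then (i)(a) and (i)(b) supply the desired $\alpha$ (with $\alpha({\rm supp}\,F) \subset {\rm Int}_M N_K$) and $\phi_1$ so that $\phi := \psi\phi_1$ again satisfies $\phi(U) \subset {\rm Int}_M W$.

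With $V := U$ and $\tilde U$ a compact neighborhood of $V \cup {\rm supp}\,\phi \cup {\rm supp}\,\alpha$ inside $O$, the hypotheses of Lemma~\ref{basic_lemma_cl} hold: $\tilde U \supset V \supset W$, $\phi(V) \subset W$, and ${\rm supp}\,F^\alpha \subset {\rm Int}_M V \setminus W$ by construction. Lemma~\ref{basic_lemma_cl}(1) produces $f^\alpha = g' h'$ with $g' \in {\rm Diff}^r(M, M_{\tilde U} \cup \partial M)_0^c$ and $h' \in {\rm Diff}^r(M, M_W)_0^c$; conjugating back yields $f = gh$ with $g := \alpha^{-1} g'\alpha$ and $h := \alpha^{-1} h'\alpha$. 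The crucial point is that $\alpha(L) \subset L$ and $W \cap L = \emptyset$ jointly force $\alpha^{-1}(W) \cap L = \emptyset$, so $h \in {\rm Diff}^r_c(M, M_O \cup L)_0^c$, while $g \in {\rm Diff}^r_c(M, M_O)_0^c$.

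For Part [II] I reuse the Case (1) construction but invoke (i)(b)$'$ to choose $\phi_1$ with $clb^f(\phi_1) \leq k$ in ${\rm Diff}^r_c(M, M_O)_0$, and replace $\phi := \psi\phi_1$ by $\phi := \psi\phi_1\psi^{-1}$. Because $\psi$ fixes $M \setminus U$, we still have $\phi(U) = \psi\phi_1(U) \subset {\rm Int}_M W$; meanwhile conjugation-invariance gives $clb^f(\phi) = clb^f(\phi_1) \leq k$, taking $\tilde U$ large enough to accommodate the supports of the commutators realizing this bound. Lemma~\ref{basic_lemma_cl}(2) then yields $clb^f(g') \leq 2k$ and $clb^f(h') \leq 1$, which transfer to $g$ and $h$ under conjugation by $\alpha$, giving $clb^f(f) \leq 2k+1$ in ${\rm Diff}^r_c(M, M_O)_0$ by the triangle inequality. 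The main obstacle is precisely arranging $W \cap L = \emptyset$, which is what the displacement hypotheses (ii) are designed for; the secondary subtlety (only in [II]) is that a naive $\phi = \psi\phi_1$ would inflate $clb^f(\phi)$ by $clb^f(\psi)$, and the conjugation trick $\psi\phi_1\psi^{-1}$ circumvents this since $\psi$ preserves $U$ setwise and $clb^f$ is conjugation-invariant.
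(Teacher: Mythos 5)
Your proof is correct and follows essentially the same route as the paper: in each case you build the triple $(V, W, \phi)$ feeding into Lemma~\ref{basic_lemma_cl} by absorbing ${\rm supp}\,F$, displacing $K$, and compressing $U$ into a neighborhood $W$ of the displaced copy that is disjoint from $L$. The only (harmless) organizational differences are that you conjugate the isotopy $F$ by $\alpha$ and apply Lemma~\ref{basic_lemma_cl} to $F^\alpha$ rather than pulling $V, W, \phi$ back through $\chi^{-1}$ as the paper does, that you treat [I]\,(2) directly instead of reducing it to [I]\,(1), and that in [I] you use the product $\psi\phi_1$ before switching to the conjugate $\psi\phi_1\psi^{-1}$ for [II] --- whereas the paper uses the conjugate form $\eta\psi'\eta^{-1}$ uniformly, which handles both parts at once.
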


\begin{proof} 
{[I]}\,(1) \& [II] (Alterations for [II] are shown in parentheses.) 
By the condition (i)(a) there exist $\chi \in {\rm Diff}_c(M, M_O)_0$ such that 
$\chi({\rm supp}\,F) \subset {\rm Int}_M U - K$ and $\chi(L) \subset L$. 
Consider the open neighborhood $U_0 := {\rm Int}_M U - \chi_1({\rm supp}\,F)$ of $K$ in ${\rm Int}_M U$. 
By the condition (ii) there exist $\eta \in {\rm Diff}_c(M, M_{U_0})_0$ such that $\eta(K) \cap L = \emptyset$. 
Take a compact neighborhood $W_0$ of $\eta(K)$ in $U_0 - L$. 
By the condition (i)(b) ((i)(b)$'$) there exists $\psi' \in {\rm Diff}_c(M, M_O)_0$ such that \\
\hspace*{15mm} $\psi'(U) \subset \eta^{-1}({\rm Int}_M W_0)$ \hsf (and $\ clb^f(\psi') \leq k$ in ${\rm Diff}_c(M, M_O)_0$). \\
Let $\psi := \eta\psi' \eta^{-1} \in {\rm Diff}_c(M, M_O)_0$. 
Then we have \\  
\hspace*{15mm} $\psi(U) \subset {\rm Int}_M W_0$ \hsf (and \ $clb^f(\psi) = clb^f(\eta \psi' \eta^{-1}) = clb^f(\psi') \leq k$ \ in \ ${\rm Diff}_c(M, M_O)_0$). \\
Consider \ $V := \chi_1^{-1}(U), \ W := \chi_1^{-1}(W_0) \in {\cal K}(O)$ \  
and \ $\phi := \chi^{-1}\psi\chi \in {\rm Diff}_c(M,M_O)_0.$ \ 
It follows that \\
\hspace*{20mm} $W \subset {\rm Int}_MV - L \subset O - L$, \hsf 
${\rm supp}\,F \subset {\rm Int}_MV - W$, \hsf $\phi(V) \subset W$ \\
\hspace*{40mm} (and \ $clb^f(\phi) = clb^f(\chi^{-1}\psi\chi) = clb^f(\Psi_1) \leq k$ \ in \ ${\rm Diff}_c(M, M_O)_0$). \\
Since ${\rm Diff}^r(M, M_W)_0 \subset {\rm Diff}^r_c(M, M_O \cup L)_0$, 
the conclusion follows from Lemmas~\ref{basic_lemma_cl}. 

{[I]}\,(2) 
By the condition (ii) there exists $\phi \in {\rm Diff}(M, M_U)_0$ such that $\phi(K) \cap (K \cup L) = \emptyset$. 
We show that the condition (1) is satisfied for $K_1 := \phi_1(K) \in {\cal K}({\rm Int}_MU)$ instead of $K$. 

Since ${\rm Int}_MU - K_1$ is a neighborhood of $K$, by the condition (i)(a) 
${\rm supp}\,F$ is absorbed to ${\rm Int}_M U - K_1$ in $O$ rel $L$. 
From the condition (i)(b) and Lemma~\ref{lem_absorption_displacement}\,(1) 
it follows that $U$ is weakly absorbed to $K_1$ in $O$.
Since $K_1 \cap L = \emptyset$, obviously $K_1$ is strongly displaceable from $L$ in $M$.
Therefore, the assertion follows from [I]\,(1). 
\end{proof}

\begin{lem}\label{basic_lemma_cl-bdry}
 In Setting~{\rm \ref{setting_basic_cl_clb^f}};  
\benum[(1)] 
\item[{\rm [I]}] The statement $(\#)$ holds,  
if the following condition is satisfied : 
\benum 
\item[$(\ast)$] There exists $O_0 \in {\cal O}(M)$ with $O_0 \cap {\rm Int}\,M = O$, 
$K_0 \in {\cal K}(O_0)$ and a compact neighborhood $U_0$ of $K_0$ in $\widetilde{O}_0$ such that 
\bit 
\itemI 
{\rm (a)} $U_0$ is weakly absorbed to $K_0$ in $\widetilde{O}_0$, \\
{\rm (b)} $K_0$ has the weak absorotion property in $O$ rel $L$ and 
\itemII $K_0$ is displaceable from $K_0 \cup L \cup (\partial M \times [0,1))$ in ${\rm Int}_{\widetilde{M}}U_0$, 
\eit 
\eenum 

\vskip 1mm 
\item[{\rm [II]}] The statement $(\flat)$ holds,
 if in {\rm [I]} $K_0$ and $U_0$ satisfy the following stronger condition : 
\benum 
\item[]
\bit 
\itemI\hspace{-1mm}{\rm (a)$'$} $U_0$ is weakly absorbed to $K_0$ in $\widetilde{O}_0$ with $clb^f \leq k$ in ${\rm Diff}_c(\widetilde{M}, \widetilde{M}_{\widetilde{O}_0})_0$. 
\eit 
\eenum 
\eenum
\end{lem}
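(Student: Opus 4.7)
My plan is to reduce to Lemma~\ref{basic_lemma_cl-2} by passing to the collared manifold $\widetilde{M}$ with an enlarged avoidance set $\widetilde{L} := L \cup (\partial M \times [0,1))$. Given $f \in {\rm Diff}^r_c(M, M_O)_0$ and a realizing isotopy $F \in {\rm Isot}^r_c(M, M_O)_0$, I extend both by the identity on $\partial M \times [0,1)$ to $\tilde{f} \in {\rm Diff}^r_c(\widetilde{M}, \widetilde{M}_{\widetilde{O}_0})_0$ and $\tilde{F}$, and then apply Lemma~\ref{basic_lemma_cl-2}\,[I](2) (respectively [II] for part~[II]) to the data $(\widetilde{M}, \widetilde{O}_0, K_0, U_0, \widetilde{L}, \tilde{F})$. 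The hypothesis check is routine: (i)(b) here provides weak absorption of ${\rm supp}\,F \subset O$ to $K_0$ in $O$ rel $L$ by some $\chi \in {\rm Diff}_c(M, M_O)_0$, and since $\chi$ fixes the collar pointwise it also preserves $\widetilde{L}$, yielding the absorption in $\widetilde{O}_0$ rel $\widetilde{L}$ required by (i)(a) of Lemma~\ref{basic_lemma_cl-2}\,[I](2); conditions (i)(a) and (ii) here are precisely (i)(b) and (ii) of that lemma, with $\widetilde{L}$ in place of $L$. This produces a factorization $\tilde{f} = \tilde{g}\tilde{h}$ with $\tilde{g} \in {\rm Diff}^r_c(\widetilde{M}, \widetilde{M}_{\widetilde{O}_0})_0^c$ and $\tilde{h} \in {\rm Diff}^r_c(\widetilde{M}, \widetilde{M}_{\widetilde{O}_0} \cup \widetilde{L})_0^c$.

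Next I trace the proof of Lemma~\ref{basic_lemma_cl} to verify that these factors restrict to $M$ as commutators in the required subgroups. The generators $A, B$ of the commutator $\tilde{h}$ are supported in a compact set $W = \chi_1^{-1}(W_0)$ with $W_0 \subset {\rm Int}_{\widetilde{M}}U_0 \setminus \widetilde{L}$. A direct computation $\widetilde{O}_0 \setminus \widetilde{L} = O_0 \setminus (L \cup \partial M) = O \setminus L$, combined with $\chi \in {\rm Diff}_c(M, M_O)_0$ preserving both $O$ and $L$, forces $W \subset O \setminus L \subset {\rm Int}\,M$, so $A|_M, B|_M \in {\rm Diff}^r_c(M, M_O \cup L)_0$ and hence $\tilde{h}|_M \in {\rm Diff}^r_c(M, M_O \cup L)_0^c$ as required. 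Since $\tilde{f}$ and $\tilde{h}$ both have support in $M$, the identity $\tilde{g} = \tilde{f}\tilde{h}^{-1}$ shows that $\tilde{g}$ is supported in $M$ as well.

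The main obstacle is to realize $\tilde{g}|_M$ as a single commutator in ${\rm Diff}^r_c(M, M_O)_0$, rather than only in the larger ambient group over $\widetilde{M}$. In the explicit form $\tilde{g} = [\tilde{\phi}^{\tilde{h}}, (\tilde{H}^{-1})^{\tilde{h}}]$ from Lemma~\ref{basic_lemma_cl}, one checks that $\tilde{H}$ is supported in $O$ (the terms with $s-i \geq 1$ are conjugates by $\tilde{\phi}^{s-i}$ of elements supported in $O \subset V$, hence supported in $\tilde{\phi}^{s-i}(V) \subset W \subset O$, while the $s-i = 0$ term is $\tilde{f}$ itself), whereas $\tilde{\phi} = \chi^{-1}\psi\chi$ may extend into the collar through the absorbing piece $\psi'$ provided by hypothesis (i)(a). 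Because $\tilde{H}$ is trivial on $\partial M \times [0,1)$, the commutator depends on $\tilde{\phi}$ only through its values on a neighborhood of ${\rm supp}\,\tilde{H} \subset O$, and $\tilde{\phi}({\rm supp}\,\tilde{H}) \subset W \subset {\rm Int}\,M$; an isotopy-extension argument inside ${\rm Int}\,M$ then yields $\phi' \in {\rm Diff}^r_c(M, M_O)_0$ agreeing with $\tilde{\phi}$ on such a neighborhood, and substituting $\phi'$ for $\tilde{\phi}$ in the commutator gives the required representation of $\tilde{g}|_M$. The $clb^f$-bounds asserted in part [II] transfer through this reduction, since the commutator balls supplied by Lemma~\ref{basic_lemma_cl-2}\,[II] already lie in $M$ under the support analysis above.
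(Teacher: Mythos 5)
Your strategy is genuinely different from the paper's: you pass to $\widetilde{M}$ at the start, apply Lemma~\ref{basic_lemma_cl-2}\,[I](2) directly to the data $(\widetilde{M},\widetilde{O}_0,K_0,U_0,\widetilde{L},\tilde{F})$, and try to restrict the resulting factorization back to $M$. The paper instead first uses the displacement hypothesis (ii) to replace $K_0$ by $K:=\psi(K_0)\subset{\rm Int}\,M-(K_0\cup L)$ and then pushes $U_0$ into $O$ via a bicollar, so that Lemma~\ref{basic_lemma_cl-2} is applied to honest $M$-data $K,U\subset O$ and no post-hoc restriction is needed. Your first two paragraphs, up to and including the identification $\widetilde{O}_0-\widetilde{L}=O-L$, are correct (modulo the need to choose the fragmentation balls $D_i$ inside ${\rm Int}_MV-W\subset{\rm Int}\,M$ rather than in ${\rm Int}_{\widetilde{M}}V-W$, which is possible since ${\rm supp}\,\tilde F\subset{\rm Int}\,M$ but should be said).

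The last paragraph has two genuine gaps. First, the replacement of $\tilde\phi$ by $\phi'\in{\rm Diff}^r_c(M,M_O)_0$ via ``an isotopy-extension argument inside ${\rm Int}\,M$'' is the crux of the whole proof and is too vague to be checked: $\tilde\phi$ arises from an isotopy supported in $\widetilde{O}_0$, which can genuinely sweep points through the collar $\partial O_0\times[0,1)$, and you must show that a diffeomorphism agreeing with $\tilde\phi$ near ${\rm supp}\,\tilde H$ can be realized as the time-one map of an isotopy supported in $O$. This requires an explicit diffeomorphism $\widetilde{O}_0\cong O$ which is the identity near the relevant compact set — precisely the $\theta$ the paper constructs — so your route does not actually dispense with that device. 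Second, and more seriously, the final sentence about part [II] is wrong. Lemma~\ref{basic_lemma_cl-2}\,[II] applied to the $\widetilde{M}$-data gives $clb^f(\tilde g)\le 2k$ in ${\rm Diff}^r_c(\widetilde{M},\widetilde{M}_{\widetilde{O}_0})_0$, and the commutator balls in that decomposition are (conjugates of) the balls supplied by hypothesis (i)(a)$'$ for the absorbing diffeomorphism $\psi'$. Those balls live in $\widetilde{O}_0$ and may perfectly well lie in the outer collar $\partial O_0\times[0,1)$; nothing in the support analysis of $\tilde g$ forces them into ${\rm Int}\,M$. So ``the commutator balls supplied by Lemma~\ref{basic_lemma_cl-2}\,[II] already lie in $M$'' does not hold, and the claimed transfer of the $clb^f$-bound fails as stated. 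The correct argument is to invoke the isomorphism ${\rm Diff}^r_c(\widetilde{M},\widetilde{M}_{\widetilde{O}_0})_0\cong{\rm Diff}^r_c(M,M_O)_0$ induced by a diffeomorphism $\theta:\widetilde{O}_0\cong O$ chosen to be the identity on a neighborhood of ${\rm supp}\,\tilde g$, and to use the $clb^f$-invariance under such conjugations (Remark~\ref{rmk_clb^f} and Complement~\ref{compl_abs_dis}\,(5)). Once you do this, however, you have essentially reproduced the paper's Step 3. In short: the route is viable but the two restriction steps that you treat as routine are in fact the whole content of the lemma, and the $clb^f$-transfer as written is false.
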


\begin{proof} (Alterations for [II] are shown in parentheses.)  
Given any $f \in {\rm Diff}^r_c(M, M_O)_0$. There exists $F \in {\rm Isot}^r_c(M, M_O)_0$ with $F_1 = f$. 
By the condition (ii) there exists $\psi \in {\rm Diff}(\widetilde{M}, \widetilde{M}_{U_0})_0$ 
such that $K := \psi(K_0) \subset {\rm Int}\,M - (K_0 \cup L)$. 
Note that $K \in {\cal K}(O)$. 
Since ${\rm supp}\,F \in {\cal K}(O)$ and $K_0 \subset ({\rm Int}_{\widetilde{M}}U_0 - K) \cap M \in{\cal O}(O_0)$, 
by the condition (i)(b) there exists $h \in {\rm Diff}_c(M, M_{O})_0$ 
such that $h({\rm supp}\,F) \subset {\rm Int}_{\widetilde{M}}U_0 - K$ and $h(L) \subset L$.
We can push $U_0$ inward using a bicollar of $\partial O_0$ in $\widetilde{O}_0$ to obtain 
$\phi \in {\rm Diff}_c(\widetilde{M}, \widetilde{M}_{\widetilde{O}_0} \cup H_1({\rm supp}\,F) \cup K)_0$ such that 
$U := \phi(U_0) \subset O$. 
Then $U$ is a compact neighborhood of $K$ in $O$. 

The conclusions follow from Lemma~\ref{basic_lemma_cl-2}, once 
we show that $F$, $K$ and $U$ satisfies the conditions in Lemma~\ref{basic_lemma_cl-2}\,[I](1) (and [II]). 
Since $h({\rm supp}\,F) \subset {\rm Int}_M U - K$, it follows that 
${\rm supp}\,F$ is absorbed by $h$ to ${\rm Int}_M U - K$ in $O$ rel $L$. 
Since $K \cap L = \emptyset$, obviously $K$ is strongly displaceable from $L$ in $M$. 

It remains to show that $U$ is weakly absorbed to $K$ in $O$ (with $clb^f \leq k$). 
This follows from the assumption (i)(a) ((i)(a)$'$), Complement~\ref{compl_abs_dis} and the following observation. 
There exists a collar $E \cong \partial O_0 \times I$ of $\partial O_0$ in $O_0 - U$ and 
a $C^\infty$ diffeomorphism $\theta : \widetilde{O}_0 \cong O$ such that 
$\theta = \id$ on $O - E$. 
Then $(U, K)$ corresponds to $(U_0, K_0)$ under the $C^\infty$ diffeomorphism 
$\theta \phi\psi : \widetilde{O}_0 \cong O$. 
This completes the proof. 
\end{proof}

Finally we remove the conditions on the neighborhood $U_0$ from Lemma~\ref{basic_lemma_cl-bdry} to obtain a more practical criterion. 

\begin{lem}\label{basic_lemma_cl-bdry_final} In Setting~{\rm \ref{setting_basic_cl_clb^f}};  
The statement $(\flat)$ holds, if one of the conditions {\rm [A]} and {\rm [B]} is satisfied. 
\vskip 2mm 
\benum[(1)]  
\item[{\rm [A]}] There exist $O_0 \in {\cal O}(M)$ with $O = O_0 \cap {\rm Int}\,M$ and $K \in {\cal K}(O_0)$ such that  
\bit 
\item[$(\natural)$\,] $clb^f(K, \widetilde{O}_0) \leq k$. 
\item[$(\ast)$\,] $K$ satisfies one of the conditions $(\ast)_1$ and $(\ast)_2$ : 
\bit 
\item[$(\ast)_1$] 
{\rm (i)} $K$ has the weak absorption property {\rm (a)} in $O$ rel $L$ and {\rm (b)} in $O_0$ keeping $K$ invariant. \\
{\rm (ii)} $K$ is  displaceable from $K \cup L \cup (\partial M \times [0,1))$ in $\widetilde{O}_0$. 

\item[$(\ast)_2$]
{\rm (i)} $K$ has the weak absorption property {\rm (a)} in $O$ rel $L$ and {\rm (b)} in $O_0$. \\
{\rm (ii)} $K$ is strongly displaceable from $K \cup L \cup (\partial M \times [0,1))$ in $\widetilde{M}$. 
\eit 
\eit 

\item[{\rm [B]}] There exist $K \in {\cal K}(O)$ such that  
\bit 
\item[$(\natural)$\,] $clb^f(K, O) \leq k$. 
\item[$(\ast)$\,] $K$ satisfies one of the conditions $(\ast)_1$ and $(\ast)_2$ : 
\bit 
\item[$(\ast)_1$] 
{\rm (i)} $K$ has the weak absorption property in $O$ rel $L$ and keeping $K$ invariant. \\
{\rm (ii)} $K$ is  displaceable from $K \cup L$ in $O$. 

\item[$(\ast)_2$]
{\rm (i)} $K$ has the weak absorption property in $O$ rel $L$. \\
{\rm (ii)} $K$ is strongly displaceable from $K \cup L$ in $M$. 
\eit 
\eit 

\eenum 
\end{lem}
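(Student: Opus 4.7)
The plan is to reduce the statement $(\flat)$ to Lemma~\ref{basic_lemma_cl-bdry} by producing, in each case, a compact neighborhood $U_0$ of a suitable compact set $K_0$ which satisfies the hypotheses of that lemma, and then to deduce case [B] as an instance of case [A] by taking $O_0=O$.

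I begin with case [A]. Condition $(\natural)$ combined with Complement~\ref{compl_abs_dis}\,(2) supplies a compact neighborhood $U_0$ of $K$ in $\widetilde O_0$ which is weakly absorbed to $K$ in $\widetilde O_0$ with $clb^f\leq k$; this is exactly the stronger version of (i)(a) needed in Lemma~\ref{basic_lemma_cl-bdry}\,[II], while its condition (i)(b) is precisely $(\ast)_j$(i)(a) for $j=1,2$. So the only remaining task is to establish condition (ii) of that lemma, namely displaceability of $K$ from $K\cup L\cup(\partial M\times[0,1))$ inside ${\rm Int}_{\widetilde M}U_0$, with $K_0:=K$. In case $(\ast)_2$, strong displaceability in $\widetilde M$ means displaceability in any open neighborhood of $K$ in $\widetilde M$, and in particular in ${\rm Int}_{\widetilde M}U_0$, so Lemma~\ref{basic_lemma_cl-bdry} applies at once.

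The heart of the argument is case $(\ast)_1$, where only displaceability inside $\widetilde O_0$ is assumed. Fix a displacing diffeomorphism $\psi_0\in{\rm Diff}_c(\widetilde M,\widetilde M_{\widetilde O_0})_0$ with $\psi_0(K)\cap(K\cup L\cup(\partial M\times[0,1)))=\emptyset$; in general ${\rm supp}\,\psi_0$ lies in $\widetilde O_0$ but not in $U_0$. I enlarge $U_0$ as follows. Lemma~\ref{lem_absorption_displacement}\,(1)(ii) extends the weak absorption property of $K$ in $O_0$ keeping $K$ invariant to one in $\widetilde O_0$ keeping $K$ invariant, so the compact set $K':={\rm supp}\,\psi_0\cup K\in{\cal K}(\widetilde O_0)$ is weakly absorbed to $K$ in $\widetilde O_0$ keeping $K$ invariant. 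Applying Lemma~\ref{lem_clb^f_basic}\,(1)(i) to $K'$, the hypothesis $clb^f(K;\widetilde O_0)\leq k$ from $(\natural)$ then produces a compact neighborhood $V$ of $K'$ in $\widetilde O_0$ that is weakly absorbed to $K$ in $\widetilde O_0$ with $clb^f\leq k$. Since $K\subset K'\subset{\rm Int}_{\widetilde M}V$, the set $V$ is also a compact neighborhood of $K$, and ${\rm supp}\,\psi_0\subset K'\subset{\rm Int}_{\widetilde M}V$; substituting $U_0:=V$ one recovers condition (ii), and Lemma~\ref{basic_lemma_cl-bdry} delivers $(\flat)$. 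This enlargement step is where I expect the main obstacle to lie, since one must arrange simultaneously that $U_0$ is still a compact neighborhood of $K$, that its weak absorption to $K$ keeps the bound $clb^f\leq k$, and that it contains the (typically much larger) support of the displacing diffeomorphism.

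For case [B], set $O_0:=O$. Since $O\in{\cal O}({\rm Int}\,M)$ is an open $n$-submanifold of $M$, its manifold boundary is empty and so $\widetilde O_0=O$; consequently $(\natural)$ of [A] reads $clb^f(K,O)\leq k$, matching the hypothesis of [B]. Because $O\cap(\partial M\times[0,1))=\emptyset$, the conditions $(\ast)_1$ of [A] and [B] are literally identical. For $(\ast)_2$, weak absorption in $O$ rel $L$ implies the unrestricted weak absorption in $O_0=O$ demanded by [A]\,$(\ast)_2$(i)(b), and strong displaceability of $K$ from $K\cup L$ in $M$ upgrades to strong displaceability from $K\cup L\cup(\partial M\times[0,1))$ in $\widetilde M$: since $K\subset{\rm Int}\,M$ admits arbitrarily small open neighborhoods inside ${\rm Int}\,M$, any displacement diffeomorphism supported in such a neighborhood extends by the identity across $\partial M\times[0,1)$. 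Thus [B] is subsumed by [A], and the proof is complete.
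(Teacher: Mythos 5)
Your proof is correct and follows essentially the same route as the paper: reduce to Lemma~\ref{basic_lemma_cl-bdry} by constructing a suitable compact neighborhood $U_0$ from $(\natural)$, handle $(\ast)_2$ directly via strong displaceability, handle $(\ast)_1$ by enlarging the neighborhood using Lemma~\ref{lem_absorption_displacement}\,(1)(ii) and Lemma~\ref{lem_clb^f_basic}, and treat [B] as the special case $O_0=O$. The only cosmetic difference is that in the $(\ast)_1$ case you invoke Lemma~\ref{lem_clb^f_basic}\,(1)(i) to enlarge $U_0$ around $K'={\rm supp}\,\psi_0\cup K$, while the paper first fixes $U$ from the displacing diffeomorphism and then upgrades $(\natural)$ to the full weak absorption property with $clb^f\leq k$ via Lemma~\ref{lem_clb^f_basic}\,(2); these two sub-lemmas are interchangeable here and lead to the same conclusion.
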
 

\begin{proof} \mbox{} [A] By Lemma~\ref{basic_lemma_cl-bdry} it suffices to show that 
$K$ satisfies the following conditions. 
\benum 
\item[$(\ddagger)$] 
\bit 
\itemI $K$ has the weak absorotion property in $O$ rel $L$, 

\itemII there exists a compact neighborhood $U$ of $K$ in $\widetilde{O}_0$ such that 
\bit 
\itema $K$ is displaceable from $K \cup L \cup (\partial M \times [0,1))$ in ${\rm Int}_{\widetilde{M}} U$, 
\itemb $U$ is weakly absorbed to $K$ in $\widetilde{O}_0$ with $clb^f \leq k$ in ${\rm Diff}_c(\widetilde{M}, \widetilde{M}_{\widetilde{O}_0})_0$. 
\eit 
\eit 
\eenum 
The condition $(\ddagger)$(i) is included as $(\ast)_1$(i)(a) and $(\ast)_2$(i)(a). 
For the condition $(\ddagger)$(ii), first note that 
the condition $(\natural)$ means that 
\benum 
\item[$(\natural)$] $K$ has the weak neighborhood absorption property in $\widetilde{O}_0$ with $clb^f \leq k$ 
in ${\rm Diff}_c(\widetilde{M}, \widetilde{M}_{\widetilde{O}_0})_0$. 
\eenum 

\benum[(1)] 
\item[] \hspace{-7mm} $(\ast)_1$ case : 
By $(\ast)_1$(ii) there exists a compact neighborhood $U$ of $K$ in $\widetilde{O}_0$ for which  holds $(\ddagger)$(ii)(a). 
By Lemma~\ref{lem_absorption_displacement}\,(1)(ii) $K$ has the weak absorption property in $\widetilde{O}_0$ keeping $K$ invariant.
Hence, by Lemma~\ref{lem_clb^f_basic}\,(2) and $(\natural)$ it follows that 
\bit 
\item[$(\natural)'$] $K$ has the weak absorption property in $\widetilde{O}_0$ with $clb^f \leq k$ 
in ${\rm Diff}_c(\widetilde{M}, \widetilde{M}_{\widetilde{O}_0})_0$. 

\eit 
This implies $(\ddagger)$(ii)(b). 

\item[] \hspace{-7mm} $(\ast)_2$ case :  By the condition $(\natural)$ there exists a compact neighborhood $U$ of $K$ in $\widetilde{O}_0$ which satisfies $(\ddagger)$(ii)(b). 
Then $(\ddagger)$(ii)(a) follows from $(\ast)_2$(ii).
\eenum 

The condition {[B]} is a special case of [A] (taking $O_0 = O$). 
\end{proof} 

\subsection{Factorization of isotopies on manifolds} \mbox{} 

This subsection includes some remarks on factorization of isotopies. 
In the study of commutator length of diffeomorphisms, modification and factorization of isotopies are important subjects.
The next factorization lemma is repeatedly applied in the subsequent sections (cf.\,\cite[Lemma 2.3]{Tsuboi3}). 
Recall that $\pi_M : M \times I \to M$ is the projection onto $M$. 

\begin{lem}~\label{Isotopy_ext} 
Suppose $M$ is a compact $n$-manifold possibly with boundary, $K, L \in {\cal K}(M)$ and 
$F \in {\rm Isot}^r(M, \partial M \cup (K \cap L))_0$ satisfies $\pi_M F((K-L) \times I) \cap L = \emptyset$. 
Then there exists $G \in {\rm Isot}^r(M, \partial M \cup L)_0$ such that  $G = F$ on $[$a neighborhood of $K] \times I$.
It induces a factorization $F= GH$, where $H = G^{-1}F \in {\rm Isot}^r(M, \partial M \cup K)_0.$
\end{lem}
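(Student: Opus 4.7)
The plan is to construct $G$ by smoothly cutting off, near $L$, the time-dependent vector field that generates $F$. Let $X_t$ denote the smooth time-dependent vector field on $M$ with $\frac{d}{dt}F_t = X_t \circ F_t$ and $F_0 = \id_M$. Since $F \in {\rm Isot}^r(M, \partial M \cup (K \cap L))_0$, there is an open neighborhood $N$ of $\partial M \cup (K \cap L)$ in $M$ on which $F_t \equiv \id_M$ for every $t$, and in particular $X_t \equiv 0$ on $N$. The idea is to choose a smooth $\rho : M \to [0,1]$ vanishing near $L \cup \partial M$ and equal to $1$ on the track of a suitable open neighborhood of $K$, then take $G_t$ to be the flow of $Y_t := \rho\,X_t$.

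To construct $\rho$, fix an open $N'$ with $\partial M \cup (K \cap L) \subset N' \Subset N$, and set $K' := K \setminus N'$. Then $K'$ is a compact subset of $M$ disjoint from $\partial M$, and $K' \subset K \setminus (K \cap L) \subset K - L$. The hypothesis $\pi_M F((K - L) \times I) \cap L = \emptyset$ implies that the closed set $\{(y,t) \in M \times I \mid F_t(y) \in L\}$ is disjoint from the compact set $K' \times I$; compactness then yields an open neighborhood $U$ of $K'$ in $M \setminus \partial M$ such that the compact track $T := \pi_M F(U \times I)$ is disjoint from $L$ (and from $\partial M$, since $F_t$ preserves $\partial M$ setwise). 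Since $T$ and $L \cup \partial M$ are disjoint closed subsets of $M$, a smooth partition-of-unity argument provides $\rho : M \to [0,1]$ with $\rho \equiv 1$ on an open neighborhood of $T$ and $\rho \equiv 0$ on an open neighborhood $V$ of $L \cup \partial M$.

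Let $G_t$ be the flow of $Y_t := \rho\,X_t$; it is globally defined on $I$ because $M$ is compact, and $G$ is a $C^r$ isotopy with $G_0 = \id_M$. Since $Y_t \equiv 0$ on $V$, we have $G_t \equiv \id_M$ on $V$, so $G \in {\rm Isot}^r(M, \partial M \cup L)_0$. To verify $G = F$ on the open neighborhood $N' \cup U$ of $K$, note first that on $N'$ we have $X_t \equiv 0$, hence $Y_t \equiv 0$, so $G_t = \id_M = F_t$ there. For $x \in U$, the curve $t \mapsto F_t(x)$ lies in $T \subset \{\rho = 1\}$, so it satisfies $\frac{d}{dt} F_t(x) = X_t(F_t(x)) = \rho(F_t(x))\,X_t(F_t(x)) = Y_t(F_t(x))$ with initial value $x$; ODE uniqueness forces $G_t(x) = F_t(x)$ for every $t \in I$.

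The factorization $F = GH$ with $H_t := G_t^{-1} F_t$ is then automatic: $H$ is a $C^r$ isotopy with $H_0 = \id_M$, and on $N' \cup U$ we have $G = F$, hence $H \equiv \id_M$ there; as $\partial M \cup K \subset N' \cup U$, this gives $H \in {\rm Isot}^r(M, \partial M \cup K)_0$. The only delicate point is arranging the disjointness required for the cutoff $\rho$: this is exactly where the hypothesis on the track of $K - L$ is consumed, while the presence of the overlap $K \cap L$ causes no difficulty because $F$ is already the identity on a neighborhood of it.
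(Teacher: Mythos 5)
Your proposal is correct and uses essentially the same idea as the paper's proof sketch: the paper constructs $G$ by integrating a vector field $Y$ on $M \times I$ that agrees with the suspension of the velocity field of $F$ near $F(K \times I)$ and equals $\partial/\partial t$ near $(\partial M \cup L) \times I$, which is just the space-time reformulation of your cutoff $Y_t = \rho\,X_t$. Your version is a bit more explicit in handling the compatibility near $K \cap L$ (by shrinking to $K' = K \setminus N'$ so the track where $\rho \equiv 1$ cleanly avoids $L \cup \partial M$), but the underlying construction is the same.
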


The isotopy $G$ in Lemma~\ref{Isotopy_ext} is constructed as follows : 
Let $X$ be the velocity vector field of $F$. 
There exists a vector field $Y$ on $M \times I$ such that $Y = X$ on a neighborhood of $F(K \times I)$ and $\ds Y = \frac{\partial}{\partial t}$ on [a neighborhood of $\partial M \cup L] \times I$. 
The isotopy $G$ is obtained by integrating $Y$ on $M \times I$. 

\begin{lem}\label{lem_factorization_open} \mbox{}  
Suppose $M$ is an open $n$-manifold, $F \in {\rm Isot}^r(M)_0$ 
and ${\mathcal R}$ is a cofinal subcollection of ${\mathcal S}{\mathcal M}_c(M)$. 
\begin{enumerate} 
\item[$(1)$] For any  $K \in {\mathcal S}{\mathcal M}_c(M)$ there exists $L \in {\mathcal R}$ such that $F(K \times I) \Subset L \times I$ and 
$F(M_L \times I) \Subset M_K \times I$.

\item[$(2)$] For any $K_k \in {\mathcal S}{\mathcal M}_c(M)$ $(k \geq 1)$ there exists an exhausting sequence $\{ M_k \}_{k \geq 1}$ of $M$ such that 
$K_k \Subset M_k \in {\mathcal R}$ $(k \geq 1)$ and 
$F(M_{4k,4k+1} \times I) \Subset M_{4k-1, 4k+2} \times I$ $(k \geq 0)$. 
$($In the inductive construction of $\{ M_k \}_{k \geq 1}$, given $M_k$, we can take $M_{k+1}$ arbitrarily large.$)$

\item[$(3)$] Let $\{ M_k \}_{k \geq 1}$ be an exhausting sequence of $M$ such that $F(M_{4k,4k+1} \times I) \Subset M_{4k-1, 4k+2} \times I$ $(k \geq 0)$
and set $M' := \bigcup_{k \geq 0} M_{4k+2, 4k+3}$ and $M'' := \bigcup_{k \geq 0} M_{4k, 4k+1}$. 
Then there exists a factorization $F = GH$ for some $G \in {\rm Isot}^r(M, M')_0$ and $H \in {\rm Isot}^r(M, M'')_0$. 
\end{enumerate} 
\end{lem}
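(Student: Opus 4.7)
For part (1), the projections $\pi_M F(K \times I)$ and $\pi_M F^{-1}(K \times I)$ are compact as continuous images of $K \times I$. Since $M$ is $\sigma$-compact, pick $N \in \mathcal{SM}_c(M)$ with $K \cup \pi_M F(K \times I) \cup \pi_M F^{-1}(K \times I) \Subset N$, and by cofinality of $\mathcal{R}$ select $L \in \mathcal{R}$ with $N \subset L$; then all three compacta lie in ${\rm Int}_M L$. The first inclusion gives $F(K \times I) \Subset L \times I$. The second gives $\pi_M F^{-1}(K \times I) \subset {\rm Int}_M L$, equivalently $F(M_L \times I) \cap (K \times I) = \emptyset$; since $F(M_L \times I)$ is closed in $M \times I$ and contained in $(M - K) \times I = {\rm Int}_{M \times I}(M_K \times I)$, we obtain $F(M_L \times I) \Subset M_K \times I$.

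For part (2), build $M_1, M_2, \ldots$ inductively in $\mathcal{R}$ with $M_{j-1} \cup K_j \Subset M_j$, at each step also requiring the following compacta be contained in ${\rm Int}_M M_j$: at $j = 4k+2$ $(k \geq 0)$ take $\pi_M F(M_{4k, 4k+1} \times I)$, and at $j = 4k$ $(k \geq 1)$ take $\pi_M F^{-1}(M_{4k-1} \times I)$. Each step is feasible by the reasoning of (1). With the convention $M_0 = M_{-1} = \emptyset$, the $j = 2$ step directly gives $F(M_{0,1} \times I) = F(M_1 \times I) \Subset M_2 \times I = M_{-1, 2} \times I$. For $k \geq 1$, the $j = 4k$ step forces $F(M_{4k, 4k+1} \times I) \cap (M_{4k-1} \times I) = \emptyset$, and the $j = 4k+2$ step gives $\pi_M F(M_{4k, 4k+1} \times I) \subset {\rm Int}_M M_{4k+2}$; together, $F(M_{4k, 4k+1} \times I) \Subset M_{4k-1, 4k+2} \times I$. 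Each $M_{j+1}$ remains free to be taken arbitrarily large.

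For part (3), set $T := \pi_M F(M'' \times I) = \bigcup_{k \geq 0} \pi_M F(M_{4k, 4k+1} \times I)$. By the spacing condition, each $\pi_M F(M_{4k, 4k+1} \times I)$ sits in the open set ${\rm Int}_M M_{4k-1, 4k+2} = {\rm Int}_M M_{4k+2} - M_{4k-1}$, and these open sets are pairwise disjoint, so $T$ is closed in $M$ and disjoint from $M' \subset \bigsqcup_j M_{4j+2, 4j+3}$. For each $k$ pick open sets $V_k \subset W_k$ with $\pi_M F(M_{4k, 4k+1} \times I) \subset V_k$, $\overline{V_k} \subset W_k$ and $\overline{W_k}$ compact in ${\rm Int}_M M_{4k-1, 4k+2}$; set $V := \bigsqcup_k V_k$, $W := \bigsqcup_k W_k$, and by smooth Urysohn choose $\rho \in C^\infty(M, [0,1])$ with $\rho \equiv 1$ on $V$ and $\rho \equiv 0$ on $M - W$. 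Let $X = \{X_t\}$ be the time-dependent velocity field generating $F$ and set $Y := \rho X$. Since ${\rm supp}(Y) \subset \overline{W}$ is a locally finite union of compacta and $Y = 0$ on $M - W$ (in particular on $\partial W_k$), integral curves of $Y$ starting in $\overline{W_k}$ cannot exit $\overline{W_k}$, so the flow is complete and yields $G \in {\rm Isot}^r(M)$ with $G_0 = \id$ and $G = \id$ on the open neighborhood $M - \overline{W}$ of $M'$; hence $G \in {\rm Isot}^r(M, M')_0$. By continuity and compactness of each $M_{4k, 4k+1} \times I$, we may arrange (shrinking $V_k$ slightly if necessary) that $\pi_M F(U_k \times I) \subset V_k$ for some open neighborhood $U_k$ of $M_{4k, 4k+1}$ in $M$; then $\rho \equiv 1$ along $F$-orbits of points of $U_k$, so $Y = X$ there and $G_t = F_t$ on $U_k$. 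Therefore $G = F$ on the open neighborhood $\bigcup_k U_k$ of $M''$, and $H := G^{-1}F$ (with $H_t := G_t^{-1} F_t$) lies in ${\rm Isot}^r(M, M'')_0$, giving the factorization $F = GH$.

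The main technical point will be the global integrability of $Y$ on the non-compact manifold $M$ in the absence of a compact-support hypothesis on $F$: the cutoff confines ${\rm supp}(Y)$ to a locally finite union of compacta $\overline{W_k} \Subset M_{4k-1, 4k+2}$, and because $Y$ vanishes on the complement of $W$ (in particular on the topological boundary $\partial W_k$), integral curves of $Y$ cannot exit their compact piece, which secures a well-defined global flow and hence the isotopies $G$ and $H$ with the required supports.
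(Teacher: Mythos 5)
Your proof of parts (1) and (2) is correct and follows essentially the same route as the paper: (1) is proved by choosing $L \in \mathcal{R}$ large enough to engulf the compacta $\pi_M F(K\times I)$ and $\pi_M F^{-1}(K\times I)$, and (2) iterates (1), inserting the engulfing requirements at the steps $j = 4k$ and $j=4k+2$.

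For part (3) you take a genuinely different route from the paper. The paper treats each annulus $M_{4k-1,4k+2}$ separately: it applies the isotopy extension theorem inside the compact manifold-with-boundary $M_{4k-1,4k+2}$ to obtain $G^k \in {\rm Isot}^r(M_{4k-1,4k+2},\partial)_0$ agreeing with $F$ near $M_{4k,4k+1}$, and then defines $G$ by gluing the $G^k$'s and extending by the identity elsewhere; the supports are automatically disjoint because the annuli $M_{4k-1,4k+2}$ are pairwise disjoint. You instead construct $G$ in one stroke as the flow of a single cutoff $Y = \rho X$ of the velocity field of $F$, with $\rho$ supported in the discrete union $\bigsqcup_k W_k$ and $\equiv 1$ on a smaller neighborhood $V_k$ of each track $\pi_M F(M_{4k,4k+1}\times I)$; completeness of the flow follows since trajectories starting in $W_k$ cannot cross the zero locus $\partial W_k$, and points outside $W$ are fixed. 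This is essentially an unpacked, global proof of the isotopy extension theorem tailored to this configuration. What the paper's route buys is brevity and the ability to quote a standard compact-manifold theorem; what yours buys is a self-contained, explicit construction with no need to verify that the piecewise-defined map glues to a $C^r$ isotopy. Both carry the same (standard, routinely suppressed) regularity bookkeeping: the velocity field of a $C^r$ isotopy is a priori only $C^{r-1}$, a point that the paper's own Lemma~\ref{Isotopy_ext} glosses over in exactly the same way, so this is not a gap specific to your argument. One small remark on (2): to guarantee that $\{M_k\}$ is actually an \emph{exhausting} sequence you must, as the parenthetical in the statement allows, also insert the members of some fixed exhaustion of $M$ into the list of compacta to be engulfed at each step; you implicitly rely on this freedom but do not say so.
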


\begin{proof} \mbox{} 
\begin{enumerate} 
\item Since $F(K \times I), F^{-1}(K \times I) \in {\cal K}(M \times I)$, 
there exists $L \in {\mathcal R}$ such that $F(K \times I) \cup F^{-1}(K \times I) \Subset L \times I$. 
Then $L$ satisfies the required conditions. 
 
\item There exist $M_1, M_2, M_3 \in {\cal R}$ such that \\
\hspace*{25mm} $K_i \Subset M_i$ $(i=1,2,3)$, \ $F(M_{1} \times I) \Subset M_{2} \times I$ \ and \ $M_{2} \Subset M_{3}$. \\
Inductively, for $k \geq 1$, given $M_{4k-1} \in {\cal R}$, by (1) we can find $M_{4k+i} \in {\cal R}$ $(i=0, 1, 2, 3)$ 
such that \\
\hspace*{12mm} 
$K_{4k+i} \Subset M_{4k+i}$ $(i=0,1,2,3)$, \ \ $F(M_{M_{4k}} \times I) \Subset M_{M_{4k-1}}\times I$, \ \ 
$M_{4k} \Subset M_{4k+1}$, \\ 
\hspace*{12mm} $F(M_{4k+1} \times I) \Subset M_{4k+2} \times I$, \ \ 
$M_{4k+2} \Subset M_{4k+3}$. \\
The sequence $\{ M_k \}_{k\geq 1}$ is obtained by iteration of this procedure. 

\item For each $k \geq 0$, since $F(M_{4k, 4k+1} \times I) \Subset M_{4k-1, 4k+2} \times I$, 
by the isotopy extension theorem  we obtain $G^k \in {\rm Isot}^r(M_{4k-1,4k+2}; \partial)_0$ such that 
$G^k = F$ on $[$a neighborhood of $M_{4k, 4k+1}] \times I$. 
The isotopy $G \in {\rm Isot}^r(M; M')_0$ is defined by $G = G^k$ on $M_{4k-1,4k+2}$ $(k \geq 0)$. 
Then $G = F$ on $[$a~neighborhood of $M''] \times I$ and $H := G^{-1}F \in {\rm Isot}^r(M; M'')_0$. 
\end{enumerate} 
\vskip -7mm 
\end{proof}

Here we give a simple application of Lemma~\ref{lem_factorization_open}.

\begin{lem}\label{lemma_subseq} 
Suppose $M$ is an open $n$-manifold. 
\benum
\item[{\rm (1)}] 
$cld\,{\rm Diff}^r(M)_0 \leq \max\{ \ell + m, 2m\}$ \ \ and \ \ $cld\,{\rm Diff}_c^r(M)_0 \leq \ell$ \\
if $M$ has an exhausting sequence ${\mathcal L} = \{ L_i \}_{i \geq 1}$ such that \\ 
\hspace*{10mm} $cld\,{\rm Diff}^r(L_i; \partial)_0 \leq \ell$ $(1 \leq i  < \infty)$ \ and \ $cld\,{\rm Diff}^r(L_{ij}; \partial)_0 \leq m$ \ $(1 \leq i < j < \infty)$. 
\item[{\rm (2)}]
$clb^d d\,{\rm Diff}^r(M)_0 \leq \max\{ \ell + m, 2m\}$ \ \ and \ \  $clb^f d\,{\rm Diff}_c^r(M)_0 \leq \ell$ \\ 
if $M$ has an exhausting sequence ${\mathcal L} = \{ L_i \}_{i \geq 1}$ such that \\ 
\hspace*{10mm} $clb^f d\,{\rm Diff}^r(L_i; \partial)_0 \leq \ell$ $(1 \leq i  < \infty)$ \ and \ $clb^f d\,{\rm Diff}^r(L_{ij}; \partial)_0 \leq m$ \ $(1 \leq i < j < \infty)$. 
\eenum 
\end{lem}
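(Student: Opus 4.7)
The plan is to split every diffeomorphism into two factors whose supports sit on discrete families of compact annular pieces taken from $\mathcal{L}$, and to bound each factor using the hypotheses on the full manifolds $L_i$ (bound $\ell$) and on the annular pieces $L_{ij}$ (bound $m$).

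For the compactly supported statements, any $f \in \mathrm{Diff}_c^r(M)_0$ comes from a compactly supported isotopy whose support lies in some $L_i$. Under the restriction isomorphism $\mathrm{Diff}^r(L_i;\partial)_0 \cong \mathrm{Diff}_c^r(\mathrm{Int}\,L_i)_0$ recalled at the end of Section 2.1, $f$ corresponds to an element of commutator length (resp.\ $clb^f$) at most $\ell$; since every supporting ball sits inside the compact piece $L_i$, the decomposition transfers directly to $\mathrm{Diff}_c^r(M)_0$, giving both bounds by $\ell$.

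For the open statements, take $f \in \mathrm{Diff}^r(M)_0$ with isotopy $F$. Apply Lemma~\ref{lem_factorization_open}(2) with $\mathcal{R}=\mathcal{L}$ to obtain an exhausting subsequence $\{M_k\}_{k\geq 1}\subset \mathcal{L}$ satisfying $F(M_{4k,4k+1}\times I)\Subset M_{4k-1,4k+2}\times I$ for $k\geq 0$, and Lemma~\ref{lem_factorization_open}(3) to factor $F=GH$ with $G\in \mathrm{Isot}^r(M,M')_0$ and $H\in \mathrm{Isot}^r(M,M'')_0$. The construction of $G$ there exhibits $G_1=\prod_{k\geq 0}g_k$ with $g_k\in\mathrm{Diff}^r(M_{4k-1,4k+2};\partial)_0$; since $M_{-1}=\emptyset$, the $k=0$ piece is the full compact manifold $M_2=L_{i_2}$ (bound $\ell$), while for $k\geq 1$ the piece $M_{4k-1,4k+2}=L_{i_{4k-1},i_{4k+2}}$ is annular (bound $m$). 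Symmetrically, $H_1=\prod_{k\geq 0}h_k$ with each $h_k$ supported on $M_{4k+1,4k+4}=L_{i_{4k+1},i_{4k+4}}$ (bound $m$, no exceptional piece). Both families of supports are discrete in $M$, being separated by the nonempty open annuli $M_{4k+2,4k+3}$ and $M_{4k+4,4k+5}$ respectively.

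To finish, write each $g_k=\prod_{i=1}^{c}[a_{i,k},b_{i,k}]$ with the uniform $c=\max(\ell,m)$ (padding shorter factorizations by the identity), and set $a_i=\prod_k a_{i,k}$, $b_i=\prod_k b_{i,k}$; these locally finite products are well defined in $\mathrm{Diff}^r(M)_0$ by discreteness of the supporting pieces, and pairwise disjointness of supports gives $[a_i,b_i]=\prod_k[a_{i,k},b_{i,k}]$, so $G_1=\prod_{i=1}^c[a_i,b_i]$ and hence $cl(G_1)\leq\max(\ell,m)$. The analogous argument yields $cl(H_1)\leq m$, giving $cl(f)\leq\max(\ell,m)+m=\max\{\ell+m,2m\}$. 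The $clb^d$ bound in (2) follows by the same bookkeeping: if $a_{i,k},b_{i,k}$ are supported in finite disjoint unions of balls $D_{i,k}\subset\mathrm{Int}\,M_{4k-1,4k+2}$, then $D_i:=\bigcup_k D_{i,k}$ is a discrete union of balls closed in $M$, yielding $clb^d(G_1)\leq\max(\ell,m)$ and $clb^d(H_1)\leq m$. The only delicate point is to verify that the locally finite products define diffeomorphisms in $\mathrm{Diff}^r(M)_0$ and that commutators with disjoint supports commute and regroup as claimed; both are immediate from the discreteness of the annular pieces, so beyond Lemma~\ref{lem_factorization_open} no further geometric input is required.
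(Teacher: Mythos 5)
Your proof is correct and takes essentially the same route as the paper: apply Lemma~\ref{lem_factorization_open} with $\mathcal{R}=\mathcal{L}$ to factor $F=GH$, observe that $G$ is built from a copy of $L_{i(2)}$ (bound $\ell$) plus annular pieces $L_{i(4k-1),i(4k+2)}$ (bound $m$) and that $H$ is built from annular pieces only (bound $m$), then pass to $q(f)\leq\max\{\ell,m\}+m$. The paper's proof simply asserts ``the assumption implies that $q(G_1)\leq\max\{\ell,m\}$, $q(H_1)\leq m$''; your write-up makes the underlying regrouping of commutators over a discrete family of supports explicit, which is a useful elaboration but not a different argument.
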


\begin{proof} The assertions for ${\rm Diff}_c^r(M)_0$ are obvious. 
To obtain the estimates for ${\rm Diff}^r(M)_0$,  
let $q = cl$ in (1) and $q = clb^d$ in (2). 
Given any $f \in {\rm Diff}^r(M)_0$. 
Take $F \in {\rm Isot}^r(M)_0$ with $F_1 = f$. 
By Lemma~\ref{lem_factorization_open} 
we have 
\bit 
\itemI a subsequence $M_k = L_{i(k)}$ $(k \geq 1)$ of ${\mathcal L}$ 
such that  $F(M_{4k, 4k+1} \times I) \Subset M_{4k-1, 4k+2} \times I$ $(k \geq 0)$ and 
\itemII a factorization $F = GH$ for some $G \in {\rm Isot}^r(M, M')_0$ and $H \in {\rm Isot}^r(M, M'')_0$, 
where \\
\hspace*{30mm} $M' := \bigcup_{k \geq 0} M_{4k+2, 4k+3}$ \ \ and \ \ $M'' := \bigcup_{k \geq 0} M_{4k, 4k+1}$. 
\eit 

Hence, the assumption implies that 
$q(G_1) \leq \max\{ \ell, m\}$, $q(H_1) \leq m$ and $q(f) \leq \max\{ \ell, m\} + m$. 
\end{proof}

\begin{prop}\label{prop_product_end} 
Suppose $W$ is a compact $n$-manifold  with boundary and $1 \le r \le \infty, \, r \neq n+1$. 
\benum
\item[$(1)$] $cld\, {\rm Diff}^r({\rm Int}\,W)_0 \leq \max\{ cld\,{\rm Diff}^r(W; \partial)_0 , 2 \} + 2$. 
\item[$(2)$] $clb^d d\,{\rm Diff}^r({\rm Int}\,W)_0 \leq \max\{ clb^f d\,{\rm Diff}^r(W; \partial)_0, m \} + m$, \\
\hspp where \ \ $m:= clb^fd\,{\rm Diff}^r(\partial W \times I; \partial)_0 < \infty$ 
\ \ $($cf. Example~{\rm \ref{example_displaceable}\,(2)}$)$
\eenum 
\end{prop}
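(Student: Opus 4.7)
The plan is to reduce the proposition to Lemma~\ref{lemma_subseq} by building an exhausting sequence $\{L_i\}$ of $M := {\rm Int}\,W$ in which each $L_i$ is diffeomorphic to $W$ and each complement $L_{ij}$ is diffeomorphic to $\partial W \times I$. Given such a sequence, the two parts of the proposition will follow by plugging the standard bounds into parts (1) and (2) of Lemma~\ref{lemma_subseq} respectively.

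First I would fix a $C^\infty$ collar embedding $c \colon \partial W \times [0,1) \hookrightarrow W$ and choose a strictly decreasing sequence $t_i \in (0,1)$ with $t_i \to 0$. Setting $L_i := W \setminus c(\partial W \times [0, t_i))$ produces a nested compact exhaustion of $M$ with $L_i \Subset L_{i+1}$, each $L_i$ diffeomorphic to $W$ via a push along the collar direction, and, for $i<j$, the complement $L_{ij} \cong \partial W \times [t_j, t_i] \cong \partial W \times I$ as $C^\infty$ manifolds with boundary. Since $C^\infty$ diffeomorphisms of manifolds with boundary induce group isomorphisms preserving both $cl$ and $clb^f$ (cf.\ Remark~\ref{rmk_clb^f}), this identifies $cld\,{\rm Diff}^r(L_i;\partial)_0$ and $clb^f d\,{\rm Diff}^r(L_i;\partial)_0$ with those of $W$, and analogously identifies the corresponding norms for $L_{ij}$ with those of $\partial W \times I$.

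Next I would bound the invariants on the cylindrical pieces. Example~\ref{example_displaceable}(1), applied to the closed $(n-1)$-manifold $\partial W$, gives $cld\,{\rm Diff}^r(\partial W \times I;\partial)_0 \leq 2$; the hypothesis there requires $r \neq (n-1)+2 = n+1$, which is exactly our standing assumption. Example~\ref{example_displaceable}(2), applied similarly (to each connected component of $\partial W$ if necessary), provides the finiteness of $m = clb^f d\,{\rm Diff}^r(\partial W \times I;\partial)_0$ claimed in the statement.

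Finally, setting $\ell := cld\,{\rm Diff}^r(W;\partial)_0$ and invoking Lemma~\ref{lemma_subseq}(1) with the parameters $(\ell, 2)$ in place of $(\ell, m)$ gives $cld\,{\rm Diff}^r(M)_0 \leq \max\{\ell+2,\,4\} = \max\{\ell, 2\} + 2$, which is (1). Setting $\ell' := clb^f d\,{\rm Diff}^r(W;\partial)_0$ and invoking Lemma~\ref{lemma_subseq}(2) with the parameters $(\ell', m)$ gives $clb^d d\,{\rm Diff}^r(M)_0 \leq \max\{\ell'+m,\,2m\} = \max\{\ell', m\} + m$, which is (2). No step in this plan looks substantive; the only point requiring attention is matching the dimension condition in Example~\ref{example_displaceable} with the hypothesis $r \neq n+1$ when the example is applied to the $n$-dimensional product manifold $\partial W \times I$.
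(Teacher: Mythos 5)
Your proposal is correct and follows essentially the same route as the paper: both build a nested exhaustion of ${\rm Int}\,W$ by complements of shrinking collar neighborhoods of $\partial W$ (the paper takes $L_i = W_N \cup (\partial W \times [1/i, 1])$, you take $L_i = W \setminus c(\partial W \times [0,t_i))$, which is the same construction up to notation), then invoke Lemma~\ref{lemma_subseq} together with Example~\ref{example_displaceable} for the cylindrical pieces and collapse the $\max$ expression to the stated form. Your remark about the dimension shift ($\partial W$ being $(n-1)$-dimensional, so the hypothesis $r \neq (n-1)+2 = n+1$ in Example~\ref{example_displaceable} matches the standing assumption) is a point the paper leaves implicit, and is correct.
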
 

\begin{proof} Let $q = cl$ in (1) and $q = clb^f$ in (2).
Take a collar $N \cong \partial W \times I$ of $\partial W = \partial W \times \{ 0 \}$ in $W$ 
and consider the exhausting sequence ${\mathcal L} = \{ L_i \}_{i \geq 1}$ in ${\rm Int}\,W$ defined by $L_i := W_N \cup (\partial W \times [1/i,1])$ $(i \geq 1)$. 
Since $L_i$ $(1 \leq i < \infty)$ are diffeomorphic to $W$ and $L_{ij}$ $(1 \leq i < j < \infty)$ are diffeomorphic to $\partial W \times I$, 
it follows that $qd\,{\rm Diff}^r(L_i; \partial)_0 = qd\,{\rm Diff}^r(W; \partial)_0$ $(1 \leq i < \infty)$ and 
$qd\,{\rm Diff}^r(L_{ij}; \partial)_0 = qd\,{\rm Diff}^r(\partial W \times I; \partial)_0$ $(1 \leq i < j < \infty)$. 
Hence the conclusion follows from Lemma~\ref{lemma_subseq} and Example~\ref{example_displaceable}. 
\end{proof}

\begin{example}\label{exp_portable} 
If $W$ is a compact $n$-manifold with boundary and has a handle decomposition with no handles of indices greater than $(n-1)/2$, then 
$cld\,{\rm Diff}^r(W; \partial)_0 \leq 2$ (\cite[Theorem 4.1]{Tsuboi1}) and hence $cld\, {\rm Diff}^r({\rm Int}\,W)_0 \leq 4$ 
(cf \cite{R2} for portable manifolds and related topics). 
\end{example}

\section{Handle decompositions and triangulations of manifolds} 

To explain the aim of this section, first we recall the strategy in \cite{BIP, Tsuboi2, Tsu09, Tsuboi3} 
for estimate of $cl$ and $clb^f$ (i.e., effective factorization to commutators) 
and understand the trouble which occurs in the bounded case. 

Suppose $M$ is a closed $n$-manifold and $f \in {\rm Diff}(M)_0$.
For simplicity, we assume that $n = 2m+1$ and 
consider any handle decomposition ${\cal H}$ of $M$, the gradient flow $\phi_t$ induced from ${\cal H}$, 
the core $m$-skeleton $P$ and the dual $m$-skeleton $Q$. 
Then, by an argument of removing intersections between $Q$ and a track of $P$ under an isotopy  
it is shown that $f$ has a factorization $f = agh$ (up to conjugate), where 
$g \in {\rm Diff}(M, Q)_0$, $h \in {\rm Diff}(M, P)_0$ and $a \in {\rm Diff}(M, M_D)_0^c$ for some $D \in {\cal B}_f(M)$. 
Since any compact subset of $M - Q$ is attracted to $P$ under the flow $\phi_t$ and $P$ is displaceable from itself in $M - Q$, 
Basic Lemma~\ref{basic_lemma_cl} induces a factorization of $g$ into commutators. 
Similarly, since any compact subset of $M - P$ is attracted to $Q$ under the flow $\phi_t^{-1}$ and $Q$ is displaceable from itself in $M - P$, 
Basic Lemma~\ref{basic_lemma_cl} induces a factorization of $h$ into commutators. 
This procedure yields a factorization of $f$ into commutators. 

When $M$ is a compact $n$-manifold with nonempty boundary and 
${\cal H}$ is a usual handle decomposition of $M$ (cf. Convention~\ref{convention_handle_decomp}), 
the flow $\phi_t$ is transverse to $\partial M$ and 
$Q$ intersects $\partial M$ transversaly, while $P$ is in ${\rm Int}\,M$. 
Moreover, 
any compact subset of ${\rm Int}\,M - P$ is attracted to the union of $Q$ and $\partial M$ by $\phi_t^{-1}$.
However, the union $Q \cup \partial M$ is not displaceable from itself even if we attach an outer collar to $M$, 
though each summand $Q$ and $\partial M$ is displaceable from $Q \cup \partial M$. 
Thus, the factorization argument breaks down at this point. 

This observation means that, in order to apply Basic Lemma~\ref{basic_lemma_cl} to both $P$ and $Q$, 
we need a flow on $M$ with connecting between $P$ and $Q$ and keeping $\partial M$. 
In this section we see that any triangulation of $M$ induces such flows (Section 3.2). 
We also observe that,
in the case of an open manifold $M$ with a handle decomposition ${\cal H}$, 
the above argument still works if we use 
a pair $(N, L)$ of compact submanifolds of $M$ such that $N \subset L$, $N$ is ${\cal H}$-saturated and $L$ is ${\cal H}^\ast$-saturated 
(cf. Propositions~\ref{prop_2m_cpt_no-m-h}, ~\ref{cl_odd_cpt_bd_handle}, ~\ref{cl_even_cpt_bd_handle}).  
It might be possible to unify these arguments by considering handle decompositions which are transverse to boundary (cf. Section 3.1). 
We postpone such a general approach to a succeeding work.  

\subsection{Handle decompositions of manifolds} 
\subsubsection{\bf Submanifolds saturated with respect to a handle decomposition} \mbox{} 

\begin{convention}\label{convention_handle_decomp}
Suppose $M$ is a $C^\infty$ $n$-manifold possibly with boundary. 
In this article a handle decomposition ${\cal H}$ of $M$ means a locally finite family of $C^\infty$ handles in $M$ of index $k = 0,1, \cdots, n$ 
which satisfies the following conditions : 
(i) $M$ is constructed from ${\cal H}$ by the handle attachment with starting from the 0-handles, that is, 
there exists a sequence of $n$-submanifolds $M_0 \subset M_1 \subset \cdots \subset M_n = M$, where 
$M_0$ is the union of the 0-handles in ${\cal H}$ 
and inductively $M_k$ is obtained by attaching the $k$-handles in ${\cal H}$ to $M_{k-1}$ (with corners being smoothed), 
(ii) any two handles in ${\cal H}$ of same index are disjoint and 
(iii) the handles are attached generically so that 
if $h, k \in {\cal H}$, $h \cap k \neq \emptyset$ and ${\rm ind}\, h > {\rm ind}\, k$, then 
the interior of the attaching region of $h$ intersects the interior of the coattaching region of $k$.
Hence, in our convention  
$\partial M$ occurs only as a portion where no handles are attached. 
\end{convention}

\begin{setting}\label{setting_handle_decomp} 
Below we assume that $M$ is an $n$-manifold without boundary and ${\cal H}$ is a handle decomposition of $M$. 
By ${\cal H}^\ast$ we denote the dual handle decomposition of $M$ for ${\cal H}$. 
For $N \in {\mathcal S}{\mathcal M}(M)$, let ${\cal H}|_N : = \{ h \in {\cal H} \mid h \subset N \}$. 
We say that $N \in {\mathcal S}{\mathcal M}(M)$ is ${\cal H}$-saturated if $N = \bigcup {\cal H}|_N$ and ${\cal H}|_N$ forms a handle decomposition of $N$. 
Let ${\mathcal S}{\mathcal M}(M; {\cal H}) := \{ N \in {\mathcal S}{\mathcal M}(M) \mid \text{$N$ is ${\cal H}$-saturated.} \}$ and 
${\mathcal S}{\mathcal M}_c(M; {\cal H}) := {\mathcal S}{\mathcal M}_c(M) \cap {\mathcal S}{\mathcal M}(M; {\cal H})$. We regard as $\emptyset \in {\mathcal S}{\mathcal M}_c(M; {\cal H})$. 
These notations are also used for ${\cal H}^\ast$. 
\end{setting}

The next lemma easily follows from the definitions.

\begin{lem}\label{H-submfd} \mbox{} In Setting~\ref{setting_handle_decomp} : 
\benum 
\item[$(1)$] For $N \in {\mathcal S}{\mathcal M}(M)$
\bit 
\itemI $N$ is ${\cal H}$-saturated. \LLRA 
\btab[t]{@{}ll}
{\rm (a)} & $N = |{\cal H}|_N|$ and \\[2mm] 
{\rm (b)} & if $h \in {\cal H}|_N$, $k \in {\cal H}$, $h \cap k \neq \emptyset$ and ${\rm ind}\,k < {\rm ind}\,h$, then $k \subset N$. 
\etab 
\vskip 2mm 

\itemII $N$ is ${\cal H}^\ast$-saturated. \LLRA 
\btab[t]{@{}ll}
{\rm (a)} & $N =|{\cal H}|_N|$ and \\[2mm] 
{\rm (b)} & if $h \in {\cal H}|_N$, $k \in {\cal H}$, $h \cap k \neq \emptyset$ and ${\rm ind}\,h < {\rm ind}\,k$, then $k \subset N$. 
\etab 
\eit 
\vskip 2mm 
\item[$(2)$] 
\bit 
\itemI $N_1, N_2 \in {\mathcal S}{\mathcal M}(M; {\cal H})$ \LRA $N_1 \cap N_2, N_1 \cup N_2 \in {\mathcal S}{\mathcal M}(M; {\cal H})$
\itemII $N_1, N_2 \in {\mathcal S}{\mathcal M}(M)$ are disjoint, $N_1 \cup N_2 \in {\mathcal S}{\mathcal M}(M; {\cal H})$ 
\LRA $N_1, N_2 \in {\mathcal S}{\mathcal M}(M; {\cal H})$

\itemiii {\rm (a)} $N \in {\mathcal S}{\mathcal M}(M; {\cal H})$ \LRA $M_N \in {\mathcal S}{\mathcal M}(M; {\cal H}^\ast)$ \\
{\rm (b)} $N \in {\mathcal S}{\mathcal M}(M; {\cal H}^\ast)$ \LRA $M_N \in {\mathcal S}{\mathcal M}(M; {\cal H})$ 

\itemiv Suppose $L, N \in {\mathcal S}{\mathcal M}(M)$ and $L \Subset N$. 
\bit 
\itema $N_L \in {\mathcal S}{\mathcal M}(M; {\cal H})$ \LLRA $L \in {\mathcal S}{\mathcal M}(M; {\cal H}^\ast)$ and $N \in {\mathcal S}{\mathcal M}(M; {\cal H})$
\itemb $N_L \in {\mathcal S}{\mathcal M}(M; {\cal H}^\ast)$ \LLRA $L \in {\mathcal S}{\mathcal M}(M; {\cal H})$ and $N \in {\mathcal S}{\mathcal M}(M; {\cal H}^\ast)$
\eit 
\eit 
\item[$(3)$] For any $L \in {\cal S}{\mathcal M}_c(M)$ there exists
\bit 
\item[] $N_1 \in {\cal S}{\mathcal M}_c(M; {\cal H})$ with $L \subset N_1$ \ and \ 
$N_2 \in {\cal S}{\mathcal M}_c(M; {\cal H}^\ast)$ with $L \subset N_2$. 
\eit 

\eenum 
\end{lem}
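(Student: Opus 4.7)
The plan is to derive everything from Part (1): once we have the handle-theoretic characterization of ${\cal H}$- and ${\cal H}^\ast$-saturation, Parts (2) and (3) become formal consequences.

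For Part (1)(i), the forward direction ``$\Rightarrow$'' is immediate: (a) is the definition, and (b) follows because in a handle decomposition of $N$, each index-$p$ handle $h \subset N$ attaches inside $\partial N_{p-1}$ (where $N_{p-1} = \bigcup\{g \in {\cal H}|_N \mid {\rm ind}\,g \leq p-1\}$), so Convention~\ref{convention_handle_decomp}(iii) forces any lower-index $k \in {\cal H}$ meeting $h$ to already lie in $N_{p-1}$. For the converse ``$\Leftarrow$'', I would induct on the index $p$. The key verification is that the attaching region of each index-$p$ handle $h \in {\cal H}|_N$ in $\partial M_{p-1}$ actually lies in $\partial N_{p-1}$: every point of this region belongs to the coattaching region of some lower-index $k$ meeting $h$, and condition (b) places every such $k$ in $N$, hence in $N_{p-1}$. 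Part (1)(ii) is obtained by applying (1)(i) to ${\cal H}^\ast$, after observing that the index reversal $k \mapsto n-k$ flips the inequality in (b), turning the statement of (1)(i) into that of (1)(ii).

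Part (2) is formal from Part (1). Items (i) and (ii) follow by tracking how conditions (a), (b) behave under union, intersection, and disjoint decomposition. Item (iii) is the duality inherent in the reversal ${\cal H} \leftrightarrow {\cal H}^\ast$. For item (iv)(a), write $N_L = N \cap M_L$: if $L \in {\cal S}{\mathcal M}(M;{\cal H}^\ast)$ and $N \in {\cal S}{\mathcal M}(M;{\cal H})$, then by (iii) $M_L$ is ${\cal H}$-saturated and by (i) so is $N_L$; conversely, if $N_L$ is ${\cal H}$-saturated, then by (iii) the set $M_{N_L} = L \cup M_N$ is ${\cal H}^\ast$-saturated, and since $L \Subset N$ implies $L \cap M_N = \emptyset$, (ii) and (iii) split the conclusion into ``$L$ is ${\cal H}^\ast$-saturated'' and ``$N$ is ${\cal H}$-saturated''. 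Item (iv)(b) is parallel. For Part (3), given $L \in {\cal S}{\mathcal M}_c(M)$, build the ${\cal H}$-saturation iteratively: set $S_0 := \{h \in {\cal H} \mid h \cap L \neq \emptyset\}$ (finite by local finiteness) and $S_{j+1} := S_j \cup \{k \in {\cal H} \mid \exists\, h \in S_j, \, h \cap k \neq \emptyset, \, {\rm ind}\,k < {\rm ind}\,h\}$; the strict index-decrease caps the recursion depth at $n+1$ and each $S_j$ remains finite, so $S := \bigcup_j S_j$ is finite and $N_1 := L \cup \bigcup S$ is a compact ${\cal H}$-saturated $n$-submanifold by (1)(i). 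The dual recursion (with reversed inequality) yields $N_2 \in {\cal S}{\mathcal M}_c(M; {\cal H}^\ast)$.

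The main obstacle I anticipate is the smooth-gluing step in the inductive backward direction of (1): certifying that attaching the index-$p$ handles of ${\cal H}|_N$ to $N_{p-1}$ really produces a smooth $n$-submanifold whose corner-smoothing is compatible with that already performed inside $M_p$. The content here is that, once (b) forces every attaching region into $\partial N_{p-1}$, the local chart structures around each attaching region inside $N$ and inside $M$ coincide --- a point which follows from the generic-intersection clause of Convention~\ref{convention_handle_decomp}(iii) together with condition (b), but which must be stated with some care to avoid circularity in the induction.
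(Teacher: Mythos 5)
The paper gives no argument for this lemma (it is dismissed with ``easily follows from the definitions''), so I am assessing your proposal on its own terms. The overall plan --- establish the (a),(b)-characterization of saturation in (1), deduce (1)(ii) by index reversal, then derive (2) and (3) formally --- is the natural one, and your treatment of (2)(iv) (writing $N_L = N \cap M_L$ and using (2)(ii),(iii) to split $M_{N_L} = L \cup M_N$) and the bounded recursion in (3) are sound.

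Two points need more care. First, in the forward direction of (1)(i), the step from ``$h$ attaches inside $\partial N_{p-1}$'' to ``$k \subset N_{p-1}$'' is not a single deduction. Convention 3.1(iii) gives a point $x$ in the interior of the attaching region of $h$ and in the interior of the coattaching region of $k$; then $x \in \partial N_{p-1}$, so $x$ lies in some $g \in {\cal H}|_N$ with ${\rm ind}\,g < p$; but the interior of the coattaching region of $k$ only excludes handles of index $\le {\rm ind}\,k$ other than $k$, so you can conclude $g = k$ or ${\rm ind}\,g > {\rm ind}\,k$, and a secondary induction on ${\rm ind}\,h$ (applied to the pair $(g,k)$) is needed to finish the second case. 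Second, and more substantively, your invocations of (1)(i) in parts (2)(i),(ii) and (3) to certify that $N_1 \cap N_2$, $N_1 \cup N_2$, and $\bigcup S$ lie in ${\cal SM}(M;{\cal H})$ overreach: as stated, (1)(i) presupposes $N \in {\cal SM}(M)$ and only then characterizes saturation; it does not by itself certify that a union of handles satisfying (b) is a smooth $n$-submanifold of $M$. That is exactly where the ``smooth-gluing'' concern you raise in your last paragraph is genuinely needed --- one must rerun the handle-attachment argument inside $M$ to see the union is a smooth manifold with boundary --- whereas in the backward direction of (1)(i) the smooth structure on $N$ is already given and one only needs to verify that the filtration $N_p = N \cap M_p$ realizes a handle attachment. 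So your worry is legitimate but is located in (2) and (3) rather than in the backward direction of (1).
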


We need a more sophisticated version of Lemma~\ref{lem_factorization_open}\,(2). 

\begin{lem}\label{lem_factorization_open-2} 
Suppose $M$ is an open $n$-manifold, ${\cal H}$ is a handle decomposition of $M$, 
${\mathcal R}$ is a cofinal subcollection of ${\mathcal S}{\mathcal M}_c(M)$, 
$K_k \in {\mathcal S}{\mathcal M}_c(M)$ $(k \geq 1)$ and  
${\cal F}$ is a locally finite family of compact subsets of $M$.  
Then, for any $F \in {\rm Isot}^r(M)_0$ the following hold : 
\benum[(1)] 
\item[{\rm [I]}] There exists an exhausting sequence $\{ M_k \}_{k \geq 1}$ of $M$ 
which satisfies the following conditions : 

\benum
\item[$(1)$] {\rm (i)} $K_k \Subset M_k \in {\mathcal R}$ $(k \geq 1)$ \hsh {\rm (ii)}  
$F(M_{4k,4k+1} \times I) \Subset M_{4k-1, 4k+2} \times I$ $(k \geq 0)$ 

\item[$(2)$] There exist $N_k'  \in {\cal S}{\cal M}_c(M; {\cal H})$ and $N_k'' \in {\cal S}{\cal M}_c(M; {\cal H}^\ast)$ $(k \geq 0)$ 
such that for each $k \geq 0$ 
\bit 
\itema $M_{4k-1, 4k+2} \FSubset N_k' \FSubset N_k'' \ \Subset \ M_{4k-2, 4k+3}$ 
\ \ and \ \ {\rm (b)} $N_k'' \cap N_{k+1}'' = \emptyset$. 
\eit 

\item[$(3)$] There exist $L_k'  \in {\cal S}{\cal M}_c(M; {\cal H})$ and $L_k'' \in {\cal S}{\cal M}_c(M; {\cal H}^\ast)$ $(k \geq 1)$ such that for each $k \geq 1$ 
\bit 
\itema $M_{4k-3, 4k} \FSubset L_k' \FSubset L_k'' \ \Subset \ M_{4k-4, 4k+1}$  
\ \ and \ \ {\rm (b)} $K_1 \cap L_1'' = \emptyset$, \ $L_k'' \cap L_{k+1}'' = \emptyset$. 
\eit 
\eenum 
Here, the notation $L \FSubset N$ means that $St(L, {\cal F}) \Subset N$. 
\item[{\rm [II]}] There exists a factorization $F = GH$ for some $G \in {\rm Isot}^r(M, M')_0$ and $H \in {\rm Isot}^r(M, M'')_0$, \\
\hspace*{30mm} where \ $M' := \bigcup_{k \geq 0} M_{4k+2, 4k+3}$ \ and \ $M'' := \bigcup_{k \geq 0} M_{4k, 4k+1}$. \\
In addition, we have the following estimates for the commutator lengths $q = cl$ and $clb^d$ : 
\bit 
\item[$(1)$] 
$q(G_1) \leq \ell_1$ in ${\rm Diff}^r(M)_0$ 
if \ $qd({\rm Diff}^r(M, M_{M_{4k-1, 4k+2}})_0, {\rm Diff}^r(M, M_{N_k''})_0) \leq \ell_1$ for each $k \geq 0$. 
\item[$(2)$] $q(H_1) \leq \ell_2$ in ${\rm Diff}^r(M)_0$ 
if \ $qd({\rm Diff}^r(M, M_{M_{4k-3, 4k}})_0, {\rm Diff}^r(M, M_{L_k''})_0) \leq \ell_2$ for each $k \geq 1$. 
\item[$(3)$] $q(F_1) \leq \ell_1 + \ell_2 $ in ${\rm Diff}^r(M)_0$ if $q(G_1) \leq \ell_1$ and $q(H_1) \leq \ell_2$. 
\eit 
\eenum 
\end{lem}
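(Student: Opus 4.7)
For \textbf{[I]}, the plan is to build $\{M_k\}$ inductively, extending the construction in Lemma~\ref{lem_factorization_open}\,(2). I start by picking $M_1, M_2, M_3 \in {\cal R}$ with $K_i \Subset M_i$ for $i = 1, 2, 3$ and $F(M_1 \times I) \Subset M_2 \times I$. Inductively, suppose $M_1, \ldots, M_{4k+1}$ together with $N_j', N_j''$ for $j < k$ and $L_j', L_j''$ for $j \leq k$ are in place. By Lemma~\ref{lem_factorization_open}\,(1) I choose $M_{4k+2} \in {\cal R}$ with $K_{4k+2} \Subset M_{4k+2}$ and $F(M_{4k, 4k+1}\times I) \Subset M_{4k-1, 4k+2}\times I$. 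Then I form the compact set $A_k := St(M_{4k-1, 4k+2}, {\cal F})$ (compact by local finiteness of ${\cal F}$), fatten it to a compact $n$-submanifold of $\mathrm{Int}\,M$ in its interior, and apply Lemma~\ref{H-submfd}\,(3) to obtain an ${\cal H}$-saturated $N_k' \in {\cal SM}_c(M; {\cal H})$ with $A_k \Subset N_k'$. Repeating with $St(N_k', {\cal F})$ and the ${\cal H}^*$ version of Lemma~\ref{H-submfd}\,(3), I get $N_k'' \in {\cal SM}_c(M; {\cal H}^*)$ satisfying $M_{4k-1, 4k+2} \FSubset N_k' \FSubset N_k''$. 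Finally, I pick $M_{4k+3} \in {\cal R}$ enclosing, as $\Subset$-subsets, $K_{4k+3}$, $N_k''$, every ${\cal F}$-member meeting $N_k''$, and every ${\cal H}$- or ${\cal H}^*$-handle meeting any of these — each such set is finite by local finiteness, so such $M_{4k+3}$ exists by cofinality of ${\cal R}$. The construction of $L_k', L_k''$ at the odd-parity shells $M_{4k-3, 4k}$ is entirely parallel and is interleaved with the above through stage $4k+1$.

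The enlargement of $M_{4k+3}$ just engineered forces $N_{k+1}'' \cap N_k'' = \emptyset$ at the next stage: since $M_{4k+3, 4k+6} \subset M \setminus M_{4k+3}$, and the construction of $N_{k+1}', N_{k+1}''$ only adjoins members of ${\cal F}$ and handles of ${\cal H}$, ${\cal H}^*$ that meet $M_{4k+3, 4k+6}$, by the choice of $M_{4k+3}$ none of these can reach $N_k''$. The same trick yields $L_{k+1}'' \cap L_k'' = \emptyset$ and $K_1 \cap L_1'' = \emptyset$.

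For \textbf{[II]}, the factorization $F = GH$ with $G \in \mathrm{Isot}^r(M, M')_0$ and $H \in \mathrm{Isot}^r(M, M'')_0$ comes directly from Lemma~\ref{lem_factorization_open}\,(3), and its proof realizes $G$ as the concatenation of isotopies $G^k \in \mathrm{Isot}^r(M_{4k-1, 4k+2}; \partial)_0$. Setting $g_k := (G^k)_1$, extended by the identity outside, the $g_k$ have pairwise disjoint supports in the locally finite family $\{M_{4k-1, 4k+2}\}$, so $G_1 = \prod_k g_k$ with commuting factors. By hypothesis each $g_k$ factors as $\prod_{j=1}^{\ell_1} [a_{k,j}, b_{k,j}]$ in $\mathrm{Diff}^r(M, M_{N_k''})_0$. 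Pairwise disjointness of the $N_k''$ lets me form $A_j := \prod_k a_{k,j}$ and $B_j := \prod_k b_{k,j}$ as bona fide diffeomorphisms of $M$ whose factors commute, and rearrange to get $G_1 = \prod_{j=1}^{\ell_1} [A_j, B_j]$, whence $cl(G_1) \leq \ell_1$. For $q = clb^d$, the commutator supports $D_{k,j}$ may be chosen as discrete unions of balls in $\mathrm{Int}\,N_k''$, and $D_j := \bigsqcup_k D_{k,j}$ is again a discrete union of balls closed in $M$ (by the locally finite disjoint family $\{N_k''\}$), so $[A_j, B_j] \in \mathrm{Diff}^r(M, M_{D_j})_0^c$ and the bound carries over. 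The estimate (2) for $H_1$ is obtained by the symmetric argument using the $L_k''$, and (3) follows from $F_1 = G_1 H_1$ and subadditivity of $q$.

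The hard part is the bookkeeping in step \textbf{[I]}: arranging simultaneously the ${\cal H}$- and ${\cal H}^*$-saturation, the ${\cal F}$-star nesting, the isotopy-tracking condition $F(M_{4k,4k+1}\times I) \Subset M_{4k-1,4k+2}\times I$, and the pairwise disjointness of the $N_k''$ (and of the $L_k''$). This is made feasible only by the conjunction of three finiteness hypotheses — local finiteness of ${\cal F}$, of ${\cal H}$ and of ${\cal H}^*$ — which render each enclosure step a finite operation, together with the cofinality of ${\cal R}$ in ${\cal SM}_c(M)$ permitting arbitrarily large choices of $M_{4k+3}$.
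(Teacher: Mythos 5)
The proof of [II] is fine and essentially matches the paper's, but there is a genuine gap in [I]: your construction of $N_k'$ and $N_k''$ does not satisfy the inner inclusion $N_k'' \Subset M_{4k-2, 4k+3}$.

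You build $N_k'$ by taking the ${\cal H}$-saturation (via Lemma~\ref{H-submfd}\,(3)) of a compact submanifold containing $St(M_{4k-1, 4k+2}, {\cal F})$. But Lemma~\ref{H-submfd}\,(3) produces a filled compact ${\cal H}$-saturated submanifold; ${\cal H}$-saturation is a downward closure (every handle of lower index meeting an included handle must be included), so the saturation of an annulus in general cascades inward and can reach well inside $M_{4k-2}$ --- there is no mechanism in your construction that keeps $N_k'$ (hence $N_k''$) away from $M_{4k-2}$, which is already fixed at this stage of the induction. Your choice of a large $M_{4k+3}$ handles the outer inclusion only. The same issue undermines your disjointness argument for $N_k'' \cap N_{k+1}'' = \emptyset$: enclosing handles that \emph{meet} $N_k''$ inside $M_{4k+3}$ does not control the full downward cascade of the ${\cal H}$-saturation of $M_{4k+3,4k+6}$, which can still reach handles lying in $N_k''$ after several index-lowering steps.

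The paper avoids this entirely by never trying to saturate an annulus directly. Instead it interleaves auxiliary \emph{filled} compact submanifolds $N_{k,-}', N_{k,+}'$ etc.\ into the exhausting sequence, with opposite saturations on the inner and outer pieces ($N_{k,+}' \in {\cal SM}_c(M;{\cal H})$, $N_{k,-}' \in {\cal SM}_c(M;{\cal H}^\ast)$), and then defines the annular $N_k'$ as the difference $N_k' := (N_{k,+}')_{N_{k,-}'}$. Lemma~\ref{H-submfd}\,(2)(iv) is exactly what guarantees that such a difference is again ${\cal H}$-saturated (and analogously for $N_k''$, $L_k'$, $L_k''$ with the dual). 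The inclusions $N_k'' \Subset M_{4k-2,4k+3}$ and the disjointness $N_k'' \cap N_{k+1}'' = \emptyset$ are then automatic from the nested chain $M_{4k+2} \FSubset N_{k,+}' \FSubset N_{k,+}'' \Subset N_{k+1,-}'' \FSubset N_{k+1,-}' \FSubset M_{4k+3}$, since these are genuine inclusions of filled sets, not attempts to contain a saturation inside an annulus. To repair your proof you would need to import this difference-of-saturated-submanifolds device; the direct saturation of stars of annuli cannot be made to work.
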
 

\begin{proof}
{[I]} We repeat the proof of Lemma~\ref{lem_factorization_open}\,(2) to achieve the condition (1).   
In this inductive construction of $\{ M_k \}_{k \geq 1}$  
we can take $M_{k+1}$ arbitrarily large for a given $M_k$. 
To achieve the conditions (2), (3), in the sequence $\{ M_k \}_{k \geq 1}$ we insert the following compact $n$-submanifolds of $M$, 
based on Lemma~\ref{H-submfd}\,(3). \\[1mm]
\hspace*{25mm} $N_{k,-}', \ N_{k,+}', \ N_{k,-}'', \ N_{k,+}''$ \ $(k \geq 0)$, \hsh 
$L_{k,-}', \ L_{k,+}', \ L_{k,-}'', L\ _{k,+}''$ \ $(k \geq 1)$ \\[1mm] 
We choose these submanifolds so to satisfy the following conditions :

\bit 
\itemI (a) \ $N_{0,-}' = N_{0, -}'' = \emptyset$, \hsh $M_{4k+2} \FSubset N_{k,+}' \FSubset N_{k,+}'' \Subset N_{k+1,-}'' \FSubset N_{k+1,-}' \FSubset M_{4k+3}$ \ \ $(k \geq 0)$ \\[0.5mm] 
(b) \ $K_1 \Subset L_{1,-} '' \FSubset L_{1,-}' \FSubset M_1$,  \\[0.5mm] 
\hspace*{6mm} $M_{4k} \FSubset L_{k,+}' \FSubset L_{k,+}'' \Subset L_{k+1,-}'' \FSubset L_{k+1,-}' \FSubset M_{4k+1}$ \ \ $(k \geq 1)$
\vskip 1mm 
\itemII (a) \ $N_{k,+}', N_{k,-}'' \in {\cal S}{\mathcal M}_c(M; {\cal H})$, \ 
$N_{k,-}', N_{k,+}'' \in {\cal S}{\mathcal M}_c(M; {\cal H}^\ast)$ \ $(k \geq 0)$ \\[0.5mm] 
(b) \ $L_{k,+}', L_{k,-}'' \in {\cal S}{\mathcal M}_c(M; {\cal H})$, \ 
$L_{k,-}', L_{k,+}'' \in {\cal S}{\mathcal M}_c(M; {\cal H}^\ast)$ \ $(k \geq 1)$ 
\eit  
Note that $St(K, {\cal F})$ is compact for any $K \in {\cal K}(M)$. 
Then, from Lemma~\ref{H-submfd}\,(2)(iv) it follows that \\[0.5mm] 
\hspace*{3mm} $N_k' := (N_{k,+}')_{N_{k,-}'}$, \ $N_k'' := (N_{k,+}'')_{N_{k,-}''}$ \ $(k \geq 0)$ 
\ \ and \ \ $L_k' := (L_{k,+}')_{L_{k,-}'}$, \ $L_k'' := (L_{k,+}'')_{L_{k,-}''}$ \ $(k \geq 1)$ \\[1mm] 
satisfy the required conditions (2) and (3). 

{[II]} By [I](1)(ii) and Lemma~\ref{lem_factorization_open}\,(3) we have the factorization $F = GH$. 
\benum 
\item For each $k \geq 0$ consider $g_k \in {\rm Diff}^r(M, M_{M_{4k-1,4k+2}})_0$ defined by $g_k = G_1$ on $M_{4k-1,4k+2}$. 
Then $q(g_k) \leq \ell_1$ in ${\rm Diff}^r(M, M_{N_k''})_0$ by the assumption. 
Since $G_1$ is ``the discrete sum'' of $g_k$ $(k \geq 0)$ by [I](2), it follows that $q(G_1) \leq \ell_1$ in ${\rm Diff}^r(M)_0$.  
\eenum

The statment (2) is shown by a similar argument for $H$. 
\end{proof}

\subsubsection{\bf The gradient flow and the complex associated to a handle decomposition} \mbox{} 

\begin{setting}\label{setting_handle_decomp_dual-skeleton} 
Suppose $M$ is an $n$-manifold without boundary and ${\cal H}$ is a handle decomposition of $M$. 
\end{setting} 

Each handle of index $m$ in ${\cal H}$ has the core $m$-disk and a cocore $(n-m)$-disk. These disks intersect at the center point transversely. 
There exists a Morse function $f$ on $M$ and a Riemannian metric on $M$ with the properties that 
$f$ has a critical point of index $m$ at the center point of each handle of index $m$ and that 
with respect to the induced (downward) gradient flow $\phi_t$ on $M$ the unstable manifold of each critical point of $f$ of index $m$ is  
an open $m$-disk and the total of those disks provides $M$ with a structure of a $C^\infty$ CW-complex, which we denote by $P_{\cal H}$ and call the core complex of ${\cal H}$ for short. As usual $P_{\cal H}^{(k)}$ denotes the $k$-skeleton of the complex $P_{\cal H}$. 
The core complex $P_{\cal H^\ast}$ of ${\cal H}^\ast$ is obtained by considering the stable manifolds of each critical point of $f$ under the flow $\phi_t$. 
The flow $\phi_t$ keeps each cell and skeleton of $P_{\cal H}$ and $P_{\cal H^\ast}$ invariant. 
(When $\partial M \neq \emptyset$, the partial gradient flow $\phi_t$ $(t \geq 0)$ is defined and it is transverse to $\partial M$.) 
Each $k$-handle of ${\cal H}$ determines a unique $k$-cell of $P_{\cal H}$ and a unique $(n-k)$-cell of $P_{\cal H^\ast}$. 
To each $k$-cell $\sigma$ of $P_{\cal H}$ the associated $(n-k)$-cell of $P_{\cal H^\ast}$ is denoted by $\sigma^\ast$ and called the dual cell to $\sigma$. 

\begin{setting_3.2+} 
Let $P = P_{\cal H}^{(k)}$ and $Q = P_{\cal H^\ast}^{(n-k-1)}$ $(k=0,1, \cdots, n)$. 
\end{setting_3.2+}

The gradient flow $\phi_t$ has the following properties : 
$\phi_t(M - Q) = M - Q$ $(t \in \IR)$ and $M - Q$ is attracted to $P$. 

\begin{setting_3.2++} 
Suppose $N \in {\cal S}{\cal M}_c(M; {\cal H})$ and 
$P_N := (P_{{\cal H}|_N})^{(k)}$ (the $k$-skeleton of the core complex of ${\cal H}|_N$). 
\end{setting_3.2++}

Then $P_N$ is a finite subcomplex of $P$ and  $P_N \subset {\rm Int}\,N$. The gradient flow $\phi_t$ satisfies the following conditions : 
The flow $\phi_t$ is transverse to $\partial N$, 
$\phi_t(N) \subset N$, $\phi_t(N - Q) \subset N - Q$ and $N - Q$ is attracted to $P_N$.   
Therefore, the following holds : 
\bit 
\item[$(\ast)$] $P_N$ has the weak absorption property in ${\rm Int}\,N - Q$ keeping $P$ invariant. 
\eit 
More precisely, for any $C \in {\cal K}({\rm Int}\,N - Q)$ and any neighborhood $U$ of $P_N$ in ${\rm Int}\,N - Q$
there exists $H \in {\rm Isot}(M; M_{N} \cup Q \cup P_N)_0$ such that $H_1(C) \subset U$, $H_1(P) = P$ and 
$H_t$ makes each cell of $P_{\cal H}$ and $P_{\cal H^\ast}$ invariant. 
In fact, $H$ is obtained by cutting off the gradient vector field associated to $\phi_t$ around $M_{N} \cup Q \cup P_N$ and 
truncating the induced flow.  

To apply Lemma~\ref{basic_lemma_cl-bdry_final} in this situation, 
we need an estimate on $clb^f (P_N; {\rm Int}\,N - Q)$ (cf. Complement~\ref{compl_abs_dis}\,(2)). 
The next lemma follows from \cite[Proof of Theorem 2.2, Remark 2.1]{Tsu09} and 
provides with this estimate.  
The following notations are introduced in \cite{Tsu09}. 

\begin{notation}\label{notation_c(P)} Suppose $M$ is an $n$-manifold possibly with boundary. 
\benum 
\item For a stratified subset $P$ of $M$, let 
$c(P) := \# \{ i \in \IZ_{\geq 0} \mid P^{(i)} - P^{(i-1)} \neq \emptyset \} \leq \dim P +1 \leq n+1$. 

\item Suppose ${\cal H}$ is a handle decomposition of $M$. 
The index of a handle $h \in {\cal H}$ is denoted by ${\rm ind}\,h$. 
For any subset ${\cal C} \subset {\cal H}$, let $c({\cal C}) : = \# \{ {\rm ind}\,h \mid h \in {\cal C} \} \leq n+1$.  
Note that $c({\cal H}) = c(P_{\cal H})$ and $c({\cal H}^{(k)})= c(P_{\cal H}^{(k)}) \leq k+1$, 
where ${\cal H}^{(k)} := \{ h \in {\cal H} \mid \text{the index of $h$} \leq k \}$. 
\eenum 
\end{notation}

\begin{lem}\label{lem_reg_nbd} 
Suppose $M$ is an $n$-manifold possibly with boundary, ${\cal H}$ is a handle decomposition of $M$, 
$O \in {\cal O}({\rm Int}\,M)$ and $K \in {\cal K}(O)$. 
If $K$ is a finite subcomplex of $P_{\cal H}$, then $clb^f(K, O) \leq c(K)$. 
\end{lem}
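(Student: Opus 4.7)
The plan is induction on $c(K)$. The base $c(K)=0$, i.e., $K=\emptyset$, is realized by the identity. For the inductive step, enumerate the cell dimensions of $K$ as $i_1<\cdots<i_r$ and set $K':=K^{(i_{r-1})}$. Then $K'\subset K\subset O$, $K'$ is a finite subcomplex of $P_{\cal H}$ with $c(K')=r-1$, and $K\setminus K'$ is the disjoint union of the open $i_r$-cells of $K$. By the inductive hypothesis, $clb^f(K',O)\leq r-1$, so some compact neighborhood $U'$ of $K'$ in $O$ is weakly absorbed to $K'$ in $O$ with $clb^f\leq r-1$. It suffices to construct a compact neighborhood $U$ of $K$ in $O$ and, for any preassigned target, a diffeomorphism $\phi''\in{\rm Diff}_c(M,M_O)_0$ with $clb^f\phi''\leq 1$ sending $U$ into $U'$; given a neighborhood $W$ of $K$ in $O$, choosing $\phi'\in{\rm Diff}_c(M,M_O)_0$ with $\phi'(U')\subset W$ and $clb^f\phi'\leq r-1$ by the inductive hypothesis, the composition $\phi:=\phi'\phi''$ satisfies $\phi(U)\subset W$ and $clb^f\phi\leq r$.

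The construction of $\phi''$ uses the disjointness (Convention~\ref{convention_handle_decomp}) of the $i_r$-handles of ${\cal H}$. Let $h_1,\ldots,h_p$ be those handles containing an $i_r$-cell of $K$, and let $B_1,\ldots,B_p$ be slightly shrunken closed $n$-balls inside $h_1,\ldots,h_p$ with $B_j\subset O$; then $D:=\bigsqcup_j B_j\in{\cal B}_f(O)$. In $B_j$, the closed $i_r$-cell $\bar\sigma_j$ passes through along an $i_r$-disk whose boundary on $\partial B_j$ flows, under the gradient flow of ${\cal H}$, into any preassigned neighborhood of $\bar\sigma_j\setminus\sigma_j\subset K^{(i_r-1)}=K^{(i_{r-1})}=K'$. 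Using the handle product coordinates, one builds a compactly supported contraction $\psi_j$ of $B_j$ carrying a fixed tubular neighborhood $C_j$ of $\bar\sigma_j\cap B_j$ into this exit region, and hence into $U'$ after enlarging $U'$ by a thin collar of the exit boundary. Each $\psi_j$ is realizable as a single commutator in ${\rm Diff}_c(M,M_{B_j})_0$ via the single-commutator-in-a-ball principle (valid for ${\rm Diff}_c({\Bbb R}^n)_0\cong{\rm Diff}_c(M,M_{B_j})_0$ by the displacement arguments in \cite{BIP,Tsu09}, formalized through Lemma~\ref{basic_lemma_cl} applied inside the open handle with the auxiliary $\phi$ taken as a shift). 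Since the $B_j$ are disjoint, $\phi'':=\prod_j\psi_j$ is a single commutator in ${\rm Diff}_c(M,M_D)_0$, i.e., $clb^f\phi''\leq 1$; setting $U:=U'\cup\bigsqcup_j C_j$ gives $\phi''(U)\subset U'$.

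The main obstacle is the single-commutator realization of $\psi_j$ inside $B_j$: a generic contraction's support is not itself displaceable from itself within the compact ball $B_j$, so one works in the open interior of $B_j$ modeled on ${\Bbb R}^n$ and uses a compactly supported shift that displaces the support of $\psi_j$ to a disjoint copy, enabling a single-commutator factorization. A secondary point is ensuring the exit of $\bar\sigma_j\cap B_j$ lands in an arbitrarily thin neighborhood of $K'$; this is handled by the gradient flow of ${\cal H}$, which contracts the attaching region of $h_j$ into $K^{(i_r-1)}=K'$, so any preassigned thickness of collar suffices. These devices are exactly those of \cite{Tsu09,Tsuboi3}, and once in place the induction yields $clb^f(K,O)\leq c(K)$.
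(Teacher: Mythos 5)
Your approach is in the same spirit as the paper's (the gradient flow of $\mathcal H$ is used to compress, with a budget of one commutator supported in a disjoint union of balls per cell dimension, and the paper delegates the details to \cite[Proof of Theorem 2.2, Remark 2.1]{Tsu09}), but it differs in organization --- you use a top-down induction on $c(K)$ peeling off the top-dimensional cells, whereas the paper directly produces a factorization $H = H^{(1)}\cdots H^{(c(K))}$ by cutting off the gradient flow along the open $k(i)$-cells of $K$ --- and your version has genuine gaps.

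The central one is geometric. An open $i_r$-cell $\sigma_j$ of $P_{\mathcal H}$ is the unstable manifold of the index-$i_r$ critical point; it is not contained in the handle $h_j$ but flows out of $h_j$ through the attaching region into the lower handles (e.g., the open $2$-cell of $P_{\mathcal H}$ for the standard decomposition of $S^2$ is the complement of the minimum, not a disk inside the top handle). Consequently $C_j\subset B_j\subset h_j$ covers only $\sigma_j\cap B_j$, and $U:=U'\cup\bigsqcup_j C_j$ fails to be a neighborhood of $K$ unless $U'$ is taken large enough to contain $\sigma_j\setminus B_j$ together with a collar of the ``exit boundary''. You acknowledge the need to enlarge $U'$, but the inductive hypothesis only delivers weak absorption with $clb^f\le r-1$ for \emph{one specific} compact neighborhood $U'$ of $K'$, not for an arbitrarily enlarged one. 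To close this gap you would need to argue (via Lemma~\ref{lem_clb^f_basic} and the fact that the cut-off gradient flow preserves each cell and carries $\sigma_j\setminus B_j$ into arbitrarily small neighborhoods of $\bar\sigma_j\setminus\sigma_j\subset K'$) that the enlarged set is also weakly absorbed to $K'$ keeping $K'$ invariant, and hence still absorbable with $clb^f\le r-1$. That argument is not present. A second, independent gap: the ``single-commutator-in-a-ball'' step is attributed to Lemma~\ref{basic_lemma_cl} with a shift as the auxiliary $\phi$, but that lemma only yields $cl\,\psi_j\le 2$ (and $clb^f\psi_j\le 2k+1$), not $clb^f\psi_j\le 1$; the bound $c(K)$ rather than $2c(K)$ in the statement depends on the modification device of \cite[Remark 2.1]{Tsu09} which alters the isotopy so that its time-$1$ map is a single commutator supported in the balls, and this is the mechanism the paper invokes. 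Finally, the induction as written breaks at $r=1$: there $K'=K^{(i_1-1)}=\emptyset$, $U'=\emptyset$, and $\phi''(U)\subset U'$ is impossible; the base case should be $r=1$ (a finite set of $0$-cells) rather than $r=0$.
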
 

More precisely, the following holds : There exists a compact neighborhood $U$ of $K$ in $O$ which has the following property. 
\benum
\item[$(\#)$] 
For any neighborhoods $W$ of $K$ and $V$ of $U$ in $O$ there exists $H \in {\rm Isot}_c(M, M_{V} \cup K)_0$ such that 
\bit 
\itema $H_1(U) \subset W$, \hsh {\rm (b)} $H_t(\sigma) = \sigma$ $(\sigma \in  P_{\cal H}, t \in I)$ \ \ and 
\itemc $H$ has a factorization $H = H^{(1)} \cdots H^{(c(K))}$ such that for each $i = 1,2, \cdots, c(K)$ 
there exists $D_i \in {\cal B}_f({\rm Int}_MV)$ for which 
$H^{(i)} \in {\rm Isot}(M, M_{D_i} \cup K)_0$ and $H^{(i)}_1 \in {\rm Diff}(M, M_{D_i})_0^c$ \\
$($in particular, $clb^f H_1 \leq c(K)$ in ${\rm Diff}_c(M, M_{V})_0)$. 
\eit  
\eenum 

The isotopy $H^{(i)}$ is obtained by cutting off the gradient flow $\phi_t$ $(t \geq 0)$ along the open $k(i)$-cells of $K$ 
for the dimensions $0 = k(1) < \cdots < k(c(K)) \leq n$ of cells which appear in $K$.  

From Lemmas~\ref{basic_lemma_cl-bdry_final}\,[B] and ~\ref{lem_reg_nbd} we have the following conclusion. 

\begin{lem}\label{factorization_hdle_cpt} 
In Setting~{\rm \ref{setting_handle_decomp_dual-skeleton}, ~\ref{setting_handle_decomp_dual-skeleton}$^+$, 
~\ref{setting_handle_decomp_dual-skeleton}$^{++}$}; \ 
If $1 \le r \le \infty, \, r \neq n+1$ and $2k < n$,  
then the following holds. 
\benum
\item[$(1)$] {\rm (i)} $cld\,{\rm Diff}^r(M, M_N \cup Q)_0 \leq 2$, \ \ {\rm (ii)} $clb^f\!d\,{\rm Diff}^r(M, M_N \cup Q)_0 \leq 2c(P_N)+1$. 
\item[$(2)$] Any $f \in {\rm Diff}^r(M, M_N \cup Q)_0$ has a factorization $f = gh$ \ such that  \\
\hsp 
\btab[t]{lll} 
$g \in {\rm Diff}^r(M, M_N \cup Q)_0^c$ & and & $clb^f(g) \leq 2c(P_N)$ \ in ${\rm Diff}^r(M, M_N \cup Q)_0$, \\[2mm] 
$h \in {\rm Diff}^r(M, M_N  \cup P \cup Q)_0^c$ & and & $clb^f(h) \leq1$ \ in ${\rm Diff}^r(M, M_N  \cup P \cup Q)_0$. 
\etab 
\eenum 
\end{lem}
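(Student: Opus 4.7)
The plan is to apply Lemma~\ref{basic_lemma_cl-bdry_final}\,[B] with the substitutions $O := {\rm Int}_M N - Q$, $L := P$, $K := P_N$, taking the integer ``$k$'' in that lemma to be $c(P_N)$. Since $M$ has no boundary and $Q$ is closed in $M$, the set $O$ is open in ${\rm Int}\,M = M$. The skeletons $P$ and $Q$ arise from disjoint families of handles (indices $\le k$ versus indices $\ge k+1$), so $P \cap Q = \emptyset$ and $P_N \subset P \cap {\rm Int}\,N \subset O$. A direct computation gives $M_O = M_N \cup Q$ and $M_O \cup L = M_N \cup Q \cup P$; compactness of $N$ forces every element of ${\rm Diff}^r(M, M_N \cup Q)_0$ to have compact support inside $N$, so
$${\rm Diff}^r(M, M_N \cup Q)_0 = {\rm Diff}^r_c(M, M_O)_0, \qquad {\rm Diff}^r(M, M_N \cup Q \cup P)_0 = {\rm Diff}^r_c(M, M_O \cup L)_0.$$
Under this dictionary, the conclusion $(\flat)$ of Lemma~\ref{basic_lemma_cl-bdry_final}\,[B] translates verbatim into assertions (1) and (2) of the present lemma.

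For $(\natural)$, since $P_N$ is a finite subcomplex of $P_{\cal H}$ lying in the open subset $O \subset {\rm Int}\,M$, Lemma~\ref{lem_reg_nbd} gives $clb^f(P_N, O) \le c(P_N)$ directly. For hypothesis $(\ast)_1$(i), I invoke the flow construction recorded immediately before the present lemma: cutting off the downward gradient vector field associated to ${\cal H}$ near $M_N \cup Q \cup P_N$ and truncating yields, for any compact $C \subset O$ and any neighborhood $U$ of $P_N$ in $O$, an isotopy $H \in {\rm Isot}(M; M_N \cup Q \cup P_N)_0$ with $H_1(C) \subset U$, $H_1(P) = P$, and $H_1$ identity on a neighborhood of $P_N$; this simultaneously realises the rel-$P$ weak absorption of $P_N$ in $O$ and keeps $P_N$ invariant, which is exactly $(\ast)_1$(i).

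For $(\ast)_1$(ii), observe that $K \cup L = P_N \cup P = P$ since $P_N \subset P$. Because $P_N$ is a compact stratified subset of ${\rm Int}\,M$ of dimension $\le k$ and $P$ is a stratified subset of $M$ of dimension $\le k$, the hypothesis $2k < n$ places us in the regime of Example~\ref{exp_displacement}\,(1)(i), yielding strong displaceability of $P_N$ from $P$ in $M$. In particular the displacing diffeomorphism can be arranged to be supported in a prescribed compact neighborhood of $P_N$ contained in $O$, which provides the required displacement within $O$ itself. All three hypotheses of Lemma~\ref{basic_lemma_cl-bdry_final}\,[B] are thereby verified, and statement $(\flat)$ of that lemma gives the conclusion.

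The most delicate point is $(\ast)_1$(i), where a single truncated gradient flow must simultaneously absorb arbitrary compacts of $O$ into any preassigned neighborhood of $P_N$, preserve the skeleton $P$ setwise, and fix $P_N$ pointwise. That all three properties can be secured in one construction is precisely the content of the flow discussion recorded before the lemma (together with Lemma~\ref{lem_reg_nbd}), so once the dictionary above is set up the proof reduces to a mechanical invocation of Lemma~\ref{basic_lemma_cl-bdry_final}\,[B].
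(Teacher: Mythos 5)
Your proposal is correct and follows essentially the paper's own route: reduce to Lemma~\ref{basic_lemma_cl-bdry_final}\,[B] with $O = {\rm Int}\,N - Q$, $L = P$, $K = P_N$, verify $(\natural)$ via Lemma~\ref{lem_reg_nbd}, the absorption via Setting~\ref{setting_handle_decomp_dual-skeleton}$^{++}(\ast)$, and displaceability via Example~\ref{exp_displacement}\,(1)(i). The only deviation is that you verify alternative $(\ast)_1$ while the paper invokes $(\ast)_2$; since the gradient-flow isotopy is identity near $P_N$ and preserves $P$, and strong displaceability of $P_N$ from $P$ in $M$ automatically yields displaceability in the open neighborhood $O$, both alternatives are equally available here and the choice is cosmetic.
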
 

\begin{proof} We apply Lemma~\ref{basic_lemma_cl-bdry_final}\,[B]\,$(\ast)_2$ to $O = {\rm Int}\,N - Q$, $L = P$ and $K := P_N \in {\cal K}(O)$. 
Note that $M_O = M_N \cup Q$ and $K \subset L$. It follows that (i) $clb^f(K, O) \leq c(K)$ by Lemma~\ref{lem_reg_nbd},  
(ii) $K$ has the weak absorption property in $O$ rel $L$ by Setting~\ref{setting_handle_decomp_dual-skeleton}$^{++}$\,$(\ast)$ and 
(iii) $K$ has the strong displacement property for $L$ in $M$, 
since $\dim K \leq \dim L \leq k$ and $2k < n$ (cf. Example~\ref{exp_displacement}\,(1)(i)). 
\end{proof}

\subsubsection{\bf Even-dimensional case} \mbox{} 

Here we include some basic cases in the even-dimension, which are used in Section 5 to treat the general cases. 
First we consider the case of handle decompositions without $m$-handles. 

\begin{prop}\label{prop_2m_cpt_no-m-h}
Suppose $M$ is a $2m$-manifold without boundary, $1 \le r \le \infty, \, r \neq 2m+1$, 
${\cal H}$ is a handle decomposition of $M$, 
$N \in {\cal S}{\cal M}_c(M; {\cal H})$, $L \in {\cal S}{\cal M}_c(M; {\cal H}^\ast)$ and $N \subset L$. 
If ${\cal H}$ has no $m$-handles in $L$, then 
\benum 
\item[$(1)$] $cld\,({\rm Diff}^r(M, M_N)_0,  {\rm Diff}^r(M, M_L)_0) \leq 3$ \ \ and 
\item[$(2)$] $clb^f\!d\,({\rm Diff}^r(M, M_N)_0,  {\rm Diff}^r(M, M_L)_0) \leq 2c({\cal H}|_L)+1$.  
\eenum 
\end{prop}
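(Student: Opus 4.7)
I will adapt the argument of Lemma~\ref{factorization_hdle_cpt} to handle general $f \in {\rm Diff}^r(M, M_N)_0$ (whose support may meet the dual skeleton) by introducing one preliminary commutator that absorbs the ``$Q$-portion'' of the support, and then applying Lemma~\ref{factorization_hdle_cpt} to the remainder. The no-$m$-handle hypothesis makes the requisite displacements available symmetrically on both the core and dual sides.

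Setup: take $k = m-1$ and set $P = P_{\cal H}^{(m-1)}$, $Q = P_{{\cal H}^\ast}^{(m-1)}$. From the ${\cal H}$-saturation of $N$ and the ${\cal H}^\ast$-saturation of $L$, define the finite subcomplexes $P_N := (P_{{\cal H}|_N})^{(m-1)} \subset {\rm Int}\,N$ and $Q_L := (P_{{\cal H}^\ast|_L})^{(m-1)} \subset {\rm Int}\,L$. Both have dimension at most $m-1$, hence are strongly displaceable from themselves in $M$ by Example~\ref{exp_displacement}\,(1)(i), since $2(m-1) < 2m = n$. The absence of $m$-handles in $L$ forces the index sets contributing to $c(P_N)$ and to $c(Q_L)$ to be disjoint (indices $\leq m-1$ for the first, $\geq m+1$ for the second), which gives $c(P_N) + c(Q_L) \leq c({\cal H}|_L)$.

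Given $f$ with isotopy $F$, $F_1 = f$, ${\rm supp}\,F \subset N$, I proceed in three steps. First, I factor $F = F_Q \cdot F_0$ via Lemma~\ref{Isotopy_ext}, so that ${\rm supp}\,F_Q$ is contained in a small neighborhood of $Q \cap N$ in $L$ while $(F_0)_1 \in {\rm Diff}^r(M, M_N \cup Q)_0$. Second, I express $h_0 := (F_Q)_1$ via Lemma~\ref{basic_lemma_cl-2} applied with $K = Q_L$, the absorbing diffeomorphism $\phi$ coming from a truncation of the backward gradient flow of ${\cal H}$ in $L$ (which preserves $P$ and attracts $L \setminus P$ toward $Q_L$), and the strong displacement of $Q_L$ from itself in $M$. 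By Lemma~\ref{lem_reg_nbd}, $clb^f(\phi) \leq c(Q_L)$, so Lemma~\ref{basic_lemma_cl} yields a decomposition of $h_0$ with total $clb^f \leq 2 c(Q_L) + 1$. Third, I apply Lemma~\ref{factorization_hdle_cpt} to $(F_0)_1$, obtaining $(F_0)_1 = \beta \gamma$ with $\beta \in {\rm Diff}^r(M, M_N \cup Q)_0^c$, $clb^f(\beta) \leq 2c(P_N)$, and $\gamma \in {\rm Diff}^r(M, M_N \cup P \cup Q)_0^c$, $clb^f(\gamma) \leq 1$. Combining, $f = h_0 \cdot \beta \cdot \gamma$ in ${\rm Diff}^r(M, M_L)_0$. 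Using the identity $[a,b][c,d] = [ac, bd]$, valid when the pairs have disjoint supports, the ``tail'' single commutator of $h_0$ (supported near $Q_L$) can be merged with $\gamma$ (supported near $P_N$), bringing the total commutator count down to three and establishing~(1), while the $clb^f$ bounds sum to $2c(Q_L) + 2c(P_N) + 1 = 2c({\cal H}|_L) + 1$, establishing~(2).

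The main obstacle lies in the final compression: producing exactly three commutators, not four, requires arranging that the final single commutator in the decomposition of $h_0$ and the commutator $\gamma$ from Lemma~\ref{factorization_hdle_cpt} have disjoint supports, so that they fuse via the disjoint-support product rule for commutators. This forces a coordinated choice of the isotopy decomposition in the first step, the absorbing diffeomorphism $\phi$ in the second step, and the realization of $\gamma$ in the third, arranging that the $Q_L$-supported tail of $h_0$ lies entirely outside the ball(s) of $P_N$ used to carry $\gamma$.
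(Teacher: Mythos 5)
Your plan is broadly in the spirit of the paper's argument (two factorization stages, one using ${\cal H}$ over $N$ and one using ${\cal H}^\ast$ over $L$, with the no-$m$-handle hypothesis making the index sets disjoint so that $c(P_N)+c(Q_L)\leq c({\cal H}|_L)$), but the final compression from four commutators to three has a real gap, and your chosen ordering of the factorization makes the clean fix unavailable.

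After your decompositions you have $f = h_1\,h_2\,\beta\,\gamma$, and you propose to merge $h_2$ (the tail commutator of $h_0$) with $\gamma$ via the disjoint-support identity $[a,b][c,d]=[ac,bd]$. But $\beta$ sits \emph{between} $h_2$ and $\gamma$ in the product, and $\beta \in {\rm Diff}^r(M, M_N\cup Q_1)_0$ has support spread over ${\rm Int}\,N - Q_1$, which overlaps the region where $h_2$ lives. To move $h_2$ past $\beta$ you would have to conjugate, which destroys the carefully-placed support of $h_2$; and even before that, ensuring ${\rm supp}\,h_2 \cap {\rm supp}\,\gamma = \emptyset$ requires a non-trivial coordination that you flag but do not carry out. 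A separate, minor point: Lemma~\ref{Isotopy_ext} does not give ${\rm supp}\,F_Q$ contained in a small neighborhood of $Q\cap N$; it gives $F_Q$ supported near the (possibly large) track $\pi_M F(Q\cap N \times I)$ and vanishing near $P$. This claim is not needed, but as written it is false.

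The paper avoids all of this with the ``incorporation'' device (Remark~\ref{rmk_incorporation}): after the first isotopy factorization $F=GH$ (with $G$ fixing a perturbed dual skeleton $Q_1$ and $H$ fixing $P$), it decomposes $G_1 = g_1 g_2$ via Lemma~\ref{factorization_hdle_cpt} over $(N,{\cal H})$, and then applies the lemma not to $H_1$ alone but to the product $g_2H_1$. Since $g_2\in{\rm Diff}^r(M, M_N\cup P\cup Q_1)_0$ and $H_1\in{\rm Diff}^r(M, M_N\cup P)_0$, the product lies in ${\rm Diff}^r(M, M_N\cup P)_0 \subset {\rm Diff}^r(M, M_L\cup P)_0$ (using $M_N\supset M_L$), so Lemma~\ref{factorization_hdle_cpt} over $(L,{\cal H}^\ast)$ gives $g_2H_1 = h_1h_2$, hence $f = g_1h_1h_2$ with three commutators, and the $clb^f$ terms add up exactly as you computed. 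Note that the incorporation trick is only available in \emph{this} ordering: the tail factor of the first stage must fix $P$, so the product fed into the second stage lies in ${\rm Diff}^r(M, M_L\cup P)_0$. If you instead let $F_0$ fix $Q$ and decompose it over $(N,{\cal H})$ first, the product $h_2\,(F_0)_1$ is only supported in $L$, which is ${\cal H}^\ast$-saturated but not ${\cal H}$-saturated, so the second application of Lemma~\ref{factorization_hdle_cpt} has no admissible ambient submanifold. Reversing your Isotopy-extension factorization so that the piece fixing $P$ comes \emph{second}, and then incorporating the leftover single commutator into that piece, gives the same bound cleanly, without any disjoint-support merging.
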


\begin{proof} 
Let $P = P_{\cal H}^{(m)}$ and $Q = P_{\cal H^\ast}^{(m)}$.
By the assumption any open $m$-cell of $P$ does not intersect $L$ and any $m$-cell of $Q$ does not intersect $N$.  
This implies that \\
\hspace*{10mm} $P \cap L = P^{(m-1)} \cap L$, \ $Q \cap N = Q^{(m-1)} \cap N$, \ $P \cap Q \cap L = \emptyset$ \ and \ $M_N \cup P = M_N \cup P^{(m-1)}$. 

Given any $f \in {\rm Diff}^r(M, M_{N})_0$. Take $F \in {\rm Isot}^r(M, M_{N})_0$ with $F_1 = f$. 
Then there exists $\Phi \in {\rm Isot}(M, M_N \cup P)_0$ such that 
$Q_1 \cap \pi_M F((P \cap L) \times I) = \emptyset$ for $Q_1 := \Phi_1(Q)$. 
By Lemma~\ref{Isotopy_ext} we obtain a factorization $F = GH$, where 
$G \in {\rm Isot}^r(M, M_{N} \cup Q_1)_0$ and $H \in {\rm Isot}^r(M, M_{N} \cup P)_0$. 

Applying Lemma~\ref{factorization_hdle_cpt} to $(M, \Phi_1({\cal H}), N, P^{(m-1)}, Q_1)$, it follows that \ 
$G_1 \in {\rm Diff}^r(M, M_N \cup Q_1)_0$ \ has a factorization \ $G_1 = g_1g_2$ \\
\hsh for some 
\btab[t]{ll}
$g_1 \in {\rm Diff}^r(M, M_N \cup Q_1)_0^c$ & with \ $clb^f g_1 \leq 2c\big(P_{{\cal H}|_N}^{\ \ (m-1)}\big)$ in ${\rm Diff}^r(M, M_N \cup Q_1)_0$ \ and \\[3mm]
$g_2 \in  {\rm Diff}^r(M, M_N \cup P \cup Q_1)_0^c$ & with \ $clb^fg_2 \leq 1$ in ${\rm Diff}^r(M, M_N \cup P \cup Q_1)_0$. 
\etab \\[2mm]  
Since $g_2H_1 \in {\rm Diff}^r(M, M_N \cup P)_0 \subset {\rm Diff}^r(M, M_L \cup P)_0$, we can apply Lemma~\ref{factorization_hdle_cpt} to 
$(M, {\cal H}^\ast, L, Q^{(m-1)}, P)$ to obtain 
a factorization \ $g_2H_1 = h_1h_2$ \ for some $h_1, h_2 \in {\rm Diff}^r(M, M_L \cup P)_0^c$ \ with \\[1mm] 
\hspace{30mm} $clb^f h_1 \leq 2c\big(P_{\cal H^\ast|_L}^{\ \ (m-1)}\big)$ \ and \ $clb^fh_2 \leq 1$ \ in ${\rm Diff}^r(M, M_L \cup P)_0$. 

\benum
\item $f = G_1H_1 = g_1g_2H_1 = g_1h_1h_2$ \ and \ $g_1, h_1, h_2 \in {\rm Diff}^r(M, M_L)_0^c$. 
Hence, $cl\,f \leq 3$ \ in \ ${\rm Diff}^r(M, M_L)_0$.
\item 
Since $c\big(P_{{\cal H}|_N}^{\ \ (m-1)}\big) + c\big(P_{\cal H^\ast|_L}^{\ \ (m-1)}\big) \leq c({\cal H}|_L)$,   
it follows that, in ${\rm Diff}^r(M, M_L)_0$ \\[1mm] 
\hspace{3mm} $f = g_1h_1h_2$ \ \ and \ \ 
$clb^f\,f \leq 2c\big(P_{{\cal H}|_N}^{\ \ (m-1)}\big) + 2c\big(P_{\cal H^\ast|_L}^{\ \ (m-1)}\big) + 1 \leq 2c({\cal H}|_L) +1$. 
\eenum 
\vskip -7.5mm 
\end{proof} 

Next we consider a basic situation related to $m$-handles. 
Here we need to force the displacement property for $m$-cells to obtain some estimates on $cl$ and $clb^f$. 

\begin{setting}\label{setting_handle_even_cpt} 
Suppose $M$ is a $2m$-manifold without boundary, ${\cal H}$ is a handle decomposition of $M$, 
$P = P_{\cal H}^{(m)}$, $Q = P_{\cal H^\ast}^{(m)}$ and 
${\cal C}$, ${\cal C}'$ are some sets of open $m$-cells of $P$ with ${\cal C} \subset {\cal C}'$. 
\end{setting}

The sets of $m$-cells of $P$ and $Q$ are in the 1-1 correspondence $\sigma \llra \sigma^\ast$, 
where $\sigma^\ast$ is the dual $m$-cell to $\sigma$. 
Hence, the set ${\cal C}$ determines 
the set ${\cal C}^\ast = \{ \sigma^\ast \mid \sigma \in {\cal C} \}$ of open $m$-cells of $Q$. 
For simplicity, we set $Q_{\cal C} := Q - |{\cal C}^\ast|$. 
In this context, the gradient flow $\phi_t$ has the following properties : 
$\phi_t(M - Q_{\cal C}) = M - Q_{\cal C}$ and 
$M - Q_{\cal C}$ is attracted to $P^{(m-1)} \cup |{\cal C}|$. 

\begin{setting_3.3+} 
Suppose $N \in {\cal S}{\cal M}_c(M; {\cal H})$, 
$P_N = (P_{{\cal H}|_N})^{(m)}$, 
${\cal C}|_N := \{ \sigma \in {\cal C} \mid \sigma \subset N \}$, 
$O:= {\rm Int}\,N - Q_{\cal C}$, $K := P_N^{(m-1)} \cup |{\cal C}|_N|$ and $L = P^{(m-1)} \cup |{\cal C}'|$. 
\end{setting_3.3+}

Then $M_O = M_{N} \cup Q_{\cal C}$, $K \in {\cal K}(O)$, $L \in {\cal F}(M)$, $K \subset L$, $K$ is a subcomplex of $P_N$ and 
$N \cap  |{\cal C}^\ast| = N \cap  |({\cal C|_N})^\ast|$.  The flow $\phi_t$ has the following properties : 
The flow $\phi_t$ is transverse to $\partial N$, 
$\phi_t(N) \subset N$, $\phi_t(N - Q_{\cal C}) \subset N - Q_{\cal C}$, $N - Q_{\cal C}$ is attracted to $K$ 
and $\phi_t$ keeps each cell of $P_{\cal H}$ and $P_{\cal H^\ast}$ invariant.  
Therefore, the following holds : 
\bit 
\item[$(\sharp)$] $K$ has the weak absorption property in $O$ keeping $P$, $K$ and $L$ invariant.  
\eit
More precisely, for any $C \in {\cal K}(O)$ and any neighborhood $U$ of $K$ in $O$
there exists $H \in {\rm Isot}(M; M_O \cup K)_0$ such that $H_1(C) \subset U$ 
and $H_t$ keeps each cell of $P_{\cal H}$ and $P_{\cal H^\ast}$ invariant. 

\begin{lem}\label{lem_even_cpt_handle}  
In Settings{\rm ~\ref{setting_handle_even_cpt}, {}~\ref{setting_handle_even_cpt}$^+$}; 
Suppose $1 \le r \le \infty, \, r \neq 2m+1$. \\
If $Cl_M |{\cal C}|_N|$ has the displacement property for $Cl_M |{\cal C}'|$ in ${\rm Int}\,N - Q_{\cal C}$, then the following holds. 
\benum
\item[$(1)$]\hspace{0.2mm}{\rm (i)}\, $cld\,{\rm Diff}^r(M, M_{N} \cup Q_{\cal C})_0 \leq 2$, \\
{\rm (ii)} $clb^f\!d\,{\rm Diff}^r(M, M_{N} \cup Q_{\cal C})_0 \leq 2c(K)+1$. 
\item[$(2)$] Any $f \in {\rm Diff}^r(M, M_{N} \cup Q_{\cal C})_0$ has a factorization $f = gh$ \ such that \\ 
\hsp \hsh \btab[t]{@{}l@{ \ }l@{ \ }l}
$g \in {\rm Diff}^r(M, M_{N} \cup Q_{\cal C})_0^c$ & and & $clb^f(g) \leq 2c(K)$ in ${\rm Diff}^r(M, M_{N} \cup Q_{\cal C})_0$ \\[2mm] 
$h \in {\rm Diff}^r(M, M_{N} \cup Q_{\cal C} \cup L)_0^c$ & and & 
$clb^f(h) \leq 1$ in ${\rm Diff}^r(M, M_{N} \cup Q_{\cal C} \cup L)_0$. 
\etab 
\eenum 
\end{lem}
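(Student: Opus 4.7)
My plan is to invoke Lemma~\ref{basic_lemma_cl-bdry_final}\,[B] under the condition $(\ast)_1$, applied to $O := {\rm Int}\,N - Q_{\cal C}$, $L := P^{(m-1)} \cup |{\cal C}'|$, $K := P_N^{(m-1)} \cup |{\cal C}|_N|$ and $k := c(K)$, so that Setting~\ref{setting_basic_cl_clb^f} holds with $M_O = M_N \cup Q_{\cal C}$. Since $N$ is compact, $O$ is relatively compact in $M$, and hence ${\rm Diff}^r_c(M, M_O)_0 = {\rm Diff}^r(M, M_N \cup Q_{\cal C})_0$. The inclusions ${\cal C}|_N \subset {\cal C} \subset {\cal C}'$ and $P_N^{(m-1)} \subset P^{(m-1)}$ give $K \subset L$, so $K \cup L = L$. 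Routine checks using the ${\cal H}$-saturation of $N$ together with the transversality between cells of $P$ and cells of $Q$ (an $m$-cell $\sigma$ of $P$ meets $Q$ only at the centre of its dual $\sigma^\ast$, and $P^{(m-1)}$ does not meet $Q$ at all) confirm that $K \in {\cal K}(O)$ and $L \in {\cal F}(M)$.

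First I will verify $(\natural)$, namely $clb^f(K, O) \leq c(K)$; this is immediate from Lemma~\ref{lem_reg_nbd}, since $K$ is a finite subcomplex of the core complex $P_{\cal H}$ and $K \subset O$. Next I will verify $(\ast)_1$(i), the weak absorption property of $K$ in $O$ rel $L$ keeping $K$ invariant. This is precisely the property $(\sharp)$ recorded just before the lemma in Setting~\ref{setting_handle_even_cpt}$^+$: the absorbing isotopy is obtained by truncating the gradient flow $\phi_t$, which keeps every cell of $P_{\cal H}$ and of $P_{\cal H^\ast}$ invariant, and therefore preserves both $K$ and $L$.

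The remaining hypothesis $(\ast)_1$(ii) is the displaceability of $K$ from $K \cup L = L$ in $O$, and here the hypothesis of the lemma is used. Regarding $K$ and $L$ as $m$-dimensional stratified subsets of $M$, their top strata satisfy $Cl_M(K - K^{(m-1)}) = Cl_M |{\cal C}|_N|$ and $Cl_M(L - L^{(m-1)}) = Cl_M |{\cal C}'|$. The assumed displaceability of $Cl_M |{\cal C}|_N|$ from $Cl_M |{\cal C}'|$ in $O$ therefore upgrades, via Example~\ref{exp_displacement}\,(1)(ii)(a), to displaceability of $K$ from $L$ in $O$. Conclusions (1) and (2) then read off directly from the statement $(\flat)$ in Lemma~\ref{basic_lemma_cl-bdry_final} with $k = c(K)$. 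The only genuinely non-mechanical step is this stratified displacement upgrade, which exploits the critical equality $\dim K + \dim L = 2m = \dim M$; everything else is bookkeeping inside the general framework of Section~2.
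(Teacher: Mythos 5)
Your proposal is correct and follows essentially the same route as the paper's proof: invoking Lemma~\ref{basic_lemma_cl-bdry_final}\,[B]\,$(\ast)_1$ with $(\natural)$ verified via Lemma~\ref{lem_reg_nbd}, $(\ast)_1$(i) via the property $(\sharp)$ recorded in Setting~\ref{setting_handle_even_cpt}$^+$, and $(\ast)_1$(ii) via the stratified displacement criterion of Example~\ref{exp_displacement}\,(1)(ii)(a). Your extra remarks (that $K\cup L=L$, the dimension count $\dim K+\dim L=2m$, and the identification of the top strata with $Cl_M|{\cal C}|_N|$ and $Cl_M|{\cal C}'|$) are accurate unpackings of the same steps.
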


\begin{proof} We apply Lemma~\ref{basic_lemma_cl-bdry_final}\,[B]\,$(\ast)_1$ to $O = {\rm Int}\,N - Q$, $L$ and $K$. 
It follows that (i) $clb^f(K, O) \leq c(K)$ by Lemma~\ref{lem_reg_nbd}, 
(ii) $K$ has the weak absorption property in $O$ rel $L$ and keeping $K$ invariant by Setting~\ref{setting_handle_even_cpt}$^+$\,$(\sharp)$
and (iii) $K$ has the displacement property for $L$ in $O$ by Example~\ref{exp_displacement}\,(1)(ii)(a). 
\end{proof}

\subsection{Triangulations of manifolds} \mbox{} 
\subsubsection{\bf Generalities on triangulations of manifolds} \mbox{} 

Suppose $M$ is an $n$-manifold possibly with boundary 
and ${\mathcal T}$ is a $C^\infty$ triangulation of $M$. 
For each (closed) simplex $\sigma \in {\cal T}$, by the symbol $\stackrel{\circ}{\sigma}$ 
we denote the interior of $\sigma$ (the open simplex for $\sigma$). 
The symbol ${\mathcal T}^{(k)}$ $(k = 0,1,\cdots, n)$ denotes the $k$-skeleton of ${\mathcal T}$ and 
$|{\mathcal T}^{(k)}|$ denotes its realization in $M$, i.e., 
$|{\mathcal T}^{(k)}| = \bigcup \{ \sigma \mid \sigma \in {\mathcal T}^{(k)} \} \subset M$. 
Let $sd^i \hspace{0.2mm}\mathcal{T}$ $(i \geq 0)$ denote the $i$-th barycentric subdivision of  ${\mathcal T}$. 
Sometimes we do not distinguish a subcomplex ${\cal S}$ of ${\cal T}$ and its realization $|{\cal S}|$ in $M$. 

For each $k$-simplex $\sigma$ of ${\cal T}$ its barycenter is denoted by $b(\sigma)$. 
The simplex $\sigma$ determines the dual (closed) $(n-k)$-cell $\sigma^\ast$ and the handle $H_\sigma$ of index $k$. 
These are (piecewise $C^\infty$) PL-disks defined by \\[2.5mm] 
\hspace*{22mm} $\begin{array}[c]{l}
\sigma^\ast = \bigcup \big\{ \langle b(\sigma_k), b(\sigma_{k+1}), \cdots, b(\sigma_n) \rangle \mid 
\sigma = \sigma_k < \sigma_{k+1} < \cdots < \sigma_n \text{ in } {\cal T} \big\} \ \ \text{and} \\[3mm]
H_\sigma = St(b(\sigma), sd^2\,{\mathcal T}) = \bigcup \{ \tau \in sd^2\,{\mathcal T} \mid b(\sigma) \in \tau \}. 
\end{array}$ \\[2.5mm] 
Note that, if $\sigma \subset \partial M$, then $\sigma^\ast$ is a ``half'' disk and $H_\sigma$ is a ``half'' handle cut by $\partial M$ transversely.  
The handle $H_\sigma$ has the core disk $H_\sigma \cap \sigma$ and the cocore $H_\sigma \cap \sigma^\ast$. 
The triangulation ${\cal T}$ induces 
the dual PL cell decomposition ${\cal T}^\ast = \{ \sigma^\ast \mid \sigma \in {\cal T}\}$ and 
the PL handle decomposition ${\cal H}_{\cal T} = \{ H_\sigma \mid \sigma \in {\cal T} \}$ of $M$. 
The $(n-k-1)$-skeleton $({\cal T}^\ast)^{(n-k-1)}$ of ${\cal T}^\ast$ is called 
the dual $(n-k-1)$-skeleton for the $k$-skeleton ${\mathcal T}^{(k)}$ of ${\cal T}$. 
Under our convention in Section 3.1 this handle decomposition does not satisfy our requirement. 
However, in this bounded PL-case we permit this kind of handle decompositions. 

Associated to the triangulation ${\cal T}$, there exists  
a (downward) $C^\infty$ gradient flow $\phi_t$ $(t \in \IR)$ on $M$  
induced from a Riemannian metric of $M$ and 
a Morse function on $M$ which has a critical point of index $k$ at $b(\sigma)$ for each $k$-simplex $\sigma$ of ${\cal T}$. 
The cell decomposition ${\cal T}$ is recovered as the unstable cell complex associated to the flow $\phi_t$ and 
the dual cell decomposition ${\cal T}^\ast$ is approximated by 
the stable cell complex ssociated to $\phi_t$. 
When $\partial M \neq \emptyset$, the flow $\phi_t$ keeps $\partial M$, while the partial gradient flow associated to a handle decomposition in Section 3.1 is transverse to $\partial M$. 

A $C^\infty$ subpolyhedron of $M$ means a subcomplex of some $C^\infty$ triangulation of $M$. 

\begin{lem}\label{lem_reg_nbd_2} 
Suppose $M$ is an $n$-manifold possibly with boundary, ${\mathcal T}$ is a $C^\infty$ triangulation of $M$, 
$O \in {\cal O}({\rm Int}\,M)$ and $K$ is a finite subcomplex of ${\cal T}$ with $K  \subset O$. Then, the following holds. 
\benum
\item[$(1)$] $clb^f(K, O) \leq \dim K+1$. 
\item[$(2)$] If $K \subset K' \in {\cal K}(O)$ and $K'$ is weakly absorbed to $K$ in $O$ keeping $K$ invariant, then \\
\hspp $clb^f(K'; O) \leq clb^f(K;O) \leq \dim K+1$. 
\eenum 
\end{lem}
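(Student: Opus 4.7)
The plan for part (1) is to mimic the proof of Lemma~\ref{lem_reg_nbd} using the Morse function and associated $C^\infty$ downward gradient flow $\phi_t$ determined by ${\mathcal T}$, as described in the second paragraph of Section 3.2: there is a critical point of index $i$ at the barycenter $b(\sigma)$ of each $i$-simplex $\sigma$ of ${\mathcal T}$, and the open simplices $\stackrel{\circ}{\sigma}$ are precisely the unstable manifolds of $\phi_t$. In particular $\phi_t$ keeps each simplex of ${\mathcal T}$ invariant, so $K$ plays exactly the role played by a finite subcomplex of $P_{\cal H}$ in Lemma~\ref{lem_reg_nbd}. For each dimension $i$ at which $K$ has open $i$-cells, truncate $\phi_t$ in a small smooth ball about the barycenter of each such open $i$-cell of $K$ to obtain an isotopy $H^{(i)} \in {\rm Isot}(M, M_{D_i} \cup K)_0$ with $D_i \in {\cal B}_f(O)$ a finite disjoint union of balls (one per open $i$-cell of $K$). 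The product $H := H^{(0)} H^{(1)} \cdots H^{(\dim K)}$ absorbs a compact neighborhood of $K$ in $O$ into any prescribed smaller neighborhood, and only the dimensions $i$ with $K^{(i)} - K^{(i-1)} \neq \emptyset$ contribute a nontrivial factor. Hence $clb^f H_1 \leq c(K)$ in ${\rm Diff}_c(M, M_O)_0$, and since the dimensions appearing in $K$ are contained in $\{0, 1, \ldots, \dim K\}$, Notation~\ref{notation_c(P)} gives $c(K) \leq \dim K + 1$, whence $clb^f(K, O) \leq \dim K + 1$.

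For part (2), the plan is a direct appeal to Lemma~\ref{lem_clb^f_basic}\,(1)(ii). Part (1) already shows that $K$ has the weak neighborhood absorption property in $O$ (in fact with $clb^f \leq \dim K + 1$), and the hypotheses $K \subset K'$ with $K'$ weakly absorbed to $K$ in $O$ keeping $K$ invariant are exactly those of that lemma. The conclusion $clb^f(K'; O) \leq clb^f(K; O)$ combined with part (1) yields the desired inequality.

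I do not anticipate any serious obstacle, as the construction carries over from Lemma~\ref{lem_reg_nbd} with the PL handle decomposition ${\cal H}_{\mathcal T}$ in place of ${\cal H}$. The only minor point of care is that, although ${\cal H}_{\mathcal T}$ is only piecewise $C^\infty$, the gradient flow $\phi_t$ itself is $C^\infty$ on ${\rm Int}\,M$ (and $K \subset O \subset {\rm Int}\,M$), so the neighborhoods $D_i$ can be chosen as genuine $C^\infty$ $n$-balls and we control the $C^\infty$ version of $clb^f$ as required by the statement.
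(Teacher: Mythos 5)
Your proof of part (2) is exactly the paper's: an appeal to Lemma~\ref{lem_clb^f_basic}\,(1)(ii). For part (1), you take the gradient-flow route that the paper's proof acknowledges in its first sentence ("One may verify this lemma, using the flow $\phi_t$ as in Lemma~\ref{lem_reg_nbd}\ldots") before opting for a PL presentation instead: take a fine subdivision ${\cal T}'$ of ${\cal T}$ with $St(K, {\cal T}') \subset O$, set $U := St(K, sd^2\,{\cal T}')$, and compress the handles $H_\sigma = St(b(\sigma), sd^2\,{\cal T}')$ dimension by dimension. Both routes produce the same factorization $H = H^{(0)}\cdots H^{(\dim K)}$ and the same bound; the paper's version is arguably cleaner because the handles $H_\sigma$ are already the right supports.

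There is, however, one concrete flaw in your description of the supports $D_i$: you take them to be ``small smooth ball[s] about the barycenter of each \ldots open $i$-cell.'' A small ball around the critical point $b(\sigma)$ cannot do the job. The absorption property requires pushing a fixed compact neighborhood $U$ of $K$ into an \emph{arbitrarily} thin neighborhood $W$ of $K$, while the product $H$ is the identity outside $\bigcup_i D_i$; if the $D_i$ are small relative to the simplices of ${\cal T}$, then $H_1(U)$ still contains all of $U$ outside $\bigcup_i D_i$ and can never be pushed inside a thin $W$. (Test this on a single $1$-simplex $K$ in $\IR^2$: two small balls at the endpoints and one at the midpoint do not let you squash a rectangular neighborhood of $K$ onto a thin strip.) What you actually need, exactly as in the description of Lemma~\ref{lem_reg_nbd}, is to cut off the gradient vector field along a \emph{thin disk neighborhood of each closed $i$-simplex} — equivalently, a small disk neighborhood of each handle $H_\sigma$, $\sigma$ an $i$-simplex. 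These thin disk neighborhoods are pairwise disjoint $n$-balls because the $i$-handles of fixed index are disjoint, and after passing to a fine subdivision ${\cal T}'$ (so that $St(K, {\cal T}') \subset O$) they lie in $O$. With that correction — and the modification of \cite[Remark~2.1]{Tsu09} to turn each $H^{(i)}_1$ into a commutator, which you rely on implicitly by citing Lemma~\ref{lem_reg_nbd} — your count of $\dim K + 1$ commutators (one per dimension $0, \ldots, \dim K$, all of which occur for a simplicial complex) matches the statement.
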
 

For example, the assumption in (2) is satisfied if $K'$ is a finite subcomplex of ${\cal T}$ with $K \subset K' \subset O$ and $K'$ collapses to $K$ in the PL-sense. 

\begin{proof} (1) One may verify this lemma, using the flow $\phi_t$ as in Lemma~\ref{lem_reg_nbd}, based on \cite[Proof of Theorem 2.2, Remark 2.1]{Tsu09}. 
Here, we use a regular neighborhood of $K$ to obtain a simpler discription. 
Take a fine subdividion ${\cal T}'$ of ${\cal T}$ with $St(K, {\cal T}') \subset O$ and 
consider the regular neighborhood $U := St(K, sd^2 {\cal T}')$ of $K$ with respect to ${\cal T}'$. 
Then $U$ has the following property : 

\benum[(1)] 
\item[] \hspace{-4mm} $(\sharp)$ 
For any neighborhoods $W$ of $K$ and $V$ of $U$ in $O$ there exists $H \in {\rm Isot}_c(M, M_{V} \cup K)_0$ such that 
\bit 
\itema $H_1(U) \subset W$, \hsh {\rm (b)} $H_t(\sigma) = \sigma$ $(\sigma \in  {\cal T}', t \in I)$ \ \ and 
\itemc $H$ has a factorization $H = H^{(0)} H^{(1)} \cdots H^{(k)}$ $(k=\dim K)$ \ such that for each $i = 0,1, \cdots, k$ \\ 
there exists $D_i \in {\cal B}_f({\rm Int}_MV)$ for which 
$H^{(i)} \in {\rm Isot}(M, M_{D_i} \cup K)_0$ and $H^{(i)}_1 \in {\rm Diff}(M, M_{D_i})_0^c$ \\
$($in particular, $clb^f H_1 \leq k+1$ in ${\rm Diff}_c(M, M_{V})_0)$. 
\eit  
\eenum 

In fact, we can write as 
$U = \bigcup \{ H_\sigma \mid \sigma \in K' \}$, 
where $K' = {\cal T}'|_K$ (the subdivision of $K$ induced by ${\cal T}'$) and 
$H_\sigma = St(b(\sigma) , sd^2 {\cal T}')$ (the handle associated to $\sigma$ in ${\cal T}'$). 
Then for each $i$-simplex $\sigma$ of $K'$ (with $H_\sigma \not\subset {\rm Int}_MK$) 
we can compress  the handle $H_\sigma$ 
toward the union of $H_\sigma \cap K$ and the attaching region of $H_\sigma$ in a small disk neighborhood of $H_\sigma$. 
The isotopy $H^{(i)}$ is obtained by this procedure and the modification due to \cite[Remark 2.1]{Tsu09}. 

The statement (2) follows from Lemma~\ref{lem_clb^f_basic}\,(1)(ii). 
\end{proof}

\subsubsection{\bf Double mapping cylinder structures between complimentary full subcomplexes} \mbox{} 

We further observe that the triangulation ${\cal T}$ of a manifold $M$ provides $M$ with 
a more rigid structure than the induced handle decomposition ${\cal H}_{\cal T}$ and the gradient flow $\phi_t$. 
Especially, around any subcomplex of ${\cal T}$ we can take a regular neighborhood with a mapping cylinder structure. 
This structure also induces some flow, which has a simple discription and is used to construct absorbing diffeomorphisms. 
This section is devoted to this approach, which yields some basic estimates on $cl$ and $clb^f$ in compact manifolds possibly with boundary.  

\begin{setting}\label{setting_mapping_cylinder} 
Suppose $M$ is an $n$-manifold possibly with boundary, 
${\mathcal T}$ is a $C^\infty$ triangulation of $M$, ${\cal S}$ is a full subcomplex of ${\cal T}$ and ${\cal U} := \{ \sigma \in {\cal T} \mid \sigma \cap |{\cal S}| = \emptyset \}$. 
\end{setting}

Then ${\cal U}$ is also a full subcomplex of ${\cal T}$ and ${\cal S} = \{ \sigma \in {\cal T} \mid \sigma \cap |{\cal U}| = \emptyset \}$.
In this case ${\cal S}$ and ${\cal U}$ are said to be complimentary to each other in ${\cal T}$. 
Also note that any $\sigma \in {\cal T}$ is represented as $\sigma = (\sigma \cap {\cal S}) \ast (\sigma \cap {\cal U})$ (the join of $(\sigma \cap {\cal S})$ and $(\sigma \cap {\cal U})$). 
Consider the standard PL regular neighborhoods 
$St(|{\cal S}|, sd\, {\cal T})$ of $|{\cal S}|$ and 
$St(|{\cal U}|, sd\, {\cal T})$ of $|{\cal U}|$. 
These regular neighborhoods have the structure of PL mapping cylinder with the base 
$|{\cal S}|$ and $|{\cal U}|$ respectively, and 
$M$ has the structure of PL double mapping cylinder as the union of these mapping cylinders. 
This structure respects $\partial M$. 
From these observations we have a $C^\infty$ flow $\psi_t$ on $M$ which fixes $|{\cal S}|$ and $|{\cal U}|$ pointwise and 
keeps each simplex of ${\cal T}$ invariant and under which $M- |{\cal U}|$ is attracted to $|{\cal S}|$. 
The associated vector field $X_\psi$ on $M$ is constructed inductively on a neighborhood of ${\cal T}^{(k)}- ({\cal S} \cup {\cal U})$ for $k=1, \cdots, n$, so that $X_\psi$ is tangent to each simplex in ${\cal T} - ({\cal S} \cup {\cal U})$. 
Therefore, the following holds : 
\bit 
\item[$(\natural)$] For any $C \in {\cal K}(M - |{\cal U}|)$ and any neighborhood $U$ of $|{\cal S}|$ in $M - |{\cal U}|$
there exists $H \in {\rm Isot}_c(M; |{\cal S}| \cup |{\cal U}|)_0$ such that $H_1(C) \subset U$ and $H_t(\sigma) = \sigma$ $(\sigma \in {\cal T}, t \in I)$. 
If $C \in {\cal K}({\rm Int}\,M - |{\cal U}|)$, then we can take $H$ in ${\rm Isot}_c(M; \partial M \cup |{\cal S}| \cup |{\cal U}|)_0$.
\eit 

Note that, if ${\cal P}$ is any subcomplex of ${\cal T}$, 
then $sd\,{\cal P}$ is a full subcomplex of $sd\,{\cal T}$ and we can apply the above argument in $sd\,{\cal T}$. 

\begin{lem}\label{lem_mapping_cylinder}
Suppose $M$ is an $n$-manifold possibly with boundary, $1 \le r \le \infty, \, r \neq n+1$, 
${\mathcal T}$ is a $C^\infty$ triangulation of $M$, 
$P$ is a finite full subcomplex of ${\cal T}$, 
$Q := \bigcup \{ \sigma \in {\cal T} \mid \sigma \cap P = \emptyset \}$, 
$L$ is a subcomplex of ${\cal T}$ with $P \subset L$, 
$O_0 := M - Q$ and $\widetilde{O}_0 := O_0 \cup (\partial O_0 \times [0,1))$. \\
If $clb^f(P, \widetilde{O}_0) \leq \ell < \infty$ and $P$ is displaceable from $L \cup (\partial M \times [0,1))$ in $\widetilde{O}_0$, then the following holds. 

\benum 
\item[$(1)$] 
{\rm (i)} $cld\,{\rm Diff}^r_c(M, \partial M \cup Q)_0 \leq 2$, \hsh
{\rm (ii)} $clb^f\!d\,{\rm Diff}^r_c(M, \partial M \cup Q)_0 \leq 2\ell +1$. 

\item[$(2)$]  
Any $f \in {\rm Diff}^r_c(M, \partial M \cup Q)_0$ has a factorization $f = gh$ such that \\[2mm] 
\hsp 
\btab[c]{lll}
$g \in {\rm Diff}^r_c(M, \partial M \cup Q)_0^c$ & and & $clb^f(g) \leq 2\ell$ in ${\rm Diff}^r_c(M, \partial M \cup Q)_0$, \\[2mm] 
$h \in {\rm Diff}^r_c(M, \partial M \cup Q \cup L)_0^c$ & and & $clb^f(h) \leq 1$ in ${\rm Diff}^r_c(M, \partial M \cup Q \cup L)_0$. 
\etab 
\eenum 
\end{lem}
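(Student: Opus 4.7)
The plan is to apply Lemma~\ref{basic_lemma_cl-bdry_final}\,[A] under branch $(\ast)_1$, with the data
\[
O_0 := M - Q, \quad O := O_0 \cap {\rm Int}\,M = {\rm Int}\,M - Q, \quad K := P, \quad k := \ell.
\]
The locally finite union $Q$ of closed simplices of ${\cal T}$ is closed in $M$, so $O_0 \in {\cal O}(M)$ and $O \in {\cal O}({\rm Int}\,M)$; the finite complex $P$ is compact and disjoint from $Q$, so $P \in {\cal K}(O_0)$; and $L$ is closed in $M$ since ${\cal T}$ is locally finite. Because $M_O = \partial M \cup Q$, the conclusion $(\flat)$ of Setting~\ref{setting_basic_cl_clb^f} specializes to precisely statements (1)(i), (1)(ii), and (2) of the present lemma, so it will suffice to verify the hypotheses of [A].

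Two of these hypotheses are essentially given. The numerical hypothesis $clb^f(K, \widetilde{O}_0) \le k$ is our assumption $clb^f(P,\widetilde{O}_0) \le \ell$, and the displacement clause $(\ast)_1$(ii) reduces, via $P \subset L$, to the displaceability of $P$ from $L \cup (\partial M \times [0,1))$ in $\widetilde{O}_0$, which is the remaining hypothesis.

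The main step is to supply the twin weak absorption conditions $(\ast)_1$(i)(a),(b). For this I will invoke the double mapping cylinder structure of Setting~\ref{setting_mapping_cylinder}, which is available because $P$ is full in ${\cal T}$ with complementary full subcomplex $Q$. The associated flow $\psi_t$ fixes $P \cup Q$ pointwise, keeps every simplex of ${\cal T}$ set-wise invariant, and attracts $M - Q = O_0$ onto $P$. Truncating $\psi_t$ yields, as formulated in the bulleted property at the end of Section 3.2.2, for any $C \in {\cal K}(O_0)$ (respectively $C \in {\cal K}(O)$) and any neighborhood $U$ of $P$ in $O_0$, an isotopy $H \in {\rm Isot}_c(M; P \cup Q)_0$ (respectively $H \in {\rm Isot}_c(M; \partial M \cup P \cup Q)_0$) with $H_1(C) \subset U$ and $H_t(\sigma) = \sigma$ for every $\sigma \in {\cal T}$. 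Since $L$ is a union of simplices, $H_t(L) = L$. The first case provides $(\ast)_1$(i)(b) (weak absorption in $O_0$, with $P$ even pointwise fixed), and the second case together with $H_t(L) = L$ provides $(\ast)_1$(i)(a) (weak absorption in $O$ rel $L$). With all hypotheses verified, Lemma~\ref{basic_lemma_cl-bdry_final}\,[A] delivers $(\flat)$ with $k = \ell$, which is the desired conclusion. I expect no serious obstacle; the only subtle point is recognizing that the mapping cylinder flow realizes both absorption conditions simultaneously, and that this in turn relies essentially on the fullness of $P$ in ${\cal T}$.
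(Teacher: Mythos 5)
Your proposal is correct and follows essentially the same route as the paper: set $O := {\rm Int}\,M - Q$ so that $M_O = \partial M \cup Q$, read off the two weak-absorption clauses $(\ast)_1$(i)(a),(b) from the mapping-cylinder flow property $(\natural)$ of Setting~\ref{setting_mapping_cylinder}, and then apply Lemma~\ref{basic_lemma_cl-bdry_final}\,[A]\,$(\ast)_1$ with $K = P$. You have simply spelled out the hypothesis checks (including the reduction $K \cup L = L$ via $P \subset L$) that the paper leaves implicit.
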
 

\begin{proof} Let $O := {\rm Int}\,M - Q$. Then, $M_O = \partial M \cup Q$ and 
by Setting~\ref{setting_mapping_cylinder}\,$(\natural)$ the following holds; 
\bit 
\item[$(\ast)_1$] (i) $P$ has the weak absorption property (a) in $O$ rel $L$ and (b) in $O_0$ keeping $P$ invariant.
\eit 
Therefore, the conclusion follows from 
Lemma~\ref{basic_lemma_cl-bdry_final}\,[A]\,$(\ast)_1$ (taking $K := P$) and the assumptions. 
\end{proof} 

We list two important examples of complimentary full subcomplexes used in the subsequent sections. 

\begin{example}\label{exp_(S,U)_I}
Suppose $M$ is an $n$-manifold possibly with boundary and ${\mathcal T}$ is a $C^\infty$ triangulation of $M$. 

(1) The pair $(|{\mathcal T}^{(k)}|, |({\cal T}^\ast)^{(n-k-1)}|)$ 
of the $k$-skeleton of ${\cal T}$ and its dual $(n-k-1)$-skeleton 
is a pair of complimentary full subcomplexes in $sd\,{\cal T}$, that is, 
$|({\cal T}^\ast)^{(n-k-1)}| = \bigcup \{ \tau \in sd\,{\mathcal T} \mid \tau \cap |{\mathcal T}^{(k)}| = \emptyset \}.$ 

(2) Let $(P, Q) := (|{\mathcal T}^{(k)}|, |({\cal T}^\ast)^{(n-k-1)}|)$ or $(|({\cal T}^\ast)^{(k)}|, |{\mathcal T}^{(n-k-1)}|)$ \ $(k=0,1, \cdots, n)$ 
and $O_0 := M - Q$. 
Let $L$ be a subcomplex of $sd\,{\cal T}$ with $P \subset L$. 
When $M$ is compact, $P$ is a finite full subcomplex of $sd\,{\cal T}$ and 
the next assertions follow from Lemma~\ref{lem_reg_nbd_2} and the general position argument, respectively. 
\benum 
\itemI $clb^f(P, \widetilde{O}_0) \leq k+1$. 
Moreover, if $K$ is a compact $C^\infty$ subpolyhedron in $M$ with $K \subset P$ and 
$P$ is weakly absorbed to $K$ in $\widetilde{O}_0$ keeping $K$ invariant, then $clb^f(P, \widetilde{O}_0) \leq clb^f(K, \widetilde{O}_0) \leq \dim K +1$. 
\itemII If $2k < n$ and $\dim L \leq k$, then $P$ is displaceable from $L \cup (\partial M \times [0,1))$ in $\widetilde{O}_0$. 
\eenum 
\end{example}

\begin{example}\label{exp_(S,U)_II} (Even-dimensional case) 
 
Suppose $M$ is a compact $2m$-manifold possibly with boundary, ${\cal T}$ is a $C^\infty$ triangulation of $M$, 
and $({\cal C}, {\cal D})$ is a partition of the set of $m$-simplices of ${\cal T}$. 
Let ${\cal T}[m]$ and ${\cal T}^\ast[m]$ denote the sets of $m$-simplices of ${\cal T}$ and $m$-cells of ${\cal T}^\ast$ respectively. 

(1) The sets ${\cal T}[m]$ and ${\cal T}^\ast[m]$ are in the 1-1 correspondence $\sigma \llra \sigma^\ast$, 
and the a partition $({\cal C}, {\cal D})$ of ${\cal T}[m]$ induces 
the partition ${\cal C}^\ast = \{ \sigma^\ast \mid \sigma \in {\cal C} \}$, ${\cal D}^\ast = \{ \sigma^\ast \mid \sigma \in {\cal D} \}$ of ${\cal T}^\ast[m]$. 
Consider the subcomplexes \\
\hspace{25mm}  ${\cal S} := {\cal T}^{(m-1)} \cup {\cal C}$ \ of \ ${\cal T}$ \ \ and \ \ 
${\cal U} := ({\cal T}^\ast)^{(m-1)} \cup {\cal D}^\ast = ({\cal T}^\ast)^{(m)} - {\cal C}^\ast$ \ of \ ${\cal T}^\ast$. \\
Then, $|{\cal S}|$, $|{\cal U}|$ underlie complimentary full subcomplexes of $sd\,{\cal T}$, that is, 
$|{\cal U}| = \bigcup \{ \sigma \in sd\,{\cal T} \mid \sigma \cap |{\cal S}| = \emptyset \}$. 

(2) Suppose ${\cal C} \subset {\cal C}' \subset {\cal T}[m]$ and ${\cal D} \subset {\cal D}' \subset {\cal T}[m]$.  
Consider the following cases: \\[2mm] 
\hsp \hsh 
\btab[c]{c@{ \ }llll}
{[I]} & $(P, Q) = (|{\cal S}|, |{\cal U}|)$, & $C_0 = |{\cal C}|$, & $L = |{\cal T}^{(m-1)}| \cup |{\cal C}'|$, & $L_0 = |{\cal C}'|$ \\[2mm] 
{[II]} & $(P, Q) = (|{\cal U}|, |{\cal S}|)$, & $C_0 = |{\cal D}^\ast|$, & $L = |({\cal T}^\ast)^{(m-1)}| \cup |{\cal D}'{}^\ast|$, & $L_0 = |{\cal D}'{}^\ast|$  
\etab \\[2mm] 
In each case, $P, Q$ are complimentary finite full subcomplexes of $sd\,{\cal T}$, $L$ is a subcomplex of $sd\,{\cal T}$ and \\
\hsh $P \subset L$, \ 
$P = P^{(m-1)} \cup C_0$, $C_0 = Cl_M(P - P^{(m-1)})$ \ and \ 
$L = L^{(m-1)} \cup L_0$, $L_0 = Cl_M(L - L^{(m-1)})$. \\
Let $O_0 := M - Q$. The next assertion (i) follows from Lemma~\ref{lem_reg_nbd_2}. 
\bit 
\itemi $clb^f(P, \widetilde{O}_0) \leq m+1$. 
Moreover, if $K$ is a compact $C^\infty$ subpolyhedron in $M$ with $K \subset P$ and 
$P$ is weakly absorbed to $K$ in $\widetilde{O}_0$ keeping $K$ invariant, then $clb^f(P, \widetilde{O}_0) \leq clb^f(K, \widetilde{O}_0) \leq \dim K +1$.
\itemii
\bit 
\itema If $C_0$ is displaceable from $L_0 \cup (\partial M \times [0,1))$ in $\widetilde{O}_0$, then $P$ is displaceable from $L \cup (\partial M \times [0,1))$ in $\widetilde{O}_0$. 

\itemb If $C_0$ is weakly absorbed to a $(m-1)$-dimensional compact subpolyhedron in $\widetilde{O}_0$, then 
$C_0$ is displaceable from $L_0 \cup (\partial M \times [0,1))$. 
\eit 
\eit 
The assertion (ii)(a) is easily verified 
by the same argument as in Example~\ref{exp_displacement}\,(1)(ii)(a). 
The assertion (ii)(b) follows from Lemma~\ref{lem_absorption_displacement}\,(2)(ii)(a). 
\end{example} 

\section{Diffeomorphism groups of $(2m+1)$-manifolds}

\subsection{Factorization of isotopies on $(2m+1)$-manifolds} \mbox{} 

\subsubsection{\bf Removing crossing points} \mbox{} 

We recall the trick in \cite{BIP, Tsuboi2} to remove crossing points between some $m$-strata and the tracks of some compact $m$-submanifold under an isotopy on a $(2m+1)$-manifold. Consider the following situation. 

\begin{setting}\label{setting_Whitney trick} \mbox{} 
\bit 
\itemI $M$ is an $n$-manifold possibly with boundary $(n = 2m+1, m \geq 1)$ and 
$N \in {\cal S}{\cal M}_c({\rm Int}\,M)$.  
\itemII $E$ is a compact $C^\infty$ $m$-submanifold of ${\rm Int}\,N$, \\
$D \Subset D' \Subset D'' \Subset E$ are compact $C^\infty$ $m$-submanifolds of $E$. 
\itemiii $\Lambda$ is an $m$-dimensional stratified subset in $M - E$. 
\itemiv $H \in {\rm Isot}^r(M, M_N)_0$ $(1 \leq r \leq \infty)$
 and $W$ is an open neighborhood of $\pi_M H(D'' \times I)$ in ${\rm Int}\,N$. \\
Suppose $\Lambda^{(m-1)} \cap \pi_M H(E \times I) = \emptyset$ and $H = \id$ 
on $\text{[a neighborhood of $(E - {\rm Int}\,D)$]} \times I$. 
\eit 
\end{setting}

\noindent {\bf Removing crossing points.} (\cite[Section 3.3]{BIP}, \cite[Lemma 6.5]{Tsuboi2}) \\[1mm] 
{\bf (Claim.)}  There exists $\overline{H} \in {\rm Isot}^r(M, M_N)_0$, $U \in {\cal B}_f(W - E)$ and 
$A \in {\rm Isot}(M, M_U)_0$ such that \\
\hspp 
\btab[t]{c@{ \ }l} 
(i) & $\overline{H}$ is a $C^r$-approximation of $H$ \ and \ $\overline{H} = H$ on $(M - W) \times I$, \\[2mm] 
(ii) & $U \cap \pi_M \overline{H}((E - {\rm Int}\,D) \times I) =\emptyset$,  \\[2mm] 
(iii) & $\pi_M A\overline{H}(E \times I) \cap \Lambda = \emptyset$ \ and \ $A_1 \in {\rm Diff}(M, M_U)_0^c$. 
\etab 
\vskip 2mm 

\noindent {\bf (Construction.)} 
\benum[(1)] 
\item Take an open neighborhood $W_0$ of $D''$ in $W$ such that $\pi_M H(W_0 \times I) \subset W$.  
Then, we can find a $C^r$-approximation $\overline{H} \in {\rm Isot}^r(M, M_N)_0$ of $H$ such that 
$\overline{H} = H$ on $(M - W_0) \times I$ and 
\bit 
\itemI $\pi_M\overline{H}|_{D' \times I} : D' \times I \lra W$ is a $C^\infty$-immersion outside of a finite subset, 
which has no double points on $(D' - {\rm Int}\,D) \times I$ in $D' \times I$. 
\eit 
If $\overline{H}$ is sufficiently close to $H$, then 
\bit
\itemII (a) $\pi_M \overline{H}((E - {\rm Int}\,D) \times I) \cup (E \times \{ 0,1 \})) \cap \Lambda = \emptyset$, \hsh   
$\pi_M \overline{H}(D \times I) \cap \pi_M \overline{H}((E - {\rm Int}\,D') \times I) = \emptyset$, \\[1mm]  
(b) $\pi_M \overline{H}(E \times I) \cap \Lambda^{(m-1)} = \emptyset$. 
\eit  
Let $W' := W - \pi_M \overline{H}((E - {\rm Int}\,D) \times I)$. 
Note that $\pi_M \overline{H}(D'' \times I) \subset W$ and 
\bit 
\itemiii 
$\pi_M \overline{H}({\rm Int}\,D \times I) \cap \pi_M \overline{H}((E - {\rm Int}\,D) \times I) = \emptyset$ \ \ and \ \  
$\pi_M \overline{H}({\rm Int}\,D \times I) \subset W'$.
\eit 

We also choose the immersion $\pi_M\overline{H}|_{D \times I}$ in generic with respect to $\Lambda$ and the self-intersections, 
so that the arguments below work well. 
It follows that 
\begin{itemize}
\itemiv $\pi_M \overline{H}(E \times I)$ and $\Lambda$ intersect transversely 
at finitely many points $\{ \pi_M \overline{H}(v_i, s_i) \}_{i=1, \cdots, p}$, 
where $\{ v_i \}_{i=1, \cdots, p}$ are distinct points in ${\rm Int}\,D$, $s_i \in (0,1)$ $(i=1, \cdots, p)$ and $\pi_M \overline{H}$ is an embedding on 
$\cup_{i=1}^p(\{ v_i \} \times [s_i, 1])$. 

\itemv $\pi_M\overline{H}|_{D \times I}$ has finitely many double point curves, 
which are in general position with respect to the curves $\pi_M\overline{H}(\{ v \} \times I)$ \ $(v \in D)$ 
\eit 

\vskip 1mm 
\item We construct a disjoint union of closed $n$-disks $U = \cup_{i=1}^p U_i$ in 
$W' - E$ and  
$A \in {\rm Isot}(M, M_U)_0$ such that  
$\pi_M A\overline{H}(D \times I) \cap \Lambda = \emptyset$. 
\vskip 1mm 
\item[] {\bf [1]} the case that $m \geq 2$ : 

In this case, generically  
each arc $\pi_M\overline{H}(\{ v_i \} \times [s_i, 1])$ does not contain any double points and dose not intersect $E$. 
For each $i=1, \cdots, p$ take a thin closed $n$-disk neighborhood $U_i$ of $\pi_M\overline{H}(v_i \times [s_i, 1])$ in $W' - E$ so that 
$\{ U_i \}_{i=1, \cdots, p}$ are disjoint. 
The isotopy $A$ is obtained by pushing the track $\pi_M \overline{H}(D \times I)$ 
along the arcs $\pi_M\overline{H}(v_i \times [s_i, 1])$ $(i=1, \cdots, p)$ in $U$. 
To achieve the required condition $\pi_M A\overline{H}(D \times I) \cap \Lambda = \emptyset$, 
we choose a small parameter $t_0 \in (0,1)$ such that 
$\pi_M \overline{H}(D \times [0,t_0]) \cap U = \emptyset$, and take $A$ so that   
$A_t$ ($t \in [t_0, 1]$) pushes a thin neighborhood of each arc $\pi_M\overline{H}(v_i \times [s_i, 1])$ 
ahead of the point $\pi_M \overline{H}(v_i, s_i)$ in $U_i$.  
\vskip 1mm 
\item[] {\bf [2]} the case that $m = 1$ : 

In this case the double point curves may intersect each other and 
possibly there are at most finitely many triple points and cusps. 
Also the arcs $\pi_M\overline{H}(\{ v_i \} \times [s_i, 1])$ may intersect the double point curves. 
To remove the crossing points $\pi_M \overline{H}(E \times I) \cap \Lambda$, 
for each crossing point $\pi_M\overline{H}(v_i, s_i)$ we construct a tree $T_i \subset \pi_M\overline{H}({\rm Int}\,D \times [0,1])$ rooted at $\pi_M\overline{H}(v_i, s_i)$ which serves as a guide to construct the isotopy~$A$. 

In general, for a curve $\pi_M\overline{H}(\{ v \} \times [s,1])$ $(v \in {\rm Int}\,D)$, 
consider the double points on this curve of the form 
$\pi_M\overline{H}(v, t) = \pi_M\overline{H}(v', t')$ with $v' \in {\rm Int}\,D$, $s < t < t' < 1$.  
To each such double point $\pi_M\overline{H}(v, t)$ consider the curve $\pi_M\overline{H}(\{ v' \} \times [t', 1])$, which is called a branch on the curve $\pi_M\overline{H}(\{ v \} \times [s,1])$. 
Generically, $\pi_M\overline{H}(\{ v \} \times [s,1])$ is an arc including no triple points, no non-transverse double points and no cusps, 
and it has only finitely many branches 
$\pi_M\overline{H}(\{ v_j' \} \times [s_j', 1])$ rooted at a double point 
$\pi_M\overline{H}(v, s_j) = \pi_M\overline{H}(v'_j, s_j')$ with $s < s_j < s_j' < 1$ $(j = 1, \cdots,\ell)$. Generically, these branches are arcs with similar  properties. 

If the arc $\pi_M\overline{H}(\{ v \} \times [s,1])$ is 
a branch on an arc $\pi_M\overline{H}(\{ \underline{v} \} \times [\underline{s},1])$ rooted at 
$\pi_M\overline{H}(\underline{v}, \underline{s}') = \pi_M\overline{H}(v,s)$ $(\underline{s}' < s)$ 
and has no branches on itself, 
then we can find a thin closed  $n$-disk neighborhood $O$ of this arc and $A' \in {\rm Isot}(M, M_O)_0$ 
such that $\pi_M A'\overline{H}(\{ \underline{v} \} \times [\underline{s},1]) = \pi_M\overline{H}(\{ \underline{v} \} \times [\underline{s},1])$
and this arc do not have any branch corresponding to $\pi_M\overline{H}(\{ v \} \times [s,1])$ with respect to $A'\overline{H}$
(so the number of branches decreases by one).  
To define $A'$ we have to respect the times $t'_k, t_k$ which appear in the remaining double points 
$\pi_M\overline{H}(v, t_k) = \pi_M\overline{H}(v'_k, t'_k)$ on $\pi_M\overline{H}(\{ v \} \times [s,1])$ 
with $s < t_k' < t_k < 1$ $(k=1, \cdots, q)$. 
Take $\e > 0$ such that $t_k' < t_k -2\e$ $(k=1, \cdots, q)$. 
Then $A'_\tau$ $(\tau \in [s,1])$ pushes $O'[s, \tau]$ ahead of the point $\pi_M\overline{H}(v, s)$ and 
remains $O'[\tau +\e, 1]$ fixed, where $O'$ is a shrink of $O$ and 
$O'[a,b]$ denotes the part of $O'$ corresponding to $\pi_M\overline{H}(\{ v \} \times [a, b])$.
Since $\overline{H}_\tau(D)$ may appear in $O'[\tau+\e, 1]$, if we push this part across $O'[s]$, 
then it may happen that 
$A_{\tau}'\overline{H}_\tau(D)$ touches $\pi_M\overline{H}(\{ \underline{v} \} \times [\underline{s},1])$ and 
generates a new branch instead of $\pi_M\overline{H}(\{ v \} \times [s,1])$. 

The tree $T_i = \cup_{j=0}^{\ell_i} T_i^{(j)}$ is defined as follows : 
Let $T_i^{(0)} = \pi_M\overline{H}(\{ v_i \} \times [s_i, 1])$ and 
inductively $T_i^{(j)}$ is defined as the union of branches on the arcs in $T_i^{(j-1)}$. 
Generically, this procedure ends in a finite step $\ell_i$ and $T_i$ forms a tree and $\{ T_i \}_{i=1}^p$ are disjoint. 
We call $T = \cup_{i=1}^p T_i$ the intersection graph for $\overline{H}$ for short. Let $T^{(j)} = \cup_{i=1}^p T_i^{(j)}$ $(j=0, 1, \cdots, \ell)$, 
where $\ell = \max_i \ell_i$ and $T_i^{(j)}=\emptyset$ for $j > \ell_i$. 
Generically, $T \cap E = \emptyset$ and we may assume that 
the points $w$'s in ${\rm Int}\,D$ which appear in the following way are all distinct : 
$w = v_i$ for some $i = 1, \cdots, p$ or $\pi_M\overline{H}(v,s) = \pi_M\overline{H}(w,t)$, where 
$\pi_M\overline{H}(v,s)$ is a point of a branch arc in $T$ (including the arcs $\pi_M\overline{H}(\{ v_i \} \times [s_i, 1])$) and 
$\pi_M\overline{H}(w,t)$ is a point on a double points curve (including the case $s > t$). 

For each $i=1, \cdots, p$ take a thin closed $n$-disk neighborhood $U_i$ of $T_i$ in $W' - E$ so that 
$\{ U_i \}_{i=1, \cdots, p}$ are disjoint. 
Let $U = \cup_{i=1}^p U_i$ and for each $j=0,1, \cdots, \ell$ let $U^{(j)}$ denote 
a disjoint union of thin closed $n$-disk neighborhoods of the arcs in $T^{(j)}$ in $U$. 
Then, in the backward order we can inductively construct $A^{(j)} \in {\rm Isot}(M, M_{U^{(j)}})_0$ $(j= 0,1, \cdots, \ell)$  such that 
$A^{(k)} A^{(k+1)} \cdots A^{(\ell)}\overline{H}$ has the intersection graph $\cup_{j=0}^{k-1} T^{(j)}$ for each $k=0,1, \cdots, \ell$. 
Finally, let $A := A^{(0)} A^{(1)} \cdots A^{(\ell)} \in {\rm Isot}(M, M_{U})_0$.
Then $ A\overline{H}$ has the empty intersection graph, which means 
$\pi_M A\overline{H}(D \times I) \cap \Lambda = \emptyset$. 

\item Since $U \subset W'$, it follows that 
$\pi_M A\overline{H}((E - {\rm Int}\,D) \times I) = \pi_M \overline{H}((E - {\rm Int}\,D) \times I)$, 
which does not intersect $\Lambda$ by (1)(ii)(a). 
Therefore, we have $\pi_M A\overline{H}(E \times I) \cap \Lambda = \emptyset$. 
Finally, by \cite[Remark 2.1]{Tsu09}, we can modify $A \in {\rm Isot}(M, M_U)_0$ so that 
$A_1 \in {\rm Diff}(M, M_U)_0^c$. 
This completes the construction. 
\eenum

\subsubsection{\bf Factorization of isotopies on $(2m+1)$-manifolds} \mbox{} 

Next we recall the strategy in \cite{BIP}, \cite{Tsuboi2} to factor isotopies on compact $(2m+1)$-manifolds. 

\begin{setting}\label{setting_cpt_odd} Suppose $M$ is a $(2m+1)$-manifold possibly with boundary $(m \geq 1)$, 
$N \in {\cal S}{\cal M}_c({\rm Int}\,M)$, $F \in {\rm Isot}^r(M, M_N)_0$ and 
$P$, $Q$ are disjoint $m$-dimensional stratified subsets of $M$. 
\end{setting}
\noindent {\bf Review of the arguments in \cite{BIP}, \cite{Tsuboi2}.} \mbox{} \\ 
\noindent (Step 1) \ \ First we remove the intersections between low dimensional parts of $Q$ and $\pi_M F(P \times I)$. 
\benum 
\item By the general position argument we obtain an arbitrarily small isotopy $K \in {\rm Isot}(M, M_N \cup P)_0$ 
with support in an arbitrarily small neighborhood  of $Q$ such that $Q_1 := K_1(Q)$ satisfies the following conditions : 
$$\pi_M F(P \times I) \cap Q_1^{(m-1)} = \emptyset \ \ \ \text{and} \ \ \ 
\pi_M F((P^{(m-1)} \times I)\cup (P \times \{ 0,1 \})) \cap Q_1 = \emptyset.$$

\item 
By (1) and Lemma~\ref{Isotopy_ext} there exists a factorization $F = GH$ 
for some $G \in {\rm Isot}^r(M, M_{N} \cup Q_1)_0$ and $H \in {\rm Isot}^r(M, M_{N} \cup P^{(m-1)})_0$. 
From (1)\, it follows that 
\bit 
\item[(i)\ ] $\pi_M H(P \times I) \cap Q_1^{(m-1)} = \emptyset \ \ \ \text{and} \ \ \ 
\pi_M H((P^{(m-1)} \times I)\cup(P \times \{ 0,1 \})) \cap Q_1 = \emptyset.$
\eit 
\eenum

\noindent (Step 2) \ \ Next we remove the intersection $\pi_M H(P \times I) \cap Q_1$. 

\benum 
\item 
Take an open neighborhood $V$ of $M_N \cup P^{(m-1)}$ in $M$ such that $H = \id$ on $V \times I$. 
Since $O := P - (M_N \cup P^{(m-1)})$ is an $m$-manifold without boundary and 
$P - V$ is a compact subset of $O$, 
there exists a compact $m$-submanifold $E$ of $O$ with $P - V \subset {\rm Int}\,E$. 
Let $D \subset D' \subset D'' \subset E$ be shrinks of $E$ with $P - V \subset {\rm Int}\,D$. 
Note that $E - {\rm Int}\,D \subset P - {\rm Int}\,D \subset P \cap V$. 
Since $\pi_M H(D'' \times I) \subset {\rm Int}\,N - (P - {\rm Int}\,E)$, 
there exists an open neighborhood $W$ of $\pi_M H(D'' \times I)$ in $M$ with $Cl_MW \subset {\rm Int}\,N - (P - {\rm Int}\,E)$. 
 
\item By Removing crossing points in Subsection 4.1.1 (applied to $M$, $N$, $E$, $Q_1$, $H$, $W$), 
there exists \\
$\overline{H} \in {\rm Isot}^r(M, M_N)_0$, \ $U \in {\cal B}_f(W - E)$ \ and \ 
$A \in {\rm Isot}(M, M_U)_0$ \ \ such that 
\bit 
\itemI $\overline{H}$ is a $C^r$-approximation of $H$ \ and \ $\overline{H} = H$ on $(M - W) \times I$, 
\itemII $U \cap \pi_M \overline{H}((E - {\rm Int}\,D) \times I) =\emptyset$,
\itemiii $\pi_M A\overline{H}(E \times I) \cap Q_1 = \emptyset$ \ and \ $A_1 \in {\rm Diff}(M, M_U)_0^c$. 
\eit 
Then, it follows that 
\bit 
\itemiv (a) $H' := A\overline{H} \in  {\rm Isot}^r(M, M_N \cup P^{(m-1)})_0$ \ \ and \ \ (b) $\pi_MH'(P \times I) \cap Q_1 = \emptyset$. 
\eit 

In fact, since $U \subset W - E$, we have $U \subset {\rm Int}\,N - P$ and $A \in {\rm Isot}^r(M, M_N \cup P)_0$. 
From (i)(a) it follows that $\overline{H} \in  {\rm Isot}^r(M, M_N \cup P^{(m-1)})_0$. This implies (iv)(a).    
Since $P - E \subset V \cap (M - W)$, it follows that 
$\overline{H}_t = H_t = \id = A_t$ and $H'_t = \id$ on $P - E$, so that 
$\pi_MH'((P - E) \times I) = P - E$.  
This and (iii) means (iv)(b). 

By (iv)(b) and Lemma~\ref{Isotopy_ext} there exists a factorization 
\bit 
\itemv $H' = G'H''$ for some \ $G' \in {\rm Isot}^r(M, M_N \cup Q_1)_0$ \ and \ $H'' \in {\rm Isot}^r(M, M_N \cup P)_0$.
\eit  
\eenum

\noindent (Step 3) \ \ Factorization of $f$ \\
\hspace*{5mm} We denote the 1-levels of the isotopies $G$, $H$, $\overline{H}$, $A$, $H'$, $G'$ and $H''$ 
by the corresponding letters \break 
\hspace*{5mm} $g$, $h$, $\overline{h}$, $a$, $h'$, $g'$ and $h''$ respectively. 
(Step 1) and (Step 2) lead to the following factorization of $f$.
\benum
\item 
Since $\overline{h}^{-1}h \in {\rm Diff}^r(M, M_N)_0$ is sufficiently close to $\id_M$, for the open cover 
$\{ {\rm Int}\,N - Q_1, {\rm Int}\,N - P \}$ of ${\rm Int}\,N$ we have a factorization \\
\hsp $\overline{h}^{-1}h = \hat{h}\hat{g}$ \ \ for some 
$\hat{g} \in {\rm Diff}^r(M, M_N \cup Q_1)_0$ \ and \ $\hat{h} \in {\rm Diff}^r(M, M_N \cup P)_0$. 

\item Since $\overline{h} = a^{-1}g'h''$, it follows that \\[1mm] 
\hsh $f = gh = g \,\overline{h}\, (\overline{h}^{-1}h) = g \,(a^{-1}g'h'') (\hat{h}\hat{g}) 
= \hat{g}^{-1}\big[ \big(\hat{g}g a^{-1}(\hat{g}g)^{-1}\big) (\hat{g}g g')(h''\hat{h})\big] \hat{g} 
= \hat{g}^{-1}\big[ \tilde{a} \tilde{g} \tilde{h} \big] \hat{g},$ \\[1mm] 
\hsh 
where 
\btab[t]{c@{ \ }l}
(a) & $\tilde{g} := \hat{g}g g' \in {\rm Diff}^r(M, M_N \cup Q_1)_0$, \ \ \ 
(b) \ $\tilde{h}:=h''\hat{h} \in {\rm Diff}^r(M, M_N \cup P)_0$, \\[2mm] 
(c) & $\tilde{a}:= \hat{g}g a^{-1}(\hat{g}g)^{-1} \in {\rm Diff}^r(M, M_{\widetilde{U}})_0^c$ \ 
for \ $\widetilde{U} := \hat{g}g(U) \in {\cal B}_f({\rm Int}\,N)$.
\etab 
\eenum
\vskip 3mm 

\subsection{Compact manifold case} \mbox{} 

\begin{thm}\label{thm_cpt_bdry_odd} Suppose $M$ is a compact $(2m+1)$-manifold possibly with boundary $(m \geq 0)$ and $1 \leq r \leq \infty$, $r \neq 2m+2$.  Then 
$cld\,{\rm Diff}^r(M, \partial)_0 \leq 4$ and $clb^fd\,{\rm Diff}^r(M, \partial)_0 \leq 4m+6$.
\end{thm}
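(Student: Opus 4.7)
The plan is to combine the isotopy-factorization procedure of Section~4.1.2 with the mapping-cylinder factorization (Lemma~\ref{lem_mapping_cylinder}) applied to the $m$-skeleton and its dual in a $C^\infty$ triangulation of $M$, and then to absorb the residual ``ball-commutator'' $\tilde a$ into an interior factor to save one commutator and reach the stated bounds.

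First I would fix a $C^\infty$ triangulation $\mathcal T$ of $M$ and set $P := |\mathcal T^{(m)}|$, $Q := |(\mathcal T^\ast)^{(m)}|$; by Example~\ref{exp_(S,U)_I} these are disjoint $m$-dimensional complementary full subcomplexes of $sd\,\mathcal T$. Given $f \in {\rm Diff}^r(M,\partial)_0$, choose $F \in {\rm Isot}^r(M,\partial)_0$ with $F_1=f$ and a compact $N \Subset {\rm Int}\,M$ containing ${\rm supp}\,F$. Running the three-step procedure of Section~4.1.2 on $(M,N,P,Q,F)$ produces a small pre-isotopy $K$ of $Q$ to $Q_1=K_1(Q)$ and, up to conjugation by some $\hat g$, a factorization
\[
f \ \sim\ \tilde a\,\tilde g\,\tilde h,
\]
where $\tilde g \in {\rm Diff}^r_c(M,\partial M\cup Q_1)_0$, $\tilde h \in {\rm Diff}^r_c(M,\partial M\cup P)_0$, and $\tilde a \in {\rm Diff}^r(M,M_{\widetilde U})_0^c$ is a single commutator supported in a finite disjoint union of balls $\widetilde U \Subset {\rm Int}\,N$.

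Next I would apply Lemma~\ref{lem_mapping_cylinder} twice. For $\tilde h$, I take roles $(P_{\rm lem},Q_{\rm lem},L)=(Q,P,Q)$ inside $sd\,\mathcal T$: by Example~\ref{exp_(S,U)_I}(i), $clb^f(Q,\widetilde O_0)\leq m+1$, and by Example~\ref{exp_(S,U)_I}(ii), $Q$ is displaceable from $Q\cup(\partial M\times[0,1))$ in $\widetilde O_0$ because $\dim Q=m$ and $2m<2m+1=n$. Lemma~\ref{lem_mapping_cylinder}(2) yields $\tilde h=g_h h_h$ with each factor a single commutator, $clb^f(g_h)\leq 2(m+1)=2m+2$, and $h_h$ supported in a finite disjoint union of balls with $clb^f(h_h)\leq 1$. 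Applying the same argument to the pushed-forward triangulation $K_1(\mathcal T)$ produces $\tilde g=g_g h_g$ with analogous estimates.

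Finally, the decisive step is to absorb $\tilde a$ into $h_g$. By shrinking the neighborhood $W$ used in the ``Removing crossing points'' construction of Section~4.1.1 and by choosing the absorbing balls in Lemma~\ref{lem_mapping_cylinder} to avoid $\widetilde U$, I arrange $\widetilde U$ to be disjoint from the supports of both $g_g$ and $h_g$. Then $\tilde a$ commutes with $g_g$, and since $\tilde a=[A_a,B_a]$ and $h_g=[A_g,B_g]$ are single commutators with disjoint supports contained in unions of balls,
\[
\tilde a\,h_g \ =\ [A_aA_g,\ B_aB_g]
\]
is again a single ball-commutator. Consequently $\tilde a\,\tilde g\,\tilde h=g_g\cdot(\tilde a\,h_g)\cdot g_h\cdot h_h$ is a product of $4$ commutators with total $clb^f$ at most $(2m+2)+1+(2m+2)+1=4m+6$, giving the theorem. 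The hardest part will be this final compatibility step: the generic-position choice producing $\widetilde U$ inside $W$ and the absorbing-ball choices in Lemma~\ref{lem_mapping_cylinder} are a priori independent constructions, and one must interleave them carefully to force the required disjointness.
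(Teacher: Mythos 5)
Your setup through the factorization $f \sim \tilde a\,\tilde g\,\tilde h$ and the two applications of Lemma~\ref{lem_mapping_cylinder} is exactly the route the paper takes. The gap is in your final merging step: you decompose $\tilde g = g_g h_g$ and $\tilde h = g_h h_h$ \emph{independently} and then try to combine $\tilde a$ with $h_g$ into a single ball-commutator via the identity $[A_aA_g, B_aB_g]$. For that identity you need two things: (a) that $\tilde a$ commutes with $g_g$ so it can be slid past, and (b) that $\widetilde U$ and the ball union $D$ supporting $h_g$ are disjoint. Condition (b) is achievable by choosing the absorbing balls, but condition (a) cannot be arranged by ``shrinking $W$ and avoiding $\widetilde U$'': the commutator $g_g$ produced by Lemma~\ref{basic_lemma_cl} inside Lemma~\ref{lem_mapping_cylinder} is (a conjugate of) the compression commutator $[\phi, H^{-1}]$, whose support contains essentially all of the region $V \supset \mathrm{supp}\,\tilde g$ being compressed into the regular neighborhood of the skeleton. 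Its support is neither a ball union nor small, and $\widetilde U$ cannot in general be steered away from it. You also cannot rescue the argument by conjugating $\tilde a$ past $g_g$, since $\tilde a^{g_g^{-1}}$ is then supported in $g_g^{-1}(\widetilde U)$, which you have no control over relative to $D$.

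The paper's proof avoids this obstruction entirely by the \emph{incorporation} device (Remark~\ref{rmk_incorporation}). Rather than decomposing $\tilde h$ by itself, one observes that the residual ball-commutator from the first decomposition, $h_g$ (the paper's $g_2^k$), lies in $\mathcal D_{Q_1 \cup P} \subset \mathcal D_P$, so the \emph{product} $h_g\tilde h \in \mathcal D_P$ can be fed directly into the second application of Lemma~\ref{lem_mapping_cylinder}. This gives $h_g \tilde h = h_1 h_2$ with $clb^f(h_1)\le 2(m+1)$ and $clb^f(h_2)\le 1$, and hence $f \sim \tilde a\, g_g\, h_1\, h_2$ — four commutators, $clb^f \le 1 + (2m+2) + (2m+2) + 1 = 4m+6$ — without any support-disjointness hypothesis. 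That algebraic absorption, not a geometric disjointness claim, is what saves the extra commutator. You should replace your final step with it; everything before that point in your proposal is correct.
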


\begin{compl}\label{compl_cpt_bdry_odd} In Theorem~\ref{thm_cpt_bdry_odd},  
if ${\cal T}$ is a $C^\infty$ triangulation of $M$, $(P, Q) \equiv (|{\mathcal T}^{(m)}|, |({\cal T}^\ast)^{(m)}|)$ and 
$\alpha := clb^f\,(P, (M-Q)^\sim)$ and $\beta := clb^f\,(Q, (M-P)^\sim)$, then 
$clb^fd\,{\rm Diff}^r(M, \partial)_0 \leq 2(\alpha + \beta + 1)$. 
\end{compl}

Recall that $O^\sim \equiv \widetilde{O} := O \cup (\partial O \times [0,1)) \in {\cal O}(\widetilde{M})$ for $O \in {\cal O}(M)$.  

In the proof of Theorem~\ref{thm_cpt_bdry_odd} 
we use a triangulation (Example~\ref{exp_(S,U)_I}) instead of a handle decomposition and 
apply Lemma~\ref{lem_mapping_cylinder} to obtain the estimates on $cl$ and $clb^f$. 
This is our refinement for the existence of $\partial M$. 

\begin{proof}[\bf Proof of Theorem~\ref{thm_cpt_bdry_odd} and Compliment~\ref{compl_cpt_bdry_odd}] \mbox{} 
\benum[(1)] 
\item If $m=0$, then $M$ is a finite disjoint union of closed intervals and circles. 
Since \\
\hsp $clb^fd\,{\rm Diff}^r([0,1], \partial)_0 \leq 2$ \ \ and \ \ $clb^fd\,{\rm Diff}^r({\Bbb S}^1)_0 \leq 3$ \ \ (cf.~\cite[Remark 3.1]{Tsu09}), \\
it follows that \hsp $cld\,{\rm Diff}^r(M, \partial)_0 \leq clb^fd\,{\rm Diff}^r(M, \partial)_0 \leq 3$. \\
Below we assume that $m \geq 1$. 

\item 
Take any $C^\infty$ triangulation ${\cal T}$ of $M$ and 
let $(P, Q) = (|{\cal T}^{(m)}|, |{{\cal T}^\ast}^{(m)}|)$ (the $m$-skeleton of ${\cal T}$ and its dual $m$-skeleton). 
Given any $f \in {\rm Diff}^r(M, \partial)_0$. There exists $F \in {\rm Isot}^r(M, \partial)_0$ with $F_1 = f$. \\
For notational simplicity, we set ${\cal D}_A := {\rm Diff}^r(M, \partial M \cup A)_0$ for any subset $A$ of $M$. \\
We apply the argument in Subsection 4.1.2 to the data $\widetilde{M}$, $M$, $F$, $P$, $Q$. 
Here, we can identify as ${\rm Isot}^r(M, \partial)_0 = {\rm Isot}^r(\widetilde{M}, \widetilde{M}_M)_0$ 
and ${\rm Diff}^r(M, \partial)_0 = {\rm DIff}^r(\widetilde{M}, \widetilde{M}_M)_0$ canonically. 
\bit 
\itemI In (Step 1) we obtain an isotopy $K \in {\rm Isot}(M, \partial M \cup P)_0$ and $Q_1 := K_1(Q)$. 
\itemII (Step 3) yields a factorization of $f$ in the following form : \ \ 
$f = \hat{g}^{-1}(agh)\hat{g},$ \ \ 
where 
\bit
\itema $g, \hat{g} \in {\cal D}_{Q_1}$, \hsh 
(b) $h \in {\cal D}_{P}$, \hsh 
(c) $a\in {\rm Diff}^r(M, M_U)_0^c$ \ for some $U \in {\cal B}_f({\rm Int}\,M)$.
\eit 

\itemiii Let $k := K_1 \in {\cal D}_P$. Since $Q_1 = k(Q)$, we obtain $g' := g^{k^{-1}} \equiv k^{-1}g k\in {\cal D}_{Q}$.  
\eit 

\item We apply Lemma~\ref{lem_mapping_cylinder} to the factors of the factorization of $f$ in (2)(ii). \\
From Example~\ref{exp_(S,U)_I} it folllows that 
\bit 
\itemI $(P, Q) = (|{\cal T}^{(m)}|, |({\cal T}^\ast)^{(m)}|)$ is a pair of complimentary finite full subcomplexes of $sd\,{\cal T}$, 
\itemII $P$ is displaceable from $P \cup (\partial M \times [0,1))$ in $(M-Q)^\sim$ \ and \ $\alpha := clb^f\,(P, (M-Q)^\sim) \leq m+1$, 
\itemiii $Q$ is displaceable from $Q \cup (\partial M \times [0,1))$ in $(M-P)^\sim$ \ and \ $\beta := clb^f\,(Q, (M-P)^\sim) \leq m+1$. 
\eit 
Hence, we can apply Lemma~\ref{lem_mapping_cylinder} to $sd\,{\cal T}$, $(P, Q)$, $L = P$  
and $g' \in {\cal D}_Q$ to obtain 
a factorization \\[0.5mm] 
\hspace*{7mm} $g' = g_1g_2$ \ with \ 
$g_1 \in ({\cal D}_Q)^c$, \ $clb^f(g_1) \leq 2\alpha$ \ in ${\cal D}_Q$ \ and \ 
$g_2 \in ({\cal D}_{Q \cup P})^c$, \ $clb^f(g_2) \leq 1$ \ in ${\cal D}_{Q \cup P}$. \\[0.5mm] 
This induces the factorization \ $g = g'^{k} = g_1^{k}g_2^{k}$. \ \ Since $k(P, Q) = (P, Q_1)$, it follows that 
\vskip 1mm 
\bit 
\itemiv $g_1^{k} \in ({\cal D}_{Q_1})^c$, \ $clb^f(g_1^{k}) \leq 2\alpha$ \ in ${\cal D}_{Q_1}$ \ and \ 
$g_2^{k} \in ({\cal D}_{Q_1 \cup P})^c$, \ $clb^f(g_2^{k}) \leq 1$ \ in ${\cal D}_{Q_1 \cup P}$. 
\eit 
\vskip 1mm 
Next, we apply Lemma~\ref{lem_mapping_cylinder} to $sd\,{\cal T}$, $(Q, P)$, $L = Q$ and 
$g_2^k h \in {\cal D}_P$ to obtain a factorization 
\vskip 1mm 
\bit 
\itemv $g_2^k h = h_1h_2$ \ with \ 
$h_1 \in ({\cal D}_P)^c$, \ $clb^f(h_1) \leq 2\beta$ \ in ${\cal D}_P$, \ 
$h_2 \in ({\cal D}_{P \cup Q})^c$, \ $clb^f(h_2) \leq 1$ \ in ${\cal D}_{P \cup Q}$. 
\eit 
\vskip 1mm 
Therefore, in ${\rm Diff}^r(M, \partial M)_0$ it follows that 
\bit 
\item[(vi)] 
$f = \hat{g}^{-1}(agh) \hat{g}
= \hat{g}^{-1}(a g_1^k h_1h_2 ) \hat{g}$, 
\vskip 2mm 
\item[] $cl\,f \leq 4$ \ \ and \ \ 
$\bary[t]{l@{ \ }l}
clb^ff 
& = clb^f(a g_1^k h_1h_2) 
= clb^f\,a +  clb^f\,g_1^k +  clb^f\, h_1 +  clb^f\, h_2 \\[2.5mm] 
& \leq 1 + 2\alpha + 2\beta + 1 
\leq 4m+6. 
\eary$
\eit 
\eenum
\vskip -7mm 
\end{proof}

\begin{remark}\label{rmk_incorporation}
In the last paragraph of Proof of Theorem~\ref{thm_cpt_bdry_odd} we decomposed the composition $\tilde{g}_2\tilde{h}$ instead of $\tilde{h}$. 
(The same argument has already appeared in the proof of  Proposition~\ref{prop_2m_cpt_no-m-h}.)
We call this device an incorporation of a factor to the next factor. 
\end{remark}

For a $(2m+1)$-manifold with a handle decomposition we have 
the following relative estimate on $clb^f$.  

\begin{prop}\label{cl_odd_cpt_bd_handle} 
Suppose $M$ is a $(2m+1)$-manifold without boundary $(m \geq 0)$, $1 \leq r \leq \infty$, $r \neq 2m+2$ and 
${\cal H}$ is a handle decomposition of $M$. 
\benum
\item[{\rm [I]}\,] $clb^fd\,({\rm Diff}^r(M, M_N)_0, {\rm Diff}^r(M, M_{N_1})_0) \leq 2c({\cal H}|_{N_1})+2 \leq 2c({\cal H})+2$ \\
\hspace*{10mm} for any $N \in {\cal S}{\cal M}_c(M, {\cal H})$ and $N_1 \in {\cal S}{\cal M}_c(M, {\cal H}^\ast)$ with $N \subset N_1$. 

\item[{\rm [II]}] If $M$ is closed, then $clb^fd\,{\rm Diff}^r(M)_0 \leq 2c({\cal H})+2$.  
\eenum 
\end{prop}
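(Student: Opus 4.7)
The plan is to adapt the proof of Theorem~\ref{thm_cpt_bdry_odd} to the handle-decomposition setting, replacing the triangulation-based Lemma~\ref{lem_mapping_cylinder} with the handle-based Lemma~\ref{factorization_hdle_cpt}, and exploiting the pair $N \subset N_1$ in order to apply that lemma once in the ${\cal H}$-direction (bounded on $N$) and once in the ${\cal H}^\ast$-direction (bounded on $N_1$). The case $m=0$ reduces to a disjoint union of circles just as in the proof of Theorem~\ref{thm_cpt_bdry_odd}, so I assume $m\geq 1$.

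Set $P := P_{\cal H}^{(m)}$ and $Q := P_{{\cal H}^\ast}^{(m)}$. Given $f \in {\rm Diff}^r(M, M_N)_0$, choose $F \in {\rm Isot}^r(M, M_N)_0$ with $F_1 = f$ and apply the three-step crossing-points-removal and factorization procedure of Subsection~4.1.2 to $(M, N, F, P, Q)$. Because ${\rm supp}\,F \subset N$, all perturbations can be localized in $N$: Step~1 produces an isotopy $K \in {\rm Isot}(M, M_N \cup P)_0$ with $Q_1 := K_1(Q)$; Step~2 introduces $U \in {\cal B}_f({\rm Int}\,N)$; and Step~3 yields
$$ f = \hat g^{-1}(a\, g\, h)\,\hat g, $$
with $a \in {\rm Diff}^r(M, M_U)_0^c$, \ $g,\hat g \in {\rm Diff}^r(M, M_N \cup Q_1)_0$, \ $h \in {\rm Diff}^r(M, M_N \cup P)_0$. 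In particular $clb^f(a) \leq 1$ in ${\rm Diff}^r(M, M_{N_1})_0$.

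Next, factor $g$ on the ${\cal H}$-side. Set $k := K_1 \in {\rm Diff}(M, M_N \cup P)_0$; since $k(Q) = Q_1$ and $k$ fixes $M_N$, conjugation by $k$ carries ${\rm Diff}^r(M, M_N \cup Q)_0$ isomorphically onto ${\rm Diff}^r(M, M_N \cup Q_1)_0$ and preserves $clb^f$ by Remark~\ref{rmk_clb^f}\,(2). Thus $g' := k^{-1}gk \in {\rm Diff}^r(M, M_N \cup Q)_0$, and Lemma~\ref{factorization_hdle_cpt} (applicable since $2m < 2m+1 = n$) gives $g' = g_1'g_2'$ with $g_1' \in ({\rm Diff}^r(M, M_N \cup Q)_0)^c$, $clb^f(g_1') \leq 2c(P_N)$, and $g_2' \in ({\rm Diff}^r(M, M_N \cup P \cup Q)_0)^c$, $clb^f(g_2') \leq 1$. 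Conjugating back yields $g = g_1 g_2$ with the same $clb^f$-bounds and $g_2 \in {\rm Diff}^r(M, M_N \cup P \cup Q_1)_0$.

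Now incorporate $g_2$ into $h$ in the sense of Remark~\ref{rmk_incorporation}: since $M_{N_1} \subset M_N$ (because $N \subset N_1$), we have $g_2 h \in {\rm Diff}^r(M, M_N \cup P)_0 \subset {\rm Diff}^r(M, M_{N_1} \cup P)_0$. Apply Lemma~\ref{factorization_hdle_cpt} in its dual form---with $({\cal H}^\ast, N_1)$ replacing $({\cal H}, N)$, so that $P$ plays the role of the dual skeleton and $Q_{N_1} := (P_{{\cal H}^\ast|_{N_1}})^{(m)}$ plays the role of $P_N$---to obtain $g_2 h = h_1 h_2$ with $h_1 \in ({\rm Diff}^r(M, M_{N_1} \cup P)_0)^c$, $clb^f(h_1) \leq 2c(Q_{N_1})$, and $h_2 \in ({\rm Diff}^r(M, M_{N_1} \cup P \cup Q)_0)^c$, $clb^f(h_2) \leq 1$. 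Since $\hat g \in {\rm Diff}^r(M, M_N)_0 \subset {\rm Diff}^r(M, M_{N_1})_0$, conjugation by $\hat g$ preserves $clb^f$ there, so
$$ clb^f(f) \leq clb^f(a) + clb^f(g_1) + clb^f(h_1) + clb^f(h_2) \leq 1 + 2c(P_N) + 2c(Q_{N_1}) + 1 \leq 2c({\cal H}|_{N_1}) + 2, $$
where the last inequality uses the duality count $c(P_N) + c(Q_{N_1}) \leq c({\cal H}|_{N_1})$ (the index set of $P_N$ lies in $\{0,\ldots,m\}$, that of $Q_{N_1}$ corresponds via duality to $\{m+1,\ldots,n\}$, and both sit inside the index set of ${\cal H}|_{N_1}$), exactly as in the combinatorial step in the proof of Proposition~\ref{prop_2m_cpt_no-m-h}. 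This proves [I], and [II] follows immediately by taking $N = N_1 = M$, which is simultaneously ${\cal H}$- and ${\cal H}^\ast$-saturated when $M$ is closed. The main obstacle lies in the bookkeeping of support conditions through Steps~1--3 and in verifying that the dual application of Lemma~\ref{factorization_hdle_cpt} on the ${\cal H}^\ast$-saturated $N_1$ (which is not ${\cal H}$-saturated) yields the $clb^f$-bound in the relative group ${\rm Diff}^r(M, M_{N_1})_0$; the remainder is a direct transcription of the closed-manifold argument of Theorem~A.
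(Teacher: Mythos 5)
Your proof is correct and follows essentially the same route as the paper's: the same three-step factorization from Subsection 4.1.2 applied to $(M, N, F, P, Q)$ with the handle-based skeletons, the same conjugation-back trick with $k := K_1$, the same incorporation of $g_2$ into $h$ before the dual ${\cal H}^\ast$-application of Lemma~\ref{factorization_hdle_cpt} on $N_1$, and the identical duality count $c(P_N) + c(Q_{N_1}) \le c({\cal H}|_{N_1})$ (the paper names these $\alpha$ and $\beta$). The paper also confirms your self-assessment that the dual application of Lemma~\ref{factorization_hdle_cpt} works on ${\cal H}^\ast$-saturated $N_1$ by simply swapping $({\cal H}, P, Q, N)$ for $({\cal H}^\ast, Q, P, N_1)$ in the lemma's setting.
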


The estimate in [II] is compared with that in \cite{Tsu09}, that is, \\
\hspp $clb^f\,{\rm Diff}^r(M)_0 \leq 4c({\cal H}) + 3$ \ (\cite[Proof of Theorem 1.5]{Tsu09} (p.54)). 

\begin{proof} \mbox{} [I] The proof is similar to that of Theorem~\ref{thm_cpt_bdry_odd}. 
We apply Lemma~\ref{factorization_hdle_cpt} instead of Lemma~\ref{lem_mapping_cylinder}. 
\benum[(1)] 
\item If $m=0$, then 
as in Proof of Theorem~\ref{thm_cpt_bdry_odd}, 
$N$ is a finite disjoint union of closed intervals and circles and 
$clb^fd\,{\rm Diff}^r(M, M_N)_0 \leq 3$, while $c({\cal H}) = 2$.
Below we assume that $m \geq 1$. 

\item Let $(P, Q) = (P_{\cal H}^{(m)}, P_{\cal H^\ast}^{(m)})$. For notational simplicity, we set \\  
\hspp ${\cal D}_A := {\rm Diff}^r(M, M_N \cup A)_0$ \ and \ ${\cal D}_{1,A} := {\rm Diff}^r(M, M_{N_1} \cup A)_0$ \ for \ $A \subset M$. \\
Given any $f \in {\rm Diff}^r(M, M_N)_0$. There exists $F \in {\rm Isot}^r(M, M_N)_0$ with $F_1 = f$. \\
We apply the argument in Subsection 4.1.2 to the data $M$, $N$, $F$, $P$, $Q$. 
\bit 
\itemI In (Step 1) we obtain an isotopy $K \in {\rm Isot}(M, M_N \cup P)_0$ and $Q_1 := K_1(Q)$. 
\itemII (Step 3) yields a factorization of $f$ in the following form : \ \ 
$f = \hat{g}^{-1}(agh)\hat{g},$ \ \ 
where 
\bit
\itema $g,  \hat{g}\in {\cal D}_{Q_1}$, \hsh 
(b) $h \in {\cal D}_P$, \hsh 
(c) $a\in {\rm Diff}^r(M, M_U)_0^c$ \ 
for some $U \in {\cal B}_f({\rm Int}\,N)$.
\eit 

\itemiii Let $k := K_1 \in {\cal D}_P$. Since $k(M_N \cup Q) = M_N \cup Q_1$, we obtain $g' := g^{k^{-1}} \equiv k^{-1}g k \in {\cal D}_Q$.   
\eit 

\item We apply Lemma~\ref{factorization_hdle_cpt} to the factors of the factorization of $f$ in (2)(ii). We put 
\bit 
\itemI $\alpha := c({\cal H}|_N^{(m)}) = c\big((P_{{\cal H}|_N})^{(m)}\big) \leq m+1$ \ and \ 
$\beta := c({\cal H}^\ast|_{N_1}^{(m)}) = c\big((P_{{\cal H}^\ast|_{N_1}})^{(m)}\big) \leq m+1$. 
\eit 
\vskip 1mm 
Since ${\cal H}|_{N}^{(m)} \subset {\cal H}|_{N_1}^{(m)}$ and 
${\cal H}^\ast|_{N_1}^{(m)} = \{ h \in {\cal H}|_{N_1} \mid \text{[the index of $h$ in ${\cal H}] \geq m + 1$} \}$,   
 it follows that 
\vskip 1mm 
\bit 
\itemII $\alpha + \beta \leq c({\cal H}|_{N_1})$. 
\eit 
First we apply Lemma~\ref{factorization_hdle_cpt} to $M$, $N$, ${\cal H}$, $(P, Q)$ and $g' \in {\cal D}_Q$ to obtain 
a factorization \\[0.5mm] 
\hspace*{7mm} $g' = g_1g_2$ \ with \ 
$g_1 \in ({\cal D}_Q)^c$, \ $clb^f(g_1) \leq 2\alpha$ \ in ${\cal D}_Q$ \ and \ 
$g_2 \in ({\cal D}_{P \cup Q})^c$, \ $clb^f(g_2) \leq 1$ \ in ${\cal D}_{P \cup Q}$. \\[0.5mm] 
This induces the factorization \ $g = g'^{k} = g_1^{k}g_2^{k}$. \ \ Since $k(P, Q) = (P, Q_1)$, it follows that 
\vskip 1mm 
\bit 
\itemiii $g_1^{k} \in ({\cal D}_{Q_1})^c$, \ $clb^f(g_1^{k}) \leq 2\alpha$ \ in ${\cal D}_{Q_1}$ \ and \ 
$g_2^{k} \in ({\cal D}_{Q_1 \cup P})^c$, \ $clb^f(g_2^{k}) \leq 1$ \ in ${\cal D}_{Q_1 \cup P}$. 
\eit 
\vskip 1mm 
Next we apply Lemma~\ref{factorization_hdle_cpt} to $M$, $N_1$, ${\cal H}^\ast$, $(Q, P)$ and 
$g_2^k h \in {\cal D}_{1, P}$ to obtain a factorization \ $g_2^k h = h_1h_2$, 
\bit 
\itemiv 
$h_1 \in ({\cal D}_{1, P})^c$, \ $clb^f(h_1) \leq 2\beta$ \ in ${\cal D}_{1, P}$ \ and \ 
$h_2 \in ({\cal D}_{1, P \cup Q})^c$, \ $clb^f(h_2) \leq 1$ \ in ${\cal D}_{1, P \cup Q}$. 
\eit 
\vskip 1mm 
Therefore, in ${\rm Diff}^r(M, M_{N_1})_0$ it follows that 
\bit 
\itemv 
$f = \hat{g}^{-1}(agh) \hat{g}
= \hat{g}^{-1}(a g_1^k h_1h_2 ) \hat{g}$, 
\vskip 2mm 
\item[] $cl\,f \leq 4$ \ \ and \ \ 
$\bary[t]{l@{ \ }l}
clb^ff 
& = clb^f(a g_1^k h_1h_2) 
= clb^f\,a +  clb^f\,g_1^k +  clb^f\, h_1 +  clb^f\, h_2 \\[2.5mm] 
& \leq 1 + 2\alpha + 2\beta + 1 \leq 2c({\cal H}|_{N_1}) + 2.
\eary$
\eit 
\vskip 2mm 
Finally, the assertion [II] follows from [I] by taking $N = N_1 = M$. 
\eenum
\vskip -4mm 
\end{proof}

\subsection{Open manifold case} 

\begin{thm}\label{thm_open_odd} Suppose $M$ is an open $(2m+1)$-manifold $(m \geq 0)$, $1 \leq r \leq \infty$, $r \neq 2m+2$ and 
${\cal H}$ is any handle decomposition of $M$. Then, 
\benum
\item[{\rm (1)}] {\rm (i)} $cld\,{\rm Diff}^r(M)_0 \leq 8$ \ \ and \ \ 
{\rm (ii)} $clb^dd\,{\rm Diff}^r(M)_0 \leq 4c({\cal H})+4 \leq 8m +12$, 
\item[{\rm (2)}] {\rm (i)} $cld\,{\rm Diff}_c^r(M)_0 \leq 4$ \ \ and \ \ 
{\rm (ii)} $clb^fd\,{\rm Diff}_c^r(M)_0 \leq 2c({\cal H})+2 \leq 4m +6$. 
\eenum 
\end{thm}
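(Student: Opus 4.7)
The plan is to reduce both estimates to the relative bound of Proposition~\ref{cl_odd_cpt_bd_handle}[I], together with Theorem~\ref{thm_cpt_bdry_odd} for the compact $cl$-estimate, applied band-by-band along a suitable exhaustion of $M$.

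For Part~(2), given $f \in {\rm Diff}^r_c(M)_0$ with lift $F \in {\rm Isot}^r_c(M)_0$, I would use Lemma~\ref{H-submfd}(3) to fit ${\rm supp}\,F$ into some $N \in {\cal S}{\cal M}_c(M, {\cal H})$ and then produce $N_1 \in {\cal S}{\cal M}_c(M, {\cal H}^\ast)$ with $N \subset N_1$. The identification ${\rm Diff}^r_c(M, M_{N_1})_0 \cong {\rm Diff}^r(N_1; \partial)_0$ combined with Theorem~\ref{thm_cpt_bdry_odd} applied to the compact $(2m{+}1)$-manifold $N_1$ yields $cl(f) \leq 4$, while Proposition~\ref{cl_odd_cpt_bd_handle}[I] applied to $(N, N_1)$ gives $clb^f(f) \leq 2c({\cal H}|_{N_1})+2 \leq 2c({\cal H})+2$. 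Both bounds descend to ${\rm Diff}^r_c(M)_0$ since they are measured inside a subgroup.

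For Part~(1), given $f \in {\rm Diff}^r(M)_0$ with lift $F \in {\rm Isot}^r(M)_0$, I would invoke Lemma~\ref{lem_factorization_open-2} applied to the handle decomposition ${\cal H}$ (with empty ${\cal F}$ and empty auxiliary $\{K_k\}$) to produce an exhausting sequence $\{M_k\}$, ${\cal H}$-saturated compacta $N_k'$ and ${\cal H}^\ast$-saturated compacta $N_k''$ satisfying $M_{4k-1,4k+2} \Subset N_k' \Subset N_k'' \Subset M_{4k-2,4k+3}$ with the $N_k''$ pairwise disjoint, analogous $L_k' \subset L_k''$ for the ``odd'' bands, and a factorization $F = GH$. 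The main task is to check the hypotheses of Lemma~\ref{lem_factorization_open-2}[II](1)--(2) for both $q = cl$ and $q = clb^d$. Since $M_{4k-1,4k+2} \subset N_k'$, the inclusion ${\rm Diff}^r(M, M_{M_{4k-1,4k+2}})_0 \subset {\rm Diff}^r(M, M_{N_k'})_0$ holds, and Proposition~\ref{cl_odd_cpt_bd_handle}[I] applied to $(N_k', N_k'')$ gives
\[
clb^f d\bigl({\rm Diff}^r(M, M_{M_{4k-1,4k+2}})_0,\, {\rm Diff}^r(M, M_{N_k''})_0\bigr) \leq 2c({\cal H}) + 2;
\]
the inequality $clb^d \leq clb^f$ (Remark~\ref{rmk_clb=clb^f}(1)) then yields the same bound for $q = clb^d$, while the $cl$-bound $\leq 4$ follows from Theorem~\ref{thm_cpt_bdry_odd} applied to $N_k''$ via ${\rm Diff}^r_c(M, M_{N_k''})_0 \cong {\rm Diff}^r(N_k''; \partial)_0$. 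The analogous estimates hold for $(L_k', L_k'')$ and the bands $M_{4k-3,4k}$. Plugging into Lemma~\ref{lem_factorization_open-2}[II] gives $cl(G_1), cl(H_1) \leq 4$ and $clb^d(G_1), clb^d(H_1) \leq 2c({\cal H}) + 2$, so $cl(f) \leq 8$ and $clb^d(f) \leq 4c({\cal H}) + 4$. The numerical reductions to $8m+12$ and $4m+6$ come from $c({\cal H}) \leq n+1 = 2m+2$.

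The crux of the argument is the band-wise assembly: each of $G_1$ and $H_1$ is a ``discrete sum'' of compactly supported diffeomorphisms lying in pairwise disjoint saturated containers $N_k''$ (respectively $L_k''$), and a single commutator supported in a ball inside each container combines across $k$ into a single commutator supported in a discrete union of balls — this is precisely why $clb^d$ (rather than $clb^f$) appears on the left in Part~(1)(ii). Since this combinatorial bookkeeping is already encoded in Lemma~\ref{lem_factorization_open-2}[II], once the uniform band-wise inputs are in place the remainder is routine; the only real geometric inputs are Proposition~\ref{cl_odd_cpt_bd_handle}[I] and the saturated sandwich manufactured by Lemma~\ref{lem_factorization_open-2}.
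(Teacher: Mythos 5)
Your proposal is correct and follows essentially the same route as the paper: Part (2) by sandwiching ${\rm supp}\,F$ into an ${\cal H}$-saturated $N$ and an ${\cal H}^\ast$-saturated $N_1$ and invoking Theorem~\ref{thm_cpt_bdry_odd} and Proposition~\ref{cl_odd_cpt_bd_handle}\,[I]; Part (1) by the band factorization of Lemma~\ref{lem_factorization_open-2}, applying the same compact inputs band-by-band and assembling via the discrete-union bookkeeping. The only cosmetic difference is that the paper derives the $cl$-bound in (1)(i) from the simpler Lemma~\ref{lem_factorization_open} (applying Theorem~\ref{thm_cpt_bdry_odd} directly to $M_{4k-1,4k+2}$, no saturated sandwich needed), reserving Lemma~\ref{lem_factorization_open-2} for the $clb^d$ estimate where the ${\cal H}/{\cal H}^\ast$-saturation is genuinely required; your uniform use of Lemma~\ref{lem_factorization_open-2} for both is equally valid.
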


\begin{proof} 
(1) Take any $f \in {\rm Diff}^r(M)_0$. 
There exists $F \in {\rm Isot}^r(M)_0$ with $F_1 = f$. 
\benum 
\itemi 
By Lemma~\ref{lem_factorization_open} 
there exists an exhausting sequence $\{ M_k \}_{k\geq 1}$ in $M$ such that 
\bit 
\itema $M_k \in {\mathcal S}{\mathcal M}_c(M)$ $(k \geq 1)$ and 
$F(M_{4k, 4k+1} \times I) \Subset M_{4k-1, 4k+2} \times I$ $(k \geq 0)$.
\eit  
Set $M' := \bigcup_{k \geq 0} M_{4k+2, 4k+3}$ and $M'' := \bigcup_{k \geq 0} M_{4k, 4k+1}$. 
Then there exists a factorization $F = GH$ for some $G \in {\rm Isot}^r(M, M')_0$ and $H \in {\rm Isot}^r(M, M'')_0$. 
This induces the factorization $f = gh$, where $g = G_1 \in {\rm Diff}^r(M, M')_0$ and $h = H_1 \in {\rm Diff}^r(M, M'')_0$. 

For each $k \geq 0$ we have $G|_{M_{4k-1,4k+2} \times I} \in {\rm Isot}^r(M_{4k-1,4k+2}, \partial)_0$ 
and $g|_{M_{4k-1,4k+2}} \in {\rm Diff}^r(M_{4k-1,4k+2}, \partial)_0$. 
By Theorem~\ref{thm_cpt_bdry_odd} 
it follows that $cl\,g|_{M_{4k-1,4k+2}} \leq 4$ in ${\rm Diff}^r(M_{4k-1,4k+2}, \partial)_0$. 
This implies that $cl\,g \leq 4$ in ${\rm Diff}^r(M, M')_0$. 
Similarly $cl\,h \leq 4$ in ${\rm Diff}^r(M, M'')_0$. Therefore $cl\,f \leq 8$ in ${\rm Diff}^r(M)_0$. 

\itemii We trace the argument in (i) with necessary modifications. 
For notational simplicity we set $\alpha := 2c({\cal H})+2$ and ${\cal D}_A := {\rm Diff}^r(M, M_A)_0$ for $A \subset M$. 
For $N \in {\cal S}{\cal M}_c(M)$ and $\eta \in {\rm Diff}^r(N, \partial)_0$ let $\widetilde{\eta} \in {\rm Diff}^r(M, M_N)_0$ denote the 
canonical extension of $\eta$ by $\id$. 

By Lemma~\ref{lem_factorization_open-2} we may assume that 
the exhausting sequence $\{ M_k \}_{k\geq 1}$ satisfies the following additional conditions with respect to the handle decomposition ${\cal H}$: 
\benum 
\itemb There exist $N_k'  \in {\cal S}{\cal M}_c(M; {\cal H})$ and $N_k'' \in {\cal S}{\cal M}_c(M; {\cal H}^\ast)$ $(k \geq 0)$ 
such that for each $k \geq 0$ 
\bit 
\item[] $M_{4k-1, 4k+2} \Subset N_k' \Subset N_k'' \ \Subset \ M_{4k-2, 4k+3}$  
\ \ and \ \ $N_k'' \cap N_{k+1}'' = \emptyset$. 
\eit 

\itemc There exist $L_k'  \in {\cal S}{\cal M}_c(M; {\cal H})$ and $L_k'' \in {\cal S}{\cal M}_c(M; {\cal H}^\ast)$ $(k \geq 1)$ such that for each $k \geq 1$ 
\bit 
\item[] $M_{4k-3, 4k} \Subset L_k' \Subset L_k'' \ \Subset \ M_{4k-4, 4k+1}$  
\ \ and \ \ $L_k'' \cap L_{k+1}'' = \emptyset$. 
\eit 
\eenum 
By Proposition~\ref{cl_odd_cpt_bd_handle} we have \ 
$clb^fd\,({\cal D}_{N_k'}, {\cal D}_{N_k''}) \leq \alpha$ $(k \geq 0)$ \ and \ $clb^fd\,({\cal D}_{L_k'}, {\cal D}_{L_k''}) \leq \alpha$ $(k \geq 1)$. 
Then, it follows that \ $clb^f (g|_{M_{4k-1,4k+2}})^\sim \leq \alpha$ \ in ${\cal D}_{N_k''}$ \ and \ 
$clb^f (h|_{M_{4k-3,4k}})^\sim \leq \alpha$ \ in ${\cal D}_{L_k''}$. \\
This implies that $clb^d g \leq \alpha$ and $clb^d h \leq \alpha$ in ${\rm Diff}^r(M)_0$. 
Therefore $clb^d f \leq 2\alpha$ in ${\rm Diff}^r(M)_0$. 
\eenum

(2) The assertions follow directly from Theorem~\ref{thm_cpt_bdry_odd} and Proposition~\ref{cl_odd_cpt_bd_handle}. 
\end{proof} 

\section{Diffeomorphism groups of $2m$-manifolds} 

\subsection{Factorization of isotopies on $2m$-manifolds} \mbox{} 

\subsubsection{\bf Whitney trick} \mbox{} 

First we recall the Whitney trick used in \cite{Tsuboi3} to remove 
inessential intersections between some $m$-strata and the tracks of some $m$-disks under an isotopy on a $2m$-manifold. 
Below we assume that $m \geq 3$. 

\begin{setting}\label{setting_Whitney trick}
Consider the following situation : 
\bit 
\itemI $M$ is a $2m$-manifold possibly with boundary $(m \geq 3)$ and 
$N \in {\cal S}{\cal M}_c({\rm Int}\,M)$. 
\itemII $E = \bigcup_{i=1}^s E_i$ is a disjoint union of closed $m$-disks in ${\rm Int}\,N$, \\
$D = \bigcup_{i=1}^s D_{i}$, $D' = \bigcup_{i=1}^s D_{i}'$, $D'' = \bigcup_{i=1}^s D''_{i}$, where 
$D_i \Subset D'_{i}\Subset D''_{i} \Subset E_i$ are closed $m$-subdisks of $E_i$ $(i=1, \cdots, s)$. 
\hfill (The case for $s = 1$ is described in \cite{Tsuboi3}.) 
\itemiii $\Lambda$ is an $m$-dimensional stratified subset in $M - E$. 
\itemiv $\widetilde{H} \in {\rm Isot}(M, M_N)_0$ 
 and $W$ is an open neighborhood of $\pi_M \widetilde{H}(D'' \times I)$ in ${\rm Int}\,N$. \\
Suppose $\Lambda^{(m-1)} \cap \pi_M\widetilde{H}(E \times I) = \emptyset$ and $\widetilde{H} = \id$ 
on $\text{[a neighborhood of $(E - {\rm Int}\,D)$]} \times I$. 
\eit 
\end{setting}
\noindent {\bf Basic Whitney trick.} (\cite[p161 - 165, Lemma 3.6]{Tsuboi3})
\begin{enumerate}[(1)]  
\item Take a $C^\infty$ approximation $\widetilde{H}' \in {\rm Isot}(M, M_N)_0$ of $\widetilde{H}$ such that 
(a) $\widetilde{H}' = \widetilde{H}$ on $(M - W) \times I$,
\bit 
\itemb $\pi_M\widetilde{H}'|_{D' \times I}$ is a $C^\infty$ immersion outside a 1-dimensional subset and it is generic with respect to $D$, $\Lambda$ and self-intersections, and $\pi_M\widetilde{H}'((D' - {\rm Int}\,D) \times I)$ has no double point, 
\itemc $\pi_M\widetilde{H}'((E - {\rm Int}\,D) \times I) \cap \pi_M\widetilde{H}'(({\rm Int}\,D) \times I) = \emptyset$, 
$\pi_M\widetilde{H}'((E - {\rm Int}\,D) \times I) \cap \Lambda = \emptyset$ and \\
$\widetilde{H}' = \id$ on $\text{[a neighborhood of $(E - {\rm Int}\,D'')$]} \times~I$. 
\eit 

\item In this situation the intersection $\Gamma := \Lambda \cap \pi_M\widetilde{H}'(D \times I)$ is a compact 1-manifold 
as a trace of finite points $\Lambda \cap \widetilde{H}_t'(D)$ $(t \in I)$. 
Since $\Lambda \cap D = \emptyset$, the algebraic intersection number 
between $\Lambda$ and each level $\widetilde{H}_t'(D)$ is zero 
and there are finitely many generations and cancellations of intersection points. 
\item A pre-Whitney disk is constructed by tracing a pair of intersection points starting from each generation point of intersection 
and connecting them in each level by the image of a geodesic segment in $D$, where each $D_i$ is regarded as a round $m$-disk with a flat metric. 
If two such pairs of intersection points meet at a cancellation point of intersection in a level $\widetilde{H}_t'(D)$, 
the remaining two intersection points from those pairs are coupled and their traces are connected by a geodesic segment in each level. 
The gap among three geodesic segments in the level $\widetilde{H}_t'(D)$ is fullfilled by a geodesic triangle in this level. 
\bit 
\itemI If the trace of a pair of intersection points reaches the 1-level $\widetilde{H}_1'(D)$, we obtain an arc component of $\Gamma$ and a disk bounded by this arc and an arc in $\widetilde{H}_1'(D)$. 
The Whitney disk for this arc component is obtained by smoothing this disk near each geodesic triangle. 
\itemII If the trace of a pair of intersection points self-intersects at a cancellation point $\widetilde{H}_t'(v)$ in a level $\widetilde{H}_t'(D)$, then we obtain a circle component of $\Gamma$ and a disk bounded by this circle. 
The Whitney disk associated to this circle component is obtained by smoothing this disk near each geodesic triangle and 
add a thin band along the arc $\pi_M\widetilde{H}'(\{ v \} \times [t,1])$ in $\pi_M\widetilde{H}'(D \times I)$. 
\eit 
Since $\widetilde{H}'$ is generic, it follows that 
\bit 
\itemiii 
these Whitney disks are pairwise disjoint and embedded in $\pi_M\widetilde{H}(D \times I) - D$,  
and in $\pi_M\widetilde{H}(D \times I)$
they have (a) no double points if $m \geq 4$ and 
(b) finitely many double points if $m = 3$. 
In the case (b), at each double point we take the branch arc of the form 
$\pi_M\widetilde{H}'(\{ v_i \} \times [t_i, 1])$, which has no double points except $\pi_M\widetilde{H}'(v_i , t_i)$. 
\eit 
 
\item Let $\widetilde{U}$ be the disjoint union of small $2m$-disk neighborhoods of these Whitney disks 
(together with the branch arcs in the case $m=3$) in $W - \big(E \cup \pi_M\widetilde{H}'((E - {\rm Int}\,D) \times I)\big)$. 
Then there exists $\widetilde{A} \in {\rm Isot}(M, M_{\widetilde{U}})_0$ such that   
$\pi_M \widetilde{A}\widetilde{H}'(D \times I) \cap \Lambda = \emptyset$ and $\widetilde{A}_1 \in {\rm Diff}(M, M_{\widetilde{U}})_0^c$. 
Note that $\pi_M \widetilde{A}\widetilde{H}'(E \times I) \cap \Lambda = \emptyset$.
\eenum 

\subsubsection{\bf Factorization of isotopies on $2m$-manifolds} \mbox{} 

\begin{setting}\label{setting_cpt_even} Suppose $M$ is an $2m$-manifold possibly with boundary $(m \geq 1)$, 
$N$ is a compact $2m$-submanifold of ${\rm Int}\,M$ and $F \in {\rm Isot}^r(M, M_N)_0$. 
Suppose $P$ and $Q$ are $m$-dimensional stratified subsets of $M$ such that 
$P^{(m-1)} \cap Q = P \cap Q^{(m-1)} = \emptyset$. 
\end{setting}
\noindent {\bf Review of the arguments in \cite{Tsuboi3} and some refinements in our setting.} \mbox{} \\ 
\noindent (Step 1) \ \ First we remove the intersections between low dimensional skeletons of $Q$ and $\pi_M F(P \times I)$. 
\begin{enumerate}
\item There exists an arbitrarily small isotopy $K \in {\rm Isot}(M, M_N \cup P^{(m-1)})_0$ with support in an arbitrarily small neighborhood of $Q$ such that $Q_1 := K_1(Q)$ satisfies the following conditions : 

\begin{itemize}
\item[(0)\,] $Q_1^{(m-1)} \cap P = \emptyset$ \hspace{10mm} (i) \ $Q_1 \cap \pi_M F((P^{(m-2)} \times I) \cup (P^{(m-1)} \times \{ 0, 1\})) = \emptyset$
\item[(ii)\,] $Q_1^{(m-1)} \cap \pi_M F(P^{(m-1)} \times I) = \emptyset$ 
\hspace{10mm} (iii) \ $Q_1^{(m-2)} \cap \pi_M F(P \times I) = \emptyset$. 
\end{itemize}
\item 
By (1)(i) and Lemma~\ref{Isotopy_ext} there exists a factorization $F = GH$ 
for some $G \in {\rm Isot}^r(M, M_{N} \cup Q_1)_0$ and $H \in {\rm Isot}^r(M, M_{N} \cup P^{(m-2)})_0$. 
From (1)\,(ii), (iii) it follows that 
\bit 
\item[(i)\ ] $Q_1^{(m-1)} \cap \pi_M H(P^{(m-1)} \times I) = \emptyset$ 
\hspace{10mm} (ii) \ $Q_1^{(m-2)} \cap \pi_M H(P \times I) = \emptyset$. 
\eit 
\end{enumerate} 

\noindent $\ast$ Below assume that $m \geq 2$. 
\vskip 2mm 

\noindent (Step 2) \ \ Next we remove the intersection $Q_1 \cap \pi_M H(P^{(m-1)} \times I)$.  
(cf.~Proof of Theorem~\ref{thm_cpt_bdry_odd}\,Case [I]) 
\begin{enumerate} 
\item Choose a $C^\infty$ approximation $\overline{H} \in {\rm Isot}(M, M_N \cup P^{(m-2)})_0$ of $H$ which is generic with respect to $P$ and $Q_1$ (so that (2) holds). If $\overline{H}$ is sufficiently close to $H$, then 
\bit 
\item[(i)\ ] $Q_1^{(m-1)} \cap \pi_M \overline{H}(P^{(m-1)} \times I) = \emptyset$ 
\hspace{10mm} (ii) \ $Q_1^{(m-2)} \cap \pi_M \overline{H}(P \times I) = \emptyset$. 
\eit 

\item There exists $U \in {\cal B}_f({\rm Int}\,N - (P^{(m-1)} \cup Q_1^{(m-1)}))$  
and $A \in {\rm Isot}(M, M_U)_0$ \ such that \\
$Q_1 \cap \pi_M A \overline{H}(P^{(m-1)} \times I) = \emptyset$ and $A_1 \in {\rm Diff}(M, M_U)_0^c$. 
Let $H' : = A \overline{H} \in {\rm Isot}(M, M_N \cup P^{(m-2)})_0$. 

\item There exists a factorization $H' = G'H''$ for some \\
\hspace*{30mm} $G' \in {\rm Isot}(M, M_{N} \cup Q_1)_0$ \ and \ $H'' \in {\rm Isot}(M, M_{N} \cup P^{(m-1)})_0$. \\
We have \ $Q_1^{(m-2)} \cap \pi_M H''(P \times I) = \emptyset$ \ since \ $Q_1^{(m-2)} \cap \pi_M H'(P \times I) = \emptyset$. 
\end{enumerate} 

\vskip 2mm 
\noindent $\ast$ From here we assume that $m \geq 3$.  
\vskip 2mm 
\noindent (Step 3) \ \ We remove the intersection $Q_1^{(m-1)} \cap \pi_M H''(P \times I)$. (cf.~Proof of Theorem~\ref{thm_cpt_bdry_odd}\,Case [I])
\begin{enumerate} 
\item Take a $C^\infty$ approximation $\overline{H}'' \in {\rm Isot}(M, M_N \cup P^{(m-1)})_0$ of $H''$ which is generic with respect to $P$ and $Q_1$. We have $Q_1^{(m-2)} \cap \pi_M\overline{H}''(P \times I) = \emptyset$. 

\item There exists $U'' \in {\cal B}_f({\rm Int}\,N - (P \cup Q_1^{(m-2)}))$ and  
$A'' \in {\rm Isot}(M, M_{U''})_0$ such that \\
$Q_1^{(m-1)} \cap \pi_M A''\overline{H}''(P \times I) = \emptyset$ and $A''_1 \in {\rm Diff}(M, M_{U''})_0^c$. 
Let $H''' := A''\overline{H}'' \in {\rm Isot}(M, M_N \cup P^{(m-1)})_0$.
\end{enumerate} 
\hspace*{5mm} In \cite{Tsuboi3} Step 3 is incorporated to the inductive argument in Step 4\,(2) below 
and repeated in each inductive step. 

Until now we have obtained $H^{(0)} := H''' \in {\rm Isot}(M, M_N \cup P^{(m-1)})_0$ with  
$Q_1^{(m-1)} \cap \pi_M H^{(0)}(P \times I) = \emptyset$. 

\begin{setting_5.2+}
Let ${\cal S}$ denote the set of all $m$-strata of $P$ and we put \\
\hspace*{10mm} ${\cal S}_0 := \{ \sigma \in {\cal S} \mid H^{(0)} \equiv \id 
\text{ on $[$a neighborhood of $\sigma] \times I$}\}$ \hspace{3mm} and \\
\hspace*{10mm} ${\cal S}_c := \{ \sigma \in {\cal S} - {\cal S}_0  \mid \sigma \text{ is an open $m$-disk}, \ \sigma \subset {\rm Int}\,N\}$. \\[0.5mm] 
Suppose ${\cal E}$ is a finite subset of ${\cal S}_c$ and ${\cal E} = \bigcup_{j=1}^k {\cal E}_j$ is a partition of ${\cal E}$ $(k \geq 1)$. Let \\
\hspace*{10mm} $\Sigma_0 := P^{(m-1)} \cup |{\cal S}_0|$ \ \ and \ \ 
$\Sigma_j := P^{(m-1)} \cup |{\cal S}_0| \cup (\bigcup_{i=1}^j |{\cal E}_i|)$ \ $(j=1, \cdots, k)$. \\ 
Let ${\cal U}$ denote the set of all $m$-strata of $Q_1$ and let \\
\hspace*{10mm}   
$\Lambda_j := Q_1^{(m-1)} \cup \bigcup \{ \tau \in {\cal U} \mid \tau \cap |{\cal E}_j| = \emptyset \}$ \ $(j=1, \cdots, k)$. \\
Note that 
\bit 
\itemI $\Sigma_j = \Sigma_{j-1} \cup |{\cal E}_j|$ (a disjoint union), 
\itemII $\Lambda_j^{(m-1)} = Q_1^{(m-1)}$, 
$\Lambda_j \cap |{\cal E}_j| = \emptyset$ \ and \  
$\Sigma_j \cap (\Sigma_{j-1} \cup \Lambda_j) = \Sigma_{j-1}$. 
\itemiii $H^{(0)} \in {\rm Isot}(M, M_{N} \cup \Sigma_0)_0$ \  
and \ $\pi_M H^{(0)}(P \times I) \cap Q_1^{(m-1)} = \emptyset$.  
\eit 
\end{setting_5.2+} 

\noindent (Step 4) \ This is the main step in which the Whitney trick is used to remove 
inessential intersections 
\hspace*{5mm} between $Q_1$ and $\pi_M H^{(0)}(P \times I)$ over $m$-strata.  

Starting from $H^{(0)}$, inductively we construct isotopies  $H'{}^{(j)}$, $A^{(j)}$, $H''{}^{(j)}$, $G^{(j)}$, $H^{(j)}$ $(j = 1, \cdots, k)$ as follows. \hsh 
Suppose $H^{(j-1)} \in {\rm Isot}(M, M_{N} \cup \Sigma_{j-1})_0$ is obtained so that  
 $\pi_M H^{(j-1)}(P \times I) \cap Q_1^{(m-1)} =~\emptyset$.  
\benum[(1)]  
\item[{[1]}] We apply Basic Whitney trick to the isotopy $H^{(j-1)}$, the $m$-dimensional stratified subset $\Lambda_j$,  
 disjoint unions of closed $m$-disks $D \Subset D' \Subset D'' \Subset E \subset |{\cal E}_j|$ and 
 an open neighborhood $W$ of $\pi_M H^{(j-1)}(D'' \times I)$ in ${\rm Int}\,N$. 
 Here, $D, D', D'', E$ and $W$ are chosen as follows : 
 \bit 
 \itemI 
 There exists an open neighborhood $V$ of $M_N \cup \Sigma_{j-1}$ in $M$ such that 
 $H^{(j-1)}|_{V \times I} = \id$. 
 Let ${\cal E}_j = \{ \sigma_i \}_{i=1}^s$. (If ${\cal E}_j = \emptyset$, then the argument below is formally skipped.) 
 Then each $\sigma_i$ is an open $m$-disk in ${\rm Int}\,N$  and 
 $\sigma_i - V$ is a compact subset of $\sigma_i$. Hence 
 we can take closed $m$-disks $D_i \Subset D'_{i} \Subset D''_{i} \Subset E_i$ in $\sigma_i$ with $\sigma_i - V \Subset D_i$. 
 Let $D := \bigcup_{i=1}^s D_{i}$, $D' := \bigcup_{i=1}^s D'_{i}$, $D'' := \bigcup_{i=1}^s D''_{i}$ and $E := \bigcup_{i=1}^s E_i$. 
These satisfy the condition (vi) in Setting~\ref{setting_Whitney trick} since 
$\pi_M H^{(j-1)}(P \times I) \cap Q_1^{(m-1)} = \emptyset$ and $E - {\rm Int}\,D \subset V$. 
\itemII From the assumption and (i) it follows that \\
\hspace*{15mm} $\pi_M H^{(j-1)}(D'' \times I) \subset O := {\rm Int}\,N - 
 (\Sigma_{j-1}\cup (|{\cal E}_j| - {\rm Int}\,E) \cup Q_1^{(m-1)}) \in {\cal O}(M)$. \\
 We take the neighborhood $W$ so that $Cl_M W \subset O$. 
\eit 
\item[{[2]}] Basic Whitney trick yields the following data : 
 
\bit 
\itema $H'{}^{(j)} \in {\rm Isot}(M, M_{N})_0$ : a $C^\infty$ approximation of $H^{(j-1)}$ with 
$H'{}^{(j)} = H^{(j-1)}$ on $(M - W) \times I$, 
\itemb $U_j \in {\cal B}_f(W - E)$,  
\itemc $A^{(j)} \in {\rm Isot}(M, M_{U_j})_0$ such that $\pi_M A^{(j)}H'^{(j)}(E \times I) \cap \Lambda_j = \emptyset$ and 
$A^{(j)}_1 \in {\rm Diff}(M, M_{U_j})_0^c$. 
\eit 
The isotopies $H'{}^{(j)}$, $A^{(j)}$ and $H''^{(j)} := A^{(j)}H'^{(j)}$ satisfy the following conditions : 
\bit 
\itemI  $M_{U_j} \supset M - W \supset M - Cl_M W 
\supset M_N \cup \Sigma_{j-1}\cup (|{\cal E}_j| - {\rm Int}\,E) \cup Q_1^{(m-1)}$ \\
\hspace*{10mm} since $U_j \subset W \subset Cl_M W \subset O$. \\
$V \supset |{\cal E}_j| - {\rm Int}\,E$ \ since ${\rm Int}\,E_i \supset D_i \supset \sigma_i - V$ $(i = 1, \cdots, s)$. \\
Hence, $V - Cl_M W \supset M_N \cup \Sigma_{j-1}\cup (|{\cal E}_j| - {\rm Int}\,E)$. 
\itemII On $(V - W) \times I$, we see that $H'^{(j)} = H^{(j-1)} = \id$, 
$A^{(j)} = \id$ by (i), and so $H''^{(j)} = \id$. \\
Hence $H'^{(j)}, A^{(j)}, H''^{(j)} \in {\rm Isot}(M, M_{N} \cup \Sigma_{j-1})_0$ by (i). 
\itemiii $\pi_M H''^{(j)}(|{\cal E}_j| \times I) \cap \Lambda_j = \emptyset$.  
This follows from (c) and the next observation : \\
$\pi_M H''^{(j)}((|{\cal E}_j| - E) \times I) = |{\cal E}_j| - E \subset M - \Lambda_j$ \\
\hspace*{10mm} since $H''^{(j)} = \id$ on $(V - W) \times I$ by (ii) and $\Lambda_j \cap |{\cal E}_j| = \emptyset$ 
by Setting~\ref{setting_cpt_even}$^+$\,(ii). 

\itemiv $\pi_M H''^{(j)}(P \times I) \cap Q_1^{(m-1)} = \emptyset$. In fact, 
by the assumption $H^{(j-1)}$ satisfies the same condition and 
so does $H'^{(j)}$ as a fine $C^\infty$ approximation of $H^{(j-1)}$.
Since $A^{(j)} = \id$ on $Q_1^{(m-1)} \times I$ by (i), the conclusion holds. 
\eit 

\item[{[3]}] To obtain a factorization of $H''{}^{(j)}$ based on [2](iii) we apply Lemma~\ref{Isotopy_ext} to $N$, 
$K = \Sigma_{j} \cap N$, $L = (\Sigma_{j-1} \cup \Lambda_j)\cap N$ and $H''{}^{(j)}|_{N \times I}$. 
Note that 
\bit 
\itemI $K \cap L = \Sigma_{j-1} \cap N$ and $K - L = |{\cal E}_j| \cap N$ by Setting~\ref{setting_cpt_even}$^+$\,(i),(ii), 
\itemII $H''^{(j)} \in {\rm Isot}(M, M_{N} \cup \Sigma_{j-1})_0$ and 
$\pi_M H''^{(j)}(|{\cal E}_j| \times I) \cap (\Sigma_{j-1} \cup \Lambda_j) = \emptyset$ by [2](ii), (iii). 
\eit 
Lemma~\ref{Isotopy_ext} induces a factorization $H''{}^{(j)} = G^{(j)}H^{(j)}$ for some \\ 
\hspace*{10mm} $G{}^{(j)} \in {\rm Isot}(M, M_{N} \cup \Sigma_{j-1} \cup \Lambda_j)_0$ \ \ and \ \ 
$H^{(j)} \in {\rm Isot}(M, M_{N} \cup \Sigma_{j})_0$. 

From [2](iv) and the facts that $G^{(j)} = \id$ on $\Lambda_j \times I$ and $Q_1^{(m-1)} \subset \Lambda_j$, 
it follows that $H^{(j)} = (G^{(j)})^{-1}H''{}^{(j)}$ also satisfies the next condition : 
\bit 
\itemiii $\pi_M H^{(j)}(P \times I) \cap Q_1^{(m-1)} = \emptyset$. 
\eit 
\hspace*{-6mm} This completes the inductive step. 
\eenum

In \cite{Tsuboi3} the inductive argument in Setp 4 is applied to each open $m$-cell in ${\cal S}_c$.  
The grouping of $m$-cells in Setting~\ref{setting_cpt_even}$^+$ may decrease the number $k$ of inductive steps in Setp 4 effectively. 
This advantage is essentially used to treat open $2m$-manifolds in Section 5.3. 
\vskip 2mm 
\noindent (Step 5) \ \ Factorization of $f$ \\
\hspace*{5mm} We denote the 1-levels of the isotopies appeared in Steps 1 - 4 by the corresponding small letters. \\
\hspace*{5mm} As a summary of the previous steps, we have the following factorizations : 
\benum
\item Steps 1 - 3 : \hspace*{5mm} $f = gh$, \hspace*{5mm} $\overline{h} \Doteq h$, \hspace*{5mm} $a\overline{h} = h' = g'h''$, 
\hspace*{5mm} $\overline{h}'' \Doteq h''$, \hspace*{5mm} $h''' = a''\overline{h}''$ \\[1mm] 
\hspace*{10mm} 
$\bary[c]{@{}l@{ \ }l}
\therefore \ \ f & = \, gh \, = \,g \, \overline{h}\, (\overline{h}^{-1}h) \, = \,g \,(a^{-1}h') (\overline{h}^{-1}h) \, = \,g \,a^{-1}(g'h'') (\overline{h}^{-1}h) \\[2mm] 
& = \, g \,a^{-1}g' \,\overline{h}'' {(\overline{h}''}^{-1} h'') (\overline{h}^{-1}h) \, = \,g \,a^{-1}g' {(a''}^{-1}h''') {(\overline{h}''}^{-1} h'') (\overline{h}^{-1}h). 
\eary$
\vskip 4mm  

\item Step 4 : \hspace*{5mm} $h^{(0)} = h'''$, \hspace*{5mm} $h'{}^{(j)} \Doteq h^{(j-1)}$, \hspace*{5mm} $a^{(j)}h{'}^{(j)} = h''{}^{(j)} = g^{(j)}h^{(j)}$ 
\hspace{5mm} $(j=1, \cdots, k)$. 
\bit 
\itemI $h^{(j-1)} = h{'}^{(j)}({h'{}^{(j)}}^{-1}h^{(j-1)}) = ({a^{(j)}}^{-1} g^{(j)})h^{(j)}({h{'}^{(j)}}^{-1}h^{(j-1)})$ 

\vskip 2mm 
\itemII $\bary[t]{@{}l@{ \ }l} 
h''' & = \, ({a^{(1)}}^{-1} g^{(1)})h^{(1)}({h{'}^{(1)}}^{-1}h^{(0)}) \\[2mm]
& = \, ({a^{(1)}}^{-1} g^{(1)})({a^{(2)}}^{-1} g^{(2)})h^{(2)}({h{'}^{(2)}}^{-1}h^{(1)})({h{'}^{(1)}}^{-1}h^{(0)}) \\[2mm] 
& = \, ({a^{(1)}}^{-1} g^{(1)})({a^{(2)}}^{-1} g^{(2)})({a^{(3)}}^{-1} g^{(3)})h^{(3)}({h{'}^{(3)}}^{-1}h^{(2)})({h{'}^{(2)}}^{-1}h^{(1)})({h{'}^{(1)}}^{-1}h^{(0)}) \\[2mm]
& = \, \cdots \, = \, \big[ \prod_{j=1}^k ({a^{(j)}}^{-1} g^{(j)})\big] h^{(k)}\big[\prod_{j=k}^1 ({h{'}^{(j)}}^{-1}h^{(j-1)})\big] 
\eary$ 
\vskip 3mm

\itemiii  $f \, = \, g \,a^{-1}g' {a''}^{-1} \big[ \prod_{j=1}^k ({a^{(j)}}^{-1} g^{(j)})\big] h^{(k)} 
\big[\prod_{j=k}^1 ({h{'}^{(j)}}^{-1}h^{(j-1)})\big] {(\overline{h}''}^{-1} h'') (\overline{h}^{-1}h)$
\eit 
\vskip 3mm 

\item $f_1 := g \,a^{-1}g' {a''}^{-1} \prod_{j=1}^k ({a^{(j)}}^{-1} g^{(j)}) \in {\rm Diff}^r(M, M_N)_0$ :  
\bit 
\itemI $f_1 = [g \,a^{-1}g^{-1}] \cdot [gg' {a''}^{-1} (gg')^{-1}] (gg') \prod_{j=1}^k ({a^{(j)}}^{-1} g^{(j)})$ 
\itemII Let $b_{-1} = g \,a^{-1}g^{-1}$, \ $b_0 = gg' {a''}^{-1} (gg')^{-1}$, \ $g_0 = gg'$ \ and \\
\hspace*{25mm} $g_j = g^{(j)}$, \ $a_j = {a^{(j)}}^{-1}$, \ $b_j = a_j^{g_0g_1 \cdots g_{j-1}}$ \ \ $(j=1, \cdots, k)$. \\
From Fact~\ref{fact_conjugation} it follows that \\
\hspace*{10mm} $\bary[t]{@{}c@{ \ }l}
(gg') \prod_{j=1}^k ({a^{(j)}}^{-1} g^{(j)})
& = g_0\prod_{j=1}^k (a_j g_{j}) 
= \big[\prod_{j=1}^k (g_{j-1}a_j)\big] g_{k} \\[2mm] 
& = \big[\prod_{j=1}^k b_j \big] g_0g_1 \cdots g_k 
= \big[\prod_{j=1}^k b_j\big] (gg') \big[\prod_{j=1}^k g^{(j)}\big] \ \ \ \text{and} 
\eary$ \\[2mm]
\hspace*{10mm} $f_1 = \big[\prod_{j=-1}^k b_j\big] (gg') \big[\prod_{j=1}^k g^{(j)}\big]$
\vskip 2mm 

\itemiii Note that \ $gg' \in {\rm Diff}^r(M, M_{N} \cup Q_1)_0$, \ 
$g{}^{(j)} \in {\rm Diff}(M, M_{N} \cup \Sigma_{j-1} \cup \Lambda_j)_0$ $(j=1, \cdots, k)$ \ and \\ 
$b_j \in {\rm Diff}^r(M, M_{V_j})_0^c$ 
for some $V_j \in {\cal B}_f({\rm Int}\,N)$ $(j=-1,0,1, \cdots,k)$. 
\eit 
\vskip 1mm 
\item $f_2 := \big[\prod_{j=k}^1 ({h{'}^{(j)}}^{-1}h^{(j-1)})\big] {(\overline{h}''}^{-1} h'') (\overline{h}^{-1}h) \in {\rm Diff}^r(M, M_N)_0$ : 
\vskip 1mm 
\bit 
\item[(i)\ ] $f_2$ is chosen arbitrarily close to $\id_M$. 
\item[(ii)\,] If $P \cap Q_1 \cap {\rm Int}\,N$ is a finite set, then there is $\widehat{U} \in {\cal B}_f({\rm Int}\,N)$ with 
$P \cap Q_1 \cap {\rm Int}\,N \subset {\rm Int}\,\widehat{U}$. 
By (i) for the open cover $\{ {\rm Int}\,N - P, {\rm Int}\,\widehat{U}, {\rm Int}\,N - Q_1\}$ of ${\rm Int}\,N$, 
there is a factorization \break 
$f_2 = \hat{h}\,\hat{a}\,\hat{g}$ \ such that \ 
$\hat{h} \in {\rm Diff}^r(M, M_N \cup P)_0$, \ \ 
$\hat{a} \in {\rm Diff}^r(M, M_{\widehat{U}})_0$, \ \ 
$\hat{g} \in {\rm Diff}^r(M, M_N \cup Q_1)_0$. 
By \cite[Remark 2.1]{Tsu09} we can modify $\hat{a}$ and $\hat{g}$, so that 
$\hat{a} \in {\rm Diff}^r(M, M_{\widehat{U}})_0^c$. 
\eit 

\item The case where $P \cap Q_1 \cap {\rm Int}\,N$ is a finite set :  
\bit 
\itemI There is a factorization $f_2 = \hat{h}\,\hat{a}\,\hat{g}$ as in (4)(ii). Let \\
\hspace*{10mm} $b'_{-2} := \hat{a}$, \ \ $b'_{j} := b_j^{\hat{g}}$ \ $(j=-1,0, \cdots, k$) \ \ and \\[1mm] 
\hspace*{10mm} $g'{}^{(1)} := \hat{g} (gg')g^{(1)}$, \ \ 
$g'{}^{(j)} := g^{(j)}$ $(j=2, \cdots, k)$, \ \ 
$h'{}^{(k)} := h^{(k)} \hat{h}$. \\[1mm] 
Then it follows that \hspace{3mm} 
$\hat{g} \big[\prod_{j=-1}^k b_j\big]
= \hat{g} \big[\prod_{j=-1}^k b_j\big]\hat{g}^{-1} \hat{g}
= \big[\prod_{j=-1}^k b_j^{\hat{g}} \big] \hat{g}$ \hspace{3mm} and \\[2mm] 
\hspace*{10mm} 
$\bary[t]{@{}l@{ \ }l}
f = f_1 h^{(k)} f_2 
& = \, \big[\prod_{j=-1}^k b_j\big] (gg') \big[\prod_{j=1}^k g^{(j)}\big] h^{(k)} \hat{h}\,\hat{a}\,\hat{g} \\[2.5mm] 
& = \, (\hat{a}\hspace{0.2mm}\hat{g})^{-1}\Big[\hat{a}\hspace{0.2mm}\hat{g} \big[\prod_{j=-1}^k b_j\big] (gg') \big[\prod_{j=1}^k g^{(j)}\big] h^{(k)} \hat{h}\Big] \hat{a}\hspace{0.2mm}\hat{g} \\[2.5mm] 
& = \, (\hat{a}\hspace{0.2mm}\hat{g})^{-1}\Big[\hat{a} \big[\prod_{j=-1}^k b_j^{\hat{g}} \big] \hat{g} (gg') \big[\prod_{j=1}^k g^{(j)}\big] h^{(k)} \hat{h}\Big]\hat{a}\hspace{0.2mm}\hat{g} \\[2.5mm] 
& = \, (\hat{a}\hspace{0.2mm}\hat{g})^{-1}\Big[\big[\prod_{j=-2}^k b'_j\big] \big[\prod_{j=1}^k g'{}^{(j)}\big] h'{}^{(k)}\Big]\hat{a}\hspace{0.2mm}\hat{g}
\eary$ 
\vskip 2mm 

\itemII Note that \ 
${g'}^{(j)} \in {\rm Diff}^r(M, M_{N} \cup \Lambda_j)_0$ $(j=1, \cdots, k)$, \ 
${h'}^{(k)} \in {\rm Diff}^r(M, M_{N} \cup \Sigma_{k})_0$ \ and \\ 
$b'_j \in {\rm Diff}^r(M, M_{V_j'})_0^c$ for some $V_j' \in {\cal B}_f({\rm Int}\,N)$ $(j=-2, -1,0,1, \cdots,k)$
\eit 
\eenum 

In the subsequent subsections we obtain some estimates of $cl\,f$ and $clb^f f$  from the factorization of $f$ in Step 5\,(5). 

\subsection{Compact manifold case} 
\subsubsection{\bf Compact manifold case I --- Triangulations} \mbox{} 

\begin{setting}\label{setting_even_cpt_triang}
Suppose $N$ is a compact $2m$-manifold possibly with boundary ($m \geq 3$), 
$1 \leq r \leq \infty$, $r \neq 2m+1$, 
${\mathcal T}$ is a $C^\infty$ triangulation of $N$, $(P, Q) := (|{\cal T}^{(m)}, |{{\cal T}^\ast}^{(m)}|)$,  
${\cal S}$ is the set of $m$-simplices of ${\cal T}$, 
${\cal S}' := \{ \sigma \in {\cal S} \mid \sigma \not \subset \partial N \}$  
 and $\widetilde{N} := N \cup_{\partial N} (\partial N \times [0,1))$.
\end{setting}

\begin{setting_5.3+} 
Suppose ${\cal S} \supset {\cal F} \supset {\cal S}'$,  
$\{ {\cal F}_j \}_{j=1}^k$ is a finite cover of ${\cal F}$ (as a set), 
${\cal G}_j := {\cal S} - {\cal F}_j$, 
$(P_{{\cal F}_j}, Q_{{\cal G}_j}) := (P^{(m-1)} \cup |{\cal F}_j|, Q^{(m-1)} \cup |{\cal G}_j^{\ast}|)$, 
$O_j : = N-Q_{{\cal G}_j}$ and $\ell_j := clb^f\,(P_{{\cal F}_j}, \widetilde{O}_j)$ $(j=1, \cdots, k)$.
\end{setting_5.3+}

\begin{thm}\label{thm_cpt_bdry_even} 
In Setting~{\rm \ref{setting_even_cpt_triang}, ~\ref{setting_even_cpt_triang}$^+$}: 
\benum 
\item[{\rm [I]}\,] $cld\,{\rm Diff}^r(N, \partial N)_0 \leq 3k+5$ \ and \ 
$clb^f\!d\,{\rm Diff}^r(N, \partial N)_0 \leq 2\Big(\mbox{\small $\ds \sum_{j=1}^k$}\, \ell_j + k + m + 2\Big) \leq 2(m+2)(k+1)$ \\ 
\hspace*{5mm} if each $|{\cal F}_j|$ $(j=1, \dots, k)$ is strongly displaceable from $|{\cal F}_j| \cup (\partial N \times [0,1))$ in $\widetilde{N}$.  
\item[{\rm [II]}] $cld\,{\rm Diff}^r(N, \partial N)_0 \leq 2k+7$ \ and \ 
$clb^f\!d\,{\rm Diff}^r(N, \partial N)_0 \leq 2\,\mbox{\small $\ds \sum_{j=1}^k$}\, \ell_j + k + 2m + 6 \leq (2m+3)(k+1) + 3$ \\ 
\hspace*{5mm} if each $|{\cal F}_j|$ $(j=1, \dots, k)$ is strongly displaceable from $|{\cal S}| \cup (\partial N \times [0,1))$ in $\widetilde{N}$. 
\eenum 
\end{thm}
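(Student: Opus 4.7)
The plan is to apply the isotopy factorization procedure of Section~5.1.2 (Steps~1--5) with the triangulation data $(P,Q) = (|{\cal T}^{(m)}|, |({\cal T}^\ast)^{(m)}|)$, taking $M = N$ and using the partition from the cover $\{{\cal F}_j\}$, and then to bound each resulting factor via Lemma~\ref{lem_mapping_cylinder} in conjunction with the complementary full subcomplex structure from Examples~\ref{exp_(S,U)_I} and~\ref{exp_(S,U)_II}.

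Given $f \in {\rm Diff}^r(N,\partial)_0$ with an isotopy $F$ such that $F_1 = f$, I first run Steps~1--3 of Section~5.1.2. Because $F$ is trivial near $\partial N$, every $m$-simplex in $\partial N$ automatically lies in ${\cal S}_0$, and hence ${\cal S}_c \subset {\cal T}[m]' \subset {\cal F}$. I then split ${\cal E} := {\cal S}_c$ into a partition $\{{\cal E}_j\}_{j=1}^k$ with ${\cal E}_j \subset {\cal F}_j$, e.g.\ ${\cal E}_j := {\cal S}_c \cap ({\cal F}_j \setminus \bigcup_{i<j}{\cal F}_i)$; running Step~4 over this partition and then Step~5 produces
\[
f \;=\; (\hat a \hat g)^{-1}\Bigl[\,\prod_{j=-2}^{k} b'_j \,\cdot\, \prod_{j=1}^{k} g'^{(j)} \,\cdot\, h'^{(k)}\,\Bigr]\hat a \hat g,
\]
where each $b'_j$ is a commutator supported in a disjoint union of balls (so $clb^f(b'_j)\le 1$), each $g'^{(j)}$ lies in ${\rm Diff}^r(N,\partial N \cup \Lambda_j)_0 \subset {\rm Diff}^r(N,\partial N \cup Q_{{\cal G}_j})_0$ (since ${\cal E}_j \subset {\cal F}_j$ forces $\Lambda_j \supset Q_{{\cal G}_j}$), and $h'^{(k)} \in {\rm Diff}^r(N,\partial N \cup \Sigma_k)_0$. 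The identity $\Sigma_k = P$ follows because ${\cal S}_0 \cup \bigcup_j {\cal E}_j$ exhausts ${\cal T}[m]$.

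For each $g'^{(j)}$, I take ${\cal C} = {\cal F}_j$ and ${\cal D} = {\cal G}_j$ in Example~\ref{exp_(S,U)_II}, so that $(P_{{\cal F}_j}, Q_{{\cal G}_j})$ is a pair of complementary finite full subcomplexes of $sd\,{\cal T}$. In case [I], the hypothesis directly gives, via Example~\ref{exp_(S,U)_II}(2)(ii)(a), that $P_{{\cal F}_j}$ is displaceable from $P_{{\cal F}_j} \cup (\partial N \times [0,1))$ in $\widetilde O_j$, and Lemma~\ref{lem_mapping_cylinder} then factors $g'^{(j)} = \tilde g_j \tilde h_j$ with $\tilde g_j \in ({\rm Diff}^r(N,\partial N \cup Q_{{\cal G}_j})_0)^c$, $clb^f(\tilde g_j) \le 2\ell_j$, and $\tilde h_j$ a single commutator in $({\rm Diff}^r(N,\partial N \cup Q_{{\cal G}_j} \cup P_{{\cal F}_j})_0)^c$. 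In case [II], the stronger displacement from $|{\cal S}| = |{\cal T}[m]|$ lets me choose $L_0 = |{\cal T}[m]|$ so that $L = P$; then each $\tilde h_j$ lies in the smaller group $({\rm Diff}^r(N,\partial N \cup P)_0)^c$, enabling incorporation into the last factor. For $h'^{(k)}$, I invoke Lemma~\ref{lem_mapping_cylinder} on $sd\,{\cal T}$ with the complementary pair $(Q^{(m-1)},P)$ from Example~\ref{exp_(S,U)_I}(2) and target $L = Q^{(m-1)}$: since $2(m-1)<2m$, Example~\ref{exp_(S,U)_I}(2)(ii) yields strong self-displaceability and Lemma~\ref{lem_reg_nbd_2} gives $clb^f(Q^{(m-1)},\widetilde{N-P}) \le m$, so $cl(h'^{(k)}) \le 2$ and $clb^f(h'^{(k)}) \le 2m+1$.

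Assembling the bounds, case~[I] follows by direct summation: the $k+3$ commutators $b'_j$, the $\le 2$ commutators from each $g'^{(j)}$ and the $\le 2$ from $h'^{(k)}$ give the $cl$-bound $3k+5$, while applying Lemma~\ref{lem_mapping_cylinder}(2) to $g'^{(j)}$ and using the incorporation trick of Remark~\ref{rmk_incorporation} on $\tilde h_j$ gives the stated $clb^f$-bound. In case~[II], the crucial improvement is that I can form $h_{\mathrm{tot}} := \tilde h_1 \cdots \tilde h_k h'^{(k)} \in {\rm Diff}^r(N,\partial N \cup P)_0$ inside a single group and bound it uniformly by the same application of Lemma~\ref{lem_mapping_cylinder} to $Q^{(m-1)}$, saving roughly one commutator per $j$ and yielding the sharper bounds in~[II]. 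The main obstacle is the careful bookkeeping of supports across $\partial N$: one must check at each step that the subgroup memberships of $g'^{(j)}$, $\tilde h_j$ and $h'^{(k)}$ are compatible with applying Lemma~\ref{lem_mapping_cylinder} with $\partial N \times [0,1)$ as an extra displacement obstruction in $\widetilde N$, and that the displaceability hypothesis imposed on ${\cal F}_j$ truly descends to the sub-partition ${\cal E}_j$ through the pair $(P_{{\cal F}_j},Q_{{\cal G}_j})$ of complementary full subcomplexes.
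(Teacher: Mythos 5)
Your plan follows the same skeleton as the paper's proof: run Steps 1--5 of Section~5.1.2 with $(P,Q)=(|{\cal T}^{(m)}|,|({\cal T}^\ast)^{(m)}|)$ and the partition induced by $\{{\cal F}_j\}$, then bound $g'^{(j)}$ and $h'^{(k)}$ via Lemma~\ref{lem_mapping_cylinder} applied to the complementary-pair machinery of Examples~\ref{exp_(S,U)_I} and~\ref{exp_(S,U)_II}, and incorporate the $\tilde h_j$ factors in case~[II]. The numerical bounds you arrive at match the statement. However, two substantive steps are missing, and the second one is load-bearing for case~[II].

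\textbf{The perturbation and transfer step.} You apply Lemma~\ref{lem_mapping_cylinder} to the subcomplexes $(P_{{\cal F}_j},Q_{{\cal G}_j})$ of $sd\,{\cal T}$ and claim $\Lambda_j\supset Q_{{\cal G}_j}$, but $\Lambda_j$ is built from the \emph{perturbed} complex $Q_1$, not from $Q$. Moreover, $Q$ itself cannot be used directly in Steps 1--5: its $(m-1)$-skeleton consists of piecewise-smooth cells and, after passing to the simplicial subdivision $\underline Q$ in $sd\,{\cal T}$, one has $Q^{(m-1)}\subsetneqq\underline Q^{(m-1)}$ and $P\cap\underline Q^{(m-1)}\neq\emptyset$, so Setting~\ref{setting_cpt_even} fails. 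The paper introduces an extra small isotopy $K'$ supported near the finite set $P_0\cap Q$, producing $\underline Q_0$ so that each $\sigma\in{\cal S}'$ meets $K'_1(\underline\sigma^\ast)$ transversely in a unique open $m$-simplex; then runs Steps 1--5 on $(P_0,\underline Q_0)$; and then transports the Lemma~\ref{lem_mapping_cylinder} estimates from $(P_0,Q_{{\cal G}_j},L_j)$ to $(P_1,Q_{1,{\cal G}_j},L_{1,j})$ via the diffeomorphism $\phi:=K_1K'_1$. The inclusion you need is $\Lambda_j\supset Q_{1,{\cal G}_j}=\phi(Q_{{\cal G}_j})$, not $\Lambda_j\supset Q_{{\cal G}_j}$, and the whole point of the transfer is that the $clb^f$ estimates are invariant under conjugation by $\phi$. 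Without this, the subgroup memberships you invoke are simply false.

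\textbf{The $\eta$-trick in case~[II].} You propose to form $h_{\mathrm{tot}}:=\tilde h_1\cdots\tilde h_k\, h'^{(k)}$ and bound it ``inside a single group.'' But after the transfer step, the factors $\tilde h_j$ live in ${\rm Diff}^r(N,\partial N\cup Q_{1,{\cal G}_j}\cup P_1)_0$ (with $P_1:=\phi(P_0)$), while $h'^{(k)}$ lives in ${\rm Diff}^r(N,\partial N\cup P_0)_0$. Since $P_1\neq P_0$, their product is not a priori in a single $P$-fixing group, and the intended application of Lemma~\ref{lem_mapping_cylinder} to $(Q^{(m-1)},P)$ does not apply. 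The paper resolves this by observing that $P_1$ differs from $P_0$ only inside the finite disjoint union of balls $U$, producing $\eta\in{\rm Diff}(M,M_U)_0^c$ with $\eta(P_0)=P_1$, conjugating $h'^{(k)}$ by $\eta$ to land in the $P_1$-group, and using the identity $(h'h'^{(k)})^\eta=\eta\,\eta'\,h$ to control $clb^f$ at the cost of two extra commutators in balls. Without this device, the bound $2m+3$ for $clb^f(h'h'^{(k)})$ in case~[II] is unsupported, and so is the stated $clb^f$-diameter. The ``incorporation'' remark you cite handles the algebraic reordering of factors, but not this mismatch between $P_0$ and $P_1$. You should supply both steps explicitly to make the argument sound.
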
 

\begin{compl}\label{compl_ell_j} \mbox{}
\benum
\item \bit 
\itemI $\ell_j \leq m+1$. 
\itemII If $K_j$ is a compact $C^\infty$ subpolyhedron in $N$ with $K_j \subset P_{{\cal F}_j}$ and 
$P_{{\cal F}_j}$ is weakly absorbed to $K_j$ in $\widetilde{O}_j$ keeping $K_j$ invariant, then 
$\ell_j \leq clb^f(K_j, \widetilde{O}_j) \leq \dim K_j +1$.
\eit 
\item[(2)] The condition in the case [II] holds, for example, if each $|{\cal F}_j|$ has an arbitrarily small $2m$-disk neighborhood in $\widetilde{N}$. 
\eenum 
\end{compl}

\begin{cor}\label{cor_cpt_bdry_even} In Setting~$\ref{setting_even_cpt_triang}$: 
 Let $\ell := \# {\cal S}'$ $($i.e., the number of $m$-simplices of ${\cal T}$ not in $\partial N$$)$. 
Then 
\hspace*{30mm} $cld\,{\rm Diff}^r(N, \partial)_0 \leq 2\ell+7$ \ and \ 
$clb^f\!d\,{\rm Diff}^r(N, \partial N)_0 \leq (2m+1)(\ell +1) + 5$. 
\end{cor}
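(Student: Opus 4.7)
The plan is to derive both estimates as a direct application of Theorem~\ref{thm_cpt_bdry_even}\,[II] with the choice ${\cal F} = {\cal S}'$ and the cover $\{{\cal F}_j = \{\sigma_j\}\}_{j=1}^{\ell}$, where ${\cal S}' = \{\sigma_1, \ldots, \sigma_\ell\}$ enumerates the $m$-simplices of ${\cal T}$ not contained in $\partial N$; so $k = \ell$ in Setting~\ref{setting_even_cpt_triang}$^+$. The bound on $cld$ is then immediate from Theorem~\ref{thm_cpt_bdry_even}\,[II]: \, $2k+7 = 2\ell+7$. For the $clb^f$-bound I need to sharpen the generic estimate $\ell_j \leq m+1$ of Complement~\ref{compl_ell_j}\,(1)(i) down to $\ell_j \leq m$ for each singleton ${\cal F}_j$.

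First I would verify the strong displacement hypothesis of case [II]. Since $\sigma_j$ is not a subset of $\partial N$, it is a contractible compact PL subpolyhedron of $\widetilde{N}$ and by the PL regular neighborhood theorem admits arbitrarily small closed $C^\infty$ $2m$-disk neighborhoods in $\widetilde{N}$. Each such disk has interior points outside $|{\cal S}| \cup (\partial N \times [0,1))$, so Example~\ref{exp_displacement}\,(2)(ii) gives strong displaceability (this is exactly Complement~\ref{compl_ell_j}\,(2)).

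To improve each $\ell_j = clb^f(P_{{\cal F}_j}, \widetilde{O}_j)$ with $P_{{\cal F}_j} = |{\cal T}^{(m-1)}| \cup \sigma_j$, I would apply Complement~\ref{compl_ell_j}\,(1)(ii) with an $(m-1)$-dimensional absorber. The key observation is that every $(m-1)$-face $\tau$ of $\sigma_j$ is a free face of $\sigma_j$ in $P_{{\cal F}_j}$, because no other $m$-simplex of ${\cal T}$ lies in $P_{{\cal F}_j}$. Choose such a $\tau$ and set $K_j$ to be a $C^\infty$ smoothing of $|{\cal T}^{(m-1)}| - \mathring\tau$; then $K_j \subset P_{{\cal F}_j}$ is a compact $(m-1)$-dimensional subpolyhedron. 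The PL elementary collapse $P_{{\cal F}_j} \searrow K_j$ (removing $\mathring\sigma_j$ through the free face $\mathring\tau$) can be realized by an ambient $C^\infty$ isotopy supported in an arbitrarily small regular neighborhood $R$ of $\sigma_j \cup \tau$ in $\widetilde{N}$, chosen inside $\widetilde{O}_j$ and so as to preserve $K_j$ setwise. This gives a weak absorption of $P_{{\cal F}_j}$ to $K_j$ keeping $K_j$ invariant, so Complement~\ref{compl_ell_j}\,(1)(ii) yields $\ell_j \leq \dim K_j + 1 = m$.

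Substituting $k = \ell$ and $\sum_{j=1}^{\ell}\ell_j \leq m\ell$ into Theorem~\ref{thm_cpt_bdry_even}\,[II] gives
\[
clb^f\!d\,{\rm Diff}^r(N,\partial N)_0 \;\leq\; 2m\ell + \ell + 2m + 6 \;=\; (2m+1)(\ell+1) + 5,
\]
which is the desired estimate. The delicate point will be the ambient realization of the collapse $P_{{\cal F}_j} \searrow K_j$: the isotopy must keep the $(m-1)$-skeleton minus $\mathring\tau$ invariant as a set while sweeping $\sigma_j$ through $\tau$, and the support must be confined to $\widetilde{O}_j$. The first is possible since we only require $\phi(K_j)=K_j$ setwise, not pointwise fixing; the second is automatic because $\sigma_j \cup \tau \subset P_{{\cal F}_j} \subset \widetilde{O}_j$ and the collapse can be performed inside an arbitrarily small regular neighborhood.
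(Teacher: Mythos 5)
Your proposal matches the paper's proof step for step: you invoke Theorem~\ref{thm_cpt_bdry_even}\,[II] with ${\cal F}={\cal S}'$ and the singleton cover, verify the displacement hypothesis via Complement~\ref{compl_ell_j}\,(2), and sharpen $\ell_j\leq m$ by the same elementary collapse $P_{{\cal F}_j}\searrow K_j:=P^{(m-1)}-\mathring\tau$ onto a free $(m-1)$-face, then apply Complement~\ref{compl_ell_j}\,(1)(ii). The only cosmetic differences are that $P^{(m-1)}-\mathring\tau$ is already a subcomplex of ${\cal T}$ (no ``$C^\infty$ smoothing'' step is needed), and the ``weakly absorbed, keeping $K_j$ invariant'' assertion is exactly what the paper takes as the standard consequence of an elementary collapse (cf.\ the remark after Lemma~\ref{lem_reg_nbd_2}), so the extra care you flag at the end is already built into the paper's framework.
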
 

When $N$ is a closed $2m$-manifold, the estimates in Corollary~\ref{cor_cpt_bdry_even} are compared with those in \cite{Tsuboi3}, \\
\hsp that is, 
\btab[t]{ll}
$cl\,{\rm Diff}^r(N)_0 \leq 4\ell+11$  & (the last sentense in \cite[Proof of Theorem 1.2]{Tsuboi3} (p.166)), \\[2mm] 
$clb^f\,{\rm Diff}^r(N)_0 \leq 4(\ell+4)m + 3\ell + 7$ & (\cite[Proof of Corollary 1.3]{Tsuboi3} (p.173)). 
\etab 
\vskip 4mm 

\begin{proof}[{\bf Proof of Theorem~\ref{thm_cpt_bdry_even}.}] \mbox{}  

We apply the argument in Subsection 5.1.2 in $M \equiv \widetilde{N}$. 
Take any $f \in {\rm Diff}^r(N, \partial N)_0 \cong {\rm Diff}^r(M, M_N)_0$ 
and $F \in {\rm Isot}^r(M, M_N)_0$ with $F_1 = f$. 
Our task is to estimate $cl\,f$ and $clb^ff$ in ${\rm Diff}^r(M, M_N)_0$. 

The cell complex $Q$ underlies a subcomplex of $sd\,{\cal T}$. We denote this subcomplex by $\underline{Q}{}$. 
To apply (Step~4) we prefer $\underline{Q}$ consisting of smooth closed simplices rather than $Q$ itself consisting of piecewise smooth cells. 
For each $m$-simplex $\sigma$ of $P$ 
its dual $m$-cell $\sigma^\ast$ underlies a subcomplex of $\underline{Q}$, 
which we denote by $\underline{\sigma}^\ast$. Note that $Q^{(m-1)} \subsetneqq \ \underline{Q}^{(m-1)}$ and 
$P \cap \underline{Q}^{(m-1)} \neq \emptyset$

\benum[(1)] 
\item We define $m$-dimensional stratified subsets $P_0$ and $\underline{Q}_0$ of $N$ as follows : 
\bit 
\itemI Let $P_0 : = |{\cal T}^{(m-1)}| \cup |{\cal S}'|$. This is a subcomplex of $P$ and $M_{N} \cup P_0 = M_{N} \cup P$. 

\itemII Take a small neighborhood $U$ of the finite set 
$P_0 \cap Q = \bigcup \{ \sigma \cap \sigma^\ast \mid \sigma \in {\cal S}' \}$
in ${\rm Int}\,N - (P^{(m-1)} \cup Q^{\,(m-1)})$ which is a finite disjoint union of closed $2m$-disks. 
Then, there exists $K' \in {\rm Isot}(M, M_U)_0$ such that 
each $\sigma \in {\cal S}'$ intersects a unique open $m$-simplex in $K'_1(\underline{\sigma}^\ast)$  transversely at one point. 
Let $\underline{Q}_0 := K'_1(\underline{Q})$. 
\eit 
Note that $P_0^{(m-1)} \cap \underline{Q}_0 = P_0 \cap \underline{Q}_0^{(m-1)} = \emptyset$. 

\item We apply 
(Step 1) $\sim$ (Step 3) in Subsection 5.1.2 to $(M, N, F, P_0, \underline{Q}_0)$ to 
obtain the corresponding factorization of the isotopy $F$. 
\bit 
\itemI In (Step 1)\,(1) we obtain the isotopy $K$ and $\underline{Q}_1 = K_1(\underline{Q}_0)$.
\eit 
 In Setting~\ref{setting_cpt_even}$^+$ (replacing open simplices by closed simplices) we have 
\bit 
\itemII $\text{[the set of $m$-simplices of $P_0$]} = {\cal S}'$, \\  
${\cal S}_0 = \{ \sigma \in {\cal S}' \mid H^{(0)} \equiv \id \text{ on $[$a neighborhood of $\stackrel{\circ}{\sigma}\,] \times I$}\}$ \ \ and \ \ 
${\cal S}_c = {\cal S}' - {\cal S}_0$. 
\eit 
Let ${\cal E} = {\cal S}_c$ and take a partition ${\cal E} = \bigcup_{j=1}^k {\cal E}_j$ such that 
${\cal E}_j \subset {\cal F}_j$ $(j=1, \cdots, k)$. 
(For example, let ${\cal E}_j := {\cal E} \cap {\cal F}_j - \bigcup_{i=1}^{j-1} {\cal F}_i$.)
Then, it follows that 
\bit 
\itemiii (a) $\Sigma_k = P_0$, \hspace{3mm} (b) \ ${\cal U} = $ the set of $m$-simplices of $\underline{Q}_1$ \ \ and \\
(c) $\Lambda_j \equiv \underline{Q}_1^{(m-1)} \cup \bigcup \{ \tau \in {\cal U} \mid \ \stackrel{\circ}{\tau} \cap \, |{\cal E}_j| = \emptyset \}$.  
\eit 
\vskip 1mm 
We apply (Step 4) and (Step 5) to $\{ {\cal E}_j \}_{j=1}^k$ to obtain a factorization of $f$ of the following form :   
\vskip 1mm 
\bit 
\itemiv $f = \, (\hat{a}\hspace{0.2mm}\hat{g})^{-1}\Big[\big[\prod_{j=-2}^k b'_j\big] 
\big[\prod_{j=1}^k g'{}^{(j)}\big] h'{}^{(k)}\Big]\hat{a}\hspace{0.2mm}\hat{g}$, where  
\vskip 2mm 
\bit 
\itema ${g'}^{(j)} \in {\rm Diff}^r(M, M_{N} \cup \Lambda_j)_0$ $(j=1, \cdots, k)$, 
\itemb ${h'}^{(k)} \in {\rm Diff}^r(M, M_{N} \cup P_0)_0$, 
\itemc $b'_j \in {\rm Diff}^r(M, M_{V_j'})_0^c$ for some $V_j' \in {\cal B}_f({\rm Int}\,N)$ $(j=-2, -1,0,1, \cdots,k)$. 
\eit 
\eit 
\vskip 1mm

\item Next we apply Examples~\ref{exp_(S,U)_I}, ~\ref{exp_(S,U)_II} and Lemma~\ref{lem_mapping_cylinder} to this situation. \\
Let $L_j := P_{{\cal F}_j}$ in [I] and $L_j = P$ in [II] $(j=1, \cdots, k)$. It follows that 
\bit 
\itemI $cld\,{\rm Diff}^r(N, \partial N \cup P)_0 \leq 2$ \ \ and \ \ $clb^fd\,{\rm Diff}^r(N, \partial N \cup P)_0 \leq 2m+1$,  

\itemII 
any $\underline{g}\in {\rm Diff}^r(N, \partial N \cup Q_{{\cal G}_j})_0$ has a factorization $\underline{g}= \underline{g}_1\underline{g}_2$ such that \\[2mm] 
\hsp 
\btab[c]{lll}
$\underline{g}_1 \in {\rm Diff}^r(N, \partial N \cup Q_{{\cal G}_j})_0^c$ & and & $clb^f(\underline{g}_1) \leq 2\ell_j$ in ${\rm Diff}^r(N, \partial N \cup Q_{{\cal G}_j})_0$, \\[2mm] 
$\underline{g}_2 \in {\rm Diff}^r(N, \partial N \cup Q_{{\cal G}_j} \cup L_j)_0^c$ & and & $clb^f(\underline{g}_2) \leq 1$ in ${\rm Diff}^r(N, \partial N \cup Q_{{\cal G}_j} \cup L_j)_0$. 
\etab 
\eit 
\vskip 2mm 
The assertion (i) follows from Example~\ref{exp_(S,U)_I} and Lemma~\ref{lem_mapping_cylinder} for $(Q^{(m-1)}, P)$. 
The assertion (ii) follows from Example~\ref{exp_(S,U)_II}, 
Lemma~\ref{lem_mapping_cylinder} for $(P_{{\cal F}_j}, Q_{{\cal G}_j})$ and the assumption on ${\cal F}_j$. 
We also note that Complement~\ref{compl_ell_j}\,(1) follows from Example~\ref{exp_(S,U)_II} for $(P_{{\cal F}_j}, Q_{{\cal G}_j})$. 

\item In order to apply the estimates on $cl$ and $clb^f$ in (3) to (2)(iv), 
we need to transfrom them to the corresponding estimates for $\underline{Q}_1$, 
using the diffeomorphism $\phi := K_1K_1' \in {\rm Diff}(M, M_{N})_0$. \\
We use the following notations : 
\bit 
\itemI 
$(P_1, Q_1,  Q_{1,{\cal G}_j}) := \phi(P_0, Q, Q_{{\cal G}_j})$ (as a cell complex), \ 
$L_{1,j} : = \phi(L_j)$ $(j=1, \cdots, k)$, \\
$\sigma^\flat := \phi(\sigma^\ast)$ for $\sigma \in {\cal S}$ \ and \ 
${\cal C}^\flat := \{ \sigma^\flat \mid \sigma \in {\cal C} \}$ for ${\cal C} \subset {\cal S}$. 

\itemII Note that 
(a) $\underline{Q}_1$ is a subdivision of the cell complex $Q_1$ into a simplicial complex, \\
(b) $Q_{1,{\cal G}_j} = Q_1^{(m-1)} \cup |{\cal G}_j^{\,\flat}| \subset \Lambda_j$ and (c) $M_N \cup L_{1, j} = M_N \cup P_1$ in [II]. 

\itemiii The choice of $K'$ and $K$ implies that \\
(a) $\sigma^\flat$ is sufficiently close to $\sigma^\ast$ so that 
$(P - \stackrel{\circ}{\sigma}) \cap \sigma^\flat = \emptyset$, and \\
(b) $P_1$ differs from $P_0$ only in $U$ and 
there exists $\eta \in {\rm Diff}(M, M_U)_0$ with $P_1 = \eta(P_0)$. \\
By \cite[Remark 2.1]{Tsu09} we may assume that $\eta \in {\rm Diff}(M, M_U)_0^c$. 
\eit 
\item[] 
Since the tuple $(N, P_0, Q_{{\cal G}_j}, L_j)$ corresponds to $(N, P_1, Q_{1, {\cal G}_j}, L_{1,j})$ under the diffeomorphism $\phi$ 
and $\partial N \cup P =\partial N \cup P_0$, 
from (3) we have the following conclusions :  
\bit 
\item[(i)$'$\,] $cld\,{\rm Diff}^r(N, \partial N \cup P_1)_0 \leq 2$ \ \ and \ \ $clb^fd\,{\rm Diff}^r(N, \partial N \cup P_1)_0 \leq 2m+1$,  

\item[(ii)$'$] 
any $\underline{g}\in {\rm Diff}^r(N, \partial N \cup Q_{1,{\cal G}_j})_0$ has a factorization $\underline{g}= \underline{g}_1\underline{g}_2$ such that \\[2mm] 
\hsp 
\btab[c]{lll}
$\underline{g}_1 \in {\rm Diff}^r(N, \partial N \cup Q_{1, {\cal G}_j})_0^c$ & and & $clb^f(\underline{g}_1) \leq 2\ell_j$ in ${\rm Diff}^r(N, \partial N \cup Q_{1, {\cal G}_j})_0$, \\[2mm] 
$\underline{g}_2 \in {\rm Diff}^r(N, \partial N \cup Q_{1,{\cal G}_j} \cup L_{1,j})_0^c$ & and & $clb^f(\underline{g}_2) \leq 1$ in ${\rm Diff}^r(N, \partial N \cup Q_{1,{\cal G}_j} \cup L_{1,j})_0$. 
\etab 
\eit 
\vskip 2mm 

\item Finally we deduce the estimates of $cl\,f$ and $clb^f f$ in ${\rm Diff}^r(M, M_{N})_0$. 
First we note that 
\bit 
\itemi (a) \,${g'}^{(j)} \in {\rm Diff}^r(M, M_{N} \cup \Lambda_j)_0 \subset {\rm Diff}^r(M, M_{N} \cup Q_{1,{\cal G}_j})_0$  
by (4)(ii)(b), and \\ 
\hspace*{5.5mm} $cl\,{g'}^{(j)} \leq 2$, \ $clb^f {g'}^{(j)} \leq 2\ell_j+1$ by (4)(ii)$'$. 
\item[] (b) \,$cl\,{h'}^{(k)} \leq 2$, \ $clb^f{h'}^{(k)} \leq 2m+1$ by (3)(i). 
\item[] (c) \,$cl\,b'_j \leq 1$, \ $clb^f b'_j \leq 1$, \ so that \ $cl\,\big(\prod_{j=-2}^k b'_j\big) \leq k+3$, \ 
$clb^f\big(\prod_{j=-2}^k b'_j \big)\leq k+3$.
\eit 
\vskip 2mm 
{Case [I] :} From (i) it follows that \ $cl\,f \leq (k+3) + 2k + 2 = 3k+5$ \ and \\[1mm] 
\hspace*{20mm} 
$\bary[t]{l@{ \ }l}
clb^f f & \leq (k+3) + \mbox{\small $\ds \sum_{j=1}^k$}\,(2\ell_j+1) + 2m+1
= 2\Big(\mbox{\small $\ds \sum_{j=1}^k$}\, \ell_j + k + m + 2\Big) \\[5mm]
& \leq 2(m+2)(k+1) = (n+4)(k+1).
\eary$ \\[2mm] 
{Case [II] :}  In this case we can further refine the factorization of $f$ in (2)(iv). 
\bit 
\itemii By (4)(ii)$'$ and (4)(ii)(c)
each ${g'}^{(j)} \in {\rm Diff}^r(M, M_{N} \cup Q_{1,{\cal G}_j})_0$ has a factorization \ ${g'}^{(j)} = g_jh_j$ \\ such that 
\btab[t]{lll}
$g_j \in {\rm Diff}^r(M, M_{N} \cup Q_{1, {\cal G}_j})_0^c$ & and & $clb^f(g_j) \leq 2\ell_j$ in ${\rm Diff}^r(M, M_{N} \cup Q_{1, {\cal G}_j})_0$, \\[2mm] 
$h_j \in {\rm Diff}^r(M, M_{N} \cup Q_{1,{\cal G}_j} \cup P_1)_0^c$ & and & $clb^f(h_j) \leq 1$ in ${\rm Diff}^r(M, M_{N} \cup Q_{1,{\cal G}_j} \cup P_1)_0$. 
\etab 
\vskip 2mm 
Let $g_1' = g_1$, $g_j' := g_j^{h_1 \cdots h_{j-1}}$ $(j=2, \cdots, k)$ and $h' := h_1 \cdots h_k$. 
Then, it follows that 
\vskip 1mm 
\bit 
\itema $g_j' \in {\rm Diff}^r(M, M_{N})_0^c$ \ $(j=1, \cdots, k)$ \ and \ $h' \in {\rm Diff}^r(M, M_{N} \cup P_1)_0$, 
\vskip 1mm 
\itemb $\prod_{j=1}^k g'{}^{(j)} = \prod_{j=1}^k (g_jh_j) = g_1 g_2^{h_1} g_3^{h_1h_2} \cdots g_k^{h_1 \cdots h_{k-1}} (h_1 \cdots h_k)
= \big(\prod_{j=1}^k g_j'\big) h'$. 
\vskip 1mm 
\itemc $cl\,g_j' \leq 1$, \ $clb^f g_j' \leq 2\ell_j$ \ \ and \ \ $cl\,h' \leq 2$, \ $clb^f h' \leq 2m+1$ \ by (4)(i)$'$
\itemd $cl\,(h'{h'}^{(k)}) \leq cl\,h' + cl\,{h'}^{(k)} \leq 2+2 = 4$. 
\eit 
\vskip 1mm 
\itemiii To obtain a finer estimate on $clb^f (h'{h'}^{(k)})$
we decompose the composition $h'{h'}^{(k)}$ using $\eta \in {\rm Diff}(M, M_U)_0^c$ given in (4)(iii)(b). 
First note that 
\bit 
\itema $\eta' :=(\eta^{-1})^{h'} \in {\rm Diff}(M, M_{U'})_0^c$ \ for $U' := h'(U) \in {\cal B}_f({\rm Int}\,N)$. 
\eit 
Since $\eta(P_0) = P_1$, we have $({h'}^{(k)})^\eta \in {\rm Diff}^r(M, M_{N} \cup P_1)_0$, which implies that 
\bit 
\itemb $h : = h'({h'}^{(k)})^\eta \in {\rm Diff}^r(M, M_{N} \cup P_1)_0$ \ and \ $cl\,h \leq 2$, \ $clb^f h \leq 2m+1$ by (4)(i)$'$. 
\eit 
Since $(h')^\eta = \eta\eta' h'$, 
it follows that $(h'{h'}^{(k)})^\eta = (h')^\eta({h'}^{(k)})^\eta = \eta \eta' h$ and  
\bit 
\itemc $clb^f (h'{h'}^{(k)}) = clb^f (\eta \eta' h) \leq 1 + 1 + 2m+1 = 2m+3$. 
\eit 

\itemiv From (ii) and (iii) it follows that 
$cl\,f \leq (k+3) + k + 4 = 2k+7$ \ and \\[1mm] 
$clb^f f \leq (k+3) + \mbox{\small $\ds \sum_{j=1}^k$}\,(2\ell_j) + 2m+3
= 2\,\mbox{\small $\ds \sum_{j=1}^k$}\, \ell_j + k + 2m + 6 \leq (2m+3)(k+1) + 3 = (n+3)(k+1)+3.$  
\eit 
\vskip 2mm 
This completes the proof. 
\eenum 
\vspace{-7mm} 
\end{proof} 

\begin{proof}[{\bf Proof of Corollary~\ref{cor_cpt_bdry_even}.}] \mbox{}
\benum 
\item Let ${\cal S'} = \{ \sigma_1, \cdots, \sigma_\ell \}$ and 
take ${\cal F} := {\cal S}'$ and its cover ${\cal F}_j = \{ \sigma_j \}$ $(j=1, \cdots, \ell)$.  
Since each $|{\cal F}_j| = \sigma_j$ has an arbitrarily small $2m$-disk neighborhood in $\widetilde{N}$, 
by Theorem~\ref{thm_cpt_bdry_even}\,[II] it follows that 
\hspace*{10mm} $cld\,{\rm Diff}^r(N, \partial N)_0 \leq 2\ell+7$ \ and \ 
$clb^f\!d\,{\rm Diff}^r(N, \partial N)_0 \leq 2\,\mbox{\small $\ds \sum_{j=1}^\ell$}\, \ell_j + \ell + 2m + 6$.

\item Next we show that $\ell_j \leq m$ $(j=1, \cdots, \ell)$.
Recall that $P_{{\cal F}_j} = P^{(m-1)} \cup \sigma_j$. 
Take any face $\tau$ of $\sigma_j$ and let $K_j :=  P^{(m-1)} - \stackrel{\circ}{\tau}$. 
Then $P_{{\cal F}_j}$ elementarily collapses to the subcomplex $K_j$ in the PL-sense. 
Hence, $P_{{\cal F}_j}$ is weakly absorbed to $K_j$ in $\widetilde{O}_j$ keeping $K_j$ invariant.
Thus, $\ell_j \leq clb^f(K_j, \widetilde{O}_j) \leq m$ by Compliment~\ref{compl_ell_j}\,(1)(ii).   
\item From (1), (2) we have $clb^f\!d\,{\rm Diff}^r(N, \partial N)_0 \leq (2m+1)(\ell +1) + 5$.
\eenum 
\vskip -7mm 
\end{proof} 

\subsubsection{\bf Compact manifold case II --- Handle decompositions} \mbox{} 

\begin{setting}\label{setting_even_cpt} 
Suppose $M$ is a $2m$-manifold without boundary $(m \geq 3)$, $1 \leq r \leq \infty$, $r \neq 2m+1$, 
${\cal H}$ is a handle decomposition of $M$, 
$(P, Q) = (P_{\cal H}^{(m)}, P_{\cal H^\ast}^{(m)})$ (the $m$-skeletons of the core complexes of ${\cal H}$ and ${\cal H}^\ast$) 
and ${\cal S}$ is the set of all open $m$-cells of $P$. 
Suppose $N \in {\cal S}{\cal M}_c(M)$, $N_1 \in {\cal S}{\cal M}_c(M, {\cal H})$, 
$N_2 \in {\cal S}{\cal M}_c(M, {\cal H}^\ast)$, $N \Subset N_1 \Subset N_2$ and 
${\rm Int}\,N_1$ includes any $\sigma \in {\cal S}$ with $\sigma \cap {\rm Int}\,N \neq \emptyset$. 
\end{setting}

\begin{setting_5.4+}
Suppose ${\cal C}$ is a subset of ${\cal S}$ such that 
$ \{ \sigma \in {\cal S} \mid \sigma \cap {\rm Int}\,N \neq \emptyset \} \subset {\cal C} \subset \{ \sigma \in {\cal S} \mid \sigma \subset {\rm Int}\,N_1 \}$ 
and $\{ {\cal C}_j \}_{j=1}^k$ is a finite cover of ${\cal C}$ (as a set). 
\end{setting_5.4+}

\begin{prop}\label{cl_even_cpt_bd_handle} 
In Setting~{\rm \ref{setting_even_cpt}, ~ \ref{setting_even_cpt}$^+$}: 
\benum 
\item[{\rm [I]}\,] $cld({\rm Diff}^r(M, M_{N})_0, {\rm Diff}^r(M, M_{N_2})_0) \leq 3k+5$ \ \ and \\[1mm] 
$clb^f\!d\,({\rm Diff}^r(M, M_{N})_0, {\rm Diff}^r(M, M_{N_2})_0) \leq 
2\Big(c({\cal H}) + (k-1)c({\cal H}^{(m)}) + k + 2\Big) \leq 2(m+2)(k+1)$ \\[1mm] 
\hspace*{5mm} if each $Cl_M|{\cal C}_j|$ $(j=1, \dots, k)$ is strongly displaceable from itself in $M$. 
\vskip 2mm 

\item[{\rm [II]}] $cld({\rm Diff}^r(M, M_{N})_0, {\rm Diff}^r(M, M_{N_2})_0) \leq 2k+7$ \ \ and \\ 
$clb^f\!d\,({\rm Diff}^r(M, M_{N})_0, {\rm Diff}^r(M, M_{N_2})_0) 
\leq 2\,\Big(c({\cal H}) + (k-1)c({\cal H}^{(m)})\Big)  + k + 6 \leq (2m+3)(k+1) + 3$ \\[1mm] 
\hspace*{5mm} if each $Cl_M|{\cal C}_j|$ $(j=1, \dots, k)$ is strongly displaceable from $Cl_M|{\cal S}|$ in $M$. 
\eenum 
\end{prop}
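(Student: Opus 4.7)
The approach is to mirror the proof of Theorem~\ref{thm_cpt_bdry_even} verbatim, replacing the triangulation-based estimates (Examples~\ref{exp_(S,U)_I}--\ref{exp_(S,U)_II}, Lemma~\ref{lem_mapping_cylinder}) with their handle-decomposition counterparts from Section~3.1 --- namely Lemma~\ref{factorization_hdle_cpt}, Lemma~\ref{lem_even_cpt_handle}, and Proposition~\ref{prop_2m_cpt_no-m-h}. Given $f \in {\rm Diff}^r(M, M_N)_0$ and an isotopy $F \in {\rm Isot}^r(M, M_N)_0$ with $F_1 = f$, I would apply the generic factorization procedure of Subsection~5.1.2 (Steps~1--5) to the data $(M, N_1, F, P, Q)$. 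The hypothesis that ${\rm Int}\,N_1$ contains every open $m$-cell of $P$ meeting ${\rm Int}\,N$ is arranged precisely so that the set ${\cal S}_c$ of $m$-strata actually moved by the intermediate isotopy $H^{(0)}$ lies entirely in ${\cal C}$, and the given cover $\{{\cal C}_j\}_{j=1}^k$ then descends to a partition $\{{\cal E}_j\}_{j=1}^k$ of ${\cal S}_c$ (say $ {\cal E}_j := ({\cal C}_j \cap {\cal S}_c) - \bigcup_{i<j}{\cal C}_i$) controlling the $k$ Whitney-trick iterations of Step~4.

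Step~5 then yields the factorization
\[
f = (\hat{a}\hat{g})^{-1}\Bigl[\Bigl(\prod_{j=-2}^{k} b'_j\Bigr)\Bigl(\prod_{j=1}^{k} {g'}^{(j)}\Bigr)h'^{(k)}\Bigr]\hat{a}\hat{g},
\]
where each $b'_j$ is supported in a finite disjoint union of $n$-balls in ${\rm Int}\,N_1$, each ${g'}^{(j)} \in {\rm Diff}^r(M, M_{N_1} \cup \Lambda_j)_0$ with $Q_{1,{\cal G}_j} \subset \Lambda_j$, and $h'^{(k)} \in {\rm Diff}^r(M, M_{N_1} \cup P)_0$. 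For each ${g'}^{(j)}$ I would invoke Lemma~\ref{lem_even_cpt_handle} with ${\cal C} = {\cal C}_j$ and ${\cal C}' = {\cal C}_j$ in case~[I], or ${\cal C}' = {\cal S}$ in case~[II]; the displacement assumption on $Cl_M|{\cal C}_j|$ in the respective case is precisely the input that lemma requires. The output is a factorization ${g'}^{(j)} = g_j h_j$ with $g_j \in {\rm Diff}^r(M, M_{N_1} \cup \Lambda_j)_0^c$, $clb^f g_j \leq 2c(K_j) \leq 2c({\cal H}^{(m)})$, and $h_j$ supported away from $P^{(m-1)} \cup |{\cal C}'|$ with $clb^f h_j \leq 1$ (so in case~[II] the $h_j$ additionally fix $P$). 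For $h'^{(k)}$, which fixes $P$, I would apply Lemma~\ref{factorization_hdle_cpt} with the roles of ${\cal H}$ and ${\cal H}^\ast$ exchanged (using $k' = m-1 < n/2$, the saturated submanifold $N_2 \in {\cal S}{\cal M}_c(M,{\cal H}^\ast)$, and the dual skeletons $P_{{\cal H}^\ast}^{(m-1)}$, $P_{\cal H}^{(m)} = P$), yielding $cl\,h'^{(k)} \leq 2$ and $clb^f h'^{(k)} \leq 2c\big((P_{{\cal H}^\ast|_{N_2}})^{(m-1)}\big) + 1$.

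Case~[I] then follows by direct summation of the contributions of the $(k+3)$ ball-factors, the $k$ commutator-length-$2$ factors ${g'}^{(j)}$, and $h'^{(k)}$. Case~[II] requires the sharper incorporation trick of Remark~\ref{rmk_incorporation}: since in this case each $h_j$ fixes $P$, the $h_j$'s can be conjugated past the remaining $g_i$'s in the standard Fact~\ref{fact_conjugation} manner and pooled together with $h'^{(k)}$ into a single composite factor, which is then bounded via Proposition~\ref{prop_2m_cpt_no-m-h} applied to ${\cal H}^\ast$ on the pair $(N_1, N_2)$ --- the ``no $m$-handles'' condition being met on the effective support because the composite fixes $P$. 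The main obstacle is the combinatorial bookkeeping: verifying that the perturbed dual skeleton $Q_1$ and the auxiliary supports $\Lambda_j$ produced by Step~4 remain compatibly contained in $N_1 \subset N_2$ so that Lemma~\ref{lem_even_cpt_handle} and Proposition~\ref{prop_2m_cpt_no-m-h} apply with the correct saturations, and, for case~[II], arranging the merging of one ${g'}^{(j)}$'s $g$-piece with the $h$-composite so that the bound assembles as $2c({\cal H}) + 2(k-1)c({\cal H}^{(m)}) + O(k)$ rather than the weaker naive sum $2k\,c({\cal H}^{(m)}) + 2m + O(k)$. Once this accounting is carried out, the universal bounds $c({\cal H}^{(m)}) \leq m+1$ and $c({\cal H}) \leq 2m+1$ yield the closed-form estimates $\leq 2(m+2)(k+1)$ and $\leq (2m+3)(k+1)+3$ respectively.
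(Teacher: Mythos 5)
Your proposal follows the same overall architecture as the paper's proof: the Step~1--5 factorization of Subsection~5.1.2, the use of the hypothesis on $N_1$ to force ${\cal S}_c \subset {\cal C}$, the partition ${\cal E}_j \subset {\cal C}_j$ driving the Whitney-trick iterations, Lemma~\ref{lem_even_cpt_handle} for the ${g'}^{(j)}$ factors (with $L_j = P_{{\cal C}_j}$ or $P$ depending on the case), and Lemma~\ref{factorization_hdle_cpt} with $({\cal H}^\ast, Q^{(m-1)}, P, N_2)$ for ${h'}^{(k)}$. Case~[I] and the $cl$ estimate in [II] are handled essentially as in the paper.

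However, there is a genuine gap in your treatment of the $clb^f$ bound in Case~[II]. You propose pooling the $h_j$'s together with ${h'}^{(k)}$ into one composite and then bounding it ``via Proposition~\ref{prop_2m_cpt_no-m-h} applied to ${\cal H}^\ast$ on the pair $(N_1, N_2)$ --- the `no $m$-handles' condition being met on the effective support because the composite fixes $P$.'' This is a misreading of Proposition~\ref{prop_2m_cpt_no-m-h}: that proposition requires that the handle decomposition itself literally contain no $m$-handles in $L$, not merely that the diffeomorphism under consideration fix the $m$-skeleton; ${\cal H}^\ast$ does have $m$-handles in $N_2$ whenever ${\cal H}$ does. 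The tool actually needed is Lemma~\ref{factorization_hdle_cpt} (which you do invoke for ${h'}^{(k)}$ itself), and more importantly, the pooled composite $h'\cdot {h'}^{(k)}$ with $h':=h_1\cdots h_k$ cannot be fed into that lemma directly either: $h'$ fixes the \emph{perturbed} skeleton $P_1 = K_1(P)$, while ${h'}^{(k)}$ fixes $P$, so the composite fixes neither. The paper resolves this with a conjugation device: since $P_1 = \eta(P)$ for a ball-supported $\eta$, one writes $(h'{h'}^{(k)})^\eta = \eta\,\eta'\,h$ with $\eta' := (\eta^{-1})^{h'}$ ball-supported and $h := h'({h'}^{(k)})^\eta$ fixing $P_1$, whence $clb^f(h'{h'}^{(k)}) \leq 1 + 1 + (2\ell+1) = 2\ell + 3$. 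Without this, you would only get $clb^f(h') + clb^f({h'}^{(k)}) \leq (2\ell+1)+(2\ell+1) = 4\ell+2$, which overshoots the stated closed-form bound $(2m+3)(k+1)+3$ by roughly $2m$. You cite ``mirror the proof of Theorem~\ref{thm_cpt_bdry_even} verbatim,'' which does include the $\eta$-trick, so you may have it implicitly in mind; but the explicit appeal to Proposition~\ref{prop_2m_cpt_no-m-h} with a mistaken justification suggests this step still needs to be made precise.
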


\begin{proof} 
The proof is similar to that of Theorem~\ref{thm_cpt_bdry_even} 
and is based on Lemmas~\ref{factorization_hdle_cpt}, ~\ref{lem_even_cpt_handle}. 

Take any $f \in {\rm Diff}^r(M, M_{N})_0$ and $F \in {\rm Isot}^r(M, M_{N})_0$ with $f = F_1$. 
Our goal is to estimate $cl\,f$ and $clb^f f$ in ${\rm Diff}^r(M, M_{N_2})_0$. 

\benum[(1)] 
\item 
First we apply (Step 1) $\sim$ (Step 3) in Subsection 5.1.2 to $(M, N)$, $P$, $Q$ and $F$. 
Then we obtain 
\bit 
\itemI $K \in {\rm Isot}(M, M_N \cup P^{(m-1)})_0$, $Q_1 = K_1(Q)$ and 
$H^{(0)} \in {\rm Isot}^r(M, M_N \cup P^{(m-1)})_0$. 
\eit 
To apply (Step 4) we replace $N$ by $N_1$.  
In Setting~\ref{setting_cpt_even}$^+$ we have 
\bit 
\itemII [the set of open $m$-cells of $P$] $= {\cal S}$, \\
${\cal S}_0 := \{ \sigma \in {\cal S} \mid H^{(0)}\equiv \id \text{ on $[$a neighborhood of $\sigma] \times I$}\}$, \ \  
${\cal S}_c = {\cal S} - {\cal S}_0 \subset {\cal C}$. 
\eit 
Let ${\cal E} = {\cal S}_c$ and take a partition ${\cal E} = \bigcup_{j=1}^k {\cal E}_j$ such that 
${\cal E}_j \subset {\cal C}_j$ $(j=1, \cdots, k)$. 
Then we have  
\bit 
\itemiii $\Sigma_j := P^{(m-1)} \cup |{\cal S}_0| \cup (\bigcup_{i=1}^j |{\cal E}_i|)$ \ $(j=1, \cdots, k)$, \ \ $\Sigma_k = P$, 

${\cal U} := $ the set of open $m$-cells of $Q_1$ \ \ and \\
$\Lambda_j := Q_1^{(m-1)} \cup \bigcup \{ \tau \in {\cal U} \mid \tau \cap |{\cal E}_j| = \emptyset \}$ \ $(j=1, \cdots, k)$. 
\eit 
\vskip 1mm 
We apply (Step 4) to $(M, N_1)$, $P$, $Q_1$, $H^{(0)}$ and $\{ {\cal E}_j \}_{j=1}^k$. Then (Step 5) 
 yields the factorization   
\vskip 1mm 
\bit 
\itemiv $f = \, (\hat{a}\hspace{0.2mm}\hat{g})^{-1}\Big[\big[\prod_{j=-2}^k b'_j\big] 
\big[\prod_{j=1}^k g'{}^{(j)}\big] h'{}^{(k)}\Big]\hat{a}\hspace{0.2mm}\hat{g}$, \ where  
\vskip 2mm 
\bit 
\itema ${g'}^{(j)} \in {\rm Diff}^r(M, M_{N_1} \cup \Lambda_j)_0$ \ $(j=1, \cdots, k)$, \hsh 
(b) ${h'}^{(k)} \in {\rm Diff}^r(M, M_{N_1} \cup P)_0$,  
\itemc $b'_j \in {\rm Diff}^r(M, M_{V_j'})_0^c$ \ for some \ $V_j' \in {\cal B}_f({\rm Int}\,N_1)$ \ $(j=-2, -1,0,1, \cdots,k)$. 
\eit 
\eit 
\vskip 1mm 
\item Next we apply Lemmas~\ref{factorization_hdle_cpt}, ~\ref{lem_even_cpt_handle}. 
Let $(K_j, Q_{{\cal C}_j}) := (P_{N_1}^{(m-1)}\cup |{\cal C}_j|, Q - |{\cal C}_j^\ast|)$, 
$L_j := P_{{\cal C}_j}$ in [I] and $L_j := P$ in [I\hspace{-0.2mm}I] 
and $\ell_j := c(K_j)$, $\ell := c((P_{{\cal H}^\ast|_{N_2}})^{(m-1)})$ $(j=1, \cdots, k)$. 
It follows that 
\bit 
\itemI any $\underline{g} \in {\rm Diff}^r(M, M_{N_1} \cup Q_{{\cal C}_j})_0$ $(j = 1, \cdots, k)$ \ 
has a factorization \ $\underline{g} = \underline{g}_1\underline{g}_2$ \ such that \\[1mm] 
\hsp \btab[t]{@{}l@{ \ }l@{ \ }l}
$\underline{g}_1 \in {\rm Diff}^r(M, M_{N_1} \cup Q_{{\cal C}_j})_0^c$ & and & 
$clb^f(\underline{g}_1) \leq 2\ell_j$ in ${\rm Diff}^r(M, M_{N_1} \cup Q_{{\cal C}_j})_0$, \\[2mm] 
$\underline{g}_2 \in {\rm Diff}^r(M, M_{N_1} \cup Q_{{\cal C}_j} \cup L_j)_0^c$ & and & 
$clb^f(\underline{g}_2) \leq 1$ in ${\rm Diff}^r(M, M_{N_1} \cup Q_{{\cal C}_j} \cup L_j)_0$, 
\etab 
\vskip 2mm 
\itemII $cld\,{\rm Diff}^r(M, M_{N_2} \cup P)_0 \leq 2$ \ \ and \ \ 
$clb^f\!d\,{\rm Diff}^r(M, M_{N_2} \cup P)_0 \leq 2\ell+1$. 
\eit 
The assertion (i) follows from Lemma~\ref{lem_even_cpt_handle} 
for the data : \\
\hsh $M$, ${\cal H}$, $(P, Q)$, $N_1$, ${\cal C}_j$, ${\cal C}_j' := {\cal C}_j$ in [I] and ${\cal S}$ in [I\hspace{-0.2mm}I], 
$(K_j, Q_{{\cal C}_j})$, $L_j$, $O_j := {\rm Int}\,N_1 - Q_{{\cal C}_j}$ and $\ell_j$. \\
By the assumption, $Cl_M|{\cal C}_j|$ is displaceable from $Cl_M|{\cal C}_j'|$ in $O_j$. 
The assertion (ii) follows from Lemma~\ref{factorization_hdle_cpt} for the data $M$, ${\cal H}^\ast$, $(Q^{(m-1)}, P)$, $N_2$ and $\ell$. 
It is easily seen that  
\bit 
\itemiii  $\ell_j \leq c(P_{\cal H}^{(m)}) = c({\cal H}^{(m)}) \leq m+1$, \ \ $\ell \leq c(P_{{\cal H}^\ast}^{(m-1)}) = c(({\cal H}^\ast)^{(m-1)}) \leq m$
\ \ and \\
$\ell_j + \ell \leq c({\cal H}^{(m)}) + c(({\cal H}^\ast)^{(m-1)}) = c({\cal H}) \leq 2m+1$. 
\eit 

\item We transform the estimates in (2) to those associated to $Q_1$, using $K_1 \in {\rm Diff}(M, M_{N})_0$. We set 
\bit 
\itemI $(P_1, Q_{1,{\cal C}_j}, L_{1,j}) := K_1(P, Q_{{\cal C}_j}, L_j)$, $\sigma^\flat := K_1(\sigma^\ast)$ for $\sigma \in {\cal S}$ \ and \ 
${\cal D}^\flat := \{ \sigma^\flat \mid \sigma \in {\cal D} \}$ for ${\cal D} \subset {\cal S}$. 
\eit 
The choice of $K$ in (Step 1) means that 
\bit 
\itemII 
\bit 
\itema $\sigma^\flat \cap \tau \neq \emptyset$ iff $\sigma = \tau$ \ ($\sigma, \tau \in {\cal S}$), hence 
$\Lambda_j = Q_1 - |{\cal E}_j^\flat| \supset Q_1 - |{\cal C}_j^\flat| = K_1(Q - |{\cal C}_j^\ast|) \equiv Q_{1,{\cal C}_j}$.
\itemb $P_1$ differs from $P$ only in some $U \in {\cal B}_f({\rm Int}\,N)$, which is a small neighborhood of $P \cap Q \cap {\rm Int}\,N$ in ${\rm Int}\,N$, 
and there exists $\eta \in {\rm Diff}(M, M_U)_0$ with $P_1 = \eta(P)$. 
\eit 
By \cite[Remark 2.1]{Tsu09} we may assume that $\eta \in {\rm Diff}(M, M_U)_0^c$. 
\eit 
Since the tuple $(N_2, N_1, P, Q_{{\cal C}_j}, L_j)$ corresponds to $(N_2, N_1, P_1, Q_{1, {\cal C}_j}, L_{1,j})$ under the diffeomorphism $K_1$, 
from (2) we have the following conclusions :  

\bit 
\item[(i)$'$\,] any $\underline{g} \in {\rm Diff}^r(M, M_{N_1} \cup Q_{1, {\cal C}_j})_0$ $(j = 1, \cdots, k)$ \ 
has a factorization \ $\underline{g} = \underline{g}_1\underline{g}_2$ \ such that \\[1mm] 
\hsp \btab[t]{@{}l@{ \ }l@{ \ }l}
$\underline{g}_1 \in {\rm Diff}^r(M, M_{N_1} \cup Q_{1,{\cal C}_j})_0^c$ & and & 
$clb^f(\underline{g}_1) \leq 2\ell_j$ in ${\rm Diff}^r(M, M_{N_1} \cup Q_{1,{\cal C}_j})_0$, \\[2mm] 
$\underline{g}_2 \in {\rm Diff}^r(M, M_{N_1} \cup Q_{1,{\cal C}_j} \cup L_{1,j})_0^c$ & and & 
$clb^f(\underline{g}_2) \leq 1$ in ${\rm Diff}^r(M, M_{N_1} \cup Q_{1,{\cal C}_j} \cup L_{1,j})_0$, 
\etab 
\vskip 2mm 
\item[(ii)$'$] $cld\,{\rm Diff}^r(M, M_{N_2} \cup P_1)_0 \leq 2$ \ \ and \ \ 
$clb^f\!d\,{\rm Diff}^r(M, M_{N_2} \cup P_1)_0 \leq 2\ell+1$. 
\eit 
\vskip 1mm 

\item It remains to estimate the contribution of ${g'}^{(j)}$, ${h'}^{(k)}$ and $b'_j$ to $cl\,f$ and $clb^f f$ in ${\rm Diff}^r(M, M_{N_2})_0$.
\bit 
\itemi (a) \,${g'}^{(j)} \in {\rm Diff}^r(M, M_{N_1} \cup \Lambda_j)_0 \subset {\rm Diff}^r(M, M_{N_1} \cup Q_{1,{\cal C}_j})_0$ 
by (3)(ii)(a), and \\ 
\hspace*{5.5mm} $cl\,{g'}^{(j)} \leq 2$, \ $clb^f {g'}^{(j)} \leq 2\ell_j+1$ by (3)(i)$'$. 
\item[] (b) \,$cl\,{h'}^{(k)} \leq 2$, \ $clb^f{h'}^{(k)} \leq 2\ell+1$ by (2)(ii). 
\item[] (c) \,$cl\,b'_j \leq 1$, \ $clb^f b'_j \leq 1$, \ so that \ $cl\,\big(\prod_{j=-2}^k b'_j\big) \leq k+3$, \ 
$clb^f\big(\prod_{j=-2}^k b'_j \big)\leq k+3$.
\eit 
\vskip 2mm 
{Case [I] :} From (i) it follows that \ $cl\,f \leq (k+3) + 2k + 2 = 3k+5$ \ and \\[1mm] 
\hspace*{20mm} 
$\bary[t]{l@{ \ }l}
clb^f f & \leq (k+3) + \mbox{\small $\ds \sum_{j=1}^k$}\,(2\ell_j+1) + 2\ell+1
= 2\Big(\mbox{\small $\ds \sum_{j=1}^k$}\, \ell_j + \ell + k + 2\Big) \\[5mm]
& \leq 2\Big(c({\cal H}) + (k-1)c({\cal H}^{(m)}) + k + 2\Big) \leq 2(m+2)(k+1) = (n+4)(k+1).
\eary$ \\[2mm] 
{Case [I\hspace{-0.2mm}I] :}  We refine the factorization of $f$ in (1)(iv). 
\bit 
\itemii By (3)(i)$'$ 
each ${g'}^{(j)} \in {\rm Diff}^r(M, M_{N_1} \cup Q_{1,{\cal C}_j})_0$ has a factorization \ ${g'}^{(j)} = g_jh_j$ \\ such that 
\btab[t]{lll}
$g_j \in {\rm Diff}^r(M, M_{N_1} \cup Q_{1,{\cal C}_j})_0^c$ & and & 
$clb^f(g_j) \leq 2\ell_j$ in ${\rm Diff}^r(M, M_{N_1} \cup Q_{1,{\cal C}_j})_0$, \\[2mm] 
$h_j \in {\rm Diff}^r(M, M_{N_1} \cup Q_{1,{\cal C}_j} \cup P_1)_0^c$ & and & 
$clb^f(h_j) \leq 1$ in ${\rm Diff}^r(M, M_{N_1} \cup Q_{1,{\cal C}_j} \cup P_1)_0$. 
\etab 
\vskip 2mm 
Let $g_1' = g_1$, $g_j' := g_j^{h_1 \cdots h_{j-1}}$ $(j=2, \cdots, k)$ and $h' := h_1 \cdots h_k$. 
Then, it follows that 
\vskip 1mm 
\bit 
\itema $g_j' \in {\rm Diff}^r(M, M_{N_1})_0^c$ \ $(j=1, \cdots, k)$ \ and \ $h' \in {\rm Diff}^r(M, M_{N_1} \cup P_1)_0 \subset {\rm Diff}^r(M, M_{N_2} \cup P_1)_0$, 
\vskip 1mm 
\itemb $\prod_{j=1}^k g'{}^{(j)} = \prod_{j=1}^k (g_jh_j) = g_1 g_2^{h_1} g_3^{h_1h_2} \cdots g_k^{h_1 \cdots h_{k-1}} (h_1 \cdots h_k)
= \big(\prod_{j=1}^k g_j'\big) h'$. 
\vskip 1mm 
\itemc $cl\,g_j' \leq 1$, \ $clb^f g_j' \leq 2\ell_j$ \ \ and \ \ $cl\,h' \leq 2$, \ $clb^f h' \leq 2\ell+1$ \ by (3)(ii)$'$
\itemd $cl\,(h'{h'}^{(k)}) \leq cl\,h' + cl\,{h'}^{(k)} \leq 2+2 = 4$ \ by (i)(b) and (c). 
\eit 
\vskip 1mm 
\itemiii To obtain a finer estimate on $clb^f (h'{h'}^{(k)})$
we decompose the composition $h'{h'}^{(k)}$ using $\eta \in {\rm Diff}(M, M_U)_0^c$ given in (3)(ii)(b). 
We note that 
\bit 
\itema $\eta' :=(\eta^{-1})^{h'} \in {\rm Diff}(M, M_{U'})_0^c$ \ for $U' := h'(U) \in {\cal B}_f({\rm Int}\,N_1)$. 
\eit 
Since $\eta(P) = P_1$, we have $({h'}^{(k)})^\eta \in {\rm Diff}^r(M, M_{N_1} \cup P_1)_0$, which implies that 
\bit 
\itemb $h : = h'({h'}^{(k)})^\eta \in {\rm Diff}^r(M, M_{N_1} \cup P_1)_0$ \ and \ $cl\,h \leq 2$, \ $clb^f h \leq 2\ell+1$ by (3)(ii)$'$. 
\eit 
Since $(h')^\eta = \eta\eta' h'$, 
it follows that $(h'{h'}^{(k)})^\eta = (h')^\eta({h'}^{(k)})^\eta = \eta \eta' h$ and  
\bit 
\itemc $clb^f (h'{h'}^{(k)}) = clb^f (\eta \eta' h) \leq 1 + 1 + 2\ell+1 = 2\ell+3$. 
\eit 

\itemiv From (ii) and (iii) it follows that 
$cl\,f \leq (k+3) + k + 4 = 2k+7$ \ and \\[1mm] 
$clb^f f \leq (k+3) + \mbox{\small $\ds \sum_{j=1}^k$}\,(2\ell_j) + 2\ell+3
= 2\,\mbox{\small $\ds \sum_{j=1}^k$}\, \ell_j  + 2\ell + k + 6 \\
\leq 2\,\Big(c({\cal H}) + (k-1)c({\cal H}^{(m)})\Big)  + k + 6
\leq (2m+3)(k+1) + 3 = (n+3)(k+1)+3.$  
\eit 
\eenum 

This completes the proof. 
\end{proof} 

When $M$ is a closed $2m$-manifold, we can select $N = N_1 = N_2 = M$. 
Then, Proposition~\ref{cl_even_cpt_bd_handle} reduces to the following form. 

\begin{cor}\label{cor_closed} 
Suppose $M$ is a closed $2m$-manifold $(m \geq 3)$, $1 \leq r \leq \infty$, $r \neq 2m+1$, 
${\cal H}$ is a handle decomposition of $M$, 
${\cal C}$ is the set of all open $m$-cells of $P_{\cal H}$ and $\{ {\cal C}_j \}_{j=1}^k$ is a finite cover of ${\cal C}$. 

\benum 
\item[{\rm [I]}\,] $cld\,{\rm Diff}^r(M)_0 \leq 3k+5$ \ \ and \ \  \\[1mm] 
$clb^f\!d\,{\rm Diff}^r(M)_0 \leq 
2\Big(c({\cal H}) + (k-1)c({\cal H}^{(m)}) + k + 2\Big) \leq 2(m+2)(k+1)$ \\[1mm] 
\hspace*{5mm} if each $Cl_M|{\cal C}_j|$ $(j=1, \dots, k)$ is strongly displaceable from itself in $M$. 
\vskip 2mm 

\item[{\rm [II]}] $cld\,{\rm Diff}^r(M)_0 \leq 2k+7$ \ \ and \\[1mm] 
$clb^f\!d\,{\rm Diff}^r(M)_0  
\leq 2\,\Big(c({\cal H}) + (k-1)c({\cal H}^{(m)})\Big)  + k + 6 \leq (2m+3)(k+1) + 3$ \\[1mm] 
\hspace*{5mm} if each $Cl_M|{\cal C}_j|$ $(j=1, \dots, k)$ is strongly displaceable from $Cl_M|{\cal C}|$ in $M$. 
\eenum 
\end{cor}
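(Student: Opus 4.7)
The plan is to specialize Proposition~\ref{cl_even_cpt_bd_handle} to the closed case by taking $N = N_1 = N_2 = M$. First I would verify that the setup of that proposition is satisfied: when $M$ is closed, $M$ itself lies in both ${\cal S}{\cal M}_c(M;{\cal H})$ and ${\cal S}{\cal M}_c(M;{\cal H}^\ast)$, so the chain $N \Subset N_1 \Subset N_2$ required in Setting~\ref{setting_even_cpt} is realized trivially by the common choice $M$ (all the $\Subset$ inclusions degenerate to equalities, and the condition that ${\rm Int}\,N_1$ contain every $m$-cell meeting ${\rm Int}\,N$ is automatic). Setting~\ref{setting_even_cpt}$^+$ then forces ${\cal S} \subset {\cal C} \subset {\cal S}$, so ${\cal C}$ must be the full set of open $m$-cells of $P_{\cal H}$, which is exactly the definition given in the corollary.

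Next I would identify the relative diameters with absolute ones. Since $M_M = \emptyset$, the groups ${\rm Diff}^r(M, M_N)_0$ and ${\rm Diff}^r(M, M_{N_2})_0$ both reduce to ${\rm Diff}^r(M)_0$, so the relative commutator length diameters $cld\,({\rm Diff}^r(M, M_N)_0, {\rm Diff}^r(M, M_{N_2})_0)$ and $clb^f\!d\,({\rm Diff}^r(M, M_N)_0, {\rm Diff}^r(M, M_{N_2})_0)$ appearing in Proposition~\ref{cl_even_cpt_bd_handle} become $cld\,{\rm Diff}^r(M)_0$ and $clb^f\!d\,{\rm Diff}^r(M)_0$ respectively. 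The displacement hypotheses transfer verbatim: in [I] the assumption that each $Cl_M|{\cal C}_j|$ be strongly displaceable from itself in $M$ is identical in both statements, while in [II] the role played by $Cl_M|{\cal S}|$ in the proposition is played by $Cl_M|{\cal C}|$ in the corollary, which is the same set under the identification ${\cal C} = {\cal S}$ obtained above.

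Finally, the rightmost (cruder) bounds follow from the universal estimates $c({\cal H}) \leq n+1 = 2m+1$ and $c({\cal H}^{(m)}) \leq m+1$ of Notation~\ref{notation_c(P)}. For [I], $2(c({\cal H}) + (k-1)c({\cal H}^{(m)}) + k + 2) \leq 2((2m+1) + (k-1)(m+1) + k + 2) = 2(m(k+1) + 2(k+1)) = 2(m+2)(k+1)$. For [II], $2(c({\cal H}) + (k-1)c({\cal H}^{(m)})) + k + 6 \leq 2((2m+1) + (k-1)(m+1)) + k + 6 = 2m(k+1) + 3(k+1) + 3 = (2m+3)(k+1) + 3$.

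Since the technical work has already been carried out in the proof of Proposition~\ref{cl_even_cpt_bd_handle}, no real obstacle remains. The only point requiring care is the bookkeeping that collapses the relative commutator lengths to absolute ones when the manifold is closed and confirms that the coverings ${\cal C}$ and ${\cal S}$ coincide in this setting; beyond that, the corollary is a direct corollary of the proposition, as the paper itself indicates.
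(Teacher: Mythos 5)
Your proposal is correct and matches the paper's approach exactly: the paper explicitly states, immediately before the corollary, that it is obtained by taking $N = N_1 = N_2 = M$ in Proposition~\ref{cl_even_cpt_bd_handle}, which is precisely the specialization you carry out, including the observation that $M_M = \emptyset$ collapses the relative diameters to absolute ones and that the constraint in Setting~\ref{setting_even_cpt}$^+$ forces ${\cal C} = {\cal S}$. The arithmetic checks for the rightmost bounds are also correct.
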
 

\begin{example}\label{exp_even_covering} 
Consider the product of two $m$-spheres $M = S^m \times S^m$. 
Each factor $S^m_k$ $(k=1,2)$ has a handle decompostion with one $0$-handle and one $m$-handle. 
By taking the products of these handles and smoothing them, 
we obtain a handle decompostion ${\cal H}$ of $M$ with one 0-handle, two $m$-handles and one $2m$-handle. 
The core complex $P_{\cal H}$ of ${\cal H}$ has two open $m$-cells $\sigma_j$ $(j=1,2)$, 
for which we may assume that $Cl_M \sigma_j$ $(j = 1, 2)$ are smooth $m$-spheres with a trivial normal bundle and 
these intersect at the unique 0-cell (i.e., the center point of the 0-handle) transversely. 
Hence, each $Cl_M\sigma_j$ $(j = 1, 2)$ is strongly displaceable from itself in $M$ (cf. Example~\ref{exp_displacement}\,(3)), 
but it is not displaceable from $Cl_M(\sigma_1 \cup \sigma_2)$. 
Therefore, if $m \geq 3$ and $1 \leq r \leq \infty$, $r \neq 2m+1$, from Corollary~\ref{cor_closed}\,[I] it follows that \\
\hspace*{45mm} $cld\,{\rm Diff}^r(M)_0 \leq 11$ \ \ and \ \ $clb^f\!d\,{\rm Diff}^r(M)_0 \leq 18$. 
\end{example}

\subsection{Open manifold case} \mbox{} 

This section is devoted to the estimates of commutator length diameter of diffeomorphism groups for some classes of open manifolds. 

\subsubsection{\bf Grouping of $m$-cells in open $2m$-manifolds} \mbox{} 

In this subsection we obtain basic results related to grouping of infinitely many $m$-cells in open $2m$-manifolds 
based on Theorem~\ref{thm_cpt_bdry_even} and Proposition~\ref{cl_even_cpt_bd_handle} in the compact $2m$-manifold case. 
These results are used in the next subsection to treat some important classes of open $2m$-manifolds 
(for example, covering spaces, infinite connected sums, etc.).  

First we consider the case related to exhausting sequences and triangulations. 

\begin{setting}\label{setting_exhausting_seq}
Suppose $M$ is an open $2m$-manifold, 
$\{ M_k \}_{k \geq 0}$ is an exhausting sequence of $M$ $(M_k = \emptyset$ $(k < 0))$ and  
${\cal T}$ is a $C^\infty$ triangulation of $M$ such that each $M_k$ underlies a subcomplex of ${\cal T}$.  
Let ${\cal S}$ and ${\cal S}_{k}$ $(k \geq 0)$ denote 
the sets of $m$-simplices of ${\cal T}$ and ${\cal T}|_{M_{k-1,k}}$ respectively. 
Suppose $p \geq 1$, $q \geq 0$ and 
$\{{\cal S}_{0,j} \}_{j=1}^{p+q}$ is a finite cover of ${\cal S}_0$ and 
$\{{\cal S}_{k,j} \}_{j=1}^{p}$ is a finite cover of ${\cal S}_k$ for each $k \geq 1$ such that 
\bit 
\item[$(\natural)$]
$\{ |{\cal S}_{k,j}| \}_{k \geq 0}$ is a disjoint family for each $j=1, \cdots, p$. 
\eit 
\end{setting}

Recall that $\widetilde{N} := N \cup_{\partial N} (\partial N \times [0,1))$ for any manifold $N$. 

\begin{prop}\label{prop_exhausting seq} 
In Setting~{\rm \ref{setting_exhausting_seq}} :  Suppose $m \geq 3$ and $1 \leq r \leq \infty$, $r \neq 2m+1$. 
\benum[\ (1)] 
\item[{\rm [I]}\,] 
\btab[t]{@{}ll}
$cld\,{\rm Diff}^r(M)_0 \leq 3(2p+q) + 10$, \hsf & $clb^dd\,{\rm Diff}^r(M)_0 \leq 2(m+2)(2p+q+2)$, \\[2mm] 
$cld\,{\rm Diff}^r_c(M)_0 \leq 3(p+q) + 5$, & $clb^fd\,{\rm Diff}_c^r(M)_0 \leq 2(m+2)(p+q+1)$,
\etab 
\vskip 1mm 
\bit 
\item[if \ {\rm (a)}] each $|{\cal S}_{0,j}|$ $(j=1, \cdots, p+q)$ is strongly displaceable from $|{\cal S}_{0,j}| \cup (\partial M_0 \times [0,1))$ in $\widetilde{M}_0$ 
and 
\itemb each $|{\cal S}_{k,j}|$ $(k \geq 1, j=1, \cdots, p)$ is strongly displaceable from $|{\cal S}_{k,j}| \cup (\partial M_{k-1, k} \times [0,1))$ in $\widetilde{M}_{k-1, k}$. 
\eit   

\item[{\rm [II]}] 
\btab[t]{@{}ll}
$cld\,{\rm Diff}^r(M)_0 \leq 2(2p+q) + 14$, \hsf &  
$clb^dd\,{\rm Diff}^r(M)_0 \leq (2m+3)(2p+q+2) + 6$, \\[2mm] 
$cld\,{\rm Diff}_c^r(M)_0 \leq 2(p+q) +7$, & 
$clb^fd\,{\rm Diff}_c^r(M)_0 \leq (2m+3)(p+q+1) + 3$, 
\etab 
\vskip 1mm 
\bit 
\item[if \ {\rm (a)}] each $|{\cal S}_{0,j}|$ $(j=1, \cdots, p+q)$ is strongly displaceable from 
$|{\cal S}_{0}| \cup (\partial M_0 \times [0,1))$ in $\widetilde{M}_0$ 
and 
\itemb each $|{\cal S}_{k,j}|$ $(k \geq 1, j=1, \cdots, p)$ is strongly displaceable from $|{\cal S}_{k}| \cup (\partial M_{k-1, k} \times [0,1))$ in $\widetilde{M}_{k-1, k}$. 
\eit   
\eenum 
\end{prop}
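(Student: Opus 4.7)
The plan is to deduce both cases from the compact-manifold Theorem~\ref{thm_cpt_bdry_even} combined with the factorization Lemma~\ref{lem_factorization_open}, using the disjointness hypothesis $(\natural)$ to amalgamate the layer-wise covers into covers of cardinality $p+q$ on compact pieces.

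\emph{Compactly supported case.} Given $f \in {\rm Diff}_c^r(M)_0$, I would choose $K$ with $\mathrm{supp}\,f \subset {\rm Int}\,M_K$ and set
\[
{\cal F}_j := \bigcup_{k=0}^{K} {\cal S}_{k,j} \quad (1 \leq j \leq p), \qquad {\cal F}_{p+i} := {\cal S}_{0,p+i} \quad (1 \leq i \leq q).
\]
By $(\natural)$ the family $\{|{\cal F}_j|\}_{j=1}^{p+q}$ is a cover of the $m$-simplices of ${\cal T}|_{M_K}$ interior to $M_K$. The layers $M_{k-1,k}$ for $1 \leq k \leq K$ are pairwise disjoint, and the abstract collars $\partial M_{k-1,k} \times [0,1)$ embed into the adjacent layers $M_{k-2,k-1}$ and $M_{k,k+1}$ on opposite sides of each interface $\partial M_k$, so the local strongly displacing diffeomorphisms of each $|{\cal S}_{k,j}|$ (arranged, by strong displaceability, to have arbitrarily small support in $\widetilde{M}_{k-1,k}$) can be transplanted to mutually disjoint ``expanded layers'' inside $\widetilde{M}_K$ and composed into a single diffeomorphism strongly displacing $|{\cal F}_j|$ from itself (case [I]) or from $|{\cal S}|$ (case [II]), together with $(\partial M_K \times [0,1))$. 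Theorem~\ref{thm_cpt_bdry_even} applied to $N = M_K$ with this cover of size $p+q$, combined with Complement~\ref{compl_ell_j}(1)(i) to bound each $\ell_j \leq m+1$, delivers the asserted estimates on $cld$ and $clb^fd$ for ${\rm Diff}_c^r(M)_0$.

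\emph{General case.} For $f \in {\rm Diff}^r(M)_0$ lifted to $F \in {\rm Isot}^r(M)_0$, Lemma~\ref{lem_factorization_open} yields, after passing to a subsequence (still denoted $\{M_k\}$), a factorization $F = GH$ with $G \in {\rm Isot}^r(M, M')_0$, $H \in {\rm Isot}^r(M, M'')_0$, where $M' = \bigcup_{i \geq 0} M_{4i+2, 4i+3}$ and $M'' = \bigcup_{i \geq 0} M_{4i, 4i+1}$ are discrete unions of compact slabs. By isotopy extension each slab-component of $G$ (resp.\ $H$) is supported inside a slightly larger slab and the compactly supported estimate above applies to it using the amalgamated cover of size $p$, with the sole exception of the $H$-slab containing $M_0$ which requires size $p+q$. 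Since the slabs are pairwise disjoint and the supporting $n$-balls for the commutators in different slabs can be chosen to form a discrete union, the slab-wise factorizations combine into a single product of commutators for $G_1$ (resp.\ $H_1$) whose length is the worst slab-wise count. Hence in case [I], $cld\,G_1 \leq 3p+5$ and $cld\,H_1 \leq 3(p+q)+5$, giving $cld\,f \leq 3(2p+q)+10$, and the analogous addition yields $clb^d f \leq 2(m+2)(p+1) + 2(m+2)(p+q+1) = 2(m+2)(2p+q+2)$; case [II] follows identically from the sharper compact bounds. The conclusion is stated in terms of $clb^d$ rather than $clb^f$ precisely because the supporting $n$-balls cannot be collected into a finite disjoint union once one sums across infinitely many slabs.

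The main technical obstacle I anticipate is the amalgamation step: one must verify that the layer-wise strong displaceability in each $\widetilde{M}_{k-1,k}$ can be realized by diffeomorphisms of the global $\widetilde{M}_K$ (or $\widetilde{M}$) whose supports are mutually disjoint across $k$. This is resolved by the observation that the abstract collars of adjacent layers live on opposite sides of each interface and can be shrunk to fit inside disjoint tubular neighborhoods in $M_K$, together with the arbitrarily-small-support freedom granted by strong displaceability. Similar care is needed when splicing commutators from different slabs in the general case, but the discreteness of the slabs and the $clb^d$-formulation absorb precisely this subtlety.
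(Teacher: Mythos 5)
Your argument is correct and follows essentially the same route as the paper's proof: amalgamate the layer-wise covers via the disjointness condition $(\natural)$ into covers of cardinality $p+q$ on $M_k$ and cardinality $p$ on the slabs $M_{k,\ell}$, apply Theorem~\ref{thm_cpt_bdry_even} to those compact pieces, and splice the pieces together via the slab factorization $F=GH$. The only cosmetic difference is that the paper wraps the splicing step in the pre-packaged Lemma~\ref{lemma_subseq} rather than invoking Lemma~\ref{lem_factorization_open} directly, and you have the roles of $G$ and $H$ interchanged (it is in fact the $G$-part whose first slab $M_{4\cdot 0-1,2}=M_2$ is a full initial piece requiring the cover of size $p+q$), but this does not change the arithmetic or the final estimates.
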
 

\begin{proof} 
We consider the two cases \ $(\dagger)$ $q = q' = cl$ \ and \ $(\ddagger)$ $q = clb^f$, $q'=clb^d$ \ simultaneously. 
We set $a, b \in \IZ_{\geq 1}$ as follows; \\[2mm] 
\hsp 
\btab[t]{c@{ \ }ll}
(i) & in the case $(\dagger)$ : \hsf & 
$(a, b) := 
\left\{ 
\bary[c]{@{\,}l@{\ \,}ll}
(3(p+q) + 5, & 3p + 5) & \text{in \,[I]} \\[2mm] 
(2(p+q) + 7, & 2p + 7) & \text{in [II]}
\eary \right.$ \\[6mm] 
(ii) & in the case $(\ddagger)$ : \hsf & 
$(a,b) := 
\left\{ 
\bary[c]{@{\,}l@{}l@{\ \,}ll}
(2(m+2)(p+q+1) &, & 2(m+2)(p+1)) & \text{in \,[I]} \\[2mm] 
((2m+3)(p+q+1) + 3&, & (2m+3)(p+1) + 3) & \text{in [II]}
\eary \right.$
\etab \\[3mm] 
Our goal is to show that \hsh $(\flat)$\, $q\hspace{0.2mm}d\,{\rm Diff}_c^r(M)_0 \leq a$ \ \ and \ \ 
$q'\hspace{0.2mm}d\,{\rm Diff}^r(M)_0 \leq a+b$ \hsh in each case. 

Below, we show the following claim :  
\vskip 1mm 
\hsp $(\ast)$ \ 
$q\hspace{0.2mm}d\,{\rm Diff}^r(M, M_{M_k})_0 \leq a$ 
\hsh 
$(\ast\ast)$ \ $q\hspace{0.2mm}d\,{\rm Diff}^r(M, M_{M_{k, \ell}})_0 \leq b$ \hsh for any $0 \leq k < \ell < \infty$. \\
This claim and Lemma~\ref{lemma_subseq} imply the assertion $(\flat)$. 

The estimates $(\ast)$ and $(\ast\ast)$ 
are deduced by Theorem~\ref{thm_cpt_bdry_even}. 
\benum[(1)] 
\item 
Let ${\cal S}_{M_k}$ and ${\cal S}_{M_{k,\ell}}$ denote 
the sets of $m$-simplices of ${\cal T}|_{M_k}$ and ${\cal T}|_{M_{k, \ell}}$ respectively. 
For each $k \geq 0$ the set 
${\cal S}_{M_k} = \bigcup_{i=0}^k {\cal S}_i$ has the cover $\{ {\cal F}_j^k \}_{j=1}^{p+q}$ defined by 
\bit 
\itemI ${\cal F}_j^k := \bigcup_{i=0}^k {\cal S}_{i, j}$ $(j=1, \cdots, p)$, \hsf 
${\cal F}_j^k := {\cal S}_{0,j}$ $(j=p+1, \cdots, p+q)$. 
\eit 
For each $0 \leq k < \ell < \infty$ 
the set ${\cal S}_{M_{k, \ell}} = \bigcup_{i=k+1}^\ell {\cal S}_i$ 
has the covering $\{ {\cal G}_j^{k, \ell} \}_{j=1}^{p}$ defined by
\bit 
\itemII ${\cal G}_j^{k, \ell} := \bigcup_{i=k+1}^\ell {\cal S}_{i, j}$ \ $(j=1, \cdots, p)$. 
\eit 

\item The estimate $(\ast)$ is obtained 
by applying Theorem~\ref{thm_cpt_bdry_even} to 
$M_k$, ${\cal T}|_{M_k}$, ${\cal S}_{M_k}$ and $\{ {\cal F}_j^k \}_{j=1}^{p+q}$. \\
In each case of [I] and [II], by the assumption $(\natural)$ it is easily seen that \\
\hsp each $|{\cal F}_j^k|$ $(j=1, \dots, p+q)$ is strongly displaceable from \\
\hspp $|{\cal F}_j^k| \cup (\partial M_k \times [0,1))$ \ in [I] \ \ and \ \ 
$|{\cal S}_{M_k}| \cup (\partial M_k \times [0,1))$ \ in [II] \ \ in $\widetilde{M}_k$.

\item Similarly, $(\ast\ast)$ is obtained 
by applying Theorem~\ref{thm_cpt_bdry_even} to 
$M_{k, \ell}$, ${\cal T}|_{M_{k, \ell}}$, ${\cal S}_{M_{k, \ell}}$ and $\{ {\cal G}_i^{k, \ell} \}_{i=1}^{p}$. 
\eenum 
\vskip -7mm 
\end{proof}

Next we consider the case concerned with handle decompositions. 

\begin{setting}\label{setting_even_open_handle} 
Suppose $M$ is a $2m$-manifold without boundary $(m \geq 3)$, $1 \leq r \leq \infty$, $r \neq 2m+1$, 
${\cal H}$ is a handle decomposition of $M$, 
$P = P_{\cal H}^{(m)}$ (the $m$-skeleton of the core complex of ${\cal H}$) and 
$Q = P_{\cal H^\ast}^{(m)}$ (the $m$-skeleton of the core complex of ${\cal H}^\ast$). 
\end{setting}

\begin{setting_5.6+}
Suppose ${\cal S}$ is the set of all open $m$-cells of $P$, $\{ {\cal S}_j \}_{j=1}^k$ $(k \geq 1)$ is a finite cover of ${\cal S}$ and 
for each $j = 1, \cdots, k$, 
$\{ {\cal S}_{j, \lambda_j} \}_{\lambda_j \in \Lambda_j}$ is a covering of ${\cal S}_j$ such that 
each ${\cal S}_{j, \lambda_j}$ is a finite set and 
$\{ |{\cal S}_{j, \lambda_j}| \}_{\lambda_j \in \Lambda_j}$ is a locally finite family in $M$. 
For any subset $\Delta \subset \Lambda_j$ let ${\cal S}_{j, \Delta} := \bigcup_{ \lambda_j \in \Delta} {\cal S}_{j, \lambda_j} \subset {\cal S}_j$. 
\end{setting_5.6+}

\begin{thm}\label{thm_even_open_handle} 
In Setting~{\rm \ref{setting_even_open_handle}, ~ \ref{setting_even_open_handle}$^+$} : 
\benum 
\item[{\rm [I]}\,] If $Cl_M|{\cal S}_{j, \Delta}|$ is strongly displaceable from itself in $M$   
for each $j=1, \dots, k$ and any finite subset $\Delta \subset \Lambda_j$, then 
\bit 
\itemI $cld\,{\rm Diff}_c^r(M)_0 \leq 3k+5$, \\[1mm] 
$clb^fd\,{\rm Diff}_c^r(M)_0 \leq 2\big(c({\cal H}) + (k-1)c({\cal H}^{(m)}) + k + 2\big) \leq 2(m+2)(k+1)$, 
\itemII 
$cld\,{\rm Diff}^r(M)_0 \leq 6k+10$, \\[1mm] 
$clb^dd\,{\rm Diff}^r(M)_0 \leq 4\big(c({\cal H}) + (k-1)c({\cal H}^{(m)}) + k + 2\big) \leq 4(m+2)(k+1)$. 
\eit 

\vskip 2mm 
\item[{\rm [II]}] If $Cl_M|{\cal S}_{j, \Delta}|$ is strongly displaceable from $Cl_M|{\cal S}|$ in $M$ 
 for each $j=1, \dots, k$ and any finite subset $\Delta \subset \Lambda_j$, then 
\bit 
\itemI 
$cld\,{\rm Diff}^r_c(M)_0 \leq 2k+7$, \\[1mm] 
$clb^fd\,{\rm Diff}^r_c(M)_0 \leq 2\,\big(c({\cal H}) + (k-1)c({\cal H}^{(m)})\big)  + k + 6 \leq (2m+3)(k+1) + 3$, 
\itemII 
$cld\,{\rm Diff}^r(M)_0 \leq 4k+14$, \\[1mm] 
$clb^dd\,{\rm Diff}^r(M)_0 \leq 4\big(c({\cal H}) + (k-1)c({\cal H}^{(m)}) + 2k + 12 \leq 2(2m+3)(k+1) + 6$. 
\eit 
\eenum 
\end{thm}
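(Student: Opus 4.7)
The plan is to reduce each of the four assertions to the compact-manifold estimate Proposition~\ref{cl_even_cpt_bd_handle} and, for the non-compactly supported statements (ii), to splice local copies of these estimates via the factorisation of isotopies in Lemma~\ref{lem_factorization_open-2}. The bounds for ${\rm Diff}^r(M)_0$ in (ii) are exactly twice those for ${\rm Diff}_c^r(M)_0$ in (i), matching the two factors of the splitting $F = GH$; the structural work of the proof goes into the compactly supported case, and the non-compact case then becomes a book-keeping argument.

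For the compactly supported statements (i), fix $f \in {\rm Diff}_c^r(M)_0$ with $F \in {\rm Isot}_c^r(M)_0$, $F_1 = f$, and set $K := {\rm supp}\,F$. By Lemma~\ref{H-submfd}\,(3) pick $N \in {\cal S}{\cal M}_c(M; {\cal H})$ containing $K$; local finiteness of ${\cal H}$ ensures that only finitely many $\sigma \in {\cal S}$ meet $N$, so we may enlarge to an ${\cal H}$-saturated $N_1 \Supset N$ whose interior contains every such cell, and pick an ${\cal H}^{\ast}$-saturated $N_2 \Supset N_1$. Define ${\cal C}_j := \{\sigma \in {\cal S}_j : \sigma \subset {\rm Int}\,N_1\}$; each ${\cal C}_j$ is finite, hence is contained in some ${\cal S}_{j, \Delta_j}$ with $\Delta_j \subset \Lambda_j$ finite. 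Since $Cl_M|{\cal C}_j| \subset Cl_M|{\cal S}_{j, \Delta_j}|$ and the strong displacement property is inherited by closed subsets, the hypothesis of case [I] (resp.\ [II]) transfers directly to $\{{\cal C}_j\}_{j=1}^k$, and Proposition~\ref{cl_even_cpt_bd_handle}\,[I] (resp.\ [II]) applied to $(N, N_1, N_2, \{{\cal C}_j\})$ delivers the $cl$ and $clb^f$ bounds in (i).

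For the non-compactly supported statements (ii), given $f \in {\rm Diff}^r(M)_0$ pick $F \in {\rm Isot}^r(M)_0$ with $F_1 = f$ and apply Lemma~\ref{lem_factorization_open-2} to the locally finite family ${\cal F} := \{Cl_M|{\cal S}_{j, \lambda_j}| : 1 \leq j \leq k,\ \lambda_j \in \Lambda_j\}$. This supplies an exhausting sequence $\{M_\kappa\}$ of $M$, compact pieces $N_\kappa' \in {\cal S}{\cal M}_c(M; {\cal H})$ and $N_\kappa'' \in {\cal S}{\cal M}_c(M; {\cal H}^\ast)$ (and analogous $L_\kappa'$, $L_\kappa''$) with the stated nesting and with $\{N_\kappa''\}$ pairwise disjoint, together with a factorisation $F = GH$. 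The key point is that $M_{4\kappa-1, 4\kappa+2} \FSubset N_\kappa'$ forces $N_\kappa'$ to contain every $\sigma \in {\cal S}$ that meets $M_{4\kappa-1, 4\kappa+2}$, so the argument of the previous paragraph, applied to each piece $G_1|_{M_{4\kappa-1, 4\kappa+2}}$ inside the triple $(M_{4\kappa-1, 4\kappa+2}, N_\kappa', N_\kappa'')$, produces the same $cl$ and $clb^f$ bounds uniformly in $\kappa$ (the bound depends only on $k$, $c({\cal H})$ and $c({\cal H}^{(m)})$). Disjointness of $\{N_\kappa''\}$ now lets us assemble the $i$-th commutator factor across all $\kappa$ into a single commutator of ${\rm Diff}^r(M)_0$ whose ball supports form a discrete union of balls; this preserves $cl$ and converts each per-piece $clb^f$ bound into a $clb^d$ bound for $G_1$. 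The identical argument for $H_1$ via $\{L_\kappa''\}$, together with $f = G_1 H_1$, then doubles all bounds and yields (ii).

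The main obstacle is the clean conversion $clb^f \rightsquigarrow clb^d$ at the assembly stage: one must verify that when the finite disjoint unions of ball supports produced by Proposition~\ref{cl_even_cpt_bd_handle} on each compact piece are combined over all $\kappa$, they still form a discrete family of balls in $M$. This is automatic because each such ball lies in ${\rm Int}\,N_\kappa''$ and $\{N_\kappa''\}$ is discrete in $M$ by construction. Lemma~\ref{lem_factorization_open-2}\,[II]\,(1)--(2) packages precisely this estimate, so beyond Proposition~\ref{cl_even_cpt_bd_handle} and the saturation properties from Lemma~\ref{H-submfd} no new ingredients are required.
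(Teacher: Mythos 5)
There is a genuine gap in the reduction to the compact estimate. You set ${\cal C}_j := \{\sigma \in {\cal S}_j : \sigma \subset {\rm Int}\,N_1\}$ and then argue that since $Cl_M|{\cal C}_j| \subset Cl_M|{\cal S}_{j,\Delta_j}|$ for some finite $\Delta_j$, the strong displacement hypothesis ``is inherited by closed subsets.'' This inheritance does not hold in general: if $K \subset K'$ are compact and $K'$ is displaceable from $L$ in every open neighbourhood of $K'$, that only gives you displacement of $K$ in neighbourhoods that contain all of $K'$. Strong displacement of $K$ requires displacement in arbitrarily small neighbourhoods of $K$, which are typically not neighbourhoods of the bigger set $K'$. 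Nothing in the paper supplies a subset-inheritance lemma (Lemma~\ref{lem_absorption_displacement} and Example~\ref{exp_displacement} do not contain one), and there is no reason the compactly supported displacing diffeomorphism of $Cl_M|{\cal S}_{j,\Delta_j}|$ restricts to one supported near $Cl_M|{\cal C}_j|$ alone. So the hypothesis of Proposition~\ref{cl_even_cpt_bd_handle} has not been verified for your $\{{\cal C}_j\}$.

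The paper avoids this entirely by arranging the covers so that the theorem's hypothesis applies \emph{verbatim}, not by inheritance: it puts $\Delta_j := \{\lambda_j \in \Lambda_j : |{\cal S}_{j,\lambda_j}| \cap {\rm Int}\,N \neq \emptyset\}$ (finite by local finiteness) and takes ${\cal C}_j := {\cal S}_{j,\Delta_j}$, a set that is \emph{exactly} of the form $Cl_M|{\cal S}_{j,\Delta}|$ for finite $\Delta$ and hence directly has the assumed displacement property. To make this compatible with Setting~\ref{setting_even_cpt}$^+$, the saturated manifold $N_1$ must be chosen so that ${\rm Int}\,N_1$ contains each whole family $|{\cal S}_{j,\lambda_j}|$ that meets ${\rm Int}\,N$ --- not merely each individual cell $\sigma$ meeting $N$, as you required. (In the non-compact half this is exactly what the relation $M_{4\kappa-1,4\kappa+2} \mathop{\Subset}_{{\cal F}} N_\kappa'$ from Lemma~\ref{lem_factorization_open-2} provides, since ${\cal F}$ is the family of the $Cl_M|{\cal S}_{j,\lambda_j}|$; but your description of that condition as containing ``every $\sigma$ that meets $M_{4\kappa-1,4\kappa+2}$'' again misses the stronger and necessary fact that it contains the whole groups.) Once ${\cal C}_j$ and $N_1$ are chosen as above, the rest of your splicing argument via Lemma~\ref{lem_factorization_open-2}\,[II] and the $clb^f \to clb^d$ upgrade using discreteness of $\{N_\kappa''\}$ is correct and matches the paper's proof.
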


\begin{proof} 
To treat various cases simultaneously, we use the following notations: 

Let $(\dagger)$ $q = q' = cl$ \ or \ $(\ddagger)$ $q = clb^f$, $q' = clb^d$. \ We set $a \in \IZ_{\geq 1}$ as follows;  \\[2mm] 
\hsp 
\btab[t]{c@{ \ }ll}
(i) & in the case $(\dagger)$ : & 
$a := 
\left\{ 
\bary[c]{@{\,}l@{\ \ }l}
3k+5 & \text{in \,[I]}, \\[2mm] 
2k+7 \hsf  & \text{in [II]}. 
\eary \right.$ \\[6mm] 
(ii) & in the case $(\ddagger)$ : \hsf & 
$a := 
\left\{ 
\bary[c]{@{\,}l@{\ \ }l}
2\big(c({\cal H}) + (k-1)c({\cal H}^{(m)}) + k + 2\big) & \text{in \,[I]}, \\[2mm] 
2\,\big(c({\cal H}) + (k-1)c({\cal H}^{(m)})\big)  + k + 6 \hsf & \text{in [II]}.
\eary \right.$ 
\etab 
\vskip 2mm 

\benum[(1)]
\item First we show the following claim based on Proposition~\ref{cl_even_cpt_bd_handle}. \\
{\bf Claim.} If $N \in {\cal S}{\cal M}_c(M)$, $N_1 \in {\cal S}{\cal M}_c(M, {\cal H})$, 
$N_2 \in {\cal S}{\cal M}_c(M, {\cal H}^\ast)$, $N \Subset N_1 \Subset N_2$ and \\ 
\hspace*{13mm} ${\rm Int}\,N_1$ includes any $|{\cal S}_{j, \lambda_j}|$ $(j=1, \cdots, k, \lambda_j \in \Lambda_j)$ 
with $|{\cal S}_{j, \lambda_j}| \cap {\rm Int}\,N \neq \emptyset$, then \\ 
\hspace*{25mm} 
$q\hspace{0.2mm}d({\rm Diff}^r(M, M_{N})_0, {\rm Diff}^r(M, M_{N_2})_0) \leq a.$ 

\begin{proof}[Proof of Claim] 
We apply Setting 5.4, 5.4$^+$ and Proposition~\ref{cl_even_cpt_bd_handle}.
\bit 
\itemI For Setting 5.4 :  If $\sigma \in {\cal S}$ and $\sigma \cap {\rm Int}\,N \neq \emptyset$, then $\sigma \subset {\rm Int}\,N_1$. 
In fact, $\sigma \in {\cal S}_{j,\lambda_j}$ for some $j =1, \cdots, k$ and $\lambda_j \in \Lambda_j$ and 
$|{\cal S}_{j, \lambda_j}| \cap {\rm Int}\,N \neq \emptyset$. 
Hence, the assumption on $N_1$ implies ${\rm Int}\,N_1 \supset |{\cal S}_{j, \lambda_j}| \supset \sigma$. 

\itemII For Setting 5.4$^+$ : For each $j = 1, \cdots, k$ \ we set \\
\hspace*{10mm} $\Delta_j := \{ \lambda_j \in \Lambda_j \mid |{\cal S}_{j, \lambda_j}| \cap {\rm Int}\,N \neq \emptyset \}$ \ \ and \ \ 
${\cal C}_j := {\cal S}_{j, \Delta_j} \equiv \bigcup_{ \lambda_j \in \Delta_j} {\cal S}_{j, \lambda_j} \subset {\cal S}_j$. \\
Then, (a) $\Delta_j$ is a finite set since $\{ |{\cal S}_{j, \lambda_j}| \}_{\lambda_j \in \Lambda_j}$ is a locally finite family, 
(b) $|{\cal C}_j| \subset {\rm Int}\,N_1$ by the assumption on $N_1$ 
and (c) ${\cal C} := \bigcup_{j=1}^k {\cal C}_j$ satisfies the condition : \\
\hspace*{20mm} $\{ \sigma \in {\cal S} \mid \sigma \cap {\rm Int}\,N \neq \emptyset \} \subset {\cal C} 
\subset \{ \sigma \in {\cal S} \mid \sigma \subset {\rm Int}\,N_1 \}.$ 

\itemiii In the case [I] each $Cl_M|{\cal C}_j| = Cl_M|{\cal S}_{j, \Delta_j}|$ $(j=1, \dots, k)$ is strongly displaceable from itself in $M$. 
Hence, the conclusion follows from Proposition~\ref{cl_even_cpt_bd_handle}\,[I]. 

In the case [II] each $Cl_M|{\cal C}_j| = Cl_M|{\cal S}_{j, \Delta_j}|$ $(j=1, \dots, k)$ is strongly displaceable from $Cl_M|{\cal S}|$ in $M$. 
Since $Cl_M|{\cal C}| \subset Cl_M|{\cal S}|$, the conclusion follows from Proposition~\ref{cl_even_cpt_bd_handle}\,[II]. 
\eit 
\vskip -6.5mm 
\end{proof}
In Claim we can replace $q$ by $q'$ since $N_2$ is compact.  

\item[(2)] The estimates for ${\rm Diff}_c^r(M)_0$ in [I](i) and [II](i) are obtained as follows. 

We have to show that $q\hspace{0.2mm}d\,{\rm Diff}^r_c(M)_0 \leq a$. 
Take any $f \in {\rm Diff}^r_c(M)_0$ and $F \in {\rm Isot}^r_c(M)_0$ with $f = F_1$. 
Since ${\rm supp}\,F$ is compact and 
${\cal F} = \{ Cl_M |{\cal S}_{j, \lambda_j}| \mid j=1, \cdots, k, \lambda_j \in \Lambda_j \}$ is a locally finite family of compact subsets of $M$, 
we can find a triple $N, N_1, N_2$ as in Claim  with ${\rm supp}\,F \Subset N$. 
From Claim it follows that \hsp $q\hspace{0.2mm}d({\rm Diff}^r(M, M_{N})_0, {\rm Diff}^r(M, M_{N_2})_0) \leq a.$ \\
Since $f \in {\rm Diff}^r(M, M_{N})_0$, it follows that $q(f) \leq a$ in ${\rm Diff}^r(M, M_{N_2})_0$ and 
hence $q(f) \leq a$ in ${\rm Diff}_c^r(M)_0$. 

\item[(3)] For the estimates for ${\rm Diff}^r(M)_0$ in [I](ii) and [II](ii), we have to show that \\ 
\hspp $q'\hspace{0.2mm}d\,{\rm Diff}^r(M)_0 \leq 2a$, \ \ when $M$ is noncompact. 
\vskip 1mm 

Take any $f \in {\rm Diff}^r(M)_0$ and $F \in {\rm Isot}^r(M)_0$ with $f = F_1$. 
By Lemma~\ref{lem_factorization_open-2}\,[I] 
there exists an exhausting sequence $\{ M_k \}_{k \geq 1}$ of $M$ which satisfies the following conditions : 

\bit 
\itemI $M_k \in {\mathcal S}_c(M)$ $(k \in \IZ)$  and  
$F(M_{4k,4k+1} \times I) \Subset M_{4k-1, 4k+2} \times I$ $(k \geq 0)$.  

\itemII There exist $N_k^1  \in {\cal S}{\cal M}_c(M; {\cal H})$ and $N_k^2 \in {\cal S}{\cal M}_c(M; {\cal H}^\ast)$ $(k \geq 0)$ 
such that for each $k \geq 0$ 
\bit 
\itema $M_{4k-1, 4k+2} \Subset N_k^1 \Subset N_k^2 \ \Subset \ M_{4k-2, 4k+3}$, \ \ (b) $N_k^2 \cap N_{k+1}^2 = \emptyset$ \ \ and 
\itemc ${\rm Int}\,N_k^1$ includes any $|{\cal S}_{j, \lambda_j}|$ $(j=1, \cdots, k, \lambda_j \in \Lambda_j)$ with 
$|{\cal S}_{j, \lambda_j}| \cap {\rm Int}\,M_{4k-1, 4k+2} \neq \emptyset$. 
\eit 

\itemiii There exist $L_k^1  \in {\cal S}{\cal M}_c(M; {\cal H})$ and $L_k^2 \in {\cal S}{\cal M}_c(M; {\cal H}^\ast)$ $(k \geq 1)$ such that for each $k \geq 1$ 
\bit 
\itema $M_{4k-3, 4k} \Subset L_k^1 \Subset L_k^2 \ \Subset \ M_{4k-4, 4k+1}$, \ \ (b) $L_k^2 \cap L_{k+1}^2 = \emptyset$ \ \ and
\itemc ${\rm Int}\,L_k^1$ includes any $|{\cal S}_{j, \lambda_j}|$ $(j=1, \cdots, k, \lambda_j \in \Lambda_j)$ with 
$|{\cal S}_{j, \lambda_j}| \cap {\rm Int}\,M_{4k-3, 4k} \neq \emptyset$
\eit 
\eit 
\vskip 1mm 
The conditions (ii)(c) and (iii)(c) are achieved by taking the locally finite family 
${\cal F} = \{ Cl_M |{\cal S}_{j, \lambda_j}| \mid j=1, \cdots, k, \lambda_j \in \Lambda_j \}$. 
From Claim and (ii), (iii) it follows that 
\bit 
\itemiv $q'\hspace{0.2mm}d({\rm Diff}^r(M, M_{M_{4k-1, 4k+2}})_0, {\rm Diff}^r(M, M_{N_k^2})_0) \leq a$ \ \ $(k \geq 0)$ and \\ 
$q'\hspace{0.2mm}d({\rm Diff}^r(M, M_{M_{4k-3, 4k}})_0, {\rm Diff}^r(M, M_{L_k^2})_0) \leq a$ \ \ $(k \geq 1)$. 
\eit 
\vskip 2mm 
Lemma~\ref{lem_factorization_open-2}\,[II]  we have a factorization 
\bit 
\itemv $F = GH$ \ \ for some $G \in {\rm Isot}^r(M, M')_0$ \ and \ $H \in {\rm Isot}^r(M, M'')_0$, \\
\hsh where \ $M' := \bigcup_{k \geq 0} M_{4k+2, 4k+3}$ \ and \ $M'' := \bigcup_{k \geq 0} M_{4k, 4k+1}$. 
\eit 
By (iv) it follows that \ in ${\rm Diff}^r(M)_0$
\bit 
\item[(vi)] $q'(G_1) \leq a$, $q'(H_1) \leq a$ \ and \ hence $q'(f) \leq 2a$ \ as required. 
\eit 
\eenum 
 
This completes the proof. 
\end{proof}

\subsubsection{\bf Some classes of open $2m$-manifolds $M$ with $cld\,{\rm Diff}^r(M)_0 < \infty$} \mbox{} 

In this subsection we show the uniform perfectness of diffeomorphism groups for some important classes of open $2m$-manifolds, 
including covering spaces and infinite connected sums. 
\vskip 2mm 

\noindent {\bf [1] Open $2m$-manifolds with finitely many $m$-handles} \mbox{} 

\begin{prop}\label{prop_2m_open_no-m-h}
Suppose $M$ is a $2m$-manifold without boundary and $1 \leq r \leq \infty$, $r \neq 2m+1$. 
If $M$ admits a handle decomposition ${\cal H}$ without $m$-handles, then 
\bit 
\itemI $cld\,{\rm Diff}^r(M)_0 \leq 6$, \hsh $clb^dd\,{\rm Diff}^r(M)_0 \leq 4c({\cal H})+2 \leq 8m+2$, 
\itemII $cld\,{\rm Diff}_c^r(M)_0 \leq 3$, \hsh $clb^fd\,{\rm Diff}_c^r(M)_0 \leq 2c({\cal H})+1 \leq 4m+1$. 
\eit 
\end{prop}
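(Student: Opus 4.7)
The plan is to reduce both statements to the compact relative estimate of Proposition~\ref{prop_2m_cpt_no-m-h}, coupled with the open-manifold factorization machinery of Lemma~\ref{lem_factorization_open-2}. Before starting, note that since ${\cal H}$ contains no $m$-handles, the set of indices appearing in ${\cal H}$ is a subset of $\{0,1,\ldots,m-1,m+1,\ldots,2m\}$, so $c({\cal H}) \leq 2m$; this accounts for the rightmost numerical inequalities in (i) and (ii).

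For part (ii), given $f \in {\rm Diff}_c^r(M)_0$, I pick $N \in {\mathcal S}{\mathcal M}_c(M;{\cal H})$ with ${\rm supp}\,f \Subset N$ and then $L \in {\mathcal S}{\mathcal M}_c(M;{\cal H}^\ast)$ with $N \Subset L$, both possible by Lemma~\ref{H-submfd}(3). Since ${\cal H}$ has no $m$-handles in $L$ (there are none at all), Proposition~\ref{prop_2m_cpt_no-m-h} applied to the pair $(N,L)$ gives $cl\,f \leq 3$ and $clb^f\,f \leq 2c({\cal H}|_L)+1 \leq 2c({\cal H})+1$ in ${\rm Diff}^r(M, M_L)_0$, which is a subgroup of ${\rm Diff}_c^r(M)_0$.

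For part (i), given $f \in {\rm Diff}^r(M)_0$ and an isotopy $F$ with $F_1 = f$, I invoke Lemma~\ref{lem_factorization_open-2}\,[I] to produce an exhausting sequence $\{M_k\}$ together with nested pairs $M_{4k-1,4k+2} \Subset N_k' \Subset N_k'' \Subset M_{4k-2,4k+3}$ with $N_k' \in {\mathcal S}{\mathcal M}_c(M;{\cal H})$, $N_k'' \in {\mathcal S}{\mathcal M}_c(M;{\cal H}^\ast)$ and $N_k'' \cap N_{k+1}'' = \emptyset$, together with the analogous pairs $L_k', L_k''$. For each $k$, since any element of ${\rm Diff}^r(M, M_{M_{4k-1,4k+2}})_0$ is supported inside $N_k'$ (using that $A \Subset B$ implies $M_B \subset M_A$), it lies in ${\rm Diff}^r(M, M_{N_k'})_0$. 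Because ${\cal H}$ has no $m$-handles in $N_k''$, Proposition~\ref{prop_2m_cpt_no-m-h} applied to $(N_k',N_k'')$ delivers
\[
q\hspace{0.2mm}d\bigl({\rm Diff}^r(M, M_{M_{4k-1,4k+2}})_0,\, {\rm Diff}^r(M, M_{N_k''})_0\bigr) \leq \ell_q,
\]
with $\ell_{cl} = 3$ and $\ell_{clb^f} = 2c({\cal H}|_{N_k''})+1 \leq 2c({\cal H})+1$, and likewise for the $(L_k',L_k'')$ blocks. Lemma~\ref{lem_factorization_open-2}\,[II] then yields $F = GH$ with $cl\,G_1, cl\,H_1 \leq 3$ and $clb^d G_1, clb^d H_1 \leq 2c({\cal H})+1$, whence $cl\,f \leq 6$ and $clb^d\,f \leq 4c({\cal H})+2$.

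Conceptually there is no hard step: everything has been prepared in Sections~2--3. The only mildly delicate bookkeeping is to verify that the saturation conditions in Lemma~\ref{lem_factorization_open-2}\,[I](2)--(3) line up correctly with the hypotheses of Proposition~\ref{prop_2m_cpt_no-m-h} (namely that the inner submanifold be ${\cal H}$-saturated and the outer one ${\cal H}^\ast$-saturated, so that ${\cal H}|_{L}$ inherits the ``no $m$-handles'' property), and that the inclusion ${\rm Diff}^r(M, M_{M_{4k-1,4k+2}})_0 \subset {\rm Diff}^r(M, M_{N_k'})_0$ goes in the right direction. Both checks are immediate, so the argument is essentially a straight application of the compact estimate block-by-block along the factorization.
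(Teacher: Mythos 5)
Your proposal is correct and follows essentially the same route as the paper: part (ii) by localizing the compact estimate of Proposition~\ref{prop_2m_cpt_no-m-h} to a saturated pair containing the support, and part (i) by block-wise application of that same estimate along the nested sequence furnished by Lemma~\ref{lem_factorization_open-2}\,[I]--[II] (the paper proves Propositions~\ref{prop_2m_open_no-m-h} and~\ref{prop_2m_open_finite-m-h} together, and with no $m$-handles your argument is exactly the specialization to the present case, where the paper's $K$ is empty). One point worth making explicit, which you pass over silently exactly as the paper does: Proposition~\ref{prop_2m_cpt_no-m-h} is stated for $clb^f$, while Lemma~\ref{lem_factorization_open-2}\,[II] asks for $clb^d$ on each block ${\rm Diff}^r(M,M_{N_k''})_0$; this is reconciled by Remark~\ref{rmk_clb=clb^f}\,(2)(ii), since $N_k''$ is compact.
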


\begin{prop}\label{prop_2m_open_finite-m-h}
Suppose $M$ is a $2m$-manifold without boundary $(m \geq 3)$, $1 \leq r \leq \infty$, $r \neq 2m+1$, 
${\cal H}$ is a handle decomposition of $M$ with only finitely many $m$-handles, 
${\cal S}$ is the set of all open $m$-cells of $P_{\cal H}$ and $\{ {\cal S}_j \}_{j=1}^k$ $(k \geq 1)$ is a finite cover of ${\cal S}$. 

\benum 
\item[{\rm [I]}\,] If $Cl_M|{\cal S}_{j}|$ is strongly displaceable from itself in $M$ for each $j=1, \dots, k$, then 
\bit 
\itemI 
$cld\,{\rm Diff}^r(M)_0 \leq 3k+8$, \\
$clb^dd\,{\rm Diff}^r(M)_0 \leq 2\,\Big(2c({\cal H}) + (k-1)c({\cal H}^{(m)})\Big)  + 2k + 3 \leq 2(m+2)(k+3) - 7$,  
\itemII 
$cld\,{\rm Diff}^r_c(M)_0 \leq 3k+5$, \\
$clb^fd\,{\rm Diff}^r_c(M)_0 \leq 2\,\Big(c({\cal H}) + (k-1)c({\cal H}^{(m)})\Big)  + 2k + 4 \leq 2(m+2)(k+1)$. 
\eit 

\item[{\rm [I\hspace{-0.1mm}I]}] 
If $Cl_M|{\cal S}_{j}|$ is strongly displaceable from $Cl_M|{\cal S}|$ in $M$ for each $j=1, \dots, k$, then 
\bit 
\itemI $cld\,{\rm Diff}^r(M)_0 \leq 2k+10$, \\
$clb^dd\,{\rm Diff}^r(M)_0 \leq 2\,\Big(2c({\cal H}) + (k-1)c({\cal H}^{(m)})\Big)  + k + 5 \leq (2m+3)(k+3)-2$, 
\itemII
$cld\,{\rm Diff}^r_c(M)_0 \leq 2k+7$, \\
$clb^fd\,{\rm Diff}^r_c(M)_0 \leq 2\,\Big(c({\cal H}) + (k-1)c({\cal H}^{(m)})\Big)  + k + 6 \leq (2m+3)(k+1) +3$. 
\eit 
\eenum 
\end{prop}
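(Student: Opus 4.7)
The plan is to reduce Proposition~\ref{prop_2m_open_finite-m-h} to the compact-manifold estimates in Propositions~\ref{prop_2m_cpt_no-m-h} and~\ref{cl_even_cpt_bd_handle}, using the factorization of isotopies from Lemma~\ref{lem_factorization_open-2}. The key simplification, not available in Theorem~\ref{thm_even_open_handle}, is that since ${\cal H}$ has only finitely many $m$-handles, the set ${\cal S}$ is finite and the union $R$ of all $m$-handles of ${\cal H}$ together with all $m$-handles of ${\cal H}^\ast$ (their duals) is compact; in particular, $R$ can be swallowed by the first term of an exhausting sequence, so that every other ring in the decomposition avoids the $m$-handle region.

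For the compactly supported statements (ii) in [I] and [II], the argument is essentially direct. Given $f \in {\rm Diff}_c^r(M)_0$ with compactly supported isotopy $F$, I choose $N \in {\mathcal S}{\mathcal M}_c(M)$ containing ${\rm supp}\,F \cup R$, together with ${\cal H}$- and ${\cal H}^\ast$-saturated enlargements $N_1 \in {\mathcal S}{\mathcal M}_c(M;{\cal H})$ and $N_2 \in {\mathcal S}{\mathcal M}_c(M;{\cal H}^\ast)$ with $N \Subset N_1 \Subset N_2$ and $Cl_M|{\cal S}| \subset {\rm Int}\,N_1$. Setting~\ref{setting_even_cpt}$^+$ is then satisfied with ${\cal C} = {\cal S}$ and ${\cal C}_j = {\cal S}_j$, and Proposition~\ref{cl_even_cpt_bd_handle} (Case [I] or [II] according to the displacement hypothesis on each $Cl_M|{\cal S}_j|$) produces the required bounds on $cl$ and $clb^f$.

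For the noncompact statements (i), I would apply Lemma~\ref{lem_factorization_open-2} with initial compact data $K_1 \supset R$, yielding an exhausting sequence $\{M_k\}_{k \geq 1}$ with $R \subset {\rm Int}\,M_1$, saturated submanifolds $N_k', N_k''$ ($k \geq 0$) and $L_k', L_k''$ ($k \geq 1$) satisfying the nesting and disjointness conditions of that lemma, together with a factorization $F = GH$ in which $G_1$ and $H_1$ are discrete sums of diffeomorphisms supported on the pairwise disjoint rings $\{M_{4k-1,4k+2}\}_{k \geq 0}$ and $\{M_{4k-3,4k}\}_{k \geq 1}$ respectively. For $k \geq 1$, both $N_k'' \subset M_{4k-2,4k+3}$ and $L_k'' \subset M_{4k-4,4k+1}$ are disjoint from ${\rm Int}\,M_1$ and hence contain no $m$-handle of ${\cal H}$ or ${\cal H}^\ast$; on these rings Proposition~\ref{prop_2m_cpt_no-m-h} applies, giving the ring-wise bounds $cl \leq 3$ and $clb^f \leq 2c({\cal H}|_{N_k''}) + 1 \leq 2c({\cal H}) - 1$ (the last inequality because index $m$ is absent from ${\cal H}|_{N_k''}$ and ${\cal H}|_{L_k''}$). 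The exceptional ring $M_{-1,2} = M_2$ in the support of $G$ contains $R$; there I apply Proposition~\ref{cl_even_cpt_bd_handle} with $(N, N_1, N_2) = (M_2, N_0', N_0'')$, ${\cal C} = {\cal S}$ and the given cover $\{{\cal S}_j\}_{j=1}^k$, giving the ring-wise bounds of the form $2k+7$ or $3k+5$ and the corresponding $clb^f$ estimates.

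Since the rings supporting $G_1$ are pairwise disjoint, and similarly for $H_1$, both $cl$ and $clb^d$ of these discrete-sum diffeomorphisms are bounded by the maximum of the ring-wise estimates (as in Proof of Theorem~\ref{thm_open_odd}\,(1)(ii)). Thus $cl(G_1)$ and $clb^d(G_1)$ are controlled by the exceptional ring, while $cl(H_1) \leq 3$ and $clb^d(H_1) \leq 2c({\cal H}) - 1$; summing via $cl(f) \leq cl(G_1) + cl(H_1)$ and $clb^d(f) \leq clb^d(G_1) + clb^d(H_1)$ yields the four estimates of (i) after a short arithmetic check. The delicate point in the proof is ensuring that the single application of Lemma~\ref{lem_factorization_open-2} simultaneously absorbs the full $m$-handle region into $M_1$ and produces the saturated submanifolds required by Proposition~\ref{cl_even_cpt_bd_handle}; this is achieved by prescribing $K_1 \supset R$ and exploiting the freedom in the inductive construction of $\{M_k\}$, $\{N_k', N_k''\}$, and $\{L_k', L_k''\}$.
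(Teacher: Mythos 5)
Your proposal matches the paper's proof in every essential respect: use Lemma~\ref{lem_factorization_open-2} with initial compact datum $K_1 \supset K$ ($K$ the $m$-handle region) to produce the exhausting sequence and the factorization $F = GH$; bound the exceptional ring $M_2 \Subset N_0' \Subset N_0''$ via Proposition~\ref{cl_even_cpt_bd_handle}; bound all the $K$-free rings via Proposition~\ref{prop_2m_cpt_no-m-h}; and add. One small imprecision: for $k=1$ the ring $L_1'' \subset M_{0,5}=M_5$ is \emph{not} disjoint from ${\rm Int}\,M_1$, so disjointness from $M_1$ is not the right reason $L_1''$ avoids the $m$-handles; rather it is the explicit condition $K_1 \cap L_1'' = \emptyset$ built into Lemma~\ref{lem_factorization_open-2}\,[I](3)(b), which you do invoke implicitly by "prescribing $K_1 \supset R$."
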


\begin{proof}[\bf Proof of Propositions~\ref{prop_2m_open_no-m-h}, ~\ref{prop_2m_open_finite-m-h}] \mbox{} 

The closed manifold cases are included in Proposition~\ref{prop_2m_cpt_no-m-h} and Corollary~\ref{cor_closed}. 
Below we assume that $M$ is an open manifold. 
We only verify the case of ${\rm Diff}^r(M)_0$. The case of ${\rm Diff}_c^r(M)_0$ follows from 
simpler versions of the arguments (2) and (3) below (cf. Proof (2) of Theorem~\ref{thm_even_open_handle}). 

Take any $f \in {\rm Diff}^r(M)_0$ and $F \in {\rm Isot}^r(M)_0$ with $F_1 = f$. 

\benum 
\item Let $K :=$ the union of all $m$-handles in ${\cal H}$. 
By Lemma~\ref{lem_factorization_open-2}\,[I] there exists an exhausting sequence 
$\{ M_i \}_{i \geq 1}$ of $M$ which satisfies the following conditions : 

\bit 
\itemI (a) $K \Subset M_1$, \hsh (b) 
$F(M_{4i,4i+1} \times I) \Subset M_{4i-1, 4i+2} \times I$ $(i \geq 0)$. 

\itemII There exist $N_i'  \in {\cal S}{\cal M}_c(M; {\cal H})$ and $N_i'' \in {\cal S}{\cal M}_c(M; {\cal H}^\ast)$ $(i \geq 0)$ 
such that for each $i \geq 0$
\bit 
\itema $M_{4i-1, 4i+2} \Subset N_i' \Subset N_i'' \Subset M_{4i-2, 4i+3}$ 
\ \ and \ \ {\rm (b)} $N_i'' \cap N_{i+1}'' = \emptyset$. 
\eit 

\itemiii There exist $L_i'  \in {\cal S}{\cal M}_c(M; {\cal H})$ and 
$L_i'' \in {\cal S}{\cal M}_c(M; {\cal H}^\ast)$ $(i \geq 1)$ such that for each $i \geq 1$ 
\bit 
\itema $M_{4i-3, 4i} \Subset L_i' \Subset L_i'' \Subset M_{4i-4, 4i+1}$ 
\ \ and \ \ {\rm (b)} $K \cap L_1'' = \emptyset$, \ $L_i'' \cap L_{i+1}'' = \emptyset$. 
\eit 
\eit 
By Lemma~\ref{lem_factorization_open-2}\,[II] 
there exists a factorization $F = GH$ for some $G \in {\rm Isot}^r(M, M')_0$ and $H \in {\rm Isot}^r(M, M'')_0$, 
where \ $M' := \bigcup_{i \geq 0} M_{4i+2, 4i+3}$ \ and \ $M'' := \bigcup_{i \geq 0} M_{4i, 4i+1}$. 

\item in Proposition~\ref{prop_2m_open_no-m-h} : \\
Let $q = cl$ or $clb^d$ and set $\ell := 3$ for $q = cl$ and $\ell := 2c({\cal H})+1$ for $q = clb^d$. 
Since ${\cal H}$ has no $m$-handles, 
by Proposition~\ref{prop_2m_cpt_no-m-h}, if 
$N \in {\cal S}{\cal M}_c(M; {\cal H})$, $L \in {\cal S}{\cal M}_c(M; {\cal H}^\ast)$ and $N \subset L$, then \\ 
\hspp $q\hspace{0.3mm}d\,({\rm Diff}^r(M, M_N)_0,  {\rm Diff}^r(M, M_L)_0) \leq \ell$. \\ 
Thus, by (1)(ii),\hspace{0.4mm}(iii) we have  

\bit 
\itemI $q\hspace{0.3mm}d\,({\rm Diff}^r(M, M_{N_i'})_0,  {\rm Diff}^r(M, M_{N_i''})_0) \leq \ell$ \ $(i \geq 0)$ \ \ and 
\itemII $q\hspace{0.3mm}d\,({\rm Diff}^r(M, M_{L_i'})_0,  {\rm Diff}^r(M, M_{L_i''})_0) \leq \ell$ \ $(i \geq 1)$.  
\eit 
Therefore, from Lemma~\ref{lem_factorization_open-2}\,[II] 
it follows that \ $q(G_1), q(H_1) \leq \ell$ \ and \ $q(f) \leq 2 \ell$ \ in ${\rm Diff}^r(M)_0$. 

\item in Proposition~\ref{prop_2m_open_finite-m-h} : \\
Let $q = cl$ or $clb^d$ and set $\ell$ and $\ell'$ as follows;  \\
\hsh for $q = cl$ : \hsh $\ell := 3k+5$ \ in [I], \hsh $\ell := 2k+7$ \ in [I\hspace{-0.1mm}I], \hsh $\ell' := 3$ \\[2mm] 
\hsh for $q = clb^d$ : \hsh 
$\bary[t]{@{}l}
\ell := 
\left\{ \bary[c]{@{\,}l@{ \ }l@{ \ }l} 
2\big(c({\cal H}) + (k-1)c({\cal H}^{(m)})\big) + 2k + 4 & \leq 2(m+2)(k+1) & \text{in \,[I]} \\[2.5mm]
2\,\big(c({\cal H}) + (k-1)c({\cal H}^{(m)})\big)  + k + 6 & \leq (2m+3)(k+1) + 3 \hsh & \text{in [I\hspace{-0.1mm}I]}
\eary\right. \\[6mm] 
\ell' := 2c({\cal H})-1 \leq 4m+1. 
\eary$ \\[2mm] 
Since $K \subset N_0' \in {\cal S}{\cal M}_c(M; {\cal H})$, we have $|{\cal S}| \Subset N_0'$. Hence, we can apply 
Proposition~\ref{cl_even_cpt_bd_handle} to 
$M_2 \Subset N_0' \Subset N_0''$, ${\cal S}$ and $\{ {\cal S}_j \}_{j=1}^k$ 
 so to obtain the following estimate.  
\bit 
\itemI $q\hspace{0.3mm}d({\rm Diff}^r(M, M_{M_2})_0, {\rm Diff}^r(M, M_{N_0''})_0) \leq \ell$ 
\eit 
Since $N_i''$ and $L_i''$ $(i \geq 1)$ do not intersect $K$, 
by Proposition~\ref{prop_2m_cpt_no-m-h} and (1)(ii), (iii) it follows that 

\bit 
\itemII $q\hspace{0.3mm}d\,({\rm Diff}^r(M, M_{N_i'})_0,  {\rm Diff}^r(M, M_{N_i''})_0) \leq \ell' \leq \ell$ \ $(i \geq 1)$, \\ 
$q\hspace{0.3mm}d\,({\rm Diff}^r(M, M_{L_i'})_0,  {\rm Diff}^r(M, M_{L_i''})_0) \leq \ell'$ \ $(i \geq 1)$. 
\eit 
Therefore, from Lemma~\ref{lem_factorization_open-2}\,[I\hspace{-0.1mm}I] 
it follows that 

\bit
\itemiii $q(G_1) \leq \ell$, $q(H_1) \leq \ell'$ \ and \ $q(f) \leq \ell + \ell'$ \ in ${\rm Diff}^r(M)_0$. 
\eit 
\eenum 
\vskip -7mm 
\end{proof} 

\noindent {\bf [2] Covering spaces of closed $2m$-manifolds} 
\begin{setting}\label{setting_even_covering}
Suppose $\pi : \widetilde{M} \to M$ is a $C^\infty$ covering space 
over a closed $2m$-manifold $M$ $(m \geq 3)$ and $1 \leq r \leq \infty$, $r \neq 2m+1$. 
\end{setting}

\begin{setting_5.7+}
Suppose ${\cal H}$ is a handle decomposition of $M$, 
${\cal S}$ is the set of open $m$-cells of $P_{\cal H}$ and 
$\{ {\cal S}_j \}_{j=1}^k$ $(k \geq 1)$ is a finite cover of ${\cal S}$ which satisfies the following condition : 
\bit 
\item[$(\dagger)$] Each $Cl_M |{\cal S}_j|$ $(j = 1, \cdots, k)$ has an open neighborhood $U_j$ in $M$ which is evenly covered by $\pi$.   
\eit 
\end{setting_5.7+}

\begin{thm}\label{thm_even_covering}
In Setting~{\rm \ref{setting_even_covering}, ~ \ref{setting_even_covering}$^+$} : 
\benum 
\item[{\rm [I]}\,] Suppose $Cl_M|{\cal S}_{j}|$ is strongly displaceable from itself in $M$ for each $j=1, \dots, k$.  
\vskip 1mm 
\bit 
\itemI $cld\,{\rm Diff}^r_c(\widetilde{M})_0 \leq 3k+5$, \\[1mm] 
$clb^fd\,{\rm Diff}^r_c(\widetilde{M})_0 \leq 2\big(c({\cal H}) + (k-1)c({\cal H}^{(m)}) + k + 2\big) \leq 2(m+2)(k+1)$. 
\vskip 1mm 
\itemII  
$cld\,{\rm Diff}^r(\widetilde{M})_0 \leq 6k+10$, \\[1mm] 
$clb^dd\,{\rm Diff}^r(\widetilde{M})_0 \leq 4\big(c({\cal H}) + (k-1)c({\cal H}^{(m)}) + k + 2\big) \leq 4(m+2)(k+1)$. 
\eit 
\vskip 1mm 
\item[{\rm [II]}] Suppose $Cl_M|{\cal S}_{j}|$ is strongly displaceable from $Cl_M|{\cal S}|$ in $M$ for each $j=1, \dots, k$. 
\vskip 1mm 
\bit 
\itemI $cld\,{\rm Diff}^r_c(\widetilde{M})_0 \leq 2k+7$, \\[1mm] 
$clb^fd\,{\rm Diff}^r_c(\widetilde{M})_0 \leq 2\,\big(c({\cal H}) + (k-1)c({\cal H}^{(m)})\big)  + k + 6 \leq (2m+3)(k+1) + 3$. 
\vskip 1mm 
\itemII 
$cld\,{\rm Diff}^r(\widetilde{M})_0 \leq 4k+14$, \\[1mm] 
$clb^dd\,{\rm Diff}^r(\widetilde{M})_0 \leq 4\big(c({\cal H}) + (k-1)c({\cal H}^{(m)}) + 2k + 12 \leq 2(2m+3)(k+1) + 6$. 
\eit 
\eenum 
\end{thm}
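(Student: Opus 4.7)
The plan is to reduce Theorem~\ref{thm_even_covering} to Theorem~\ref{thm_even_open_handle} applied to $\widetilde{M}$ equipped with the pulled-back handle decomposition $\widetilde{\mathcal{H}} := \pi^{-1}({\cal H})$. I first verify that the index-counting invariants agree, i.e.\ $c(\widetilde{\mathcal{H}}) = c({\cal H})$ and $c(\widetilde{\mathcal{H}}^{(m)}) = c({\cal H}^{(m)})$; this is immediate because every handle of $\widetilde{\mathcal{H}}$ lies over, and has the same index as, a handle of ${\cal H}$. Writing $\widetilde{\cal S}$ for the set of open $m$-cells of $P_{\widetilde{\mathcal{H}}}$, I see that $\pi$ induces a surjection $\widetilde{\cal S} \to {\cal S}$ which restricts to a bijection over each evenly covered $U_j$.

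Next I set up the grouping data of Setting~\ref{setting_even_open_handle}$^+$. For each $j$, the even covering $\pi^{-1}(U_j) = \bigsqcup_{\lambda_j \in \Lambda_j} U_{j,\lambda_j}$ gives an index set $\Lambda_j$, and I define $\widetilde{\cal S}_j := \pi^{-1}({\cal S}_j)$ with the partition $\widetilde{\cal S}_j = \bigcup_{\lambda_j \in \Lambda_j}\widetilde{\cal S}_{j,\lambda_j}$, where $\widetilde{\cal S}_{j,\lambda_j}$ consists of those lifts lying in $U_{j,\lambda_j}$. Each $\widetilde{\cal S}_{j,\lambda_j}$ is finite because ${\cal S}_j$ is finite (the closed manifold $M$ has a finite handle decomposition). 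Local finiteness of $\{|\widetilde{\cal S}_{j,\lambda_j}|\}_{\lambda_j \in \Lambda_j}$ in $\widetilde{M}$ follows by a routine case split: a point $p$ with $\pi(p)\in U_j$ lies in exactly one sheet and has a neighborhood inside that sheet meeting only one member of the family; a point with $\pi(p)\notin Cl_M|{\cal S}_j|$ admits a neighborhood $V$ in $M$ disjoint from $Cl_M|{\cal S}_j|$, so $\pi^{-1}(V)$ meets none of the $|\widetilde{\cal S}_{j,\lambda_j}|$.

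The main technical point is lifting the strong displacement hypothesis to $\widetilde{M}$. Fix $j$ and a finite $\Delta \subset \Lambda_j$, and set $\widetilde{\cal S}_{j,\Delta} := \bigcup_{\lambda_j \in \Delta}\widetilde{\cal S}_{j,\lambda_j}$, so that $Cl_{\widetilde{M}}|\widetilde{\cal S}_{j,\Delta}|$ is a finite disjoint union, one piece inside each sheet $U_{j,\lambda_j}$ with $\lambda_j \in \Delta$, and each piece projecting homeomorphically onto $Cl_M|{\cal S}_j|$. Given any neighborhood $\widetilde{W}$ of $Cl_{\widetilde{M}}|\widetilde{\cal S}_{j,\Delta}|$, the set $W := \bigcap_{\lambda_j \in \Delta}\pi(\widetilde{W}\cap U_{j,\lambda_j})$ is a neighborhood of $Cl_M|{\cal S}_j|$ inside $U_j$. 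The strong displacement hypothesis in $M$ supplies $\phi_j \in {\rm Diff}_c(M, M_W)_0$ displacing $Cl_M|{\cal S}_j|$ from itself (case [I]) or from $Cl_M|{\cal S}|$ (case [II]); because ${\rm supp}\,\phi_j \subset U_j$, I can lift $\phi_j$ sheet-by-sheet over $\lambda_j \in \Delta$ and extend by the identity elsewhere to obtain $\widetilde{\phi}_j \in {\rm Diff}_c(\widetilde{M}, \widetilde{M}_{\widetilde{W}})_0$. Disjointness of the sheets yields the required displacement in case [I]. In case [II] the relation $\pi\widetilde{\phi}_j = \phi_j\pi$ on $\pi^{-1}(U_j)$, together with $\phi_j(Cl_M|{\cal S}_j|) \cap Cl_M|{\cal S}| = \emptyset$ and $Cl_{\widetilde{M}}|\widetilde{\cal S}| \subset \pi^{-1}(Cl_M|{\cal S}|)$, gives $\widetilde{\phi}_j(Cl_{\widetilde{M}}|\widetilde{\cal S}_{j,\Delta}|) \cap Cl_{\widetilde{M}}|\widetilde{\cal S}| = \emptyset$.

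With all hypotheses of Theorem~\ref{thm_even_open_handle} verified for $(\widetilde{M}, \widetilde{\mathcal{H}}, \{\widetilde{\cal S}_j\}_{j=1}^k, \{\widetilde{\cal S}_{j,\lambda_j}\})$, the claimed estimates follow directly by substituting $c(\widetilde{\mathcal{H}}) = c({\cal H})$ and $c(\widetilde{\mathcal{H}}^{(m)}) = c({\cal H}^{(m)})$ into the conclusions of that theorem. The only nontrivial obstacle is the strong displacement lift; once the freedom to shrink the target neighborhood inside $U_j$ is exploited, everything else is formal bookkeeping.
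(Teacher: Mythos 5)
Your proof is correct and follows essentially the same route as the paper: pull back the handle decomposition to $\widetilde{M}$, verify the grouping data of Setting~\ref{setting_even_open_handle}$^+$ via the evenly-covered neighborhoods, lift the strong displacement property sheet-by-sheet, and invoke Theorem~\ref{thm_even_open_handle} together with $c(\widetilde{\cal H})=c({\cal H})$ and $c(\widetilde{\cal H}^{(m)})=c({\cal H}^{(m)})$. The only cosmetic difference is that the paper enlarges $U_j$ to a $U_j'$ evenly covering $Cl_M U_j$ to obtain discreteness and relative compactness of the sheets directly, whereas you deduce local finiteness and the sheet-confinement of $Cl_{\widetilde{M}}|\widetilde{\cal S}_{j,\lambda_j}|$ from the inclusion $Cl_M|{\cal S}_j|\subset U_j$, which works equally well.
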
 

\begin{cor}\label{cor_even_covering} 
In Setting~{\rm \ref{setting_even_covering}} : Suppose $M$ has a $C^\infty$ triangulation with at most $\ell$ $m$-simplices.  
\bit 
\itemI $cld\,{\rm Diff}^r_c(\widetilde{M})_0 \leq 2\ell+7$ \ \ and \ \  
$clb^fd\,{\rm Diff}^r_c(\widetilde{M})_0 \leq (2m+3)(\ell+1) + 3$. 
\vskip 1mm 
\itemII  
$cld\,{\rm Diff}^r(\widetilde{M})_0 \leq 4\ell+14$ \ \ and \ \  
$clb^dd\,{\rm Diff}^r(\widetilde{M})_0 \leq 2(2m+3)(\ell +1) + 6$. 
\eit 
\end{cor}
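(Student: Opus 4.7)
The plan is to deduce Corollary~\ref{cor_even_covering} directly from Theorem~\ref{thm_even_covering}\,[II] by extracting from the given triangulation ${\cal T}$ a $C^\infty$ handle decomposition of $M$ together with a singleton cover of its open $m$-cells that satisfies all hypotheses of that theorem.

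First I would smooth the PL handle decomposition ${\cal H}_{\cal T}$ of Section~3.2.1 associated with ${\cal T}$ to obtain a $C^\infty$ handle decomposition ${\cal H}$ of $M$ in the sense of Convention~\ref{convention_handle_decomp}, keeping the bijective correspondence between the $k$-handles of ${\cal H}$ and the $k$-simplices of ${\cal T}$. Then the set ${\cal S}$ of open $m$-cells of $P_{\cal H}$ has cardinality $\ell' \leq \ell$; enumerate its members as $\sigma_1,\ldots,\sigma_{\ell'}$ and take the singleton cover ${\cal S}_j := \{\sigma_j\}$ for $j = 1, \ldots, \ell'$, so that $k = \ell'$ in the notation of Theorem~\ref{thm_even_covering}.

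Next I would verify the hypotheses of Theorem~\ref{thm_even_covering}\,[II] for each $Cl_M|{\cal S}_j| = Cl_M\sigma_j$. Such a closed cell is a compact smooth $m$-disk in $M$ and therefore admits arbitrarily small open $2m$-disk neighborhoods $U_j$. Being an open $2m$-disk, $U_j$ is contractible, hence simply connected and locally path-connected, so the restricted covering $\pi^{-1}(U_j) \to U_j$ is trivial and $U_j$ is evenly covered by $\pi$; this verifies condition~$(\dagger)$ of the setting. Moreover $Cl_M|{\cal S}|$ is a finite union of smooth $m$-dimensional subpolyhedra of the $2m$-manifold $M$ and so is nowhere dense; consequently every sufficiently small $U_j$ fails to be contained in $Cl_M|{\cal S}|$, and Example~\ref{exp_displacement}\,(2)(ii) then yields the strong displaceability of $Cl_M\sigma_j$ from $Cl_M|{\cal S}|$ in $M$ required in case [II].

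Applying Theorem~\ref{thm_even_covering}\,[II] with $k = \ell'$ then produces the bounds $2\ell'+7$, $(2m+3)(\ell'+1)+3$, $4\ell'+14$, and $2(2m+3)(\ell'+1)+6$ for the four quantities appearing in Corollary~\ref{cor_even_covering}. Since each of these bounds is monotonically non-decreasing in $k$ and $\ell' \leq \ell$, substituting $\ell$ for $\ell'$ gives precisely the estimates claimed. The only mild obstacle is the initial smoothing step from ${\cal H}_{\cal T}$ to ${\cal H}$; this is standard and preserves the index-to-simplex correspondence, so the count of $m$-cells is unchanged. (The degenerate case $\ell = 0$ is covered instead by Proposition~\ref{prop_2m_open_no-m-h} applied to the pulled-back handle decomposition on $\widetilde{M}$.)
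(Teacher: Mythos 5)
Your proof is correct and follows essentially the same route the paper intends: take the Morse-theoretic $C^\infty$ handle decomposition arising from the triangulation ${\cal T}$ (one handle per simplex, as in Section~3.2.1), use the singleton cover ${\cal S}_j=\{\sigma_j\}$ of the $m$-cells, verify condition $(\dagger)$ via simply connected $2m$-disk neighborhoods, verify the strong displaceability hypothesis of Theorem~\ref{thm_even_covering}\,[II] via Example~\ref{exp_displacement}\,(2)(ii) using the fact that $Cl_M|{\cal S}|$ is nowhere dense, and then apply the theorem with $k = \ell'\leq \ell$ together with monotonicity of the bounds (or, equivalently, pad the cover with empty subsets to reach $k=\ell$). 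The remark on the $\ell=0$ edge case is also apt, since Setting~\ref{setting_even_open_handle}$^+$ requires $k\geq 1$.
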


\begin{proof}[\bf Proof of Theorem~\ref{thm_even_covering}] \mbox{} 
We apply  Setting~\ref{setting_even_open_handle}, ~ \ref{setting_even_open_handle}$^+$ and 
Theorem~\ref{thm_even_open_handle} in $\widetilde{M}$. 
\benum[(1)] 
\item For Setting~\ref{setting_even_open_handle} : \\
The lifts and inverse images in $\widetilde{M}$ of various objects on $M$  are denoted by the symbol \ $\widetilde{}$\ . 
Then we have the following data on $\widetilde{M}$ : \\[1mm] 
\hspace{25mm} $\widetilde{\cal H}$, $\widetilde{\cal H^\ast}$, $\widetilde{\cal S}$, $\widetilde{\cal S}_j$ \ \ and \ \ 
$\widetilde{P} := \pi^{-1}(P)$, $\widetilde{Q} := \pi^{-1}(Q)$, $\widetilde{U}_j := \pi^{-1}(U_j)$. \\[1mm] 
Here, $\widetilde{\cal H}$ and $\widetilde{\cal H^\ast}$ are the handle decompositions of $\widetilde{M}$ consisting of the lifts of handles of ${\cal H}$ and ${\cal H}^\ast$ respectively, 
$\widetilde{\cal S}$ and $\widetilde{\cal S}_j$ are the sets of lifts of open $m$-cells in ${\cal S}$ and ${\cal S}_j$ respectively.  
It follows that 
\bit 
\itemI $\widetilde{P}$ is the $m$-skeleton of the core complex of $\widetilde{\cal H}$, \\ 
$\widetilde{\cal S}$ is the set of all open $m$-cells of $\widetilde{P}$, \\
$\widetilde{Q}$ is the $m$-skeleton of the core complex of $\widetilde{\cal H}^\ast$ and $\widetilde{\cal H^\ast} = \big(\widetilde{\cal H}\big)^\ast$,
\vskip 1mm 
\itemII $\widetilde{\cal S} = \bigcup_{j=1}^k \widetilde{\cal S}_j$, \ \ $\big|\widetilde{\cal S}\big| = \widetilde{|{\cal S}|}$, \ \ 
$\widetilde{|{\cal S}_j|} = |\widetilde{\cal S}_j|$, \ \ 
$\widetilde{{\cal S}_j^\ast} = (\widetilde{\cal S}_j)^\ast$, \ \ 
$\widetilde{{|{\cal S}_j^\ast}|} = \big|\widetilde{{\cal S}_j^\ast}\big| = \big|\big(\widetilde{\cal S}_j\big)^\ast\big|$. 
\eit  
\vskip 2mm 
\item For Setting~\ref{setting_even_open_handle}$^+$ : \\
We can take $U_j$ so that there exists an open neighborhood $U_j'$ of $Cl_M U_j$ in $M$ which is evenly covered by $\pi$. 
Then $\widetilde{U}_j' = \pi^{-1}(U_j')$ has 
a (at most countable) disjoint open cover $\{ \widetilde{U}_{j, \lambda_j}' \}_{\lambda_j \in \Lambda_j}$ 
such that the restrictions $\pi : \widetilde{U}_{j, \lambda_j}' \to U_j'$ $(\lambda_j \in \Lambda_j)$ are diffeomorphisms. 
Let $\widetilde{U}_{j, \lambda_j} := \widetilde{U}_j \cap \widetilde{U}_{j, \lambda_j}'$ $(\lambda_j \in \Lambda_j)$ 
and $\widetilde{\cal S}_{j, \lambda_j} := \{ \sigma \in \widetilde{S}_j \mid \sigma \subset \widetilde{U}_{j, \lambda_j} \}$ 
(the lift of ${\cal S}_j$ on $\widetilde{U}_{j, \lambda_j}$). Then, the following holds for each $j = 1, \cdots, k$. 
\vskip 1mm 
\bit 
\itemI  
\bit 
\itema $\{ \widetilde{U}_{j, \lambda_j} \}_{\lambda_j \in \Lambda_j}$ is a discrete family of 
relatively compact open subsets of $\widetilde{M}$. 
\itemb $\pi : \widetilde{U}_{j, \lambda_j} \to U_j$ is a diffeomorphism. \hsh (c)\ $Cl_M |{\cal S}_j| \subset U_j$. 
\itemd $U_j \cap Cl_M|{\cal S}| = Cl_{U_j} (|{\cal S}| \cap U_j)$ \ and \ 
$\widetilde{U}_{j, \lambda_j} \cap Cl_{\widetilde{M}}|\widetilde{\cal S}| 
=  Cl_{\widetilde{U}_{j, \lambda_j}}(|\widetilde{\cal S}| \cap \widetilde{U}_{j, \lambda_j})$. 
\eit 
\vskip 1mm  
\itemII 
\bit 
\itema $\widetilde{\cal S}_{j, \lambda_j}$ is a finite subset of $\widetilde{\cal S}_j$ and 
$\{ \widetilde{\cal S}_{j, \lambda_j} \}_{\lambda_j \in \Lambda_j}$ is a disjoint covering of $\widetilde{\cal S}_j$. 
\itemb $Cl_{\widetilde{M}}|\widetilde{\cal S}_{j, \lambda_j}| \subset \widetilde{U}_{j, \lambda_j}$ and 
$\{ Cl_{\widetilde{M}}\,|\widetilde{\cal S}_{j, \lambda_j}| \}_{\lambda_i \in \Lambda_j}$ is a discrete family of compact subsets in $M$. 
\itemc $\pi(Cl_{\widetilde{M}}|\widetilde{\cal S}_{j, \lambda_j}|) = Cl_M|{\cal S}_{j}|$ \ and \ 
$\pi(\widetilde{U}_{j, \lambda_j} \cap Cl_{\widetilde{M}}|\widetilde{\cal S}|) = U_j \cap Cl_M|{\cal S}|$. 
\eit 
\vskip 1mm 

\itemiii From (i)(b) and (ii)(c) it follows that 
\bit 
\itema $Cl_{\widetilde{M}}|\widetilde{\cal S}_{j, \lambda_j}|$ is strongly displaceable from itself in $\widetilde{M}$ in the case [I]. 
\itemb $Cl_{\widetilde{M}}|\widetilde{\cal S}_{j, \lambda_j}|$ is strongly displaceable from $Cl_M|\widetilde{\cal S}|$ in $\widetilde{M}$ in the case [II]. 
\eit 
\itemiv 
Hence, for any finite subset $\Delta \subset \Lambda_j$ 
the disjoint finite union $Cl_{\widetilde{M}}\, |\widetilde{\cal S}_{j, \Delta}| 
= \bigcup_{ \lambda_j \in \Delta} Cl_{\widetilde{M}}\,|\widetilde{\cal S}_{j, \lambda_j}|$
has the same property as in (iii), that is, 
\bit 
\itema $Cl_{\widetilde{M}}\, |\widetilde{\cal S}_{j, \Delta}|$ is strongly displaceable from itself in $\widetilde{M}$ in the case [I]. 
\itemb $Cl_{\widetilde{M}}\, |\widetilde{\cal S}_{j, \Delta}|$ is strongly displaceable from $Cl_{\widetilde{M}}|\widetilde{\cal S}|$ in $\widetilde{M}$ in the case [II]. 
\eit 
\eit 
\eenum 
\vskip 1mm 

Since $c(\widetilde{\cal H}) = c({\cal H})$ and $c(\widetilde{\cal H}^{(m)}) = c({\cal H}^{(m)})$, the conclusions now follow from Theorem~\ref{thm_even_open_handle}. 
\end{proof}

\noindent {\bf [3] Infinite sums of finitely many compact $2m$-manifolds}

\begin{setting}\label{setting_inf_sum}
Consider a finite family ${\cal N} = \{ (N_i, {\cal T}_i) \}_{i=0}^\ell$, 
where each $N_i$ is a compact $2m$-manifold such that 
$\partial N_i$ (possibly empty) is a disjoint union of two $(2m-1)$-manifolds $\partial_\pm N_i$
and ${\cal T}_i$ is a $C^\infty$ triangulation of $N_i$. 
Assume that $\partial_- N_0 = \emptyset$. 
Let ${\cal C}_i$ denote the set of $m$-simplices of ${\cal T}_i$ for $i=0,1, \cdots, \ell$.
\end{setting}

We consider the class of open $2m$-manifolds which are infinite sums of compact manifolds in ${\cal N}$. 

\begin{setting_5.8+}
Suppose $M$ is an open $2m$-manifold ($m \geq 3$), 
$1 \leq r \leq \infty$, $r \neq 2m+1$, 
$\{ M_k \}_{k \geq 0}$ is an exhausting sequence of $M$ and   
${\cal T}$ is a $C^\infty$ triangulation of $M$ such that each $M_k$ underlies a subcomplex of ${\cal T}$.  
Let ${\cal S}$ denote the set of $m$-simplices of ${\cal T}$ and 
if $N \subset M$ underlies a subcomplex of ${\cal T}$, then ${\cal S}|_N$ denotes the set of $m$-simplices of ${\cal T}|_N$. 

Let $L_k := M_{k-1, k}$ $(k \geq 0)$ and set $\partial_- L_k := \partial M_{k-1}$ and $\partial_+ L_k := \partial M_k$. 
Suppose 
\bit 
\item[$(\alpha)$] 
$L_0 \equiv M_0$ admits a $C^\infty$ diffeomorphism $\phi_{0} : 
\big(L_{0}, \partial L_0, {\cal T}|_{L_{0}}\big) 
 \approx (N_0, \partial N_0, {\cal T}_0)$ which is a simplicial isomorphism and 

\item[$(\beta)$]  each $L_k$ $(k \geq 1)$ is a disjoint union of compact $2m$-manifolds $L_{k, s}$ $(s = 1, \cdots, n_k)$ and 
each $L_{k, s}$ admits some $i \equiv i(k,s) \in \{ 1, \cdots, \ell \}$ and  a $C^\infty$ diffeomorphism \\
\hspace*{20mm} $\phi_{k,s} : 
\big(L_{k, s}, L_{k, s} \cap \partial_- L_k, L_{k, s} \cap \partial_+ L_k, {\cal T}|_{L_{k, s}}\big) 
 \approx (N_i, \partial_- N_i, \partial_+ N_i, {\cal T}_i)$, \\ 
which is a simplicial isomorphism. 
\eit 
\end{setting_5.8+}

In this case we say that $M$ (or $(M, \{ M_k \}_{k \geq 0}, {\cal T})$) is an infinite sum of the model manifolds ${\cal N} = \{ (N_i, {\cal T}_i) \}_{i=0}^\ell$. 

\begin{setting_5.8++}
Suppose ${\cal C}_0$ has a finite cover $\{{\cal C}_{0,j} \}_{j=1}^{p+q}$ and 
each ${\cal C}_i$ $(i=1, \cdots, \ell)$ has a finite cover $\{{\cal C}_{i,j} \}_{j=1}^{p}$, where $p \geq 1$ and $q \geq 0$. 
We assume that \ for each $j =1, \cdots, p$ \\
\hsh  the family $\{{\cal C}_{i,j} \}_{i=0}^\ell$ satisfies the following condition : 
\bit 
\item[$(\ast)$] if $|{\cal C}_{i_0,j}| \cap \partial_+ N_{i_0} \neq \emptyset$ for some $i_0 = 0,1, \cdots, \ell$, then 
$|{\cal C}_{i,j}| \cap \partial_- N_i = \emptyset$ for any $i = 1, \cdots, \ell$. 
\eit 
\end{setting_5.8++}

\begin{prop}\label{prop_inf_sum} 

In Settings~{\rm \ref{setting_inf_sum}, ~\ref{setting_inf_sum}$^+$, ~\ref{setting_inf_sum}$^{++}$}\!\! : 
\benum[\ (1)] 
\item[{\rm [I]}\,] 
\btab[t]{@{}ll}
$cld\,{\rm Diff}^r(M)_0 \leq 3(2p+q) + 10$, \hsf & $clb^dd\,{\rm Diff}^r(M)_0 \leq 2(m+2)(2p+q+2)$, \\[2mm] 
$cld\,{\rm Diff}^r_c(M)_0 \leq 3(p+q) + 5$, & $clb^fd\,{\rm Diff}_c^r(M)_0 \leq 2(m+2)(p+q+1)$, 
\etab 
\vskip 1mm  
if \btab[t]{c@{\ }l}
{\rm (i)} & each $|{\cal C}_{0,j}|$ $(j=1, \cdots, p+q)$ is strongly displaceable from $|{\cal C}_{0,j}| \cup (\partial N_0 \times [0,1))$ in 
$\widetilde{N}_0$ \ and \\[2mm]
{\rm (ii)} & each $|{\cal C}_{i,j}|$ $(i = 1, \cdots, \ell, j=1, \cdots, p)$ is strongly displaceable from $|{\cal C}_{i,j}| \cup (\partial N_i \times [0,1))$ in $\widetilde{N}_i$.  
\etab 
\vskip 2mm 
\item[{\rm [II]}] 
\btab[t]{@{}ll}
$cld\,{\rm Diff}^r(M)_0 \leq 2(2p+q) + 14$, \hsf &  
$clb^dd\,{\rm Diff}^r(M)_0 \leq (2m+3)(2p+q+2) + 6$, \\[2mm] 
$cld\,{\rm Diff}_c^r(M)_0 \leq 2(p+q) +7$, & 
$clb^fd\,{\rm Diff}_c^r(M)_0 \leq (2m+3)(p+q+1) + 3$,  
\etab 
\vskip 1.5mm 
if \btab[t]{c@{\ }l}
{\rm (i)} & each $|{\cal C}_{0,j}|$ $(j=1, \cdots, p+q)$ is strongly displaceable from $|{\cal C}_0| \cup (\partial N_0 \times [0,1))$ in 
$\widetilde{N}_0$ \ and \\[2mm] 
{\rm (ii)} & each $|{\cal C}_{i,j}|$ $(i = 1, \cdots, \ell, j=1, \cdots, p)$ is strongly displaceable from $|{\cal C}_{i}| \cup (\partial N_i \times [0,1))$ in $\widetilde{N}_i$.  
\etab 
\eenum 
\end{prop}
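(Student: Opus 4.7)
\medskip

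\noindent\textbf{Proof plan.} The strategy is to reduce Proposition~\ref{prop_inf_sum} to Proposition~\ref{prop_exhausting seq} by pulling back the given covers of the model $m$-simplex sets $\mathcal{C}_i$ to form covers of $\mathcal{S}_k$ on $M$ that satisfy Setting~\ref{setting_exhausting_seq}. Concretely, using $\phi_0$ I would set $\mathcal{S}_{0,j} := \phi_0^{-1}(\mathcal{C}_{0,j})$ for $j = 1,\dots,p+q$, and for $k \geq 1$ I would set
$$\mathcal{S}_{k,j} \ := \ \bigsqcup_{s=1}^{n_k} \phi_{k,s}^{-1}\bigl(\mathcal{C}_{i(k,s),j}\bigr) \quad (j = 1,\dots,p).$$
Since each $\phi_{k,s}$ is a simplicial isomorphism between $\mathcal{T}|_{L_{k,s}}$ and $\mathcal{T}_{i(k,s)}$, the families $\{\mathcal{S}_{0,j}\}_{j=1}^{p+q}$ and $\{\mathcal{S}_{k,j}\}_{j=1}^{p}$ are covers of $\mathcal{S}_0$ and $\mathcal{S}_k$ respectively.

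The principal obstacle is verifying the disjointness condition $(\natural)$, and this is where hypothesis $(\ast)$ of Setting~\ref{setting_inf_sum}$^{++}$ does the essential work. For fixed $j \in \{1,\dots,p\}$ and indices $k < k'$, the sets $L_k$ and $L_{k'}$ meet only when $k' = k+1$, and then only along $\partial M_k = \partial_+ L_k = \partial_- L_{k+1}$. An $m$-simplex $\sigma \subset \partial M_k$ lies in $|\mathcal{S}_{k,j}|$ iff it comes from a simplex of $|\mathcal{C}_{i(k,s),j}| \cap \partial_+ N_{i(k,s)}$ for the component $L_{k,s}$ containing it, and it lies in $|\mathcal{S}_{k+1,j}|$ iff it comes from a simplex of $|\mathcal{C}_{i(k+1,s'),j}| \cap \partial_- N_{i(k+1,s')}$ for the adjacent component $L_{k+1,s'}$. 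Condition $(\ast)$, applied with $i_0 = i(k,s)$, forbids both conditions holding simultaneously, so $|\mathcal{S}_{k,j}| \cap |\mathcal{S}_{k+1,j}| = \emptyset$; the case $k = 0$ is handled identically since $\partial_+ L_0 = \partial M_0$. Thus $(\natural)$ holds.

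Next I would verify the displaceability hypotheses of Proposition~\ref{prop_exhausting seq}. For each $k \geq 1$, the components $L_{k,s}$ of $L_k$ are pairwise disjoint closed submanifolds, so $\widetilde{L}_k = \widetilde{M}_{k-1,k}$ decomposes as the disjoint union $\bigsqcup_s \widetilde{L}_{k,s}$, and each $\phi_{k,s}$ extends to a diffeomorphism $\widetilde{L}_{k,s} \cong \widetilde{N}_{i(k,s)}$ sending $L_{k,s} \cap |\mathcal{S}_{k,j}|$ to $|\mathcal{C}_{i(k,s),j}|$ and $\partial L_{k,s} \times [0,1)$ to $\partial N_{i(k,s)} \times [0,1)$. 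Consequently, a strong displacement in $\widetilde{N}_{i(k,s)}$ given by hypothesis (i) or (ii) of [I] (resp.\ [II]) can be pulled back to a compactly supported diffeomorphism of $\widetilde{L}_{k,s}$, and the combination of these over $s$ gives a strong displacement on $\widetilde{L}_k$; the same transfer works verbatim for $k=0$ via $\phi_0$. Thus the displacement hypotheses of Proposition~\ref{prop_exhausting seq}\,[I] (resp.\ [II]) are satisfied.

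With Settings~\ref{setting_exhausting_seq} and \ref{setting_exhausting_seq}$^+$ established, Proposition~\ref{prop_exhausting seq} applied in the corresponding case directly yields the four inequalities of Proposition~\ref{prop_inf_sum}\,[I] (resp.\ [II]), completing the proof. The main technical point is therefore the disjointness verification in the second paragraph, which is where $(\ast)$ is used; the rest is a routine transfer of geometric data through the piecewise diffeomorphisms $\phi_0$, $\phi_{k,s}$.
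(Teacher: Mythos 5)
Your proof is correct and takes essentially the same route as the paper: pull back the finite covers $\{\mathcal{C}_{i,j}\}$ through the simplicial isomorphisms $\phi_0,\phi_{k,s}$ to get covers $\{\mathcal{S}_{k,j}\}$ of $\mathcal{S}_k$, use $(\ast)$ to verify the disjointness condition $(\natural)$, transfer the displacement hypotheses across $\widetilde{L}_{k,s}\cong\widetilde{N}_{i(k,s)}$, and invoke Proposition~\ref{prop_exhausting seq}. (One small imprecision: in the disjointness check you phrase the condition in terms of $m$-simplices lying in $\partial M_k$, but simplices in $\mathcal{S}_{k,j}$ and $\mathcal{S}_{k+1,j}$ can also meet along a proper face in $\partial M_k$ without being contained there; since $(\ast)$ is formulated in terms of $|\mathcal{C}_{i,j}|\cap\partial_\pm N_i\neq\emptyset$ rather than containment of simplices, the argument still goes through once stated in those terms. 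Also, ``Setting~\ref{setting_exhausting_seq}$^+$'' does not exist; $(\natural)$ is already part of Setting~\ref{setting_exhausting_seq}.)
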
 

\begin{cor}\label{cor_inf_sum} In Settings~{\rm \ref{setting_inf_sum}, ~\ref{setting_inf_sum}$^+$}: \\ 
\hsp 
\btab[t]{@{}ll}
$cld\,{\rm Diff}^r(M)_0 \leq 2(2a+b) + 14$, & 
$clb^dd\,{\rm Diff}^r(M)_0 \leq (2m+3)(2a+b+2) + 6$, \\[2mm] 
$cld\,{\rm Diff}_c^r(M)_0 \leq 2(a+b) +7$, & 
$clb^fd\,{\rm Diff}_c^r(M)_0 \leq (2m+3)(a+b+1) + 3$,  
\etab 
\vskip 2mm 
\hsp  
if \ \ $(\natural)$ 
\btab[t]{@{\,}c@{ \ }l}
{\rm (i)} & $a_1, a_2 \geq 0$, $a := a_1 + a_2 \geq 1$, $b \geq 0$, \\[2mm] 
{\rm (ii)} & $\#{\cal C}_0 \leq a+b$, \hsf $\# \{ \sigma \in {\cal C}_0 \mid \sigma \cap \partial N_0 \neq \emptyset \} \leq a_2 + b$, \\[2mm] 
{\rm (iii)} & for each $i=1, \cdots, \ell$ \\[2mm] 
& $\#{\cal C}_i \leq a$, \hsf $\#\{ \sigma \in {\cal C}_i \mid \sigma \cap \partial_- N_i \neq \emptyset \} \leq a_1$,  \hsf 
$\#\{ \sigma \in {\cal C}_i \mid \sigma \cap \partial_- N_i = \emptyset \} \leq a_2$, \\[2mm] 
& if $\sigma \in {\cal C}_i$ and $\sigma \cap \partial_+ N_i \neq \emptyset$, then $\sigma \cap \partial_- N_i = \emptyset$. 
\etab 
\end{cor}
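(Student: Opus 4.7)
The plan is to derive Corollary~\ref{cor_inf_sum} as a direct specialization of Proposition~\ref{prop_inf_sum}\,[II]: I will choose the finite covers $\{ {\cal C}_{i,j} \}$ of Setting~\ref{setting_inf_sum}$^{++}$ with parameters $p := a = a_1 + a_2$ and $q := b$, in such a way that each class ${\cal C}_{i,j}$ contains at most one $m$-simplex. The slot range $\{ 1,\ldots,a+b \}$ will be partitioned as $\{ 1,\ldots,a_1 \}$ (reserved for simplices meeting $\partial_- N_i$), $\{ a_1+1,\ldots,a \}$ (for simplices not meeting $\partial_- N_i$), and $\{ a+1,\ldots,a+b \}$ (used only for the base piece $N_0$).

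First, for each $i = 1,\ldots,\ell$, I would split ${\cal C}_i$ as the disjoint union of ${\cal A}_i := \{ \sigma \in {\cal C}_i \mid \sigma \cap \partial_- N_i \neq \emptyset \}$ and ${\cal B}_i := {\cal C}_i - {\cal A}_i$. By $(\natural)$(iii) we have $\#{\cal A}_i \leq a_1$ and $\#{\cal B}_i \leq a_2$, and by the last clause of $(\natural)$(iii), no element of ${\cal A}_i$ meets $\partial_+ N_i$. Enumerate and distribute the elements of ${\cal A}_i$ injectively over slots $1,\ldots,a_1$ (leaving surplus slots empty), and likewise ${\cal B}_i$ over slots $a_1+1,\ldots,a$. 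For ${\cal C}_0$, since $\partial_- N_0 = \emptyset$ and $(\natural)$(ii) bounds the number of simplices meeting $\partial N_0$ by $a_2 + b$, place those boundary-touching simplices into some of the $a_2 + b$ slots indexed by $\{ a_1+1,\ldots,a+b \}$ and place the remaining simplices of ${\cal C}_0$ into still-empty slots anywhere in $\{ 1,\ldots,a+b \}$; the count $\#{\cal C}_0 \leq a+b$ guarantees this fits, with no boundary-touching simplex landing in $\{ 1,\ldots,a_1 \}$.

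Next, I would verify the two hypotheses of Proposition~\ref{prop_inf_sum}\,[II]. Condition $(\ast)$ of Setting~\ref{setting_inf_sum}$^{++}$ is checked slot by slot: for $j \in \{ 1,\ldots,a_1 \}$, by construction $|{\cal C}_{0,j}|$ avoids $\partial N_0$ and $|{\cal C}_{i,j}| \subset |{\cal A}_i|$ avoids $\partial_+ N_i$ for every $i \geq 1$, so the premise of $(\ast)$ never holds; for $j \in \{ a_1+1,\ldots,a \}$, we have $|{\cal C}_{i,j}| \subset |{\cal B}_i|$, which by definition avoids $\partial_- N_i$ for every $i \geq 1$, so the conclusion of $(\ast)$ is automatic. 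The strong displaceability hypothesis reduces to the case of a single $m$-simplex $\sigma$ in some $N_i$, which admits arbitrarily small open $2m$-disk neighborhoods in $\widetilde{N}_i$; each such disk meets ${\rm Int}\, N_i$ in an open $2m$-dimensional set that cannot be swallowed by the $m$-dimensional complex $|{\cal C}_i|$, so by Example~\ref{exp_displacement}\,(2)(ii), $\sigma$ is strongly displaceable from $|{\cal C}_i| \cup (\partial N_i \times [0,1))$ in $\widetilde{N}_i$. With these verified, Proposition~\ref{prop_inf_sum}\,[II] with $p = a$, $q = b$ produces precisely the four bounds of the corollary. The main subtle point is the simultaneous matching of the boundary-touching constraints of $(\natural)$ with compatibility condition $(\ast)$ across all infinitely many pieces $L_{k,s}$; the last clause of $(\natural)$(iii), which forbids any simplex of ${\cal C}_i$ from meeting both $\partial_\pm N_i$, is exactly what permits the disjoint slot reservation described above.
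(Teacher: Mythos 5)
Your proposal is correct and follows essentially the same route as the paper: both specialize Proposition~\ref{prop_inf_sum}\,[II] with $p=a$, $q=b$ and singleton covers ${\cal C}_{i,j}$, and both verify $(\ast)$ by reserving the first $a_1$ slots for the $\partial_-$-touching simplices of each ${\cal C}_i$ and using the last clause of $(\natural)$(iii) to ensure that no slot carrying a $\partial_+$-touching simplex also carries a $\partial_-$-touching one. The only difference is cosmetic: the paper packs the singletons into contiguous blocks at the front and back of the index range with padding empty sets in between, while you fix the reserved index ranges $\{1,\ldots,a_1\}$ and $\{a_1+1,\ldots,a\}$ globally; and the paper leaves the strong displaceability of a single $m$-simplex as ``obvious,'' whereas you spell it out via Example~\ref{exp_displacement}\,(2)(ii).
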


Note that the condition $(\natural)$ is obviously satisfied if $a_1, a_2 \geq \max \{ \#{\cal C}_i \mid i=0,1, \cdots, \ell \}$. 

\begin{proof}[\bf Proof of Proposition~\ref{prop_inf_sum}] We apply Setting~\ref{setting_exhausting_seq} and Proposition~\ref{prop_exhausting seq}. 
\benum[\ (1)] 
\item For Setting~\ref{setting_exhausting_seq} : By Setting~\ref{setting_inf_sum}$^+$\,$(\alpha)$, $(\beta)$ 
\bit 
\itemI ${\cal S}_0 := {\cal S}|_{L_0}$ inherits a finite cover $\{ {\cal S}_{0, j} \}_{j=1}^{p+q}$ 
corresponding with the finite cover $\{ {\cal C}_{0,j} \}_{j=1}^{p+q}$ of ${\cal C}_0$,  
\itemII for each $k \geq 1$ 
\bit 
\itema each ${\cal S}|_{L_{k,s}}$ $(s = 1, \cdots, n_k)$ inherits a finite cover $\{ {\cal S}_{{k, s}, j} \}_{j=1}^p$ 
corresponding with the finite cover $\{ {\cal C}_{i,j} \}_{j=1}^p$ of ${\cal C}_i$

\itemb ${\cal S}_k := {\cal S}|_{L_k} = \bigcup_{s = 1}^{n_k} {\cal S}|_{L_{k,s}}$ has a finite cover $\{ {\cal S}_{{k}, j} \}_{j=1}^p$ defined by 
${\cal S}_{{k}, j} := \bigcup_{s = 1}^{n_k} {\cal S}_{{k, s}, j}$. 
\eit
\eit 
From Setting~\ref{setting_inf_sum}$^{++}$\,$(\ast)$ it follows that $(\natural)$
$\{ |{\cal S}_{k,j}| \}_{k \geq 0}$ is a disjoint family for each $j=1, \cdots, p$. 

\item The assumptions in [I] and [II] imply the corresponding conditions in [I] and [II] in Proposition~\ref{prop_exhausting seq}.  
Therefore, the conclusions follow from Proposition~\ref{prop_exhausting seq}.  
\eenum 
\vskip -8mm 
\end{proof}

\begin{proof}[\bf Proof of Corollay~\ref{cor_inf_sum}] 
We apply Setting~\ref{setting_inf_sum}$^{++}$ and Proposition~\ref{prop_inf_sum} to the following situation : 
\benum[(1)] 
\item We use the following notations : \\
\hsh ${\cal C}_0 = \{ \alpha_{1}, \cdots, \alpha_{g}, \beta_{1}, \cdots, \beta_{h} \}$, \\
\hsp \hsh where \ 
$\alpha_{u} \cap \partial N_0 = \emptyset$ \ $(u=1, \cdots , g)$ \ and \ 
$\beta_{v} \cap \partial N_0 \neq \emptyset$ \ $(v=1, \cdots , h)$, \\
\hsh ${\cal C}_i = \{ \sigma_{i,1}, \cdots, \sigma_{i,x_i}, \tau_{i,1}, \cdots, \tau_{i,y_i}\}$ \ \ $(i=1, \cdots, \ell)$, \\
\hsp \hsh where \ 
$\sigma_{i,u} \cap \partial_- N_i \neq \emptyset$ \ $(u=1, \cdots , x_i)$ \ and \ 
$\tau_{i,v} \cap \partial_- N_i = \emptyset$ \ $(v=1, \cdots , y_i)$. \\
It follows that $g + h \leq a+b$, $h \leq a_2 + b$ and $x_i \leq a_1$, $y_i \leq a_2$ $(i=1, \cdots, \ell)$. 
\vskip 1mm 
\item For Setting~\ref{setting_inf_sum}$^{++}$ : 
Define the finite covers $\{{\cal C}_{0,j} \}_{j=1}^{a+b}$ of ${\cal C}_0$ and $\{{\cal C}_{i,j} \}_{j=1}^{a}$ of ${\cal C}_i$ $(i=1, \cdots, \ell)$ by \\[2mm] 
\hsf ${\cal C}_{0,j} = 
\left\{ \bary[c]{@{ \ }cl}
\{ \alpha_j \} & (j=1, \cdots, g) \\[1mm] 
\emptyset & (j = g+1, \cdots, a+b - h) \\[1mm] 
\{ \beta_t \} & (j=a+b - h + t) 
\eary \right.$ 
\hsp 
${\cal C}_{i,j} = 
\left\{ \bary[c]{@{ \ }cl}
\{ \sigma_{i,j} \} & (j=1, \cdots, x_i) \\[1mm] 
\emptyset & (j = x_i+1, \cdots, a - y_i) \\[1mm] 
\{ \tau_{i,t} \} & (j=a - y_i + t) 
\eary \right.$ \\
\hfill $(i=1, \cdots, \ell)$. \hspace*{3mm} \mbox{} \\
The conditions $g \leq a+b - h$ and $x_i \leq a-y_i$ assure the well-definedness. 
The condition $(\ast)$ follows from the assumptions (i)\,$\sim$\,(iii). 
In fact, if $|{\cal C}_{i_0,j}| \cap \partial_+ N_{i_0} \neq \emptyset$, then $j > a_1 \geq x_i$. 
These covers obviously satisfy the condition [II] in Proposition~\ref{prop_inf_sum}, from which follows the conclusion. 
\eenum 
\vskip -8mm 
\end{proof}
 
\begin{example}\label{exp_inf_conn_sum} 
A typical example of ``an infinite sum of finitely many compact $n$-manifolds''  
is the class of ``an infinite connected sum $M$ of a closed $n$-manifold $N$''. 
We allow any number of branches in each step of the connected sum, so that 
$M$ permits any 0-dimensional compact metrizable space as its topological end. 
Since these branches are realized by iteration of two branches, 
$M$ is represented as an infinite sum of 
model manifolds ${\cal N} = \{ N_0, N_1, N_2, N_3 \}$, where 
$N_0$, $N_1$ are $n$-disks, $N_2$ is an $n$-disk with two open $n$-disks being removed and 
$N_3$ is $N$ with $2$ open $n$-disks being removed, 
Here, $\partial_- N_0 = \emptyset$ and $\partial_- N_i =$ an $(n-1)$-sphere $(i=1,2,3)$. 
\end{example}

\begin{example}\label{exp_inf_conn_sum} 
Another example   
is given by the class of ``the complement $M$ of the intersection of 
a nested sequence $\{ C_k \}_{k=0}^\infty$ of compact $n$-submanifolds in a closed $n$-manifold $N$''. 
This means that \\
\hspace*{18mm} $M = N - C_\infty$ \ \ and \ \ 
$\ds N \equiv C_0 \Supset C_1 \Supset \cdots \Supset C_k \Supset C_{k+1} \Supset \cdots \supset C_\infty := \mbox{\small $\ds \bigcap_{k=0}^\infty$} C_k$. \\
The open $n$-manifold $M$ has the exhausting sequence $M_k := N_{C_k}$ $(k \geq 1)$, whose $k$-th part is given by 
$L_k := M_{k-1,k} = (C_{k-1})_{C_k}$ $(k \geq 1)$. 
If there exist finitely many model embeddings between compact $n$-manifolds $F_i \subset {\rm Int}\,E_i \subset E_i$ $(i=1, \cdots, \ell)$ and 
if each $C_k$ $(k \geq 0)$ is a disjoint union of compact $2m$-manifolds $C_{k, s}$ $(s = 1, \cdots, t_k)$ each of which admits 
a $C^\infty$ diffeomorphism $(C_{k, s}, C_{k, s} \cap C_{k+1}) \approx (E_i, F_i)$ onto some pair $(E_i, F_i)$, 
then $M$ is expressed as an infinite sum of finitely many compact $n$-manifolds $L_0$ and $(E_i)_{F_i}$ $(i=1, \cdots, \ell)$.  
\end{example}

\section{Conjugation-generated norm on diffeomorphism groups}
\subsection{Conjugation-generated norm} \mbox{} 

First we recall basic facts on conjugation-generated norms on groups \cite{BIP}. 
Suppose $G$ is a group. Let $G^\times := G - \{ e \}$. 
For $g \in G$ let $C(g)$ denote the conjugacy class of $g$ in $G$ 
and let $C_g := C(g) \cup C(g^{-1})$. 
Since $C_g$ is a symmetric and conjugation invariant subset of $G$, it follows that $N(g) = N(C_g) = \bigcup_{k\geq 0}(C_g)^k$ 
and we obtain the ext.~conj.-invariant norm $q_{G, C_g}$ on $G$. 
This norm is denoted by $\nu_g$ and called the conjugation-generated norm with respect to $g$. 
Note that $C_h = C_g$ and $\nu_h = \nu_g$ for any $h \in C_g$. 
We also consider the quantity \\[1.5mm] 
\hspace*{20mm} 
$\nu(G) := \min \{ k \in \IZ_{\geq 0} \cup \{ \infty \} \mid \nu_g(f) \leq k \text{ for any $g \in G^\times$ and $f \in G$}\}.$ \\[0.5mm] 
A group $G$ is called \emph{uniformly simple} (\cite{Tsu09}) if  $\nu(G) < \infty$, that is, there is $k \in \IZ_{\geq 0}$ such that for any $f \in G$ and $g \in G^\times$, $f$ can be expressed as a product of at most $k$ conjugates of $g$ or $g^{-1}$. 

A group $G$ is called \emph{bounded} if any conj.-invariant norm on $G$ is bounded 
(or equivalently, any bi-invariant metric on $G$ is bounded).
Every bounded perfect group is uniformly perfect. 

\begin{fact}\label{fact_unif-simple_bounded} 
(1)  If $G$ is uniformly simple, then $G$ is simple and $\nu_g$ is  bounded for any $g \in G^\times$. 
\benum 
\item[(2)] If $\nu_g$ is bounded for some $g \in G^\times$, then $G$ is bounded. More precisely, 
if $g \in G^\times$ and $\nu_g \leq k$ for some $k \in \IZ_{\geq 0}$, then 
$q \leq k q(g)$ for any ext.~conj.-invariant norm $q$ on $G$.  
\eenum 
\end{fact}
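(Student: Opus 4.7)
The plan is to unwind the definitions, since both parts of Fact~\ref{fact_unif-simple_bounded} are essentially formal consequences of the setup.

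For part (1), first I will show simplicity. Let $N \trianglelefteq G$ be a normal subgroup with $N \neq \{e\}$ and pick any $g \in N^\times$. Uniform simplicity gives a $k \in \IZ_{\geq 0}$ such that every $f \in G$ can be written as a product of at most $k$ conjugates of $g$ or $g^{-1}$. Each such conjugate lies in $N$ by normality, hence $f \in N$, so $N = G$. The boundedness of $\nu_g$ is immediate from the definition of $\nu(G)$: the hypothesis $\nu(G) \leq k$ directly yields $\nu_g(f) \leq k$ for every $f \in G$ and every $g \in G^\times$, so in particular $\nu_g$ takes only finite values and is bounded by $k$.

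For part (2), the key point is to combine the triangle inequality with conjugation-invariance. Suppose $\nu_g \leq k$ for some $g \in G^\times$. Given any $f \in G$, the bound $\nu_g(f) \leq k$ yields a representation
\[ f = (h_1 g^{\varepsilon_1} h_1^{-1}) \cdots (h_m g^{\varepsilon_m} h_m^{-1}) \]
with $m \leq k$, $h_i \in G$ and $\varepsilon_i \in \{\pm 1\}$. For any ext.~conj.-invariant norm $q$ on $G$, properties (iii) and (iv) of such norms give
\[ q(f) \leq \sum_{i=1}^m q(h_i g^{\varepsilon_i} h_i^{-1}) = \sum_{i=1}^m q(g^{\varepsilon_i}) = m\, q(g) \leq k\, q(g), \]
which is exactly the claimed inequality. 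To deduce that $G$ is bounded, I then specialize to a conj.-invariant norm $q$ (finite-valued by definition), for which $q(g) < \infty$ automatically, and hence $q \leq k\,q(g) < \infty$ on all of $G$.

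There is no real obstacle here; the whole argument is a matter of reading off the definitions and invoking the triangle inequality. The only subtlety worth flagging is the distinction between ext.~conj.-invariant norms (where $q(g)$ could a priori be $\infty$, in which case the inequality is vacuous) and genuine conj.-invariant norms (where finiteness of $q(g)$ is automatic and gives the boundedness conclusion); this is what lets part (2) simultaneously cover the explicit inequality and the boundedness statement.
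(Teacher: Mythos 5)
Your proof is correct, and it is essentially the standard argument; the paper itself states this as a ``Fact'' without proof (referring to \cite{BIP}), so there is no alternative route in the paper to compare against. Both halves are handled cleanly: the normal-subgroup argument for simplicity, the reading-off of $\nu(G)\leq k$ for boundedness of each $\nu_g$, and the triangle-inequality-plus-conjugation-invariance estimate $q(f)\leq m\,q(g)\leq k\,q(g)$ for part (2). Your flag at the end about distinguishing ext.~conj.-invariant norms (where $q(g)=\infty$ makes the inequality vacuous but still valid) from genuine conj.-invariant norms (where $q(g)<\infty$ delivers the boundedness of $G$) is exactly the right point to make.
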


Next we clarify relations between the conjugation-generated norm 
and the commutator length supported in balls in diffeomorphism groups \cite{BIP, Tsu09}. 
Recall that ${\cal C}(X)$ denotes the set of connected components of a topological space $X$. 

\begin{defn}\label{defn_comp_end-non-trivial} Suppose $M$ is an $n$-manifold possibly with boundary and $g \in {\rm Diff}^r(M)$. We say that 
\benum
\item $g$ is component-wise non-trivial if $g|_U \neq \id_U$ for any $U \in {\cal C}(M)$, 
\item $g$ is component-wise end-non-trivial if \\
\hsh $(\flat)$ 
\btab[t]{@{\,}c@{\ \,}l}
(i) & $g|_U \neq \id_U$ for any $U \in {\cal C}(M) \cap {\cal K}(M)$ and \\[2mm] 
(ii) & $g|_V \neq \id_V$ for any $(U, K, V)$ with 
\btab[t]{@{\,}l} 
$U \in {\cal C}(M) - {\cal K}(M)$, $K \in {\cal K}(U)$ and \\[2mm]
$V \in {\cal C}(U - K)$ such that $Cl_U V$ is not compact. 
\etab 
\etab 
\eenum 
\end{defn}

\begin{compl}\label{compl_shrap=flat} 
The condition $(\flat)$ is equivalent to the following practical condition $(\sharp)$ :
\bit 
\item[$(\sharp)$] There exists an exhausting sequence $M_i \in {\mathcal S}{\mathcal M}_c(M)$ $(i \geq 1)$ in $M$ for which \\
$g|_L \neq \id_L$ for any $L \in \bigcup_{i \geq 1} {\cal C}(M_{i-1, i})$. 
\eit 
\end{compl} 

Compliment~\ref{compl_shrap=flat} is verified by a routine argument on compact $n$-submanifolds of $M$. 
Note that any exhausting sequence $\{ M_i \}_{i \geq 1}$ in $M$ admits $g \in {\rm Diff}(M,\partial)_0$ such that 
$g|_L \neq \id_L$ for any $L \in \bigcup_{i \geq 1} {\cal C}(M_{i-1, i})$.

\begin{setting}\label{setting_C_g^4}
Suppose $M$ is an $n$-manifold possibly with boundary and 
${\cal G}$ is a subgroup of ${\rm Diff}^r(M)_0$. 
For $A \subset M$, let ${\cal G}_A := {\cal G} \cap {\rm Diff}^r(M, M_A)_0$. 
Note that $g({\cal G}_A)g^{-1} = {\cal G}_{g(A)}$ \ for any $g \in {\cal G}$. 
\end{setting}

\begin{lem}\label{lem_C_g^4}  $($cf.~\cite{BIP}, \cite[Lemma 3.1]{Tsu09} et al.$)$ \ In Setting~{\rm \ref{setting_C_g^4}}; 
Suppose $g \in {\cal G}$, $A \subset M$ and $g(A) \cap A = \emptyset$. 
\benum 
\item[{\rm (1)}] $({\cal G}_A)^c \subset C_g^4$ \ in ${\cal G}$. 
More precisely, for any $a, b \in {\cal G}_A$ the following identity holds :  \\[0.5mm] 
\hspace*{10mm} $[a,b] =g\big(g^{-1}\big)^c g^{bc}\big(g^{-1}\big)^b$ \ in \ ${\cal G}$, \hsh 
where $c := a^{g^{-1}} \in {\cal G}_{g^{-1}(A)}$. Note that $cb = bc$.  
\vskip 0.5mm 
\item[{\rm (2)}] $({\cal G}_B)^c \subset C_g^4$ \ in ${\cal G}$, if $B \subset M$ and there exists $h \in {\cal G}$ with $h(B) \subset A$.  
\eenum 
\end{lem}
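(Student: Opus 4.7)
The plan is to reduce both assertions to the explicit identity in (1) and to exploit conjugation-invariance of $C_g$. The key combinatorial point is that the hypothesis $g(A)\cap A=\emptyset$ implies, after applying $g^{-1}$, that $A\cap g^{-1}(A)=\emptyset$. Consequently, any element of ${\cal G}_A$ and any element of ${\cal G}_{g^{-1}(A)}$ have disjoint supports and therefore commute in ${\cal G}$. In particular, for $a,b\in{\cal G}_A$ the element $c:=a^{g^{-1}}=g^{-1}ag$ lies in $g^{-1}({\cal G}_A)g={\cal G}_{g^{-1}(A)}$ by the identity recorded in Setting~\ref{setting_C_g^4}, so $bc=cb$ and hence $c^{-1}bc=b$. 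This commutation is the one nontrivial input; everything else is symbolic manipulation.

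For (1), I would then verify the stated identity by a direct expansion. Writing $a^{g^{-1}}=c$ is equivalent to $gc=ag$, i.e.\ $gcg^{-1}=a$. Expanding
\[
g\,(g^{-1})^{c}\,g^{bc}\,(g^{-1})^{b}
=g\cdot cg^{-1}c^{-1}\cdot (bc)g(bc)^{-1}\cdot bg^{-1}b^{-1},
\]
using $bc=cb$ to collapse $c^{-1}(bc)=b$ and $(bc)^{-1}=c^{-1}b^{-1}$, yields $gcg^{-1}\cdot b\cdot gc^{-1}g^{-1}\cdot b^{-1}=aba^{-1}b^{-1}=[a,b]$. Each of the four factors on the left is a conjugate of $g$ or of $g^{-1}$ in ${\cal G}$, hence lies in $C_g$, so $[a,b]\in C_g^{4}$. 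Since $({\cal G}_A)^{c}$ is generated (as a set of commutators) in this way, the containment $({\cal G}_A)^{c}\subset C_g^{4}$ follows.

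For (2), given $h\in{\cal G}$ with $h(B)\subset A$, conjugation by $h$ sends ${\cal G}_B$ into ${\cal G}_{h(B)}\subset{\cal G}_A$ via the identity $h{\cal G}_Bh^{-1}={\cal G}_{h(B)}$ noted in Setting~\ref{setting_C_g^4}. Hence for $a,b\in{\cal G}_B$, the commutator satisfies $h[a,b]h^{-1}=[hah^{-1},hbh^{-1}]\in({\cal G}_A)^{c}\subset C_g^{4}$ by part~(1). Because $C_g$ is conjugation-invariant in ${\cal G}$, so is $C_g^{4}$; conjugating back by $h^{-1}$ gives $[a,b]\in C_g^{4}$, proving (2).

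The only potential obstacle is bookkeeping the support condition underlying $bc=cb$; once $A\cap g^{-1}(A)=\emptyset$ is observed, the remaining steps are formal. No analytic input is needed beyond the basic fact that diffeomorphisms with disjoint supports commute.
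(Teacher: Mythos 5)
Your proof is correct and follows essentially the same path as the paper. For (1), the paper only states the identity and leaves the verification implicit; your expansion, using $c = g^{-1}ag$ (so $gcg^{-1}=a$), the commutation $bc=cb$ coming from the disjointness $A\cap g^{-1}(A)=\emptyset$, and the collapse $c^{-1}(bc)=b$, is exactly the intended computation and is carried out correctly. For (2), the paper sets $k:=h^{-1}gh$, notes $k(B)\cap B=\emptyset$, and applies (1) with $k$ in place of $g$ together with $C_k=C_g$; you instead conjugate $[a,b]$ by $h$ into $({\cal G}_A)^c$ and then use conjugation-invariance of $C_g^4$ to pull the conclusion back. These are mirror-image presentations of the same one-step conjugation argument and yield the identical estimate.
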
 

\begin{proof} (2) 
Let $k := h^{-1}gh \in {\cal G}$. Then, $k(B) \cap B = \emptyset$ and by (1) we have $({\cal G}_B)^c \subset C_{k}^4 = C_g^4$.  
\end{proof} 

\begin{lem}\label{lem_nu_leq_4clb} Suppose $M$ is an $n$-manifold possibly with boundary and 
$g \in {\rm Diff}^r(M, \partial)_0$ is component-wise non-trivial. 
\benum 
\item[{\rm [I]}\,] $($\cite{BIP}, \cite[Lemma 3.1]{Tsu09} et al.$)$ 
\bit 
\itemI $\nu_g \leq 4 clb^f$ in ${\rm Diff}^r(M, \partial)_0$. 
\itemII $\nu_g \leq 4 clb^f$ in ${\rm Diff}^r_c(M, \partial)_0$ \ if $g \in {\rm Diff}^r_c(M, \partial)_0$ $($and ${\cal C}(M)$ is a finite set$)$. 
\eit 

\item[{\rm [I\hspace{-0.1mm}I]}] $\nu_g \leq 4 clb^d$ in ${\rm Diff}^r(M, \partial)_0$ 
if $g$ is component-wise end-non-trivial.  
\eenum 
\end{lem}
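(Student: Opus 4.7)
The plan is to reduce both parts [I] and [II] to a single application of Lemma~\ref{lem_C_g^4}\,(2). Given $f \in {\rm Diff}^r(M,\partial)_0$ with $clb^f(f) \leq k$ (resp.\ $clb^d(f) \leq k$), by definition $f$ admits a factorization $f = [a_1, b_1] \cdots [a_k, b_k]$ with each $a_i, b_i \in {\rm Diff}^r(M, M_{D_i})_0$ for some $D_i \in {\cal B}^r_f(M,\partial M)$ (resp.\ ${\cal B}^r_d(M,\partial M)$). If for each $i$ I can produce $A_i \subset M$ and $h_i \in {\rm Diff}^r(M,\partial)_0$ with $h_i(D_i) \subset A_i$ and $g(A_i) \cap A_i = \emptyset$, then Lemma~\ref{lem_C_g^4}\,(2) gives $[a_i, b_i] \in C_g^4$, and subadditivity of $\nu_g$ yields $\nu_g(f) \leq 4k$ as desired. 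Throughout, I use that any $g \in {\rm Diff}^r(M,\partial)_0$ preserves each connected component of $M$ since it is isotopic to $\id_M$.

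For [I] the construction of $(A_i, h_i)$ is a localized version of the argument of \cite{BIP, Tsu09}. Write $D_i = \bigsqcup_{j=1}^s D_{i,j}$; only finitely many components $U_1, \ldots, U_t \in {\cal C}(M)$ meet $D_i$. Component-wise non-triviality of $g$ furnishes, in each $U_\alpha$, a point $p_\alpha$ with $g(p_\alpha) \neq p_\alpha$, hence by continuity a ball $V_\alpha \subset U_\alpha$ with $g(V_\alpha) \cap V_\alpha = \emptyset$. Inside $V_\alpha$ I pack disjoint small balls, one $A_{i,j}$ for each $D_{i,j} \subset U_\alpha$, and set $A_i := \bigsqcup_j A_{i,j}$. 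Since $g$ preserves each $U_\alpha$, the cross-component terms vanish and $g(A_i) \cap A_i \subset \bigsqcup_\alpha (g(V_\alpha) \cap V_\alpha) = \emptyset$. The isotopy extension theorem, applied inside each $U_\alpha$, produces $h_i \in {\rm Diff}^r(M,\partial)_0$ with $h_i(D_i) \subset A_i$. Case [I](ii) is identical, with the finiteness of ${\cal C}(M)$ ensuring that $h_i$ can be chosen with compact support.

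For [II] the essential new difficulty is that $D_i$ may be an infinite discrete disjoint union of balls, so the one-ball-per-component strategy is insufficient, and the family $\{A_{i,\lambda}\}$ must be globally $g$-disjoint from itself. My plan is to invoke Compliment~\ref{compl_shrap=flat}\,$(\sharp)$ to fix an exhausting sequence $\{M_\nu\}_{\nu \geq 1}$ of $M$ with $g|_L \neq \id_L$ for every $L \in \bigcup_\nu {\cal C}(M_{\nu-1,\nu})$. Since the $D_{i,\lambda}$ are discrete in $M$, after passing to a refinement of the exhausting sequence (which preserves the non-triviality property), each $D_{i,\lambda}$ sits inside a unique layer-component $L(\lambda) \in {\cal C}(M_{\nu(\lambda)-1,\nu(\lambda)})$, with at most one $D_{i,\lambda}$ per $L$. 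Within each such $L(\lambda)$ I pick $p_{L(\lambda)} \in L(\lambda)$ with $g(p_{L(\lambda)}) \neq p_{L(\lambda)}$ and a small ball $V_{L(\lambda)} \ni p_{L(\lambda)}$ with $g(V_{L(\lambda)}) \cap V_{L(\lambda)} = \emptyset$, and then set $A_{i,\lambda} := V_{L(\lambda)}$; since the layer-components form a discrete family in $M$, so does $\{A_{i,\lambda}\}$, hence $A_i \in {\cal B}^r_d(M,\partial M)$. Component-by-component isotopy extension yields the required $h_i \in {\rm Diff}^r(M,\partial)_0$.

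The main obstacle is ensuring the \emph{global} disjointness $g(A_i) \cap A_i = \emptyset$: a priori, $g$ may map $V_{L(\lambda)}$ across many layers and land it inside some $V_{L(\mu)}$ with $\mu \neq \lambda$. I plan to handle this by a compactness-and-continuity argument: for each $\lambda$, the image $g(L(\lambda))$ meets only finitely many other layer-components $L'$, so only a finite avoidance condition needs to be imposed when choosing $p_{L(\lambda)}$ (namely $g(p_{L(\lambda)}) \neq p_{L'}$ for those finitely many $L'$ near $g(L(\lambda))$), after which shrinking $V_{L(\lambda)}$ enforces $g(V_{L(\lambda)}) \cap V_{L'} = \emptyset$. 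Since this condition is finite at each stage, it can be enforced inductively across $\lambda$ without obstruction. With global $g$-disjointness of $A_i$ established, Lemma~\ref{lem_C_g^4}\,(2) delivers $[a_i, b_i] \in C_g^4$ and summing over $i$ completes the bound $\nu_g(f) \leq 4\,clb^d(f)$.
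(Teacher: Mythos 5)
The overall strategy matches the paper: factor $f$ into $k$ commutators supported in ball systems $D_i$, and for each one invoke Lemma~\ref{lem_C_g^4}\,(2) by exhibiting $h_i$ with $h_i(D_i)$ carried into a $g$-disjoint set. Part [I] is essentially the paper's argument; the only cosmetic difference is that the paper pushes all of $D \cap U_i$ into a single ball $E_{U_i}$ per component (using connectedness of $U_i$), while you pack several small balls inside $V_\alpha$, and both work.

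Part [II] has a genuine gap. You claim that after ``passing to a refinement of the exhausting sequence'' you can arrange each $D_{i,\lambda}$ to sit in a unique layer-component $L(\lambda)$ with \emph{at most one} $D_{i,\lambda}$ per layer-component. That step is unjustified and, in general, false: for a discrete family of balls placed, say, symmetrically around an infinite cylinder $\mathbb{R}\times S^{n-1}$, every compact layer-component that separates an end from the core must meet several of them. Nothing about taking a finer or coarser exhausting sequence resolves this, and you would in any case need a diffeomorphism (not merely a new sequence) to move balls off the cutting hypersurfaces $\partial M_\nu$. This is precisely the difficulty the paper addresses: it first constructs, once and for all, a discrete target $E = \sqcup_{L} E_L$ with $g(E) \cap E = \emptyset$ (by choosing a closed discrete set $\{p_L\}$ with $g(\{p_L\}) \cap \{p_L\} = \emptyset$, separating it from its $g$-image by a single open set $U$, and placing $E_L$ inside $U \cap {\rm Int}\,L$), and then, for an arbitrary $D \in {\cal B}^r_d(M)$, factors $h = h_2 h_1$ where $h_1$ pushes each $D_j$ into the interior of some layer and $h_2$ uses connectedness of each layer-component $L$ to push the whole finite union $h_1(D)\cap L$ into the single $E_L$. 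This handles any number of balls per layer, and doing the $g$-disjointness construction once via $U$ replaces your per-$D_i$ inductive avoidance argument. Finally, your assertion that the layer-components form a discrete family in $M$ is not correct (adjacent layers share their common boundary $\partial M_\nu$); the discreteness of the target balls follows instead from local finiteness of a family of disjoint compact sets, which is the argument the paper implicitly uses for $\{E_L\}$.
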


\begin{proof}[\bf Proof of Lemma~\ref{lem_nu_leq_4clb}] \mbox{} \\
\hspace{1mm} {[I]} 
Let ${\cal G} = {\rm Diff}^r(M, \partial)_0$ in (i) and  ${\cal G} ={\rm Diff}_c^r(M, \partial)_0$ in (ii).  

\benum 
\item[(1)] First we show that $({\cal G}_D)^c = {\rm Diff}^r(M,M_D)_0^c \subset C_g^4$ \ for any $D \in {\cal B}_f^r(M)$. \\
For any $U \in {\cal C}(M)$, since $g|_U \neq \id_U$, 
there exists $E_U \in {\cal B}(U)$ such that $g(E_U) \cap E_U = \emptyset$. \break 
Let $E := \bigcup \{E_U \mid U \in {\cal C}(M) \} \in {\cal B}_d(M)$. 
Then, $g \in {\cal G}$ and $g(E) \cap E = \emptyset$. 
Hence, by Lemma~\ref{lem_C_g^4}\,(2) it suffices to find $h \in {\rm Diff}^r_c(M, \partial)_0 \subset {\cal G}$ with $h(D) \subset E$. 
There exists $U_1, \cdots, U_m \in {\cal C}(M)$ such that $D \subset \bigcup_{i=1}^m  U_i $. 
For each $i = 1, \cdots, m$, since $D \cap U_i \in {\cal B}_f^r(U_i)$ and $U_i$ is connected,  
there exists $h_i \in {\rm Diff}_c^r(U_i, \partial)_0$ with $h_i(D \cap U_i) \subset E_{U_i}$. 
Then, $h$ is defined by $h = h_i$ on $U_i$ and $h = id$ on $M - \bigcup_{i=1}^m  U_i$. 

\item[(2)] Given any $f \in {\cal G}$. If $clb^f(f) = \infty$, then the assertion is trivial. 
Suppose $k := clb^f(f) \in \IZ_{\geq 0}$. 
Then $f = f_1 \cdots f_k$ for some $D_i \in {\cal B}_f^r(M)$ and $f_i \in {\rm Diff}^r(M, M_{D_i})_0^c$ $(i=1, \cdots, k)$. 
Since $\nu_g(f_i) \leq 4$ $(i=1, \cdots, k)$, it follows that $\nu_g(f) \leq 4k$ 
\eenum 
\vskip 1mm 
\noindent \hspace{1mm} {[II]} Take an exhausting sequence $\{ M_i\}_{i \geq 1}$ in $M$ 
as in Compliment~\ref{compl_shrap=flat}\,$(\sharp)$.  
Let $L_i := M_{i-1, i}$ $(i \geq 1)$ \break 
\hsh ${\cal C} := \bigcup_{i \geq 1} {\cal C}(L_i)$ and ${\cal G} := {\rm Diff}^r(M, \partial)_0$.  
We show the following claims in order. 
\benum 
\item There exists a family $E_L \in {\cal B}(L)$ $(L \in {\cal C})$ such that $g(E) \cap E = \emptyset$ for $E := \bigcup_{L\in {\cal C}} E_L \in {\cal B}_d(M)$. 

In fact, inductively, we can find a family of points $p_L \in {\rm Int}\,L$ $(L \in {\cal C})$ such that $g(A) \cap A = \emptyset$ 
for $A := \{ p_L \mid L\in {\cal C}\} \subset M$. 
Since $A$ is closed in $M$, there exists an open neighborhood $U$ of $A$ in $M$ with $g(U) \cap U = \emptyset$. 
For each $L \in {\cal C}$ choose an $n$-ball neighborhood $E_L$ of $p_L$ in ${\rm Int}\,L \cap U$.
Then, $\{ E_L \}_{L\in {\cal C}}$ is discrete since $E_L \in {\cal B}(L)$ $(L \in {\cal C})$, and $g(E) \cap E = \emptyset$ since $E \subset U$. 

\vskip 1mm 
\item Given any $D = \bigcup_{j \in J} D_j \in {\cal B}_d^r(M)$ 
(i.e., $D \subset {\rm Int}\,M$ and $\{ D_j \}_{j \in J}$ is a discrete family of $n$-balls in $M$). 
 Then, there exists $h \in {\cal G}$ such that $h(D) \subset E$. 
 
The required $h$ is obtained as the composition $h = h_2h_1$ of $h_1, h_2 \in {\cal G}$ such that \\
\hsp (i) for each $j\in J$ there exists $i(j) \geq 1$ with $h_1(D_j) \Subset L_{i(j)}$ \ and \ (ii) $h_2(h_1(D)) \subset E$. 

\bit 
\itemI There exists a discrete family $\{ C_j \}_{j \in J}$ of $n$-balls in $M$ such that $D_j \Subset C_j \subset {\rm Int}\,M$ $(j \in J)$. 
For each $j\in J$ there exists $i(j) \geq 1$ with $D_j \cap {\rm Int}\,L_{i(j)} \neq \emptyset$. 
Then, we can push each $D_j$ into ${\rm Int}\,L_{i(j)}$ in $C_j$. 
This procedure yields the desired $h_1$. 

\itemII 
For each $L \in {\cal C}$, since $h_1(D) \cap L \in {\cal B}_f^r(L)$ and $L$ is connected, 
there exists $h_L \in {\rm Diff}^r(M, M_{{\rm Int}\,L})_0$ such that $h_L(h_1(D) \cap L) \subset E_L$. 
Then, $h_2$ is defined by $h_2|_L = h_L$ $(L \in {\cal C})$. 
\eit
\vskip 1mm 
\item $({\cal G}_D)^c = {\rm Diff}^r(M,M_D)_0^c \subset C_g^4$ in ${\cal G}$ for any $D \in {\cal B}_d^r(M)$. 

This follows from (2) and Lemma~\ref{lem_C_g^4}\,(2). 

\item $\nu_g \leq 4 clb^d$ in ${\cal G}$. 

Given any $f \in {\cal G}$. If $clb^d(f) = \infty$, then the assertion is trivial. 
Suppose $k := clb^d(f) \in \IZ_{\geq 0}$. 
Then $f = f_1 \cdots f_k$ for some $D_i \in {\cal B}_d^r(M)$ and $f_i \in {\rm Diff}^r(M, M_{D_i})_0^c$ $(i=1, \cdots, k)$. 
Since $f_i \in C_g^4$ $(i=1, \cdots, k)$, it follows that $f \in C_g^{4k}$ and $\nu_g(f) \leq 4k$. 
\eenum 
\vskip -7mm 
\end{proof}

\subsection{Estimates of conjugation-generated norm on diffeomorphism groups} \mbox{}

The estimates on $clb^f$ and $clb^d$ obtained in Sections 4, 5 lead to the following conclusions for $\nu_g$. 

\subsubsection{\bf Odd-dimensional case} \mbox{}

\begin{thm}\label{thm_nu_cpt_bdry_odd} 
Suppose $M$ is a compact $(2m+1)$-manifold possibly with boundary $(m \geq 0)$, $1 \leq r \leq \infty$, $r \neq 2m+2$. 
If $g \in {\rm Diff}^r(M, \partial)_0$ and $g|_U \neq \id_U$ for any $U \in {\cal C}(M)$, then the following holds. 
\benum
\item[{\rm (1)}] $\nu_gd\,{\rm Diff}^r(M, \partial)_0 \leq 4 clb^fd\, {\rm Diff}^r(M, \partial)_0 \leq 16m+24$.  
\item[{\rm (2)}] If $M$ is closed, then \ \ 
$\nu_gd\,{\rm Diff}^r(M)_0 \leq 4 clb^fd\, {\rm Diff}^r(M)_0 \leq 8c({\cal H})+8$ \\
\hspace*{80mm} for any handle decomposition ${\cal H}$ of $M$. 
\eenum 
\end{thm}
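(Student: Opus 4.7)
The plan is to obtain both estimates as direct corollaries of the $clb^f$ bounds already established in Sections~4.2--4.3, combined with Lemma~\ref{lem_nu_leq_4clb}. First I would verify that the hypothesis on $g$ matches Definition~\ref{defn_comp_end-non-trivial}(1), namely that $g$ is component-wise non-trivial: the assumption that $g|_U \neq \id_U$ for every $U \in {\cal C}(M)$ is exactly this definition. With that in hand, Lemma~\ref{lem_nu_leq_4clb}[I](i) yields the pointwise bound $\nu_g(f) \leq 4\,clb^f(f)$ for every $f \in {\rm Diff}^r(M, \partial)_0$, and taking the supremum over $f$ gives the diameter inequality $\nu_g d\,{\rm Diff}^r(M,\partial)_0 \leq 4\,clb^f d\,{\rm Diff}^r(M,\partial)_0$. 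This provides the first inequality in (1) and, when $M$ is closed so that $\partial M = \emptyset$ and ${\rm Diff}^r(M,\partial)_0 = {\rm Diff}^r(M)_0$, the first inequality in (2) as well.

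Next I would supply the second inequality in each part by quoting the $clb^f$ estimates already proved in the compact odd-dimensional case. For part (1), Theorem~\ref{thm_cpt_bdry_odd} gives $clb^f d\,{\rm Diff}^r(M,\partial)_0 \leq 4m+6$, and multiplying by $4$ yields the stated bound $16m+24$. For part (2), Proposition~\ref{cl_odd_cpt_bd_handle}[II] applied to the closed manifold $M$ with the chosen handle decomposition ${\cal H}$ gives $clb^f d\,{\rm Diff}^r(M)_0 \leq 2c({\cal H})+2$, and multiplying by $4$ yields $8c({\cal H})+8$.

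There is essentially no obstacle to this argument; the statement is a direct packaging of prior estimates via the universal factor $4$ of Lemma~\ref{lem_nu_leq_4clb}. The only small point worth checking explicitly is the compatibility of the domains of the various norms: in (1) both Lemma~\ref{lem_nu_leq_4clb}[I](i) and Theorem~\ref{thm_cpt_bdry_odd} are formulated on ${\rm Diff}^r(M,\partial)_0$, so composition is automatic; in (2) the closedness of $M$ collapses the boundary-rel group to ${\rm Diff}^r(M)_0$, which is exactly the group appearing in Proposition~\ref{cl_odd_cpt_bd_handle}[II], so again the two inputs paste together without further work.
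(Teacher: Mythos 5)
Your proposal is correct and follows exactly the route the paper indicates: the paper simply remarks that Theorem~\ref{thm_nu_cpt_bdry_odd} follows from Theorem~\ref{thm_cpt_bdry_odd}, Proposition~\ref{cl_odd_cpt_bd_handle}\,[II], and Lemma~\ref{lem_nu_leq_4clb}\,[I](i), which is precisely your argument, including the arithmetic $4(4m+6)=16m+24$ and $4(2c({\cal H})+2)=8c({\cal H})+8$. Your additional check that the hypothesis on $g$ matches Definition~\ref{defn_comp_end-non-trivial}\,(1) and that the domains of the various norms line up is a correct but routine observation that the paper leaves implicit.
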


\begin{thm}\label{thm_nu_open_odd} Suppose $M$ is an open $(2m+1)$-manifold $(m \geq 0)$, $1\leq r\leq\infty$, $r\neq 2m+2$ 
and ${\cal H}$ is any handle decomposition of $M$. 
\benum
\item[{\rm (1)}] $\nu_gd\,{\rm Diff}^r(M)_0 \leq 4 clb^dd\, {\rm Diff}^r(M)_0\leq 16c({\cal H}) + 16 \leq 32m+48$ \\
\hsp if $g \in {\rm Diff}^r(M)_0$ is component-wise end-non-trivial. 
\vskip 1mm 
\item[{\rm (2)}] $\nu_gd\,{\rm Diff}^r_c(M)_0 \leq 4 clb^fd\,{\rm Diff}_c^r(M)_0 \leq 8c({\cal H})+8 \leq 16m +24$ \\
\hsp if $g \in {\rm Diff}^r_c(M)_0$ is component-wise non-trivial. 
\eenum 
\end{thm}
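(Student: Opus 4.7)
The plan is to derive both estimates by chaining two previously established results: the comparison between the conjugation-generated norm $\nu_g$ and the commutator lengths supported in balls (Lemma~\ref{lem_nu_leq_4clb}), and the explicit estimates on $clb^f$ and $clb^d$ for open $(2m+1)$-manifolds (Theorem~\ref{thm_open_odd}). Since the heavy lifting — the factorization and Removing-crossing-points arguments of Sections 4.1 and 4.3, together with the exhaustion-based Lemma~\ref{lem_factorization_open-2} — has already been carried out in the proof of Theorem~\ref{thm_open_odd}, what remains is purely a matter of assembling inequalities and recalling the bound $c(\mathcal{H}) \leq n+1 = 2m+2$.

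For statement (1), I would first invoke Lemma~\ref{lem_nu_leq_4clb}\,[II] applied to the component-wise end-non-trivial element $g \in {\rm Diff}^r(M)_0$, which gives the pointwise inequality $\nu_g(f) \leq 4\,clb^d(f)$ for every $f \in {\rm Diff}^r(M)_0$ (noting that for an open $M$ we have $\partial M = \emptyset$, so ${\rm Diff}^r(M,\partial)_0 = {\rm Diff}^r(M)_0$). Taking diameters yields
\[
\nu_g d\,{\rm Diff}^r(M)_0 \leq 4\,clb^d d\,{\rm Diff}^r(M)_0.
\]
Then Theorem~\ref{thm_open_odd}\,(1)(ii) supplies $clb^d d\,{\rm Diff}^r(M)_0 \leq 4c(\mathcal{H}) + 4$, and the final bound $32m+48$ is obtained from $c(\mathcal{H}) \leq 2m+2$.

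For statement (2), the argument is parallel but uses Lemma~\ref{lem_nu_leq_4clb}\,[I](ii) instead, which requires $g \in {\rm Diff}^r_c(M)_0$ and a finite number of components. Here one should note that a compactly supported, component-wise non-trivial diffeomorphism automatically forces $\mathcal{C}(M)$ to be finite (since $g$ would otherwise be the identity on all but finitely many components). Once this observation is recorded, the lemma gives $\nu_g \leq 4\,clb^f$ on ${\rm Diff}^r_c(M)_0$, and Theorem~\ref{thm_open_odd}\,(2)(ii) provides $clb^f d\,{\rm Diff}^r_c(M)_0 \leq 2c(\mathcal{H}) + 2$, so $\nu_g d\,{\rm Diff}^r_c(M)_0 \leq 8c(\mathcal{H})+8 \leq 16m+24$.

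There is no real obstacle, since both ingredients have already been proved in the paper. The only subtle point worth flagging explicitly in the write-up is the compatibility between the hypothesis on $g$ and the scope of Lemma~\ref{lem_nu_leq_4clb}: in (1) we need $g$ to be non-trivial on every non-compact end-piece so that an $n$-ball in each $L \in \mathcal{C}(M_{i-1,i})$ can be displaced by $g$, which is exactly the component-wise end-non-trivial condition; in (2) the compact-support assumption reduces the situation to the component-wise non-trivial case on a finite-component manifold. Beyond this verification, the proof is a two-line concatenation of existing estimates together with the trivial bound $c(\mathcal{H}) \leq 2m+2$.
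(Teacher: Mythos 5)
Your proposal is correct and matches the paper's own proof, which is exactly the chain "Theorem~\ref{thm_nu_open_odd}\,(1), (2) follow from Theorem~\ref{thm_open_odd} and Lemma~\ref{lem_nu_leq_4clb} [II], [I](ii)." Your additional remark that a compactly supported, component-wise non-trivial $g$ forces $\#{\cal C}(M)<\infty$ is a valid and worthwhile clarification of the parenthetical hypothesis in Lemma~\ref{lem_nu_leq_4clb}\,[I](ii), and the arithmetic $c({\cal H})\leq 2m+2$ gives the stated bounds.
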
 

Theorem~\ref{thm_nu_cpt_bdry_odd}\,(1), (2) follow from 
Theorem~\ref{thm_cpt_bdry_odd}, Proposition~\ref{cl_odd_cpt_bd_handle}\,[II] and  Lemma 6.2\,[I](i). 
Theorem~\ref{thm_nu_open_odd}\,(1), (2) follow from Theorem~\ref{thm_open_odd} and Lemma 6.2 [II], [I](ii). 

The estimate in Theorem~\ref{thm_nu_cpt_bdry_odd}\,(2) is compared with that in \cite{Tsu09}, that is, \\
\hspppp $\nu_g\,{\rm Diff}^r(M)_0 \leq 16c({\cal H}) + 12$ \ (\cite[Theorem 1.5]{Tsu09}). 

\subsubsection{\bf Even-dimensional case} \mbox{}

\begin{thm}\label{thm_nu_cpt_bdry_even} 
Suppose $M$ is a compact $2m$-manifold possibly with boundary $(m \geq 3)$, 
$1 \leq r \leq \infty$, $r \neq 2m+1$, 
${\mathcal T}$ is a $C^\infty$ triangulation of $M$, 
${\cal S}$ is the set of $m$-simplices of ${\cal T}$, 
${\cal S} \supset {\cal F} \supset \{ \sigma \in {\cal S} \mid \sigma \not \subset \partial M \}$ and $\{ {\cal F}_j \}_{j=1}^k$ is a finite cover of ${\cal F}$ $($as a set$)$. 
If $g \in {\rm Diff}^r(M, \partial)_0$  is component-wise non-trivial,  
then the following holds. 

\benum 
\item[{\rm [I]}\ ] 
$\nu_g d \, {\rm Diff}^r(M, \partial)_0 \leq 4clb^f\!d\,{\rm Diff}^r(M, \partial)_0 \leq 8(m+2)(k+1)$ \\ 
\hspace*{5mm} if each $|{\cal F}_j|$ $(j=1, \dots, k)$ is strongly displaceable from $|{\cal F}_j| \cup (\partial M \times [0,1))$ in $\widetilde{M}$.  
\item[{\rm [I\hspace{-0.2mm}I]}\,] 
$\nu_g d \, {\rm Diff}^r(M, \partial)_0 \leq  4clb^f\!d\,{\rm Diff}^r(M, \partial)_0 \leq 4(2m+3)(k+1) + 12$ \\ 
\hspace*{5mm} if each $|{\cal F}_j|$ $(j=1, \dots, k)$ is strongly displaceable from $|{\cal S}| \cup (\partial M \times [0,1))$ in $\widetilde{M}$. 

\item[{\rm [I\hspace{-0.2mm}I\hspace{-0.2mm}I]}] 
$\nu_g d \, {\rm Diff}^r(M, \partial)_0 \leq  4clb^f\!d\,{\rm Diff}^r(M, \partial)_0 \leq 4(2m+1)(\ell +1) + 20$ \hsh 
for $\ell := \# \{ \sigma \in {\cal S} \mid \sigma \not \subset \partial M \}$. 
\eenum 
\end{thm}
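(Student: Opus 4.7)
The plan is to obtain Theorem~\ref{thm_nu_cpt_bdry_even} as a direct consequence of Lemma~\ref{lem_nu_leq_4clb}\,[I](i) combined with the already-established bounds on $clb^fd\,{\rm Diff}^r(M,\partial)_0$ from Theorem~\ref{thm_cpt_bdry_even} and Corollary~\ref{cor_cpt_bdry_even}. In each of the three cases the first inequality $\nu_g d\le 4\,clb^f d$ is the universal step, and the second inequality is just the numerical estimate already proved for $clb^f d$.

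First, since $M$ is a compact manifold possibly with boundary and $g\in{\rm Diff}^r(M,\partial)_0$ is component-wise non-trivial, Lemma~\ref{lem_nu_leq_4clb}\,[I](i) applies verbatim and yields the pointwise inequality $\nu_g(f)\le 4\,clb^f(f)$ for every $f\in{\rm Diff}^r(M,\partial)_0$. Taking the supremum over $f$ gives the first inequality $\nu_g d\,{\rm Diff}^r(M,\partial)_0\le 4\,clb^fd\,{\rm Diff}^r(M,\partial)_0$ in all three statements [I], [II], [III] simultaneously, without any further use of the displacement hypothesis.

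Next I would plug in the compact-case bounds from Section 5.2.1. In case [I], the hypothesis that each $|{\cal F}_j|$ is strongly displaceable from $|{\cal F}_j|\cup(\partial M\times[0,1))$ in $\widetilde{M}$ is exactly the hypothesis of Theorem~\ref{thm_cpt_bdry_even}\,[I], so $clb^fd\,{\rm Diff}^r(M,\partial)_0\le 2(m+2)(k+1)$, and multiplication by $4$ gives the asserted $8(m+2)(k+1)$. In case [II], the displacement hypothesis matches that of Theorem~\ref{thm_cpt_bdry_even}\,[II], producing $clb^fd\le (2m+3)(k+1)+3$, and the factor $4$ yields $4(2m+3)(k+1)+12$. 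Case [III] requires no displacement assumption: Corollary~\ref{cor_cpt_bdry_even} already gives $clb^fd\,{\rm Diff}^r(M,\partial)_0\le (2m+1)(\ell+1)+5$ for $\ell=\#\{\sigma\in{\cal S}\mid\sigma\not\subset\partial M\}$, so multiplication by $4$ yields $4(2m+1)(\ell+1)+20$.

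There is no substantive obstacle here: the theorem is a packaging statement, and the only thing to double-check is that the component-wise non-triviality of $g$ is genuinely enough to invoke Lemma~\ref{lem_nu_leq_4clb}\,[I](i) in the compact setting, which it is (compactness of $M$ makes ${\cal C}(M)$ finite, so the distinction between parts [I](i) and [I](ii) of the lemma is immaterial). Consequently the entire proof collapses to one application of Lemma~\ref{lem_nu_leq_4clb} followed by quoting the three previously-established numerical bounds on $clb^fd$.
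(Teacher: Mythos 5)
Your proposal is correct and matches the paper's own proof exactly: the paper states that Theorem~\ref{thm_nu_cpt_bdry_even} "follows from Theorem~\ref{thm_cpt_bdry_even}, Corollary~\ref{cor_cpt_bdry_even} and Lemma 6.2\,[I](i)," which is precisely your argument of applying the pointwise inequality $\nu_g \leq 4\,clb^f$ and then quoting the three established bounds on $clb^fd\,{\rm Diff}^r(M,\partial)_0$. The arithmetic ($2(m+2)(k+1)\cdot 4$, $((2m+3)(k+1)+3)\cdot 4$, $((2m+1)(\ell+1)+5)\cdot 4$) all checks out.
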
 

Recall that $\widetilde{M} := M \cup_{\partial M} (\partial M \times [0,1))$. 
Theorem~\ref{thm_nu_cpt_bdry_even} follows from Theorem~\ref{thm_cpt_bdry_even}, Corollary~\ref{cor_cpt_bdry_even} and  Lemma 6.2\,[I](i). 
When $M$ is a closed $2m$-manifold, the estimate in Theorem~\ref{thm_nu_cpt_bdry_even}\,[I\hspace{-0.2mm}I\hspace{-0.2mm}I] is compared with that in \cite{Tsuboi3}, that is, \\[0.5mm] 
\hspp \hsh $\nu_g\,{\rm Diff}^r(M)_0 \leq 16(\ell+4)m + 12\ell + 28$ \ \ (\cite[Proof of Corollary 1.3]{Tsuboi3} (p.173)). \\[0.5mm] 
For a closed $2m$-manifold with a handle decomposition, we can use Corollary~\ref{cor_closed} to obtain a fine estimate of $\nu_g$. 

In Section 5.3 we obtained 
some estimates on $clb^d$  
for some important classes of open $2m$-manifolds  
(covering spaces, infinite sums etc.)  
The consequence on $\nu_g$ of these results are listed below. 

\begin{prop}\label{prop_2m_open_no-m-h_nu}
Suppose $M$ is an open $2m$-manifold, $1 \leq r \leq \infty$, $r \neq 2m+1$, 
${\cal H}$ is a handle decomposition of $M$, 
$g \in {\rm Diff}^r(M)_0$ is component-wise end-non-trivial and $h \in {\rm Diff}_c^r(M)_0$ is component-wise non-trivial.
\benum
\item[{\rm (1)}] 
If ${\cal H}$ includes no $m$-handles, then \\
\hspp $\nu_gd\,{\rm Diff}^r(M)_0 \leq 4clb^dd\,{\rm Diff}^r(M)_0 \leq 16c({\cal H})+8 \leq 32m+8$, \\
\hspp $\nu_h d\,{\rm Diff}_c^r(M)_0 \leq 4clb^fd\,{\rm Diff}_c^r(M)_0 \leq 8c({\cal H})+4 \leq 16m+4$.

\item[{\rm (2)}] 
Suppose $m \geq 3$, ${\cal H}$ includes only finitely many $m$-handles, 
${\cal S}$ is the set of all open $m$-cells of $P_{\cal H}$ and $\{ {\cal S}_j \}_{j=1}^k$ $(k \geq 1)$ is a finite cover of ${\cal S}$. 

\bit  
\item[{\rm [I]}\,] 
$\nu_g d\,{\rm Diff}^r(M)_0 \leq  4clb^dd\,{\rm Diff}^r(M)_0 \leq 8(m+2)(k+3) - 28$, \\ 
$\nu_h d\,{\rm Diff}^r_c(M)_0 \leq  4clb^fd\,{\rm Diff}^r_c(M)_0 \leq 8(m+2)(k+1)$. \\
\hsh if $Cl_M|{\cal S}_{j}|$ is strongly displaceable from itself in $M$ for each $j=1, \dots, k$. 
\item[{\rm [I\hspace{-0.1mm}I]}] 
$\nu_gd\,{\rm Diff}^r(M)_0 \leq 4clb^dd\,{\rm Diff}^r(M)_0 \leq 4(2m+3)(k+3)-8$, \\ 
$\nu_h d\,{\rm Diff}_c^r(M)_0 \leq 4clb^fd\,{\rm Diff}^r_c(M)_0 \leq 4(2m+3)(k+1) +12$. \\
\hsh if $Cl_M|{\cal S}_{j}|$ is strongly displaceable from $Cl_M|{\cal S}|$ in $M$ for each $j=1, \dots, k$.
\eit 
\eenum 
\end{prop}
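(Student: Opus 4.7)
The plan is to assemble the statement from the estimates on $clb^d$ and $clb^f$ already established in Propositions~\ref{prop_2m_open_no-m-h} and~\ref{prop_2m_open_finite-m-h}, together with the comparison inequalities between the conjugation-generated norm and the ball-supported commutator lengths provided by Lemma~\ref{lem_nu_leq_4clb}. Each of the six estimates in the proposition is a chain of two inequalities of the form $\nu_g d \,\cdots \le 4\,clb^\bullet d \,\cdots \le (\text{explicit})$; the first factor-of-$4$ comes from Lemma~\ref{lem_nu_leq_4clb} and the second from the relevant open-manifold estimate in Section~5.3.

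For the first inequality in each line, since $g\in{\rm Diff}^r(M)_0$ is component-wise end-non-trivial and $M$ has no boundary, Lemma~\ref{lem_nu_leq_4clb}\,[II] yields $\nu_g\le 4\,clb^d$ on ${\rm Diff}^r(M)_0$, whence $\nu_g d\,{\rm Diff}^r(M)_0 \le 4\,clb^d d\,{\rm Diff}^r(M)_0$. For $h\in{\rm Diff}^r_c(M)_0$ I would first observe that component-wise non-triviality together with compact support forces ${\cal C}(M)$ to be finite: a compact subset of $M$ meets at most finitely many components, so ${\rm supp}\,h$ meets only finitely many, and if $h$ is non-trivial on every component then there can only be finitely many. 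Hence the parenthetical hypothesis in Lemma~\ref{lem_nu_leq_4clb}\,[I](ii) is automatic and we obtain $\nu_h d\,{\rm Diff}^r_c(M)_0 \le 4\,clb^f d\,{\rm Diff}^r_c(M)_0$.

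For the second inequality in part (1), I would apply Proposition~\ref{prop_2m_open_no-m-h} directly, which gives $clb^d d\,{\rm Diff}^r(M)_0 \le 4c({\cal H})+2$ and $clb^f d\,{\rm Diff}^r_c(M)_0 \le 2c({\cal H})+1$; multiplying by $4$ yields $16c({\cal H})+8$ and $8c({\cal H})+4$. The final bounds $32m+8$ and $16m+4$ follow from $c({\cal H})\le 2m$, since ${\cal H}$ contains handles only of indices in $\{0,1,\ldots,2m\}\setminus\{m\}$. For part (2)\,[I], assuming each $Cl_M|{\cal S}_j|$ is strongly displaceable from itself in $M$, Proposition~\ref{prop_2m_open_finite-m-h}\,[I] supplies $clb^d d\,{\rm Diff}^r(M)_0 \le 2(m+2)(k+3)-7$ and $clb^f d\,{\rm Diff}^r_c(M)_0 \le 2(m+2)(k+1)$; multiplying by $4$ produces $8(m+2)(k+3)-28$ and $8(m+2)(k+1)$. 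Part (2)\,[II] is identical, invoking Proposition~\ref{prop_2m_open_finite-m-h}\,[II] to obtain the bounds $(2m+3)(k+3)-2$ and $(2m+3)(k+1)+3$, which become $4(2m+3)(k+3)-8$ and $4(2m+3)(k+1)+12$ after multiplication.

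There is essentially no obstacle; the proposition is a direct consequence of the previously proved results, and the only point requiring a brief justification is the finiteness of ${\cal C}(M)$ needed to apply Lemma~\ref{lem_nu_leq_4clb}\,[I](ii) in the compactly-supported case.
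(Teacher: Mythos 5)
Your proof is correct and takes essentially the same route as the paper: the paper likewise obtains all six estimates by combining Lemma~\ref{lem_nu_leq_4clb} with Propositions~\ref{prop_2m_open_no-m-h} and~\ref{prop_2m_open_finite-m-h}, and your arithmetic (including the bound $c({\cal H})\le 2m$ in part (1)) matches the paper's stated constants. Your explicit check that compact support plus component-wise non-triviality forces ${\cal C}(M)$ to be finite, so that Lemma~\ref{lem_nu_leq_4clb}\,[I](ii) applies to $h$, is a small but genuine detail that the paper's one-line justification (which cites only part [II]) leaves implicit.
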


\begin{thm}\label{thm_even_covering_nu}
Suppose $\pi : \widetilde{M} \to M$ is a $C^\infty$ covering space 
over a closed $2m$-manifold $M$ $(m \geq 3)$, $1 \leq r \leq \infty$, $r \neq 2m+1$, 
$g \in {\rm Diff}^r(\widetilde{M})_0$ is component-wise end-non-trivial and $h \in {\rm Diff}_c^r(\widetilde{M})_0$ is component-wise non-trivial.
\benum[(1)] 
\item[]\hspace{-6mm} {\rm [A]}\ 
Suppose ${\cal H}$ is a handle decomposition of $M$, 
${\cal S}$ is the set of open $m$-cells of $P_{\cal H}$ and \\
$\{ {\cal S}_j \}_{j=1}^k$ $(k \geq 1)$ is a finite cover of ${\cal S}$ such that 
\bit 
\item[] each $Cl_M |{\cal S}_j|$ $(j = 1, \cdots, k)$ has an open neighborhood in $M$ which is evenly covered by $\pi$. 
\eit 
\eenum 
\benum 
\item[{\rm [I]}\,] If $Cl_M|{\cal S}_{j}|$ is strongly displaceable from itself in $M$ for each $j=1, \dots, k$, then \\ [1mm] 
\btab[t]{c@{ \ }l} 
{\rm (i)} & $\nu_hd\,{\rm Diff}_c^r(\widetilde{M})_0 \leq 4clb^fd\,{\rm Diff}^r_c(\widetilde{M})_0 \leq 8(m+2)(k+1)$, \\[2mm] 
{\rm (ii)} & 
$\nu_gd\,{\rm Diff}^r(\widetilde{M})_0 \leq 4clb^dd\,{\rm Diff}^r(\widetilde{M})_0 \leq 16(m+2)(k+1)$. 
\etab 
\vskip 2mm 
\item[{\rm [II]}] If $Cl_M|{\cal S}_{j}|$ is strongly displaceable from $Cl_M|{\cal S}|$ in $M$ for each $j=1, \dots, k$, then \\[1mm] 
\btab[t]{c@{ \ }l} 
{\rm (i)} & 
$\nu_hd\,{\rm Diff}_c^r(\widetilde{M})_0 \leq 4clb^fd\,{\rm Diff}^r_c(\widetilde{M})_0 \leq 4(2m+3)(k+1) + 12$, 
 \\[2mm]
{\rm (ii)} & 
$\nu_gd\,{\rm Diff}^r(\widetilde{M})_0 \leq 4clb^dd\,{\rm Diff}^r(\widetilde{M})_0 \leq 8(2m+3)(k+1) + 24$. 
\etab 
\eenum 

\vskip 2mm 
\benum[(1)] 
\item[]\hspace{-6mm} {\rm [B]}\ Suppose $M$ has a $C^\infty$ triangulation with at most $\ell$ $m$-simplices.  
\eenum 
\bit 
\item[]
\bit 
\itemI 
$\nu_hd\,{\rm Diff}_c^r(\widetilde{M})_0 \leq 4clb^fd\,{\rm Diff}_c^r(\widetilde{M})_0 \leq 4(2m+3)(\ell+1) + 12$. 
\vskip 1mm 
\itemII 
$\nu_gd\,{\rm Diff}^r(\widetilde{M})_0 \leq 4clb^dd\,{\rm Diff}^r(\widetilde{M})_0 \leq 8(2m+3)(\ell +1) + 24$. 
\eit 
\eit 
\end{thm}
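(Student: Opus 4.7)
The plan is to derive this theorem as a direct combination of two earlier results: the commutator-length bounds in Theorem~\ref{thm_even_covering} (for part [A]) and Corollary~\ref{cor_even_covering} (for part [B]) on the one hand, and the comparison Lemma~\ref{lem_nu_leq_4clb} on the other. Since the conclusions are stated as chains of two inequalities, the second inequality in each case is exactly the statement of Theorem~\ref{thm_even_covering} or Corollary~\ref{cor_even_covering}, so the only content to verify is the first inequality comparing $\nu_g d$ (or $\nu_h d$) with $clb^d d$ (or $clb^f d$).

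For the compactly supported estimates (statements [I](i), [II](i) in part [A] and statement (i) in part [B]), I would appeal to Lemma~\ref{lem_nu_leq_4clb}\,[I](ii): since $h \in {\rm Diff}^r_c(\widetilde{M})_0$ is component-wise non-trivial, the lemma yields $\nu_h \leq 4\,clb^f$ on ${\rm Diff}^r_c(\widetilde{M})_0$, hence $\nu_h d \leq 4\,clb^f d$ on this group. For the non-compactly supported estimates (statements [I](ii), [II](ii) in part [A] and statement (ii) in part [B]), I would apply Lemma~\ref{lem_nu_leq_4clb}\,[II]: since $g \in {\rm Diff}^r(\widetilde{M})_0$ is component-wise end-non-trivial, the lemma gives $\nu_g \leq 4\,clb^d$ on ${\rm Diff}^r(\widetilde{M})_0$, so $\nu_g d \leq 4\,clb^d d$. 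Multiplying each of the bounds from Theorem~\ref{thm_even_covering} (respectively Corollary~\ref{cor_even_covering}) by $4$ produces exactly the numerical estimates claimed.

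The only subtlety is that Lemma~\ref{lem_nu_leq_4clb}\,[I](ii) requires that one can find an $n$-ball $E_U$ displaced by $h$ in each component $U \in {\cal C}(\widetilde{M})$, which is guaranteed by the component-wise non-triviality hypothesis on $h$, and Lemma~\ref{lem_nu_leq_4clb}\,[II] requires an exhausting sequence $\{\widetilde{M}_i\}$ of $\widetilde{M}$ such that $g$ acts non-trivially on every component of each $\widetilde{M}_{i-1,i}$, which is guaranteed by the component-wise end-non-triviality hypothesis on $g$ via Complement~\ref{compl_shrap=flat}\,$(\sharp)$. Both conditions are verbatim in the hypotheses of Theorem~\ref{thm_even_covering_nu}, so no extra work is required.

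The expected obstacle is essentially bookkeeping rather than mathematical: one must match each of the four or six numerical estimates to the correct inequality in the earlier results and confirm the compatibility of the function-class assumptions (compact support for $h$, arbitrary support with end-non-triviality for $g$). Since part [B] is the special case of part [A] obtained by taking ${\cal H}$ to be the handle decomposition dual to a $C^\infty$ triangulation with $\ell$ $m$-simplices, covered by $k = \ell$ singletons (as in Proof of Corollary~\ref{cor_cpt_bdry_even}), part [B] requires no separate argument once part [A] and Corollary~\ref{cor_even_covering} are in hand.
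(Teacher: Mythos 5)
Your proposal is correct and follows precisely the paper's own (one-line) proof: the paper derives Theorem~\ref{thm_even_covering_nu} immediately from Lemma~\ref{lem_nu_leq_4clb} together with Theorem~\ref{thm_even_covering} and Corollary~\ref{cor_even_covering}. Your numerical checks (each $clb$ bound multiplied by $4$) and your identification of the correct parts of Lemma~\ref{lem_nu_leq_4clb} to invoke for the compactly supported versus end-non-trivial cases all match.
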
 

\begin{setting}\label{setting_inf_sum_nu}
Suppose $M$ is an open $2m$-manifold ($m \geq 3$), $1 \leq r \leq \infty$, $r \neq 2m+1$, 
$g \in {\rm Diff}^r(M)_0$ is component-wise end-non-trivial and $h \in {\rm Diff}_c^r(M)_0$ is component-wise non-trivial.
Assume that 
\bit 
\itemI ${\cal N} = \{ (N_i, {\cal T}_i) \}_{i=0}^\ell$ is a family of compact $2m$-manifolds as in Setting~\ref{setting_inf_sum} and \\
it satisifies the condition $(\ast)$ in Setting~\ref{setting_inf_sum}$^{++}$ with respect to 
some finite covers $\{{\cal C}_{0,j} \}_{j=1}^{p+q}$ of ${\cal C}_0$ and $\{{\cal C}_{i,j} \}_{j=1}^{p}$ of ${\cal C}_i$ $(i=1, \cdots, \ell)$. 

\itemII $(M, \{ M_k \}_{k \geq 0}, {\cal T})$ is an infinite sum of the model manifolds in ${\cal N}$ as in Setting~\ref{setting_inf_sum}$^+$. 
\eit 
\end{setting}

\begin{prop}\label{prop_inf_sum_nu} 
In Setting~{\rm \ref{setting_inf_sum_nu}}:  
\benum[\ (1)] 
\item[{\rm [I]}\,] $\nu_gd\,{\rm Diff}^r(M)_0 \leq  4clb^dd\,{\rm Diff}^r(M)_0 \leq 8(m+2)(2p+q+2)$, \\ 
$\nu_hd\,{\rm Diff}_c^r(M)_0 \leq  4clb^fd\,{\rm Diff}_c^r(M)_0 \leq 8(m+2)(p+q+1)$ \\
\hsp if $\{{\cal C}_{0,j} \}_{j=1}^{p+q}$ and $\{{\cal C}_{i,j} \}_{j=1}^{p}$ $(i=1, \cdots, \ell)$ 
satisfy the condition in Proposition{\rm ~\ref{prop_inf_sum}\,[I]}. 

\item[{\rm [II]}] $\nu_gd\,{\rm Diff}^r(M)_0 \leq  4clb^dd\,{\rm Diff}^r(M)_0 \leq 4(2m+3)(2p+q+2) + 24$ \\
$\nu_hd\,{\rm Diff}_c^r(M)_0 \leq  4clb^fd\,{\rm Diff}_c^r(M)_0 \leq 4(2m+3)(p+q+1) + 12$ \\ 
\hsp if $\{{\cal C}_{0,j} \}_{j=1}^{p+q}$ and $\{{\cal C}_{i,j} \}_{j=1}^{p}$ $(i=1, \cdots, \ell)$ 
satisfy the condition in Proposition{\rm ~\ref{prop_inf_sum}\,[II]}. 

\item[{\rm [III]}] $\nu_gd\,{\rm Diff}^r(M)_0 \leq  4clb^dd\,{\rm Diff}^r(M)_0 \leq 4(2m+3)(2a+b+2) + 24$ \\
$\nu_hd\,{\rm Diff}_c^r(M)_0 \leq  4clb^fd\,{\rm Diff}_c^r(M)_0 \leq 4(2m+3)(a+b+1) + 12$ \\
\hsp if ${\cal C}_0$ and ${\cal C}_i$ $(i=1, \cdots, \ell)$ satisfy the condition $(\natural)$ in Corollary~{\rm \ref{cor_inf_sum}}.  
\eenum 
\end{prop}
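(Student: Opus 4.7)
The plan is to split each asserted inequality into two steps: first reduce $\nu_g$ (respectively $\nu_h$) to $clb^d$ (respectively $clb^f$) using the general comparison in Lemma~\ref{lem_nu_leq_4clb}, and then insert the explicit bounds on $clb^d$ and $clb^f$ provided by Proposition~\ref{prop_inf_sum} and Corollary~\ref{cor_inf_sum}. Since Setting~\ref{setting_inf_sum_nu} copies the hypotheses of Settings~\ref{setting_inf_sum}, \ref{setting_inf_sum}$^+$, \ref{setting_inf_sum}$^{++}$ verbatim, the quantitative bounds on $clb^dd\,{\rm Diff}^r(M)_0$ and $clb^fd\,{\rm Diff}^r_c(M)_0$ are already at our disposal.

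First I would handle the reduction step. Because $h\in{\rm Diff}^r_c(M)_0$ is component-wise non-trivial by assumption (and $M$ is an open $2m$-manifold, so ${\rm Diff}^r_c(M)_0$ is the relevant ambient group), Lemma~\ref{lem_nu_leq_4clb}[I](ii) gives $\nu_h\leq 4\,clb^f$ pointwise on ${\rm Diff}^r_c(M)_0$, hence $\nu_h d\,{\rm Diff}_c^r(M)_0\leq 4\,clb^fd\,{\rm Diff}_c^r(M)_0$. Since $g\in{\rm Diff}^r(M)_0$ is component-wise end-non-trivial, Lemma~\ref{lem_nu_leq_4clb}[II] yields $\nu_g\leq 4\,clb^d$ on ${\rm Diff}^r(M)_0$, whence $\nu_g d\,{\rm Diff}^r(M)_0\leq 4\,clb^dd\,{\rm Diff}^r(M)_0$. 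These two bounds are precisely the first inequality in each of the cases [I], [II], [III].

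Next I would insert the combinatorial bounds. In case [I], the displacement hypothesis on $\{{\cal C}_{0,j}\}_{j=1}^{p+q}$ and $\{{\cal C}_{i,j}\}_{j=1}^{p}$ is exactly that of Proposition~\ref{prop_inf_sum}[I], which supplies $clb^dd\,{\rm Diff}^r(M)_0\leq 2(m+2)(2p+q+2)$ and $clb^fd\,{\rm Diff}_c^r(M)_0\leq 2(m+2)(p+q+1)$; multiplying by $4$ gives the asserted constants $8(m+2)(2p+q+2)$ and $8(m+2)(p+q+1)$. In case [II], the stronger displacement hypothesis matches Proposition~\ref{prop_inf_sum}[II], so $clb^dd\leq (2m+3)(2p+q+2)+6$ and $clb^fd\leq (2m+3)(p+q+1)+3$; multiplying by $4$ yields $4(2m+3)(2p+q+2)+24$ and $4(2m+3)(p+q+1)+12$. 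In case [III], the cardinality condition $(\natural)$ on $\mathcal C_0$ and $\mathcal C_i$ is precisely the hypothesis of Corollary~\ref{cor_inf_sum}, which then provides $clb^dd\leq (2m+3)(2a+b+2)+6$ and $clb^fd\leq (2m+3)(a+b+1)+3$; again multiplying by $4$ gives the stated bounds.

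There is no genuine obstacle in this argument; it is a packaging of prior results. The only point to double-check is that the hypotheses on $g$ and $h$ in Setting~\ref{setting_inf_sum_nu} are exactly those required by Lemma~\ref{lem_nu_leq_4clb} (component-wise end-non-trivial for $\nu_g$ on the full group, and component-wise non-trivial for $\nu_h$ on the compactly supported group), which is what makes the reduction from $\nu$ to $clb^d$ and $clb^f$ legitimate; this has already been arranged in the statement.
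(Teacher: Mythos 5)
Your proof is correct and follows exactly the route the paper takes: first apply Lemma~\ref{lem_nu_leq_4clb} to pass from $\nu_g$ to $4\,clb^d$ on ${\rm Diff}^r(M)_0$ and from $\nu_h$ to $4\,clb^f$ on ${\rm Diff}^r_c(M)_0$, then substitute the bounds from Proposition~\ref{prop_inf_sum} and Corollary~\ref{cor_inf_sum}; your arithmetic checks out in all three cases. (The paper's one-line remark after the statement cites only Lemma~\ref{lem_nu_leq_4clb}\,[II]; your explicit invocation of part [I](ii) for the compactly supported group is the more careful reading, and the parenthetical finiteness of ${\cal C}(M)$ there is automatically satisfied since a compactly supported $h$ can only be component-wise non-trivial when $M$ has finitely many components.)
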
 

The estimates in Proposition~\ref{prop_2m_open_no-m-h_nu}, Theorem~\ref{thm_even_covering_nu} and Proposition~\ref{prop_inf_sum_nu} 
are immediately obtained by Lemma~\ref{lem_nu_leq_4clb}\,[II] and 
the results in Section 5.3, i.e., Propositions~\ref{prop_2m_open_no-m-h}, ~\ref{prop_2m_open_finite-m-h}, 
Theorem~\ref{thm_even_covering} $+$ Corollay~\ref{cor_even_covering} 
and Proposition~\ref{prop_inf_sum} $+$ Corollary~\ref{cor_inf_sum}, respectively. 

\subsubsection{\bf Some special cases} \mbox{}

The next lemma is the $\nu$-version of Lemma~\ref{basic_lemma_cl}. 

\begin{lem}\label{basic_lemma_nu} 
Suppose $M$ is an $n$-manifold possibly with boundary, $1 \le r \le \infty, \, r \neq n+1$, 
$F \in {\rm Isot}^r_c(M, \partial)_0$ and $f := F_1 \in {\rm Diff}^r_c(M, \partial)_0$. 
Assume that there exist $V, W \in {\cal K}({\rm Int}\,M)$ and $\phi \in {\rm Diff}^r_c(M, \partial)_0$ such that 
$V \supset W$, $\phi(V) \subset W$, ${\rm supp}\,F \subset {\rm Int}_M V - W$ and 
$\phi|_N \neq \id_N$ for any $N \in {\cal C}(M)$. 
Then, $\nu_\phi(f) \leq 6$ in ${\rm Diff}^r_c(M, \partial)_0$. 
\end{lem}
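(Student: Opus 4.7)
The plan is to combine the explicit factorization from Lemma~\ref{basic_lemma_cl}\,(1) with Lemma~\ref{lem_nu_leq_4clb}\,[I]\,(i) applied to $\phi$. Since $\phi \in {\rm Diff}^r_c(M, \partial)_0$ is compactly supported, I would first pick $U \in {\cal K}(M)$ containing $V \cup {\rm supp}\,\phi$, so that $\phi \in {\rm Diff}^r(M, M_U \cup \partial M)_0$ and the hypotheses of Lemma~\ref{basic_lemma_cl}\,(1) are met. Tracing the explicit construction in that proof, one obtains a factorization $f = hg_0$, where $g_0 = [\phi, H^{-1}]$ for some $H \in {\rm Diff}^r(M, M_V)_0$, and $h = [A,B]$ with $A, B \in {\rm Diff}^r(M, M_D)_0$ for some $D \in {\cal B}^r_f({\rm Int}_M W)$. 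In particular $h \in {\rm Diff}^r(M, M_D)_0^c$, so $clb^f(h) \leq 1$ in ${\rm Diff}^r(M, \partial)_0$.

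Next I would estimate $\nu_\phi$ on each factor separately. Writing $g_0 = \phi \cdot (H^{-1}\phi^{-1}H)$ expresses $g_0$ as a product of $\phi \in C(\phi)$ and the single conjugate $(\phi^{-1})^{H^{-1}} \in C(\phi^{-1})$, so $\nu_\phi(g_0) \leq 2$ directly from the definition of the conjugation-generated norm. For $h$, the component-wise non-triviality of $\phi$ is exactly what Lemma~\ref{lem_nu_leq_4clb}\,[I]\,(i) needs, and that lemma gives $\nu_\phi \leq 4\,clb^f$ throughout ${\rm Diff}^r(M, \partial)_0$; hence $\nu_\phi(h) \leq 4\,clb^f(h) \leq 4$.

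Combining these bounds with the subadditivity of the extended conjugation-invariant norm $\nu_\phi$ yields
\[
\nu_\phi(f) \,=\, \nu_\phi(hg_0) \,\leq\, \nu_\phi(h) + \nu_\phi(g_0) \,\leq\, 4 + 2 \,=\, 6,
\]
as required. The step I expect to be the main (though mild) obstacle is not the final arithmetic but the careful bookkeeping needed to extract the $clb^f$ bound on $h$: the statement of Lemma~\ref{basic_lemma_cl}\,(1) only records $cl\,f \leq 2$, so one must read off the specific form of $h$ from its proof and verify that the ball-union $D = \bigcup_i \phi^{s-i+1}(D_i)$ sits inside ${\rm Int}\,W \subset {\rm Int}\,M$, so that it is a legitimate element of ${\cal B}^r_f(M)$ witnessing $clb^f(h) \leq 1$. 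Once this is in hand, both estimates are immediate.
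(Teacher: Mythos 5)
Your proof mirrors the paper's argument almost line by line: take $U \supset V$ with $\phi \in {\rm Diff}^r(M, M_U)_0$, rerun the construction of Lemma~\ref{basic_lemma_cl} to get $f = h g_0$ with $g_0 = [\phi, H^{-1}] = \phi\,(H^{-1}\phi^{-1}H)$ and $clb^f(h)\leq 1$, bound $\nu_\phi(g_0)\leq 2$ directly and $\nu_\phi(h)\leq 4$ via Lemma~\ref{lem_nu_leq_4clb}, and add. The decomposition and both estimates are exactly those in the paper.

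There is, however, one citation slip that is a genuine (if small) gap. You invoke Lemma~\ref{lem_nu_leq_4clb}\,[I]\,(i), which gives $\nu_\phi \leq 4\,clb^f$ \emph{in ${\rm Diff}^r(M,\partial)_0$}, but the conclusion of Lemma~\ref{basic_lemma_nu} asserts $\nu_\phi(f)\leq 6$ \emph{in ${\rm Diff}^r_c(M,\partial)_0$}. Since ${\rm Diff}^r_c(M,\partial)_0$ is a proper subgroup of ${\rm Diff}^r(M,\partial)_0$, the conjugation-generated norm on the smaller group can only be larger, so a bound in the big group does not transfer. The paper instead cites Lemma~\ref{lem_nu_leq_4clb}\,[I]\,(ii), which gives $\nu_\phi\leq 4\,clb^f$ in ${\rm Diff}^r_c(M,\partial)_0$; its side hypothesis that ${\cal C}(M)$ is finite is automatic here, because a compactly supported $\phi$ that is component-wise non-trivial forces $M$ to have finitely many components. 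With that citation replaced, your argument matches the paper's proof exactly.
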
  

\begin{proof} 
We take $U \in {\cal K}({\rm Int}\,M)$ such that $V \subset U$ and $\phi \in {\rm Diff}^r(M, M_U)_0$, and 
repeat the proof of Lemma~\ref{basic_lemma_cl}. 
Then, we work in ${\rm Diff}^r_c(M, \partial)_0$. 
It follows that $f = hg_0$, \ $g_0 = [\phi, H^{-1}] = \phi (H^{-1}\phi^{-1}H)$ \ and \ $clb^f(h) \leq 1$. 
Hence, $\nu_\phi(g_0) \leq 2$ and $\nu_\phi(h) \leq 4\,clb^f(h) \leq 4$ by Lemma~\ref{lem_nu_leq_4clb}\,[I](ii). 
This implies that $\nu_\phi(f) \leq 6$ in ${\rm Diff}^r_c(M, \partial)_0$. 
\end{proof}

\begin{example}\label{example_displaceable_2} 
Suppose $M$ is a closed $n$-manifold, $1 \le r \le \infty, \, r \neq n+2$. Then \\
\hsh $\nu_{\phi_\xi}d\,{\rm Diff}^r(M \times I, \partial)_0 \leq 6$ \ \ 
for any $\xi \in {\rm Diff}^r(I, \partial)_0 - \{ \id_I \}$ \ and \ $\phi_\xi := \id_M \times \xi \in {\rm Diff}^r(M \times I, \partial)_0$. 
\end{example} 

This example follows from the same argument as in Example~\ref{example_displaceable}, using Lemma~\ref{basic_lemma_nu}.
The next proposition follows from Lemma~\ref{lem_nu_leq_4clb}\,(II) and Proposition~\ref{prop_product_end}\,(2). 

\begin{prop}\label{prop_product_end_nu} 
Suppose $W$ is a compact connected $n$-manifold  with boundary, $1 \le r \le \infty, \, r \neq n+1$ and 
$g \in {\rm Diff}^r({\rm Int}\,W)_0$ is component-wise end-non-trivial. 
Let $m:= clb^fd\,{\rm Diff}^r(\partial W \times I; \partial)_0 < \infty$. Then, \\  
\hspp $\nu_gd\,{\rm Diff}^r({\rm Int}\,W)_0 \leq 4 clb^dd\,{\rm Diff}^r({\rm Int}\,W)_0 \leq 4\max\{ clb^f d\,{\rm Diff}^r(W; \partial)_0, m \} + 4m$. \end{prop}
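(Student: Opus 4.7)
The plan is to obtain the proposition as a direct composition of two earlier results already stated in the paper, namely the comparison $\nu_g\le 4\,clb^d$ from Lemma~\ref{lem_nu_leq_4clb}\,[II] and the factorization estimate for $clb^d$ on the interior of a compact manifold with boundary from Proposition~\ref{prop_product_end}\,(2). Since both inputs are in place, no new geometric construction is needed; one only has to verify that the hypotheses match and then chain the two inequalities.

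First I would note that ${\rm Int}\,W$ is an open $n$-manifold without boundary, so Lemma~\ref{lem_nu_leq_4clb}\,[II] applies directly: because $g\in{\rm Diff}^r({\rm Int}\,W)_0$ is component-wise end-non-trivial by hypothesis, we have $\nu_g(f)\le 4\,clb^d(f)$ for every $f\in{\rm Diff}^r({\rm Int}\,W)_0$. Taking the supremum over $f$ yields
\[
\nu_g d\,{\rm Diff}^r({\rm Int}\,W)_0 \;\le\; 4\,clb^d d\,{\rm Diff}^r({\rm Int}\,W)_0,
\]
which is the first inequality in the statement.

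Next I would invoke Proposition~\ref{prop_product_end}\,(2), which, with $m:=clb^fd\,{\rm Diff}^r(\partial W\times I;\partial)_0$ (finite by Example~\ref{example_displaceable}\,(2), since $\partial W$ is a closed $(n-1)$-manifold), gives
\[
clb^d d\,{\rm Diff}^r({\rm Int}\,W)_0 \;\le\; \max\{clb^f d\,{\rm Diff}^r(W;\partial)_0,\,m\} + m.
\]
Multiplying by $4$ and combining with the previous inequality produces the claimed bound
\[
\nu_g d\,{\rm Diff}^r({\rm Int}\,W)_0 \;\le\; 4\,clb^d d\,{\rm Diff}^r({\rm Int}\,W)_0 \;\le\; 4\max\{clb^f d\,{\rm Diff}^r(W;\partial)_0,\,m\}+4m.
\]

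The only point that requires any checking is the applicability of Lemma~\ref{lem_nu_leq_4clb}\,[II], i.e.\ that $g$ is component-wise end-non-trivial on ${\rm Int}\,W$; but this is exactly the standing assumption, so there is no genuine obstacle. In short, the argument is a one-line composition of the two cited results.
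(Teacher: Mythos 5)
Your proof is correct and coincides with the paper's own one-line argument: the text right after Proposition~\ref{prop_product_end_nu} simply cites Lemma~\ref{lem_nu_leq_4clb}\,[II] and Proposition~\ref{prop_product_end}\,(2), which is exactly the composition you give. Nothing more to add.
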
 

In this case, $g$ is component-wise end-non-trivial if and only if $g$ satisfies the following condition; 
\bit 
\item[$(\dagger)$] $g|_{L \times (0,1]} \neq \id$ for any $L \in {\cal C}(\partial W)$ and 
any collar neighborhood $L \times [0,1]$ of $L = L \times \{ 0 \}$ in $W$. 
\eit 

\subsection{Boundedness and uniform simplicity of diffeomorphism groups} \mbox{}

In this final section we summarize the conclusions on boundedness and uniform simplicity of diffeomorphism groups, 
which are immediately induced from the estimates on $\nu_g$ in Section 6.2 
(together with the estimates on $clb^f$ and $clb^d$ in the previous sections). 
The estimates on $clb^d$ in the previous sections lead us to some boundedness result on the group ${\rm Diff}^r(M)_0$ even for  a non-compact manifold $M$.  
However, the investigation of (uniform) simplicity 
is restricted to the subgroup ${\rm Diff}^r_c(M; \partial)_0$, 
since the diffeomorphism group ${\rm Diff}^r(M)_0$ of any $n$-manifold $M$ includes 
the proper normal subgroups 
(a) ${\rm Diff}^r(M, \partial)_0$ if $\partial M \neq \emptyset$ and 
(b) ${\rm Diff}^r_c(M)_0$ if $M$ is non-compact.  
The next lemma follows from Lemma~\ref{lem_nu_leq_4clb} together with 
Fact~\ref{fact_unif-simple_bounded}\,(2) and Remark~\ref{rmk_clb=clb^f}\,(3). 

\begin{lem}\label{lem_bdd_nu} 
Suppose $M$ is an $n$-manifold possibly with boundary. 
\benum
\item[{\rm (1)}] ${\rm Diff}^r(M, \partial)_0$ is bounded if $clb^d$ is bounded in ${\rm Diff}^r(M, \partial)_0$. 
\item[{\rm (2)}] \bit 
\itemI If $clb^f$ is bounded in ${\rm Diff}_c^r(M, \partial)_0$ and $\#{\cal C}(M) < \infty$, then ${\rm Diff}^r_c(M, \partial)_0$ is bounded.
\itemII If ${\rm Diff}^r_c(M, \partial)_0$ is bounded and $r \neq n+1$, then $clb^f$ is bounded in ${\rm Diff}_c^r(M, \partial)_0$. 
\eit 
\eenum
\end{lem}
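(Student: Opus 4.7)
The plan is to assemble this lemma from three ingredients already in place: Lemma~\ref{lem_nu_leq_4clb}, which bounds the conjugation-generated norm $\nu_g$ by $clb^f$ or $clb^d$ for suitable $g$; Fact~\ref{fact_unif-simple_bounded}(2), which says that a bound on some single $\nu_g$ forces every conjugation-invariant norm to be bounded; and Remark~\ref{rmk_clb=clb^f}(3)(ii), which identifies the normalizer $N(S_f^r)$ with ${\rm Diff}_c^r(M,\partial M\cup C)_0$ when $r\neq n+1$, so that $clb^f$ is finite (hence a genuine conjugation-invariant norm) on that group. The one non-routine issue is to produce, inside the correct ambient group, a single $g$ with the required component-wise (end-)non-triviality property, so that Lemma~\ref{lem_nu_leq_4clb} applies.

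For part (1), I would start by invoking the observation in Complement~\ref{compl_shrap=flat} (and the remark just after it) to pick an element $g\in{\rm Diff}^r(M,\partial)_0$ which is component-wise end-non-trivial; since any exhausting sequence supports such a $g$, existence is immediate. Let $K$ be a uniform bound for $clb^d$ on ${\rm Diff}^r(M,\partial)_0$. By Lemma~\ref{lem_nu_leq_4clb}[II], $\nu_g(f)\leq 4\,clb^d(f)\leq 4K$ for every $f$. Fact~\ref{fact_unif-simple_bounded}(2) then gives, for any conjugation-invariant norm $q$ on ${\rm Diff}^r(M,\partial)_0$, the bound $q\leq 4K\,q(g)$, which is a finite constant. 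Thus every conjugation-invariant norm on the group is bounded, which is exactly the definition of boundedness.

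For part (2)(i), the argument is the same but one dimension lower: the finiteness assumption $\#{\cal C}(M)<\infty$ lets one build $g\in{\rm Diff}_c^r(M,\partial)_0$ which is component-wise non-trivial (take a compactly supported diffeomorphism that is nontrivial on each of the finitely many components of $M$). With the bound $clb^f\leq K$ on ${\rm Diff}_c^r(M,\partial)_0$, Lemma~\ref{lem_nu_leq_4clb}[I](ii) gives $\nu_g\leq 4K$ inside ${\rm Diff}_c^r(M,\partial)_0$, and Fact~\ref{fact_unif-simple_bounded}(2) again delivers the uniform bound $q\leq 4K\,q(g)$ for every conjugation-invariant norm $q$ on ${\rm Diff}_c^r(M,\partial)_0$.

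For part (2)(ii), the direction is reversed and requires no displacement construction. By Remark~\ref{rmk_clb=clb^f}(3)(ii), the hypothesis $r\neq n+1$ yields $N(S_f^r)={\rm Diff}_c^r(M,\partial)_0$, which means that $clb^f$ takes only finite values on ${\rm Diff}_c^r(M,\partial)_0$. Hence $clb^f$ is an honest (not merely extended) conjugation-invariant norm on this group. The standing assumption that ${\rm Diff}_c^r(M,\partial)_0$ is bounded then forces $clb^f$ itself to be bounded, which is the claim. The only mild obstacle throughout is the bookkeeping to stay inside the right subgroup at each step (ambient ${\rm Diff}^r(M,\partial)_0$ in (1), compactly supported subgroup in (2)); once that is respected, each item is a single application of Lemma~\ref{lem_nu_leq_4clb} plus Fact~\ref{fact_unif-simple_bounded}(2) or Remark~\ref{rmk_clb=clb^f}(3).
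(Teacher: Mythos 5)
Your proof is correct and elaborates exactly the route the paper indicates (the paper only cites Lemma~\ref{lem_nu_leq_4clb}, Fact~\ref{fact_unif-simple_bounded}(2) and Remark~\ref{rmk_clb=clb^f}(3) without further detail). You correctly observe that component-wise end-non-trivial implies component-wise non-trivial (so the blanket hypothesis of Lemma~\ref{lem_nu_leq_4clb} is met in part (1)), that the remark after Complement~\ref{compl_shrap=flat} supplies the required $g$, and that Remark~\ref{rmk_clb=clb^f}(3)(ii) is precisely what makes $clb^f$ a genuine (finite-valued) conjugation-invariant norm on ${\rm Diff}_c^r(M,\partial)_0$ when $r\neq n+1$, so that boundedness of the group forces boundedness of $clb^f$.
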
 
 
The following is the main result in this section, 
which follows immediately from the results in Section 6.2. 

\begin{cor}\label{cor_bounded} \mbox{} Suppose $1 \leq r \leq \infty$, $r \neq n+1$.
\benum  
\item[{\rm [1]}]  Suppose $M$ is a compact $n$-manifold possibly with boundary.  
When $n \neq 2, 4$,  
\bit  
\item[{\rm (i)\,}] ${\rm Diff}^r(M, \partial)_0$ is bounded \ \ and \ \ 
{\rm (ii)} \,${\rm Diff}^r(M, \partial)_0$ is uniformly simple if $M$ is connected. 
\eit  

\item[{\rm [2]}] Suppose $M$ is an open $n$-manifold. Then,  

\bit 
\item[{\rm (i)}\,] ${\rm Diff}^r(M)_0$ is bounded,  
\item[{\rm (ii)}] ${\rm Diff}^r_c(M)_0$ is {\rm (a)} bounded if $\#{\cal C}(M) < \infty$ and {\rm (b)} uniformly simple if $M$ is connected 
\eit 
in the following cases : 
\bit 
\item[{\rm (1)}] $n = 2m+1$ $(m \geq 0)$
\item[{\rm (2)}] $n = 2m$ $(m \geq 1)$ and $M$ satisfies one of the following conditions: 
\bit 
\item[] \hspace{-7mm} for $m \geq 1$ : 
\bit 
\item[{\rm (a)}] $M$ has a handle decomposition without $m$-handles. 
\eit 
\item[] \hspace{-7mm} for $m \geq 3$ : 

\bit 
\item[{\rm (b)}] $M$ has a handle decomposition ${\cal H}$ with only finitely many $m$-handles 
and for which the closure of each open $m$-cell of $P_{\cal H}$ is strongly displaceable from itself in $M$. 

\item[{\rm (c)}] $M$ is a $C^\infty$ covering space over a closed $2m$-manifold. 
\item[{\rm (d)}] $M$ is an infinite sum of finitely many compact $2m$-manifolds $($as in Setting~{\rm \ref{setting_inf_sum_nu}}$)$.
\eit 
\eit 
\eit 
\vskip 2mm 

\item[{\rm [3]}] Suppose $W$ is a compact $n$-manifold  with boundary. Then, \\ 
${\rm Diff}^r({\rm Int}\,W)_0$ is bounded, if $clb^f$ is bounded in ${\rm Diff}^r(W, \partial)_0$ $($equivalently, ${\rm Diff}^r(W, \partial)_0$ is bounded\,$)$.  
\eenum 
\end{cor}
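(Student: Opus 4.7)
The proof will essentially be an assembly, combining Lemma~\ref{lem_bdd_nu} with the estimates on $\nu_g$ collected in Section~6.2 (which were themselves derived from the $clb^f$ and $clb^d$ bounds of Sections~4 and~5). The plan is to examine each of [1], [2], [3] and in each case identify the appropriate theorem from Section~6.2 whose bound is independent of the choice of the generator $g$, so that it gives both the boundedness of $\nu_g$ (hence of the whole group, via Fact~\ref{fact_unif-simple_bounded}\,(2) and Lemma~\ref{lem_bdd_nu}) and, in the connected case, the uniform simplicity via $\nu(G) < \infty$.

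First I would handle case [1]. Since $M$ is compact and $n \neq 2,4$, there are three subcases. If $n = 2m+1$, Theorem~\ref{thm_nu_cpt_bdry_odd}\,(1) yields $\nu_g d\,{\rm Diff}^r(M,\partial)_0 \leq 16m+24$ whenever $g$ is component-wise non-trivial (any $g \neq \id_M$ when $M$ is connected). If $n = 2m$ with $m \geq 3$, the finiteness of a $C^\infty$ triangulation of $M$ and Theorem~\ref{thm_nu_cpt_bdry_even}\,[III] yield a bound depending only on the number $\ell$ of $m$-simplices not in $\partial M$, independent of $g$. In either subcase the bound is uniform in $g$, so when $M$ is connected $\nu({\rm Diff}^r(M,\partial)_0) < \infty$, proving uniform simplicity; boundedness for disconnected $M$ is immediate from Fact~\ref{fact_unif-simple_bounded}\,(2) applied on each component.

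Next I would treat case [2] by going down the list (1), (2)(a)--(d) and invoking the matching uniform bound. For (1), Theorem~\ref{thm_nu_open_odd} provides uniform bounds on $\nu_g d\,{\rm Diff}^r_c(M)_0$ (via $clb^f$) and on $\nu_g d\,{\rm Diff}^r(M)_0$ (via $clb^d$), with the constants depending only on $c({\cal H})$. The even cases (2)(a)--(d) are covered respectively by Proposition~\ref{prop_2m_open_no-m-h_nu}, Proposition~\ref{prop_2m_open_no-m-h_nu}\,(2)[I], Theorem~\ref{thm_even_covering_nu}\,[B], and Proposition~\ref{prop_inf_sum_nu}\,[III], each of which again gives a bound on $\nu_g$ depending only on the manifold data, not on $g$. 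The boundedness of ${\rm Diff}^r(M)_0$ then follows from Lemma~\ref{lem_bdd_nu}\,(1) applied to any fixed component-wise end-non-trivial $g$; the boundedness of ${\rm Diff}^r_c(M)_0$ when $\#{\cal C}(M) < \infty$ follows from Lemma~\ref{lem_bdd_nu}\,(2)(i); and when $M$ is connected, every non-identity $g \in {\rm Diff}^r_c(M)_0$ is component-wise non-trivial, so the uniformity of the $\nu_g$-bound gives $\nu({\rm Diff}^r_c(M)_0) < \infty$, i.e.\ uniform simplicity.

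For case [3] I would simply invoke Proposition~\ref{prop_product_end}\,(2): if $clb^f$ is bounded in ${\rm Diff}^r(W,\partial)_0$, that proposition gives a finite bound on $clb^d d\,{\rm Diff}^r({\rm Int}\,W)_0$, so Lemma~\ref{lem_bdd_nu}\,(1) yields boundedness of ${\rm Diff}^r({\rm Int}\,W)_0$. The equivalence parenthesized in the statement follows from Lemma~\ref{lem_bdd_nu}\,(2)(ii) in the compact case. No step is really an obstacle here; the delicate point is only bookkeeping, namely verifying in each subcase of [2] that one can choose a witness $g$ that is component-wise end-non-trivial (for the ${\rm Diff}^r(M)_0$ bound) respectively component-wise non-trivial (for the ${\rm Diff}^r_c(M)_0$ bound), which is routine using Complement~\ref{compl_shrap=flat}, and ensuring that the constant produced is independent of $g$ so that it upgrades to $\nu(G) < \infty$ rather than merely $\nu_g < \infty$ for a single $g$.
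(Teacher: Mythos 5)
Your proposal is correct and follows essentially the same route as the paper, which gives no detailed argument beyond "follows immediately from the results in Section 6.2"; you supply precisely the matching of each subcase of [1], [2], [3] to the corresponding estimate on $\nu_g$ (or $clb^f$/$clb^d$) from Theorems~\ref{thm_nu_cpt_bdry_odd}, \ref{thm_nu_open_odd}, \ref{thm_nu_cpt_bdry_even}, Propositions~\ref{prop_2m_open_no-m-h_nu}, \ref{prop_inf_sum_nu}, Theorem~\ref{thm_even_covering_nu}, and Proposition~\ref{prop_product_end}, plugged into Lemma~\ref{lem_bdd_nu} and Fact~\ref{fact_unif-simple_bounded}. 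The only tiny remark is that the parenthesized equivalence in [3] uses both parts (i) and (ii) of Lemma~\ref{lem_bdd_nu}\,(2), not just (ii), but this does not affect the correctness of your argument.
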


\end{document}